\DeclareMathOperator{\Aut}{Aut}
\DeclareMathOperator{\End}{End}
\DeclareMathOperator{\id}{id}
\DeclareMathOperator{\im}{Im}
\DeclareMathOperator{\Ker}{Ker}
\DeclareMathOperator{\Sym}{Sym}
\DeclareMathOperator{\SSym}{\mathbb{S}ym}
\DeclareMathOperator{\GL}{GL}
\DeclareMathOperator{\Sp}{Sp}
\DeclareMathOperator{\ch}{ch}
\DeclareMathOperator{\rk}{rk}
\DeclareMathOperator{\ad}{ad}
\DeclareMathOperator{\td}{td}
\DeclareMathOperator{\pr}{pr}
\DeclareMathOperator{\supp}{supp}
\DeclareMathOperator{\discr}{discr}
\DeclareMathOperator{\Vect}{Vect}
\DeclareMathOperator{\coker}{coker}
\DeclareMathOperator{\Fix}{Fix}
\DeclareMathOperator{\codim}{Codim}
\DeclareMathOperator{\Supp}{Supp}
\DeclareMathOperator{\NS}{NS}
\DeclareMathOperator{\Mon}{Mon}
\DeclareMathOperator{\Bl}{Bl}
\DeclareMathOperator{\Diff}{Diff}
\renewcommand{\rho}{\varrho}
\newcommand{\hilb}[1]{^{[#1]}}
\newcommand{\ie}{{\it i.e. }}
\newcommand{\vac}{|0\rangle}
\newcommand{\tors}{{\rm{tors}}}
\newcommand{\DP}{:}
\renewcommand{\d}{\mathfrak{d}}
\newcommand{\p}{\mathfrak{p}}
\newcommand{\G}{\mathfrak{G}}
\newcommand{\q}{\mathfrak{q}}
\newcommand{\defIs}{\vcentcolon=}
\newcommand{\kum}[2]{K_{ #2 }( #1 )}
\newcommand{\X}{\kum{A}{2}}
\newcommand{\cc}{c_2(\X)}
\newcommand{\C}{\mathbb{C}}
\renewcommand{\H}{\mathbb{H}}
\newcommand{\R}{\mathbb{R}}
\newcommand{\Q}{\mathbb{Q}}
\newcommand{\Z}{\mathbb{Z}}
\newcommand{\F}{\mathbb{F}_{2}}
\newcommand{\kq}{\mathfrak{q}}
\newcommand{\vect}[1]{\left( \begin{smallmatrix} #1 \end{smallmatrix} \right)}
\newcommand{\plan}[2]{\left< \vect{ #1 }, \vect{ #2 } \right>}
\newcommand{\incl}[1][r]
  {\ar@<-0.2pc>@{^(-}[#1] \ar@<+0.2pc>@{-}[#1]}
\theoremstyle{plain}
\newtheorem{theorem}{Theorem}[section]
\newtheorem{thm}[theorem]{Theorem}
\newtheorem{lemma}[theorem]{Lemma}
\newtheorem{lemme}[theorem]{Lemma}
\newtheorem{proposition}[theorem]{Proposition}
\newtheorem{prop}[theorem]{Proposition}
\newtheorem{corollary}[theorem]{Corollary}
\newtheorem{cor}[theorem]{Corollary}
\newtheorem{defipro}[theorem]{Definition-Proposition}
\theoremstyle{definition}
\newtheorem{definition}[theorem]{Definition}
\newtheorem{defi}[theorem]{Definition}
\newtheorem{notation}[theorem]{Notation}
\theoremstyle{remark}
\newtheorem{remark}[theorem]{Remark}
\newtheorem{rmk}[theorem]{Remark}
\begin{document}

\title{\bf Integral cohomology of the generalized\\Kummer fourfold}

\author{Simon Kapfer and Gr\'egoire Menet}



\maketitle
\begin{abstract}
We describe the integral cohomology of the generalized Kummer fourfold giving an explicit basis, using Hilbert scheme cohomology and tools developed by Hassett and Tschinkel.
Then we apply our results to a IHS variety with singularities, obtained by a partial resolution of the generalized Kummer fourfold quotiented by a symplectic involution. 
We calculate the Beauville--Bogomolov form of this new variety, presenting the first example of such a form that is odd.
\end{abstract}

\section{Introduction}
In algebraic geometry \emph{irreducible holomorphic symplectic (IHS) manifolds} became important objects of study in recent years, after fundamental results by Beauville \cite{Beauville} and Huybrechts \cite{Huybrechts2}.
Among all the developments concerning this field, integral cohomology plays an inescapable role. 
This is primarily due to the \emph{Beauville--Bogomolov form} which is a non-degenerate symmetric integral and primitive bilinear pairing on the second cohomology group with integral coefficients. 
This form endows the second cohomology group with a lattice structure establishing lattice theory as a fundamental tool omnipresent in all the last developments. 
As examples, we can cite works on classifications of automorphisms \cite{Mongardi}, \cite{MongWanTari}, \cite{BCS} or the important survey of Markman \cite{Markmansurvey} with results on the Kähler cone and the monodromy. 
In a more modest term, the fourth integral cohomology group is also quite useful. As examples, we can underline Theorem 1.2 of \cite{BNS} providing formulas which apply for the classification of automorphism on IHS manifolds of $K3^{[2]}$-type, particularly used in \cite{BCS}; furthermore Theorem 1.10 of \cite{Markman2} provides a description of the monodromy group of the IHS manifolds of $K3^{[n]}$-type; we can also cite \cite{Lol2}, where the second author provides the Beauville--Bogomolov lattice of the Markushevich--Tikhomirov varieties constructed in \cite{Markou}. 
Taking $X$ a IHS manifold of $K3^{[2]}$-type, in all these papers a description of the torsion group $\frac{H^{4}(X,\Z)}{\Sym^2(H^{2}(X,\Z))}$ was essential.

Until now, no complete description of the \emph{integral cohomology of the generalized Kummer fourfold} was existing. In particular, the relation between the fourth cohomology group and the image of the symmetric power of the second cohomology group via cup-product was not known. For all reasons mentioned above, it appeared to us that it was an interesting gap to fill.

Let $\kum{A}{2}$ be the generalized Kummer fourfold over a torus $A$. There are three main theorems in this paper. Two of them describe the integral cohomology of the generalized Kummer fourfold:
\begin{itemize}
\item\textbf{Theorem \ref{integralbasistheorem}}
which provides an integral basis of $H^4(K_2(A),\Z)$ in terms of $\Sym^2(H^2(K_2(A),$
$\Z))$ and certain classes of Brian\c con subschemes with support on three-torsion points, introduced in \cite{Hassett}.
\item\textbf{Theorem \ref{thetaTheorem}}
which states that the pullback from the Hilbert scheme of points on the torus $\theta^*:H^{*}(A^{[3]},\Z)\rightarrow H^{*}(K_2(A),\Z)$ is surjective except in degree 4. Moreover it provides an integral basis of $\im \theta^*$ 
and shows that the kernel of $\theta^*$ is the ideal generated by $H^1(A\hilb{3},\Z)$.
\end{itemize}

The third theorem is related to \emph{irreducible symplectic V-manifolds}; it can be seen as an application of Theorem \ref{integralbasistheorem} and a generalization of \cite{Lol2}. A V-manifold is a compact analytic complex space with at worst finite quotient singularities. A V-manifold will be called symplectic if its nonsingular locus is endowed with an everywhere non-degenerate holomorphic 2-form which extends to a resolution of singularities. 
A symplectic V-manifold will be called irreducible if it is complete, simply connected, and if the holomorphic 2-form is unique up to $\mathbb{C}^*$. Such varieties are good candidates to generalize the short list of known IHS manifolds, since some aspects of the theory were already generalized in \cite{Nanikawa} and \cite{Mat}, for instance the Beauville--Bogomolov form, the local Torelli theorem and the Fujiki formula. 

In \cite{Nanikawa}, Namikawa proposes a definition of the Beauville-Bogomolov form for some singular irreducible symplectic varieties. He assumes that the singularities are only $\mathbb{Q}$-factorial with a singular locus of codimension $\geq 4$. Under these assumptions, he proves a local Torelli theorem. 
This result was completed by a generalization of the Fujiki formula by Matsushita in \cite{Mat} (see also Theorem 1.2.4 of \cite{Lol} for a summaring satement).

These results were further generalized by Kirschner for symplectic complex spaces in \cite{Tim}. 
In \cite[Theorem 2.5]{Lol2} the first concrete example of Beauville--Bogomolov lattice for a singular irreducible symplectic variety has appeared. 
The variety studied in \cite{Lol2} is a partial resolution of an irreducible symplectic manifold of $K3^{[2]}$-type quotiented by a symplectic involution. The objective of this paper is to provide a new example of a Beauville--Bogomolov lattice replacing the manifold of $K3^{[2]}$-type by a fourfold of Kummer type. 
Knowing the integral basis of the cohomology group of the generalized Kummer provided by Theorem \ref{integralbasistheorem}, this calculation becomes possible. 
Moreover, the calculation will be much simpler as in \cite{Lol2} because of the general techniques for calculating integral cohomology of quotients developed in \cite{Lol} and the new technique using monodromy developed in Lemma \ref{Ddelta}. 
The other techniques developed in \cite{Lol2} are also contained in \cite{Lol}, so to simplify the reading, we will only cite \cite{Lol2} in the rest of the section.

Concretely, let $X$ be an irreducible symplectic fourfold of Kummer type and $\iota$ a symplectic involution on $X$. Theorem \ref{SymplecticInvo} establishes that the fixed locus of $\iota$ is the union of 36 points and a K3 surface $Z_0$. Then the singular locus of $K:=X/\iota$ is the union of a K3 surface and 36 points. The singular locus is not of codimension four. We will lift to a partial resolution of singularities,
$K'$ of $K$, obtained by blowing up the image of $Z_0$. By Section 2.3 and Lemma 1.2 of \cite{Fujiki2}, the variety $K'$ is an irreducible symplectic V-manifold which has singular locus of codimension four.

\begin{thm}\label{theorem}
Let $X$ be an irreducible symplectic fourfold of Kummer type and $\iota$ a symplectic involution on $X$.
Let $Z_0$ be the K3 surface which is in the fixed locus of $\iota$.
We denote $K=X/\iota$ and $K'$ the partial resolution of singularities of $K$ obtained by blowing up the image of $Z_0$.
Then the Beauville--Bogomolov lattice $H^2(K',\Z)$ is isomorphic to $U(3)^{3}\oplus\left(
\begin{array}{cc}
-5 & -4\\
-4 & -5 
\end{array} \right)$, and the Fujiki constant $c_{K'}$ is equal to $8$.
\end{thm}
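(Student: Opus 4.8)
The plan is to realise $K'$ as the quotient of a smooth blow-up of $X$ and to transport the whole computation through that double cover. First I would set $\tilde{X}\defIs\Bl_{Z_0}X$, with exceptional divisor $\tilde{E}$ and blow-down $b\colon\tilde X\to X$. Since $Z_0$ is $\iota$-invariant, $\iota$ lifts to an involution $\tilde\iota$ on $\tilde X$; and because $\iota$ acts by $-1$ on the rank-two normal bundle $N\defIs N_{Z_0/X}$, the induced action on $\tilde E=\mathbb{P}(N)$ is trivial. Thus the fixed locus of $\tilde\iota$ consists of the divisor $\tilde E$ together with the $36$ points over the isolated fixed points of $\iota$. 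A quotient by an involution fixing a divisor is smooth there, so $\tilde X/\tilde\iota$ is precisely $K'$: it is smooth along the image $E$ of $\tilde E$ and carries exactly the $36$ singularities $\C^4/\pm 1$ of codimension four. This produces a degree-two cover $\pi\colon\tilde X\to K'$ ramified along $\tilde E$, and since quotient singularities are rational homology manifolds, $H^2(K',\Q)\cong H^2(\tilde X,\Q)^{\tilde\iota}$. As $\iota$ is the symplectic involution of Theorem~\ref{SymplecticInvo} (a deformation of the one induced by $-1$ on the torus), it acts trivially on $H^2(X,\Z)=U^3\oplus\langle-6\rangle$, so $\tilde\iota$ acts trivially on $H^2(\tilde X,\Q)=b^*H^2(X,\Q)\oplus\Q\,\tilde E$ and $H^2(K',\Q)$ has rank $7+1=8$, matching the target.

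The integral structure is the delicate point, and here I would invoke the techniques for integral cohomology of quotients from \cite{Lol} together with the monodromy Lemma~\ref{Ddelta}. The mechanism is that $\pi_*\pi^*=1+\tilde\iota^*=2$ on invariants, so for a primitive $\beta\in H^2(X,\Z)$ the descended class $\gamma_\beta\defIs\pi_*b^*\beta\in H^2(K',\Z)$ satisfies $\pi^*\gamma_\beta=2\,b^*\beta$, while for the branch divisor $\pi^*[E]=2\,\tilde E$. Lemma~\ref{Ddelta} and \cite{Lol} are what I would use to prove that the $\gamma_\beta$ are primitive and, crucially, that a glue vector of the form $\tfrac12(\gamma_\delta+[E])$ (with $\delta$ the Kummer exceptional class, $q_X(\delta)=-6$) is an integral class; these index-two phenomena are exactly what turns an orthogonal direct sum into the indecomposable rank-two block.

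Next I would compute the Beauville--Bogomolov form through the Fujiki relation on $K'$ and the push-pull identity $\int_{K'}\alpha^4=\tfrac12\int_{\tilde X}(\pi^*\alpha)^4$, evaluating the right-hand side by blow-up intersection theory on $\tilde X$. Writing $\pi^*\alpha=b^*\beta+t\,\tilde E$, symplecticity gives $N\cong N^\vee$, so $c_1(N)=0$ and the mixed terms $\int_{\tilde X}(b^*\beta)^3\tilde E$ and $\int_{\tilde X}(b^*\beta)\tilde E^3$ vanish (both factor through $H^6(Z_0)=0$). What remains is
\[
\int_{\tilde X}(b^*\beta+t\,\tilde E)^4=9\,q_X(\beta)^2-6\lambda\,t^2q_X(\beta)+e\,t^4,
\]
using the Fujiki constant $c_X=9$ of $X$, the self-intersection $e\defIs\int_{Z_0}c_2(N)=[Z_0]^2$, and the restriction identity $\int_{Z_0}(\beta|_{Z_0})^2=\lambda\,q_X(\beta)$. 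Since $K'$ is an irreducible symplectic V-manifold, its Fujiki relation forces this quartic to be a perfect square, hence $e=\lambda^2$ and the quartic equals $(3q_X(\beta)-\lambda t^2)^2$. I would then pin the scale geometrically using the explicit description of $H^4(X,\Z)$ from Theorem~\ref{integralbasistheorem}, which gives $\lambda=2$ and $[Z_0]^2=4$. Feeding this back, the BB form in the coordinates $(\beta,t)$ is $\tfrac14\big(3q_X(\beta)-2t^2\big)$; evaluating on the integral classes (where $\pi^*\gamma_\beta=2b^*\beta$ and $\pi^*[E]=2\tilde E$) yields $c_{K'}=8$, together with $q_{K'}(\gamma_e,\gamma_f)=3$ for a hyperbolic pair, $q_{K'}(\gamma_\delta)=-18$, $q_{K'}([E])=-2$ and, since there is no $\beta$--$t$ cross term, $\gamma_\beta\perp[E]$ for all $\beta$.

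Finally I would assemble the lattice. The sublattice generated by the $\gamma_\beta$ and $[E]$ is the orthogonal sum $U(3)^3\oplus\langle-18\rangle\oplus\langle-2\rangle$ (the $U^3$ rescales to $U(3)^3$, and $w_1\defIs\gamma_\delta$, $w_2\defIs[E]$ are orthogonal of norms $-18$, $-2$); the integral glue vector $\tfrac12(w_1+w_2)$ from the second step exhibits $H^2(K',\Z)$ as the index-two overlattice with basis $\tfrac12(w_1+w_2),\tfrac12(w_1-w_2)$, whose Gram matrix is $\left(\begin{smallmatrix}-5&-4\\-4&-5\end{smallmatrix}\right)$ (note $\tfrac14(-18-2)=-5$ is odd, yielding the promised odd form, with $\discr=9$). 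The hardest part will be the integral bookkeeping: establishing the primitivity of the $\gamma_\beta$ and the integrality of the glue vector, and computing $[Z_0]^2$ and the $[Z_0]$-restriction constant $\lambda$ exactly via Theorem~\ref{integralbasistheorem}, so that the off-diagonal entry comes out as precisely $-4$. This is where Theorem~\ref{integralbasistheorem}, \cite{Lol} and Lemma~\ref{Ddelta} do the essential work, and it is exactly this quotient-cohomology method that replaces the heavier case analysis of \cite{Lol2}.
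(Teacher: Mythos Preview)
Your proposal is correct and follows essentially the same route as the paper: your $\tilde X$, $b$, $\pi$, $\tilde E$, $[E]$, $\gamma_\beta$ are the paper's $N_1$, $s_1$, $\pi_1$, $Z_0'$, $\overline{Z_0}'$, $\pi_{1*}s_1^*\beta$, and your glue vector $\tfrac12(\gamma_\delta+[E])$ is exactly the class in Theorem~\ref{fin}; your Fujiki/blow-up computation reproduces Proposition~\ref{passage} and Lemmas~\ref{ortho}--\ref{Z2}, giving the same numbers $q_{K'}(\gamma_\beta)=3q_X(\beta)$, $q_{K'}([E])=-2$, $c_{K'}=8$. The one thing you should be aware of is that the ``integral bookkeeping'' you flag as hardest is, in the paper, done not on $\tilde X$ but on the \emph{further} blow-up $N_2\to\widetilde K$ of the $36$ isolated fixed points (Section~\ref{nota}): the Smith-theory count in Lemma~\ref{exist} produces an a priori unknown combination $D_e$ of the $36$ exceptional divisors with $\pi_{2*}s^*(e)+D_e$ divisible by $2$, Lemma~\ref{Ddelta} uses monodromy together with automorphisms of the special torus $E_\xi\times E_\xi$ to force $D_e=D_1+\cdots+D_{36}$, and only then does one combine with the separately established divisibility of $\widetilde{\overline{Z_0}}+D_1+\cdots+D_{36}$ (Lemma~\ref{Fulton}(iv)) to descend the glue vector to $K'$; the primitivity of the $\gamma_\beta$ likewise passes through the $H^4$-normality $\Rightarrow$ $H^2$-normality chain of Sections~\ref{H4}--\ref{H2}, which is where Theorem~\ref{integralbasistheorem} (specifically the list of $2$-divisible classes in $\Sym^2H^2$) enters.
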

We remark that it is the first example of a Beauville--Bogomolov form which is not even.


The paper is organised as follows. In Section \ref{OddHilb2} we describe the odd integral cohomology of $A^{[2]}$ the Hilbert scheme of two points on a surface $A$ with torsion free cohomology. 
Then, after recalling some notions on Nakajima operators in Section \ref{Section_Hilbert}, 
we are able to provide an integral basis of the Hilbert scheme of two points on an abelian surface in term of Nakajima operators (Proposition \ref{A2Basis}).
Section \ref{Section_GeneralKummer} studies the integral cohomology of generalized Kummer varieties in any dimension.
In Section \ref{Middle}, we use all these preliminary results and monodromy technique developed in \cite{Hassett} to find an integral basis of the cohomology of the generalized Kummer fourfold $K_2(A)$.
As a consequence, in Section \ref{Involution}, we are able 
to end the classification of symplectic involutions on $K_2(A)$ as a corollary of the lattice classification by Mongardi, Tari and Wandel in \cite{MongWanTari}.
Finally, Section \ref{BeauvilleForm} is dedicated to the proof of Theorem \ref{theorem}.
~\\


\section{Odd cohomology of \texorpdfstring{$A\hilb{2}$}{the Hilbert scheme of two points}}\label{OddHilb2}
Let $A$ be a smooth compact surface with torsion free cohomology and $A\hilb{2}$ the Hilbert scheme of two points. 
It can be constructed as follows: Consider the direct product $A\times A$. Denote 
$$b: \Bl_\Delta(A\! \times\! A) \rightarrow A\times A $$ 
the blow-up along the diagonal $\Delta \cong A$ with exceptional divisor $E$.
Let $j: E\rightarrow \Bl_\Delta(A\! \times\! A) $ be the embedding. 
The action of $\mathfrak{S}_2$ on $A\times A$ lifts to an action on $\Bl_\Delta(A\! \times\! A)$. 
We have the pushforward $j_*:H^*(E,\Z)\rightarrow H^*(\Bl_\Delta(A\! \times\! A),\Z) $.

The quotient by the action of $\mathfrak{S}_2$ is 
$ \pi:\Bl_\Delta(A\! \times\! A)\rightarrow A\hilb{2}$.
Now, $A\hilb{2}$ is a compact complex manifold with torsion-free cohomology,~\cite[Theorem~2.2]{Totaro}.
In this section, we want to prove the following proposition.
\begin{proposition} \label{Alpha35}
Let $A$ be a smooth compact surface with torsion free cohomology.
Then
\begin{itemize}
\item[(i)]
$H^{3}(A\hilb{2},\Z)=\pi_*(b^{*}(H^{3}(A\times A,\Z)))\oplus \frac{1}{2}\pi_*j_*b_{|E}^{*}(H^1(\Delta,\Z))$,
\item[(ii)]
$H^{5}(A\hilb{2},\Z)=\pi_*(b^{*}(H^{5}(A\times A,\Z)))\oplus \pi_*j_*b_{|E}^{*}(H^3(\Delta,\Z))$.
\end{itemize}
\end{proposition}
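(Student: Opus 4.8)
The plan is to compute $H^3$ and $H^5$ of $A^{[2]}$ by descent from the blow-up $\Bl_\Delta(A\times A)$, using the fact that cohomology of a $\mathfrak{S}_2$-quotient with torsion-free cohomology is captured (rationally) by the invariant part, and then pinning down the integral structure via the factor $\frac12$ that can appear in the transfer/pushforward. Let me sketch the steps.

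First I would write down the standard blow-up decomposition of $H^*(\Bl_\Delta(A\times A),\Z)$. By the blow-up formula,
$$
H^k(\Bl_\Delta(A\times A),\Z) \;=\; b^*H^k(A\times A,\Z)\;\oplus\;\bigoplus_{i=1}^{c-1} j_* b_{|E}^* H^{k-2i}(\Delta,\Z),
$$
where $c=\codim_\C \Delta = 2$, so the exceptional contribution is a single summand $j_*b_{|E}^*H^{k-2}(\Delta,\Z)$. For $k=3$ this gives $b^*H^3(A\times A,\Z)\oplus j_*b_{|E}^*H^1(\Delta,\Z)$, and for $k=5$ it gives $b^*H^5(A\times A,\Z)\oplus j_*b_{|E}^*H^3(\Delta,\Z)$. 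I would keep all cohomology torsion-free here, as guaranteed by the hypothesis on $A$ together with \cite[Theorem~2.2]{Totaro} for $A^{[2]}$.

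Next I would understand the $\mathfrak{S}_2$-action on each summand. The involution acts on $H^*(A\times A,\Z)$ by swapping the two factors (with the Koszul sign on odd classes), and it acts on the exceptional divisor $E\cong \mathbb{P}(N_{\Delta})$ compatibly. Since $\pi_*\pi^* = (1+\sigma)$ on the quotient (pushforward then pullback multiplies by the degree of the cover, here $2$, but $\pi_*$ alone is the transfer averaging over $\sigma$), I would identify $\pi_*(b^*(-))$ with the image of the $\mathfrak{S}_2$-invariant (or, on odd-degree classes, anti-invariant) part of $b^*H^*(A\times A,\Z)$. The key arithmetic point is that $\pi_* j_* b_{|E}^*$ applied to a class pulled back from $\Delta$ may be divisible by $2$: the diagonal is the fixed locus, so $\sigma$ acts trivially on $b_{|E}^*H^*(\Delta,\Z)$, and $\pi_*$ on a class invariant under the covering involution produces a class that is $2$ times a generator of an integral lattice downstairs — this is exactly why the factor $\frac12$ appears in $H^3$ for $H^1(\Delta,\Z)$, whereas for $H^5$ the factor is $1$.

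The main obstacle, and the heart of the argument, is the \textbf{integrality} of the factor $\frac12$ in (i): I must show that $\frac12 \pi_* j_* b_{|E}^*(H^1(\Delta,\Z))$ genuinely lands in $H^3(A^{[2]},\Z)$ and contributes a primitive complementary summand, rather than merely in the rational cohomology. For this I would argue that the class $\pi_* j_* b_{|E}^*(\alpha)$ for $\alpha\in H^1(\Delta,\Z)$ is divisible by $2$ integrally, using a local model of the quotient near the image of the diagonal (the involution fixes $E$ pointwise up to the $\mathbb{P}^1$-bundle fiber action, so the ramification along the image of $E$ forces the divisibility in the relevant degree), and then verify that no such extra divisibility occurs in degree $5$ where $H^3(\Delta,\Z)$ sits. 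To conclude that the two pieces form a direct-sum decomposition over $\Z$, I would check that the stated classes are linearly independent and that their $\Z$-span has the correct rank and covolume by comparing with the known Betti and torsion data of $A^{[2]}$ (torsion-free by \cite{Totaro}), so that an index computation forces equality rather than a finite-index inclusion.
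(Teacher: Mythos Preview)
Your overall framework matches the paper's: start from the blow-up decomposition, analyze the $\mathfrak{S}_2$-action, and determine the integral structure of the image under $\pi_*$. The gap is in the two places where the actual work happens.

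First, your argument for divisibility of $\pi_* j_* b_{|E}^*(H^1(\Delta,\Z))$ by $2$ is not yet an argument. You write that ``ramification along the image of $E$ forces the divisibility in the relevant degree,'' but the involution fixes $E$ pointwise and acts trivially on $b_{|E}^* H^*(\Delta,\Z)$ regardless of whether the input is $H^1(\Delta)$ or $H^3(\Delta)$; the local picture near $E$ is identical in degrees $3$ and $5$. A purely local or ramification-style argument therefore cannot distinguish the two cases, which is exactly what you need. The paper handles degree $3$ by a global computation: it removes the exceptional locus, sets $U = A\hilb{2}\smallsetminus \pi(E)$, uses the equivariant-cohomology spectral sequence (together with the invariants $l_{1,\pm}^k$, $l_2^k$ of the $\mathfrak S_2$-action on $H^*(A\times A)$) to show $\tors H^3(U,\Z)\cong (\Z/2\Z)^{b_1(A)}$, and then reads off from the long exact sequence of the pair $(A\hilb{2},U)$ and Thom isomorphism that the image of $H^3(A\hilb{2},U,\Z)\cong H^1(\Delta,\Z)$ in $H^3(A\hilb{2},\Z)$ has cokernel with precisely that $2$-torsion --- this is what forces divisibility by $2$.

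Second, you give no mechanism for \emph{non}-divisibility in degree $5$; ``verify that no such extra divisibility occurs'' names the goal, not a method. The paper does not attempt a direct argument here either. Instead it uses Poincar\'e duality: $H^3(A\hilb{2},\Z)\oplus H^5(A\hilb{2},\Z)$ is unimodular, and one computes (again via the $\mathfrak S_2$-module invariants on the blow-up) that $\pi_*\big(H^3(\Bl_\Delta(A\!\times\! A),\Z)\oplus H^5(\Bl_\Delta(A\!\times\! A),\Z)\big)$ has discriminant $2^{2b_1(A)}$, hence index $2^{b_1(A)}$ in the unimodular lattice. The degree-$3$ result already contributes a quotient of order $2^{b_1(A)}$, so there is nothing left for degree $5$. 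Your closing ``covolume/index computation'' is in this spirit, but to carry it out you need the discriminant machinery and the $l_{1,+}^k$, $l_2^k$ invariants, not merely Betti numbers and torsion-freeness.
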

We adopt the following notation.
\begin{notation} \label{TorusClasses}
We denote the generators of $H^1(A,\Z)$ by $a_i$, and their respective duals by $a_i^*\in H^3(A,\Z)$. 
We denote the generator of the top cohomology $H^4(A,\Z)$ by $x$.
A basis of $H^2(A,\Z)$ will be denoted by $(b_i)$.
\end{notation}
The rest of this section is dedicated to the proof of this proposition. 
This proposition is proved using techniques developed in \cite{Lol}, for another approach see \cite{Totaro}.
The proof is organized as follows. First we recall some notions on integral cohomology endowed with the action of an involution in Section \ref{IntegralTools}. 
Then Section \ref{Prelemma} is devoted to calculate the torsion of $H^{3}(A\hilb{2}\smallsetminus E,\Z)$ (Lemma~\ref{3}) using equivariant cohomology techniques. Then this knowledge allow us to deduce (i) using the exact sequence (\ref{exactutile}) of Section \ref{=} and (ii) using the unimodularity of the lattice $H^{3}\left(\Bl_\Delta(A\! \times\! A),\Z\right)\oplus H^{5}\left(\Bl_\Delta(A\! \times\! A),\Z\right)$.
\subsection{Integral cohomology under the action of an involution}\label{IntegralTools}
The main references of this subsection are \cite{Lol} and \cite{BNS}.

Let $G=\left\langle \iota\right\rangle$ be the group generated by an involution $\iota$ on a complex manifold $X$.
As denoted in \cite[Section 5]{BNS}, let $\mathcal{O}_{K}$ be the ring $\Z$ with the following $G$-module structure:
$\iota\cdot y=-y$ for $y\in \mathcal{O}_{K}$. For $a\in \Z$, we also denote by $(\mathcal{O}_{K},a)$ the module $\Z\oplus\Z$ whose $G$-module structure is defined by $\iota\cdot(y,k)=(-y+ka,k)$. We also denote by $N_{2}$ the $\mathbb{F}_{2}[G]$-module $(\mathcal{O}_{K},a)\otimes\mathbb{F}_{2}$.
We recall Definition-Proposition 2.2.2 of \cite{Lol}.
\begin{defipro}\label{defiprop}
Assume that $H^{*}(X,\Z)$ is torsion-free. Then for all $0\leq k \leq 2\dim X$, we have an isomorphism of $\Z[G]$-modules:
$$H^{k}(X,\Z)\simeq \bigoplus_{i=1}^{r}(\mathcal{O}_{K},a_{i})\oplus \mathcal{O}_{K}^{\oplus s}\oplus\Z^{\oplus t},$$
for some odd numbers $a_{i}$ and $(r,s,t)\in\mathbb{N}^3$.
We get the following isomorphism of $\mathbb{F}_{2}[G]$-modules:
$$H^{k}(X,\mathbb{F}_{2})\simeq N_{2}^{\oplus r}\oplus\mathbb{F}_{2}^{\oplus (s+t)}.$$
We denote $l_{2}^k(X):=r$, $l_{1,-}^k(X):=s$, $l_{1,+}^k(X):=t$, $\mathcal{N}_{2}:=N_{2}^{\oplus r}$ and $\mathcal{N}_{1}:=\mathbb{F}_{2}^{\oplus s+t}$.
\end{defipro}
\begin{rmk}
These invariants are uniquely determined by $G$, $X$ and $k$.
\end{rmk}
\begin{prop}\cite[Sect.2~2]{Lol}\label{sarti}
Let $X$ be a compact complex manifold of dimension $n$ and $\iota$ an involution. Assume that $H^{*}(X,\Z)$ is torsion free.
We have:
\begin{itemize}
\item[(i)]
$\rk H^{k}(X,\Z)^{\iota}=l_{2}^k(X)+l_{1,+}^k(X).$
\item[(ii)]
We denote $\sigma:=\id+\iota^*$ and $S^k_\iota:= \Ker \sigma \cap H^{k}(X,\Z)$. 
We have $H^{k}(X,\Z)^{\iota}\cap S^k_\iota=0$ and
$$\frac{H^{k}(X,\Z)}{H^{k}(X,\Z)^{\iota}\oplus S^k_\iota}=\left(\frac{\Z}{2\Z}\right)^{\oplus l_2^k(X)}.$$
\end{itemize}
\end{prop}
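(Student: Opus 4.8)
The plan is to reduce both statements to the explicit $\Z[G]$-module decomposition supplied by Definition-Proposition~\ref{defiprop} and then to read everything off summand by summand. Since $\iota^*$ respects that decomposition, so do the operators $\iota^*-\id$ and $\sigma=\id+\iota^*$; hence the fixed lattice $H^k(X,\Z)^\iota=\Ker(\iota^*-\id)$ and $S^k_\iota=\Ker\sigma$ each split as the direct sum of the corresponding subgroups of the three indecomposable building blocks $\Z$, $\mathcal{O}_K$ and $(\mathcal{O}_K,a)$ with $a$ odd. It therefore suffices to analyse these three modules one at a time and then sum the contributions.

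First I would dispose of the two easy factors. On the trivial module $\Z$ the involution acts as the identity, so the fixed part is all of $\Z$ (rank $1$), while $\sigma$ acts as multiplication by $2$, giving $\Ker\sigma=0$, and the quotient $\Z/(\Z\oplus 0)$ is trivial. On $\mathcal{O}_K$ the involution acts by $-1$, so the fixed part is $0$, while $\sigma=0$ gives $\Ker\sigma=\mathcal{O}_K$, and again the quotient vanishes. Thus each $\Z$ contributes $1$ and each $\mathcal{O}_K$ contributes $0$ to the fixed rank, and neither contributes anything to the cokernel in (ii).

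The heart of the argument is the factor $(\mathcal{O}_K,a)=\Z\oplus\Z$ with $\iota\cdot(y,k)=(-y+ka,k)$ and $a$ odd. Solving $\iota\cdot(y,k)=(y,k)$ yields $2y=ka$; since $a$ is odd this forces $k$ even, so the fixed part is the rank-one lattice $\Z\cdot(a,2)$. Computing $\sigma(y,k)=(ka,2k)$ shows $\Ker\sigma=\Z\cdot(1,0)$, also of rank one, and it meets $\Z\cdot(a,2)$ only in $0$. Finally the sublattice generated by $(a,2)$ and $(1,0)$ has index $\left|\det\left(\begin{smallmatrix} a & 1\\ 2 & 0\end{smallmatrix}\right)\right|=2$, so the quotient of this summand by $H^k(X,\Z)^\iota\oplus S^k_\iota$ is exactly $\Z/2\Z$. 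This is the only place where torsion appears, and checking this index-$2$ computation, together with the triviality of the intersection, is the main (if modest) obstacle.

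Assembling the three cases proves both claims simultaneously. The fixed rank equals $r\cdot 1+s\cdot 0+t\cdot 1=l_2^k(X)+l_{1,+}^k(X)$, which is (i). The intersection $H^k(X,\Z)^\iota\cap S^k_\iota$ vanishes because it vanishes on every summand, and the cokernel is a direct sum of $r=l_2^k(X)$ copies of $\Z/2\Z$ produced by the $(\mathcal{O}_K,a_i)$ factors, the remaining summands contributing nothing; this is (ii).
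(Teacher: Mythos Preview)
Your proof is correct. Note, however, that the paper does not actually prove this proposition; it merely cites it from \cite[Sect.~2.2]{Lol}. Your argument---reducing to the explicit $\Z[G]$-decomposition of Definition-Proposition~\ref{defiprop} and reading off the fixed part, $\Ker\sigma$, and the quotient on each of the three indecomposable summands $\Z$, $\mathcal{O}_K$, and $(\mathcal{O}_K,a)$---is the natural and straightforward route, and your summand-by-summand computations are all accurate.
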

\begin{rmk}\label{x+ix}
Note that the elements of $(\mathcal{O}_{K},a_{i})^{\iota}$ are written $y+\iota^{*}(y)$ with $y\in (\mathcal{O}_{K},a_{i})$.
\end{rmk}
Let $\pi: X\rightarrow X/G$ be the quotient map. 
We denote by $\pi^{*}$ and $\pi_{*}$ respectively the pull-back and the push-forward along $\pi$. We recall that 
\begin{equation}
\pi_*\circ \pi^*=2\id \text{ and } \pi^*\circ\pi_* =\id+\iota^*.
\label{pietpi}
\end{equation}
We also recall the commutativity behaviour of $\pi_*$ with the cup product.
\begin{prop}\cite[Lemma 3.3.7]{Lol}\label{commut}
Let $X$ be a compact complex manifold of dimension $n$ and $\iota$ an involution. Assume that $H^{*}(X,\Z)$ is torsion free.
Let $0\leq k \leq 2n$, $m$ an integer such that $km\leq 2n$, and let $(x_{i})_{1\leq i \leq m}$ be elements of $H^{k}(X,\Z)^{\iota}$.
Then $$\pi_{*}(x_{1})\cdot...\cdot \pi_{*}(x_{m})=2^{m-1}\pi_{*}(x_{1}\cdot...\cdot x_{m}).$$
\end{prop}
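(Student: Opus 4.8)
The plan is to derive the formula directly from the two structural facts already recorded: the identity $\pi^*\circ\pi_*=\id+\iota^*$ from \eqref{pietpi}, together with the projection formula $\pi_*(\alpha\cdot\pi^*\beta)=\pi_*(\alpha)\cdot\beta$ for the quotient map $\pi$ and the fact that $\pi^*$ is a ring homomorphism for the cup product. No induction on $m$ is strictly necessary; the whole computation collapses once one observes how $\pi^*\pi_*$ acts on an $\iota$-invariant class.

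First I would note that for each $x_i\in H^k(X,\Z)^\iota$ one has $\iota^*(x_i)=x_i$, so by \eqref{pietpi}
$$\pi^*\pi_*(x_i)=(\id+\iota^*)(x_i)=2x_i.$$
Next, using that $\pi^*$ respects cup products, I would compute
$$\pi^*\bigl(\pi_*(x_2)\cdots\pi_*(x_m)\bigr)=\prod_{i=2}^m\pi^*\pi_*(x_i)=2^{m-1}\,x_2\cdots x_m.$$
Finally, applying the projection formula with $\alpha=x_1$ and $\beta=\pi_*(x_2)\cdots\pi_*(x_m)$ yields
$$\pi_*(x_1)\cdots\pi_*(x_m)=\pi_*\Bigl(x_1\cdot\pi^*\bigl(\pi_*(x_2)\cdots\pi_*(x_m)\bigr)\Bigr)=\pi_*\bigl(2^{m-1}x_1\cdot x_2\cdots x_m\bigr),$$
which is exactly $2^{m-1}\pi_*(x_1\cdots x_m)$. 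The degree hypothesis $km\le 2n$ serves only to guarantee that all products appearing have degree at most $2n$, so that every class lives in a well-defined cohomology group and the manipulations are meaningful.

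The step I expect to require the most care is the justification of the projection formula for $\pi_*$ in \emph{integral} cohomology, since $X/G$ carries quotient singularities and $\pi$ is ramified along the fixed locus of $\iota$. One must check that the transfer map $\pi_*$ and the pullback $\pi^*$ interact through the projection formula despite the presence of fixed points, and that $\pi^*$ is genuinely multiplicative on the (possibly singular) quotient. This is where I would lean on the foundational setup of \cite{Lol}, which already supplies both \eqref{pietpi} and the compatibility of $\pi_*$ with cup products; granting that infrastructure, the remaining argument is purely formal.
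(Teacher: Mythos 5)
Your proof is correct and is essentially the standard argument: the paper itself does not reprove this statement but cites \cite[Lemma 3.3.7]{Lol}, whose proof rests on exactly the same two ingredients you use, namely $\pi^*\pi_*=\id+\iota^*$ applied to invariant classes and the projection formula for the transfer map. Your closing remark correctly identifies the only real point of care (the validity of the projection formula for $\pi_*$ with integral coefficients on the possibly singular quotient), which is part of the foundational setup supplied by \cite{Lol}.
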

\subsection{Preliminary lemmas}\label{Prelemma}
We denote $V=\Bl_\Delta(A\! \times\! A)\smallsetminus E$ and $U=V/\mathfrak S_{2}=A\hilb{2}\smallsetminus E$, where $\mathfrak{S}_{2}=\left\langle \sigma_{2}\right\rangle$. 
\begin{lemma}\label{1}
We have: $H^{k}(A\times A,\Z)=H^{k}(V,\Z)$
for all $k\leq 3$.
\end{lemma}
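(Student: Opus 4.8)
The plan is to first replace $V$ by a more transparent model. Since the blow-down $b\colon\Bl_\Delta(A\times A)\to A\times A$ is an isomorphism over the complement of the diagonal, its restriction to $V=\Bl_\Delta(A\times A)\smallsetminus E$ identifies $V$ with the open submanifold $(A\times A)\smallsetminus\Delta$, and under this identification the map I want to understand, $b^{*}\colon H^{k}(A\times A,\Z)\to H^{k}(V,\Z)$, becomes nothing but the restriction homomorphism attached to the open inclusion $(A\times A)\smallsetminus\Delta\hookrightarrow A\times A$. So it suffices to prove that this restriction is an isomorphism for every $k\le 3$.

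To control the effect of removing $\Delta$, I would use the Gysin (Thom) long exact sequence of the closed submanifold $\Delta\cong A$ inside $M:=A\times A$. As $\Delta$ is a complex submanifold of complex codimension $2$, it has real codimension $4$ and its normal bundle, being complex, is canonically oriented; this orientation is exactly what makes the Thom isomorphism $H^{k}(M,M\smallsetminus\Delta;\Z)\cong H^{k-4}(\Delta,\Z)$ valid with integral coefficients. Feeding it into the long exact sequence of the pair $(M,M\smallsetminus\Delta)$ gives
$$\cdots\to H^{k-4}(\Delta,\Z)\xrightarrow{\ i_{*}\ }H^{k}(M,\Z)\to H^{k}(M\smallsetminus\Delta,\Z)\to H^{k-3}(\Delta,\Z)\to\cdots,$$
in which the leftmost arrow is the Gysin pushforward and the middle arrow is the restriction.

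For $k\le 2$ both $H^{k-4}(\Delta,\Z)$ and $H^{k-3}(\Delta,\Z)$ lie in negative degree and hence vanish, so the restriction is an isomorphism with no further effort. The only real content is in degree $k=3$, where the sequence reduces to
$$0\to H^{3}(M,\Z)\to H^{3}(M\smallsetminus\Delta,\Z)\to H^{0}(\Delta,\Z)\xrightarrow{\ \gamma\ }H^{4}(M,\Z),$$
and $\gamma$ sends the generator $1\in H^{0}(\Delta,\Z)=\Z$ to the class $[\Delta]$ of the diagonal. Thus the restriction is an isomorphism in degree $3$ if and only if $\gamma$ is injective.

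The heart of the matter, and the step I expect to be the only nontrivial one, is therefore to check that $[\Delta]$ has infinite order in $H^{4}(A\times A,\Z)$. Because $A$ has torsion-free cohomology, so does $A\times A$ by Künneth, and it is enough to see that $[\Delta]\ne 0$. This is immediate from the Künneth expression of the diagonal class: in a homogeneous basis with Poincaré-dual basis it reads $[\Delta]=\sum_{i}(-1)^{\deg e_{i}}\,e_{i}\otimes e_{i}^{*}$, and it has for instance the nonzero Künneth component $x\otimes 1\in H^{4}(A,\Z)\otimes H^{0}(A,\Z)$ coming from the top generator $x$ of Notation \ref{TorusClasses}. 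Hence $\gamma$ is injective, the map $H^{3}(M\smallsetminus\Delta,\Z)\to H^{0}(\Delta,\Z)$ vanishes, and the restriction is an isomorphism in degree $3$ as well, which finishes the proof.
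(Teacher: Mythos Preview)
Your proof is correct and follows essentially the same route as the paper: both identify $V$ with $(A\times A)\smallsetminus\Delta$, use the long exact sequence of the pair together with the Thom isomorphism to reduce to showing $[\Delta]\neq 0$ in $H^4(A\times A,\Z)$, and verify this via its interaction with the K\"unneth component $x\otimes 1$. The only cosmetic difference is that you read off $[\Delta]\neq 0$ from the K\"unneth expansion of the diagonal class, while the paper phrases it as the intersection number $[\Delta]\cdot(x\otimes 1)=1$.
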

\begin{proof}
We have $V\cong A\times A\smallsetminus \Delta$,
so we get the following natural exact sequence:
$$\xymatrix{ \cdots\ar[r]&H^{k}(A\times A,V,\Z)\ar[r] & H^{k}(A\times A,\Z)\ar[r] & H^{k}(V,\Z)\ar[r]& \cdots}$$
Moreover, by Thom isomorphism $H^{k}(A\times A,V,\Z)=H^{k-4}(\Delta,\Z)=H^{k-4}(A,\Z)$.
Hence $H^{k}(A\times A,V,\Z)=0$ for all $k\leq 3$.
Hence $H^{k}(A\times A,\Z)=H^{k}(V,\Z)$
for all $k\leq 2$. It remains to consider the following exact sequence:
$$\xymatrix{ 0\ar[r]&H^{3}(A\times A,\Z)\ar[r]&H^{3}(V,\Z)\ar[r] & H^{4}(A\times A,V,\Z)\ar[r]^{\rho} & H^{4}(A\times A,\Z)}.$$
The map $\rho$ is given by $\Z \left[\Delta\right] \rightarrow H^{4}(A\times A,\Z)$.
Using Notation~\ref{TorusClasses}, the class $x\otimes 1$ is also in $H^{4}(A\times A,\Z)$ and intersects $\Delta$ in one point.
Hence the class of $\Delta$ in $H^{4}(A\times A,\Z)$ is not trivial and the map $\rho$ is injective.
It follows that 
\begin{equation*}
H^{3}(A\times A,\Z)=H^{3}(V,\Z).
\qedhere
\end{equation*}
\end{proof}
Now we will calculate the invariant $l_{1,-}^{2}(A\times A)$ and $l_{1,+}^{1}(A\times A)$ from Definition-Proposition~\ref{defiprop}.

\begin{lemma}\label{2}
We have: $l_{1,+}^{1}(A\times A)=0$ and $l_{1,-}^{2}(A\times A)=b_1(A)$.
\end{lemma}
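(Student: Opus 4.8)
The plan is to read off both $\Z[G]$-modules directly from the Künneth decomposition, where $G=\langle\sigma_2\rangle$. Since $H^{*}(A,\Z)$ is torsion free, the Künneth theorem gives an isomorphism $H^{*}(A\times A,\Z)\cong H^{*}(A,\Z)\otimes H^{*}(A,\Z)$ of graded rings, and $\sigma_2$ acts by the graded transposition $\alpha\otimes\beta\mapsto(-1)^{|\alpha||\beta|}\beta\otimes\alpha$. I would then isolate degrees $1$ and $2$ and match each block against the list $(\mathcal{O}_{K},a_i)$, $\mathcal{O}_{K}$, $\Z$ of Definition-Proposition \ref{defiprop}, keeping track only of how many sign summands $\mathcal{O}_{K}$ (counted by $s$) and trivial summands $\Z$ (counted by $t$) occur.

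For degree $1$ we have $H^{1}(A\times A,\Z)=\big(H^{1}(A)\otimes H^{0}\big)\oplus\big(H^{0}\otimes H^{1}(A)\big)$. One factor always has degree $0$, so no sign intervenes and $\sigma_2^{*}$ merely exchanges the two copies of $H^{1}(A)$. Using the basis $a_i$, each rank-two block $\langle a_i\otimes 1,\,1\otimes a_i\rangle$ carries the plain transposition; such a $\Z[G]$-module is isomorphic to $(\mathcal{O}_{K},1)$, since its invariant and anti-invariant vectors span an index-two sublattice and a change of basis reproduces the action $\sigma_2^{*}(y,k)=(-y+k,k)$. Thus $H^{1}(A\times A,\Z)\cong(\mathcal{O}_{K},1)^{\oplus b_1(A)}$, so $s=t=0$ and in particular $l_{1,+}^{1}(A\times A)=0$.

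For degree $2$, $H^{2}(A\times A,\Z)=\big(H^{2}\otimes H^{0}\big)\oplus\big(H^{1}\otimes H^{1}\big)\oplus\big(H^{0}\otimes H^{2}\big)$. The outer two summands again exchange without sign, producing $b_2(A)$ mixed blocks of type $(\mathcal{O}_{K},1)$ and no sign or trivial summands. The middle factor is where the Koszul sign is decisive: for degree-one classes $\sigma_2^{*}(a_i\otimes a_j)=-\,a_j\otimes a_i$. Splitting $H^{1}(A)\otimes H^{1}(A)$ as the $\sigma_2^{*}$-stable direct sum of the diagonal lines $\Z\langle a_i\otimes a_i\rangle$ and the off-diagonal planes $\langle a_i\otimes a_j,\,a_j\otimes a_i\rangle$ with $i<j$, I would observe that each diagonal class satisfies $\sigma_2^{*}(a_i\otimes a_i)=-a_i\otimes a_i$ and hence spans a copy of $\mathcal{O}_{K}$, whereas each off-diagonal plane is again a mixed block of type $(\mathcal{O}_{K},1)$ contributing only to $r$. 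Since the diagonal classes are the sole source of sign summands, this gives $s=b_1(A)$, that is $l_{1,-}^{2}(A\times A)=b_1(A)$.

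The one genuinely delicate point — the main obstacle — is performing the bookkeeping integrally rather than rationally. Over $\Q$ each mixed rank-two block splits as (sign)$\,\oplus\,$(trivial), which would wrongly inflate both $s$ and $t$; integrally the invariant and anti-invariant vectors span only an index-two sublattice, so such a block is $(\mathcal{O}_{K},1)$ and contributes to $r$ alone. Getting the sign rule right — so that the diagonal of $H^{1}\otimes H^{1}$ is \emph{anti-invariant} and therefore yields honest $\mathcal{O}_{K}$ summands — together with this integral splitting is exactly what forces the two counts to be $b_1(A)$ and $0$, as claimed.
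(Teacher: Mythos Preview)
Your proof is correct and follows essentially the same approach as the paper: both use the K\"unneth decomposition, identify the swap-without-sign blocks as contributing to $l_2$, and recognize the diagonal classes $a_i\otimes a_i$ in $H^1\otimes H^1$ as the only source of pure sign summands $\mathcal{O}_K$. Your version is somewhat more explicit about the integral bookkeeping (verifying that each rank-two exchange block really is $(\mathcal{O}_K,1)$ rather than $\mathcal{O}_K\oplus\Z$), which the paper leaves implicit, but the argument is the same.
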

\begin{proof}
By Künneth formula we have:
$$H^{1}(A\times A,\Z)=H^{0}(A,\Z)\otimes H^{1}(A,\Z)\oplus H^{1}(A,\Z)\otimes H^{0}(A,\Z).$$
The elements of $H^{0}(A,\Z)\otimes H^{1}(A,\Z)$ and $H^{1}(A,\Z)\otimes H^{0}(A,\Z)$ are exchanged under the action of $\sigma_2$. It follows that $l_{2}^{1}(A\times A)=b_1(A)$ and necessary $l_{1,-}^{1}(A\times A)=l_{1,+}^{1}(A\times A)=0$.
Using Künneth again, we get: 
\begin{align*}
H^{2}(A\times A,\Z)&=H^{0}(A,\Z)\otimes H^{2}(A,\Z)\oplus H^{1}(A,\Z)\otimes H^{1}(A,\Z)\\
&\oplus H^{2}(A,\Z)\otimes H^{0}(A,\Z).
\end{align*}
As before, the elements of $H^{0}(A,\Z)\otimes H^{2}(A,\Z)$ and $H^{2}(A,\Z)\otimes H^{0}(A,\Z)$ are exchanged under the action of $\sigma_2$.
Futhermore, the elements $z\otimes y\in H^{1}(A,\Z)\otimes H^{1}(A,\Z)$ are sent to $-y\otimes z$ by the action of $\sigma_2$. Such an element is anti-invariant by the action of $\sigma_2$ if $z=y$. It follows:
$$l_{2}^{2}(A\times A)=b_2(A)+\frac{b_1(A)(b_1(A)-1)}{2},$$
$$l_{1,-}^{2}(A\times A)=b_1(A),$$
and thus:
\begin{equation*}
l_{1,+}^{2}(A\times A)=0.
\qedhere
\end{equation*}
\end{proof}
\begin{lemma}\label{3}
The torsion part of the group $H^{3}(U,\Z)$ is isomorphic to $(\Z/2\Z)^{\oplus b_1(A)}$.
\end{lemma}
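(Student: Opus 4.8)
The plan is to exploit that $\mathfrak{S}_2=\langle\sigma_2\rangle$ acts \emph{freely} on $V=\Bl_\Delta(A\!\times\!A)\smallsetminus E\cong A\times A\smallsetminus\Delta$, the diagonal having been removed, so that $U=V/\mathfrak{S}_2$ is an honest quotient and its integral cohomology is the equivariant cohomology $H^*_{\mathfrak{S}_2}(V,\Z)$. I would compute this through the Cartan--Leray spectral sequence
\[
E_2^{p,q}=H^p\big(\mathfrak{S}_2,H^q(V,\Z)\big)\;\Longrightarrow\;H^{p+q}(U,\Z),
\]
whose input in the range $q\le 3$ is supplied by Lemma \ref{1}, namely $H^q(V,\Z)=H^q(A\times A,\Z)$, together with the $\Z[\mathfrak{S}_2]$-module decomposition of Definition-Proposition \ref{defiprop}.

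The key computation is the group cohomology of the three building blocks. The trivial module $\Z$ has $\hat H^{\mathrm{even}}(\mathfrak{S}_2,\Z)=\Z/2\Z$ and vanishing odd part; the module $\mathcal{O}_K$ has $\hat H^{\mathrm{odd}}(\mathfrak{S}_2,\mathcal{O}_K)=\Z/2\Z$ and vanishing even part; and, crucially, each $(\mathcal{O}_K,a_i)$ with $a_i$ odd is cohomologically trivial, i.e. $\hat H^*(\mathfrak{S}_2,(\mathcal{O}_K,a_i))=0$ (for $a_i=1$ it is the free module $\Z[\mathfrak{S}_2]$). Feeding in Lemma \ref{2}, which gives $l_{1,+}^1(A\times A)=0$ and $l_{1,-}^2(A\times A)=b_1(A)$, I find that along the antidiagonal $p+q=3$ the only entry carrying torsion is $E_2^{1,2}=\hat H^1\big(\mathfrak{S}_2,H^2(V,\Z)\big)$, and it equals $(\Z/2\Z)^{\oplus b_1(A)}$, coming precisely from the $\mathcal{O}_K^{\oplus b_1(A)}$ summand. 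The entries $(2,1)$ and $(3,0)$ vanish because $H^1(V,\Z)$ is cohomologically trivial and $\hat H^{\mathrm{odd}}(\mathfrak{S}_2,\Z)=0$, while $(0,3)$ is free.

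It then remains to show that this $(\Z/2\Z)^{\oplus b_1(A)}$ survives to $E_\infty$ and accounts for the full torsion of $H^3(U,\Z)$. No differential enters $E^{1,2}$ (the incoming columns are negative), and the differential $d_2$ leaving it lands in $E_2^{3,1}=\hat H^3(\mathfrak{S}_2,H^1(V,\Z))=0$; since the free part $E_\infty^{0,3}$ splits off the top of the filtration, the torsion of $H^3(U,\Z)$ is exactly $\ker\big(d_3\colon E_3^{1,2}\to E_3^{4,0}\big)$, where $E_3^{4,0}=\hat H^4(\mathfrak{S}_2,\Z)=\Z/2\Z$. The main obstacle is therefore to prove that this single $d_3$ vanishes. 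I would do so by identifying $E_\infty^{4,0}$ with the image of the edge homomorphism $H^4(B\mathfrak{S}_2,\Z)\to H^4(U,\Z)$ and showing that the generator $u^2$ is nonzero there, where $u$ is the integral generator of $H^2(B\mathfrak{S}_2,\Z)$, the Bockstein of the mod-$2$ class $w\in H^1(U,\F)$ classifying the double cover $V\to U$; equivalently $w^4\neq 0$ in $H^4(U,\F)$, which I would verify via the difference map $U\to\mathbb{RP}^\infty$ realizing $w$. A nonzero $d_3$ would kill $E_\infty^{4,0}$, contradicting this, so $d_3=0$ and $\mathrm{Tors}\,H^3(U,\Z)\cong(\Z/2\Z)^{\oplus b_1(A)}$.

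Alternatively, and more in the spirit of the references, the entire survival-and-extension bookkeeping is packaged by the quotient-cohomology machinery of \cite{Lol}: granting that input, the proof reduces to plugging the values $l_{1,+}^1(A\times A)=0$ and $l_{1,-}^2(A\times A)=b_1(A)$ from Lemma \ref{2} into the relevant torsion formula and reading off $(\Z/2\Z)^{\oplus b_1(A)}$ directly, which is the route I expect to be cleanest to write down.
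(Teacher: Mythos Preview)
Your approach is correct and coincides with the paper's: the proof given there is exactly your final paragraph, invoking Proposition~3.2.5 of~\cite{Lol} (which packages the Cartan--Leray/equivariant-cohomology spectral-sequence computation) together with Lemmas~\ref{1} and~\ref{2}. Your explicit $E_2$-page analysis is also on target; the vanishing of $d_3$, which you correctly isolate as the main obstacle, is precisely what the cited reference absorbs.
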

\begin{proof}
Using the spectral sequence of equivariant cohomology, it follows from Proposition 3.2.5 of~\cite{Lol}, Lemma~\ref{1} and~\ref{2}.
\end{proof}
\subsection{Third cohomology group}\label{=}
By Theorem 7.31 of~\cite{Voisin}, we have:
\begin{equation}
H^{3}(\Bl_\Delta(A\! \times\! A),\Z)=b^{*}(H^{3}(A\times A,\Z))\oplus j_*b_{|E}^{*}(H^{1}(\Delta,\Z)).
\label{voisin1}
\end{equation}
It follows that $$H^{3}(A^{[2]},\Z)\supset \pi_{*}b^{*}(H^{3}(A\times A,\Z))\oplus \pi_{*}j_*b_{|E}^{*}(H^{1}(\Delta,\Z)).$$
We want to find an equality. We will proceed as follows: We first prove that $\pi_{*}b^{*}(H^{3}(A\times A,\Z))$ is primitive. Then, in Lemma~\ref{primitive3}, we show that all the elements of $\pi_{*}j_*b_{|E}^{*}(H^{1}(\Delta,\Z))$ are divisible by 2 and finally we remark that this implies that the direct sum $\pi_*b^{*}(H^{3}(A\times A,\Z))\oplus \frac{1}{2}\pi_*j_*b_{|E}^{*}(H^{1}(\Delta,\Z))$ is primitive.  

It follows from the K\"unneth formula that
all elements in $H^{3}(A\times A,\Z)^{\mathfrak{S}_2}$ are written as $y+\sigma_{2}^{*}(y)$ with $y\in H^{3}(A\times A,\Z)$.
Since $\frac{1}{2}\pi_{*}(y+\sigma_{2}^{*}(y))=\pi_{*}(y)$, it follows that $\pi_{*}(b^*(H^{3}(A\times A,\Z)))$ is primitive in $H^{3}(A^{[2]},\Z)$.
Moreover by (\ref{voisin1}), we have the following values which will be used in Section~\ref{d5}:
$$l_2^3(\Bl_\Delta(A\! \times\! A))=\rk H^{3}(A\times A,\Z)^{\mathfrak{S}_2}=b_1(A)(b_2(A)+1).$$
and
\begin{equation}
l_{1,+}^3(\Bl_\Delta(A\! \times\! A))=\rk H^{1}(\Delta,\Z)^{\mathfrak{S}_2}=b_1(A),\ \text{ and }\ l_{1,-}^3(\Bl_\Delta(A\! \times\! A))=0.
\label{l1}
\end{equation}
\begin{lemma}\label{primitive3}
All the elements of the group $\pi_{*}j_*b_{|E}^{*}(H^{1}(\Delta,\Z))$ are divisible by 2 in $H^{3}(A^{[2]},\Z)$.
\end{lemma}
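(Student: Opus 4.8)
The plan is to identify the group $\pi_{*}j_*b_{|E}^{*}(H^{1}(\Delta,\Z))$ with the image of a single Gysin map into $H^3(A\hilb{2},\Z)$, and then to read off the $2$-divisibility directly from the torsion computation of Lemma~\ref{3}. First I would analyse $\pi$ along $E$. Since $E=\mathbb{P}(N_{\Delta/A\times A})$ and $\sigma_2$ swaps the two factors, it acts on the rank-$2$ normal bundle $N_{\Delta/A\times A}$ by $-\id$, hence trivially on its projectivization; so $\sigma_2$ fixes $E$ pointwise, $E$ is the ramification locus of the double cover $\pi$, and $\bar\pi:=\pi_{|E}\colon E\to E':=\pi(E)$ is an isomorphism onto the exceptional divisor $E'\subset A\hilb{2}$ (so that $U=A\hilb{2}\smallsetminus E'$). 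Writing $j'\colon E'\hookrightarrow A\hilb{2}$ for the inclusion and $b'\colon E'\to A$ for the induced $\mathbb{P}^1$-bundle projection, the identities $\pi\circ j=j'\circ\bar\pi$ and $b_{|E}=b'\circ\bar\pi$ together with $\bar\pi_*\bar\pi^{*}=\id$ give
$$\pi_{*}j_*b_{|E}^{*}(\omega)=j'_*\,\bar\pi_*b_{|E}^{*}(\omega)=j'_*(b')^{*}(\omega''),$$
where $\omega''\in H^1(A,\Z)$ corresponds to $\omega$. As $b'$ is a $\mathbb{P}^1$-bundle, $(b')^{*}\colon H^1(A,\Z)\to H^1(E',\Z)$ is an isomorphism, so the group in the statement equals $\im\big(j'_*\colon H^{1}(E',\Z)\to H^{3}(A\hilb{2},\Z)\big)$.

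Next I would run the localization sequence of the pair $(A\hilb{2},U)$. Using the Thom isomorphism $H^{k}(A\hilb{2},U,\Z)\cong H^{k-2}(E',\Z)$ (real codimension $2$), the relevant portion reads
$$H^{1}(E',\Z)\xrightarrow{\,j'_*\,}H^{3}(A\hilb{2},\Z)\longrightarrow H^{3}(U,\Z)\xrightarrow{\;\partial\;}H^{2}(E',\Z)\xrightarrow{\,j'_*\,}H^{4}(A\hilb{2},\Z).$$
Since $H^{*}(A\hilb{2},\Z)$ is torsion free by \cite[Theorem~2.2]{Totaro}, and $E'$ is a $\mathbb{P}^1$-bundle over $A$ so that $H^{*}(E',\Z)$ is torsion free by Leray--Hirsch, the subgroup $\Ker\big(j'_*\colon H^{2}(E',\Z)\to H^{4}\big)$ is torsion free. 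From the short exact sequence $0\to\coker(j'_*)\to H^{3}(U,\Z)\to \Ker(j'_*\colon H^2\to H^4)\to0$ with torsion-free right-hand term, I conclude that $H^{3}(U,\Z)_{\tors}$ is isomorphic to the torsion of $\coker\big(j'_*\colon H^{1}(E',\Z)\to H^{3}(A\hilb{2},\Z)\big)$.

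Finally I would insert Lemma~\ref{3}, which gives $H^{3}(U,\Z)_{\tors}\cong(\Z/2\Z)^{\oplus b_1(A)}$. Thus the cokernel of $j'_*$ has exactly $b_1(A)$ nontrivial invariant factors, each equal to $2$. The number of such factors is bounded above by $\rk\big(\im j'_*\big)\le\rk H^{1}(E',\Z)=b_1(A)$; equality forces $j'_*$ to be injective \emph{and} all its invariant factors to equal $2$. In a Smith basis this means $\im(j'_*)=\langle 2g_1,\dots,2g_{b_1(A)}\rangle$ for primitive $g_i$, so every element of $\pi_{*}j_*b_{|E}^{*}(H^{1}(\Delta,\Z))=\im(j'_*)$ is divisible by $2$ in $H^{3}(A\hilb{2},\Z)$.

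The main obstacle is this last elementary-divisor step: one must argue that the purely qualitative input \textemdash{} that the cokernel has torsion $(\Z/2\Z)^{\oplus b_1(A)}$, combined with the rank equality $\rk H^{1}(E',\Z)=b_1(A)$ \textemdash{} already pins every invariant factor to be exactly $2$ (and forces $j'_*$ injective). This is precisely what upgrades the weak statement ``the quotient carries $2$-torsion'' to the strong statement ``the entire image is $2$-divisible''. The supporting points are the geometric identification of the group as $\im(j'_*)$ in the first step and the correct Thom-isomorphism bookkeeping in the Gysin sequence, both of which are routine once the fixed-locus structure of $E$ under $\sigma_2$ is understood.
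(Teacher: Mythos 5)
Your proof is correct and follows essentially the same route as the paper: the same localization sequence for the pair $(A\hilb{2},U)$, the same use of Lemma~\ref{3} to identify the torsion of the cokernel of the Gysin map, and the same rank-versus-number-of-invariant-factors argument to force $2$-divisibility of the whole image. The only (cosmetic) difference is that you identify $\pi_{*}j_*b_{|E}^{*}(H^1(\Delta,\Z))$ with the image of the Gysin map directly, via the observation that $\pi_{|E}$ is an isomorphism onto $\pi(E)$, whereas the paper reaches the same identification by comparing the two Thom/Gysin sequences through the commutative diagram with $\Bl_\Delta(A\times A)$ and citing the proof of Theorem~7.31 of \cite{Voisin}.
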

\begin{proof}
We consider the following commutative diagram:
\begin{equation}
\xymatrix@C=10pt{
\ar[d]^{d\pi^{*}} H^{3}(\mathscr{N}_{A^{[2]}/\pi(E)},\mathscr{N}_{A^{[2]}/\pi(E)}\smallsetminus0,\Z)=H^{3}(A^{[2]},U,\Z)
\ar[r]^-{g}&
H^{3}(A^{[2]},\Z)\ar[d]_{\pi^{*}} \\
H^{3}(\mathscr{N}_{\Bl_\Delta(A\! \times\! A)/E},\mathscr{N}_{\Bl_\Delta(A\! \times\! A)/E}\smallsetminus0,\Z)=H^{3}(\Bl_\Delta(A\! \times\! A),V,\Z)
\ar[r]^-{h}&H^{3}(\Bl_\Delta(A\! \times\! A),\Z),
}
\label{ThomII}
\end{equation}
where $\mathscr{N}_{A^{[2]}/E}$ and $\mathscr{N}_{\Bl_\Delta(A\! \times\! A)/E}$ are the normal bundles of $\pi(E)$ in $A^{[2]}$ and of $E$ in $\Bl_\Delta(A\! \times\! A)$, respectively.
By the proof of Theorem 7.31 of~\cite{Voisin}, the map $h$ is injective with image in $H^{3}(\Bl_\Delta(A\! \times\! A),\Z)$ given by $j_*b_{|E}^{*}(H^{1}(\Delta,\Z))$. Hence Diagram (\ref{ThomII}) shows that $g$ is also injective and has image $\pi_{*}j_*b_{|E}^{*}(H^{1}(\Delta,\Z))$ in $H^{3}(A^{[2]},\Z)$.
We obtain:
\begin{equation}
\xymatrix{ 0\ar[r]&H^{3}(A^{[2]},U,\Z)\ar[r]^g&H^{3}(A^{[2]},\Z)\ar[r]&H^{3}(U,\Z)}.
\label{exactutile}
\end{equation}
From Thom's isomorphism, we know that $H^{4}(A^{[2]},U,\Z)$ is torsion free, hence  $\tors \coker g= \tors H^{3}(U,\Z)$, 
where $\tors$ means the torsion part of the groups.
It follows,
by Lemma~\ref{3} that:
$$\tors \coker g=(\Z/2\Z)^{\oplus b_1(A)}.$$
Since $\rk\pi_{*}j_*b_{|E}^{*}(H^{1}(\Delta,\Z))=b_1(A)$ it follows that all the elements of $\pi_{*}j_*b_{|E}^{*}(H^{1}(\Delta,\Z))$ are divisible by 2 in $H^{3}(A^{[2]},\Z)$.
\end{proof}
Now it remains to prove that $\pi_{*}b^{*}(H^{3}(A\times A,\Z))\oplus \frac{1}{2}\pi_{*}j_*b_{|E}^{*}(H^{1}(\Delta,\Z))$ is primitive in $H^{3}(A^{[2]},\Z)$.
This comes from the fact that all elements in $\pi_{*}b^{*}(H^{3}(\Bl_\Delta(A\! \times\! A),\Z)^{\mathfrak{S}_2})$ are divisible by 2, so the relations (\ref{pietpi}) on $\pi_*$ and $\pi^*$ impose the above sum to be primitive.  

More detailed, let $y\in \pi_*b^{*}(H^{3}(A\times A,\Z))$ and $z\in \pi_*j_*b_{|E}^{*}(H^{1}(\Delta,\Z))$. It is enough to show that if $\frac{y+z}{2}\in H^{3}(A^{[2]},\Z)$, then $\frac{y}{2}\in H^{3}(A^{[2]},\Z)$ and $\frac{z}{2}\in H^{3}(A^{[2]},\Z)$.
As we have seen, we can write $y=\frac{1}{2}\pi_*(w+\sigma_{2}^{*}(w))$, with $w\in b^{*}(H^{3}(A\times A,\Z))$ and $z=\frac{1}{2}\pi_*(z')$, with $z'\in j_*b_{|E}^{*}(H^{1}(\Delta,\Z))$. If $$\frac{\frac{1}{2}\pi_*(w+\sigma_{2}^{*}(w))+\frac{1}{2}\pi_*(z')}{2}\in H^{3}(A^{[2]},\Z)$$ then taking the image by $\pi^{*}$ of this element, we obtain $$\frac{w+\sigma_{2}^{*}(w)+z'}{2}\in H^{3}(\Bl_\Delta(A\! \times\! A),\Z).$$ 
Hence from (\ref{voisin1}), necessarily $z'$ is divisible by 2. It follows that $z$ is divisible by 2 and so $y$.

This finishes the proof of (i) of Proposition~\ref{Alpha35}.
\subsection{The fifth cohomology group}\label{d5}
Now we prove (ii) of Proposition~\ref{Alpha35}.
We will need two basic properties from lattice theory that we recall here and can be found for example in
Chapter 8.2.1 of \cite{Dolgachev}.

Let $M$ be a lattice. Let $L\subset M$ be a sublattice of the same rank. 
Then 
\begin{equation}
|M\DP L|=\sqrt{\frac{\discr L}{\discr M}}.
\label{squareDiscr}
\end{equation}
We recall that the \emph{discriminant} $\discr L$ of a lattice $L$
is defined by the absolute value of the determinant of the matrix of its bilinear form. 

If $M$ is unimodular and $L\subset M$ is a primitive embedding, then 
\begin{equation}
\discr L = \discr L^\perp.
\label{discrOrthPrim}
\end{equation}

By Theorem 7.31 of~\cite{Voisin}, we have:
\begin{equation}
H^{5}(\Bl_\Delta(A\! \times\! A),\Z)=b^*(H^{5}(A\times A,\Z))\oplus j_*b_{|E}^{*}(H^{3}(\Delta,\Z)).
\label{voisin2}
\end{equation}
It follows that $$H^{5}(A^{[2]},\Z)\supset \pi_{*}(b^*(H^{5}(A\times A,\Z)))\oplus \pi_{*}j_*b_{|E}^{*}(H^{3}(\Delta,\Z)).$$
As before, by looking at the Künneth formula, $\pi_{*}(b^*(H^{5}(A\times A,\Z)))$ is primitive in $H^{5}(A^{[2]},\Z)$.
Moreover, by (\ref{voisin2}):
$$
l_2^5(\Bl_\Delta(A\! \times\! A))=\rk H^{5}(A\times A,\Z)^{\mathfrak{S}_2}=b_1(A)(b_2(A)+1),
$$
and 
\begin{equation}
l_{1,+}^5(\Bl_\Delta(A\! \times\! A))=\rk H^{3}(\Delta,\Z)^{\mathfrak{S}_2}=b_1(A),\ \text{ and }\ l_{1,-}^5(\Bl_\Delta(A\! \times\! A))=0.
\label{l2}
\end{equation}
\begin{lemma}
The lattice $\pi_{*}(H^{3}(\Bl_\Delta(A\! \times\! A),\Z)\oplus H^{5}(\Bl_\Delta(A\! \times\! A),\Z))$ has discriminant $2^{2b_1(A)}$.
\end{lemma}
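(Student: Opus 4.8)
The pairing making $H^{3}(\Bl_\Delta(A\times A),\Z)\oplus H^{5}(\Bl_\Delta(A\times A),\Z)$ a lattice is the cup-product pairing into $H^{8}\cong\Z$, which pairs $H^{3}$ with $H^{5}$; since $\Bl_\Delta(A\times A)$ is compact with torsion-free cohomology this pairing is unimodular, and for the same reason the pairing on $M:=H^{3}(A^{[2]},\Z)\oplus H^{5}(A^{[2]},\Z)$ is unimodular. Write $N:=\pi_*\big(H^{3}(\Bl_\Delta(A\times A),\Z)\oplus H^{5}(\Bl_\Delta(A\times A),\Z)\big)\subseteq M$. Because $\pi_*\circ\pi^*=2\,\id$ by (\ref{pietpi}), $N$ has finite index in $M$, so by (\ref{squareDiscr}) and unimodularity of $M$ one has $\discr N=[M:N]^{2}$, with the index factoring by degree as $[M:N]=[H^{3}(A^{[2]},\Z):\pi_*H^{3}(\Bl_\Delta(A\times A))]\cdot[H^{5}(A^{[2]},\Z):\pi_*H^{5}(\Bl_\Delta(A\times A))]$. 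Thus the plan is to show this product equals $2^{b_1(A)}$.

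\textbf{The degree-$3$ factor and the Gram determinant.} The degree-$3$ factor is exactly $2^{b_1(A)}$: Proposition \ref{Alpha35}(i) gives $H^{3}(A^{[2]},\Z)=\pi_*b^*H^{3}(A\times A,\Z)\oplus\tfrac12\pi_*j_*b_{|E}^{*}H^{1}(\Delta,\Z)$, whereas (\ref{voisin1}) gives $\pi_*H^{3}(\Bl_\Delta(A\times A))=\pi_*b^*H^{3}(A\times A,\Z)\oplus\pi_*j_*b_{|E}^{*}H^{1}(\Delta,\Z)$; the two agree except that the rank-$b_1(A)$ summand $\pi_*j_*b_{|E}^{*}H^{1}(\Delta,\Z)$ sits at index $2^{b_1(A)}$ in its half. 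For the degree-$5$ factor I may not invoke Proposition \ref{Alpha35}(ii), which this lemma is meant to feed, so instead I compute $\discr N=(\det B)^{2}$ intrinsically, where $B$ is the Gram matrix of $N$. Using the adjunction $\langle\pi_*x,\pi_*y\rangle=\langle x,(\id+\iota^*)y\rangle$ and bases adapted to the $\Z[G]$-module decomposition of Definition-Proposition \ref{defiprop}, I read off $B$: since $l_{1,-}^{5}(\Bl_\Delta(A\times A))=0$ and $l_{1,+}^{5}(\Bl_\Delta(A\times A))=b_1(A)$ by (\ref{l2}), the kernel of $\pi_*$ on $H^{5}$ is spanned by the anti-invariant generators of the $(\mathcal{O}_{K},a_i)$-summands, and the surviving generators contribute the columns $(\id+\iota^*)\tilde e_2=2\tilde e_2+a_i\tilde e_1$ and $(\id+\iota^*)\tilde g=2\tilde g$.

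\textbf{Block-triangularization and conclusion.} I would augment $\id+\iota^*$ on $H^{5}$ to an invertible endomorphism acting as the identity on its kernel, of determinant $2^{b_1(A)}$ away from the anti-invariant block; multiplying by the unimodular Poincaré Gram matrix $P$, the $\iota$-invariance of the pairing forces the mixed entries $2\langle e_1,\tilde e_2\rangle+a_i\langle e_1,\tilde e_1\rangle$ to vanish, so the product is block-triangular and yields $\det Q'\cdot\det B=\pm\,2^{b_1(A)}$, where $Q'$ is half the Poincaré Gram matrix of the anti-invariant parts $S^{3}_{\iota}\times S^{5}_{\iota}$ (integral, and nondegenerate since these parts pair nondegenerately over $\Q$). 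Since $\discr N=(\det B)^{2}=[M:N]^{2}$, the degree-$3$ factor already forces $|\det B|\ge 2^{b_1(A)}$, while $\det Q'\cdot\det B=\pm\,2^{b_1(A)}$ with $|\det Q'|\ge 1$ forces $|\det B|\le 2^{b_1(A)}$; hence $|\det B|=2^{b_1(A)}$ and $\discr N=2^{2b_1(A)}$ (and, as a byproduct, $|\det Q'|=1$ and the degree-$5$ index is $1$, which will yield Proposition \ref{Alpha35}(ii)). The main obstacle is this block-triangularization: it depends on combining the explicit $\iota^*$-action on the $(\mathcal{O}_{K},a_i)$-summands from Definition-Proposition \ref{defiprop} and Proposition \ref{sarti} with the unimodularity and $\iota$-invariance of the Poincaré pairing, and on keeping the whole argument logically independent of Proposition \ref{Alpha35}(ii).
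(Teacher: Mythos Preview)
Your approach differs from the paper's and the overall strategy is sound, but there are two concrete issues. First, the determinant of your augmented endomorphism is $2^{l_2^5+l_{1,+}^5}$, not $2^{b_1(A)}$: each $(\mathcal{O}_K,a_i)$-summand of $H^5$ contributes a $2\times 2$ block $\left(\begin{smallmatrix}1 & a_i\\0 & 2\end{smallmatrix}\right)$ of determinant $2$ (and there are $l_2^5$ such summands), and each trivial $\Z$-summand contributes another factor $2$ (there are $l_{1,+}^5=b_1(A)$ of these). Your target equation $|\det Q'|\cdot|\det B|=2^{b_1(A)}$ is nevertheless correct, because $Q$ has size $l_2^3=l_2^5$ and $\det Q=2^{l_2^5}\det Q'$, so the extra $2^{l_2^5}$ cancels. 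Second, and more importantly, the integrality of $Q'=\tfrac12 Q$ is asserted without justification. It does hold: for $e_1=(1,0)$, $e_2=(0,1)$ in an $(\mathcal{O}_K,a)$-summand of $H^3$ and any $\tilde e_1\in S^5_\iota$, the same $\iota$-invariance computation gives $2\langle e_2,\tilde e_1\rangle=-a\langle e_1,\tilde e_1\rangle$, and since $a$ is odd this forces $\langle e_1,\tilde e_1\rangle$ to be even; because $l_{1,-}^3=0$ by (\ref{l1}), these $e_1$'s span $S^3_\iota$. Without this step your bound $|\det Q'|\ge 1$ is unjustified and the squeeze argument collapses.

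For comparison, the paper avoids both issues by working through the $\mathfrak S_2$-invariant sublattice rather than directly with the Gram matrix of $\pi_*$. Proposition~\ref{sarti}(ii) together with unimodularity and (\ref{discrOrthPrim}) gives $\discr\big((H^3(\Bl))^{\mathfrak S_2}\oplus(H^5(\Bl))^{\mathfrak S_2}\big)=2^{l_2^3+l_2^5}$; Proposition~\ref{commut} multiplies this by $2^{\mathrm{rank}}$ under $\pi_*$; and Remark~\ref{x+ix} (each $(\mathcal{O}_K,a_i)^\iota$ pushes to $2\pi_*(e_2)$) gives the index of $\pi_*(\text{invariant})$ in $\pi_*(\text{full})$ as $2^{l_2^3+l_2^5}$. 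That route is self-contained and never invokes Proposition~\ref{Alpha35}(i), whereas yours needs it for the lower bound on $|\det B|$; on the other hand, your argument directly delivers Proposition~\ref{Alpha35}(ii) (the degree-$5$ index is $1$) as a byproduct, which the paper only deduces afterwards from this lemma.
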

\begin{proof}
By Proposition~\ref{sarti} (ii):
\footnotesize
\begin{gather*}
\frac{H^{3}(\Bl_\Delta(A\! \times\! A),\Z)\oplus H^{5}(\Bl_\Delta(A\! \times\! A),\Z)}{H^{3}(\Bl_\Delta(A\! \times\! A),\Z)^{\mathfrak{S}_2}\oplus H^{5}(\Bl_\Delta(A\! \times\! A),\Z)^{\mathfrak{S}_2}\oplus \left(H^{3}(\Bl_\Delta(A\! \times\! A),\Z)^{\mathfrak{S}_2}\oplus H^{5}(\Bl_\Delta(A\! \times\! A),\Z)^{\mathfrak{S}_2}\right)^\bot}\hspace{30pt}
\\\hspace{92pt}= \left(\Z/2\Z\right)^{\oplus\left(l_2^3(\Bl_\Delta(A\! \times\! A))+l_2^5(\Bl_\Delta(A\! \times\! A))\right)}.
\end{gather*}
\normalsize
Since $H^{3}(\Bl_\Delta(A\! \times\! A),\Z)\oplus H^{5}(\Bl_\Delta(A\! \times\! A),\Z)$ is a unimodular lattice, 
it follows from (\ref{discrOrthPrim}) and (\ref{squareDiscr}) that 
$$\discr \left[H^{3}(\Bl_\Delta(A\! \times\! A),\Z)^{\mathfrak{S}_2}\oplus H^{5}(\Bl_\Delta(A\! \times\! A),\Z)^{\mathfrak{S}_2}\right]=2^{l_2^3(\Bl_\Delta(A\! \times\! A))+l_2^5(\Bl_\Delta(A\! \times\! A))}.$$
Then by Proposition~\ref{commut},
\begin{gather*}
\discr \pi_{*}(H^{3}(\Bl_\Delta(A\! \times\! A),\Z)^{\mathfrak{S}_2}\oplus H^{5}(\Bl_\Delta(A\! \times\! A),\Z)^{\mathfrak{S}_2})\hspace{55pt}\\
\hspace{62pt}=2^{l_2^3(\Bl_\Delta(A\! \times\! A))+l_2^5(\Bl_\Delta(A\! \times\! A))+\rk \left[H^{3}(\Bl_\Delta(A\! \times\! A),\Z)^{\mathfrak{S}_2}\oplus H^{5}(\Bl_\Delta(A\! \times\! A),\Z)^{\mathfrak{S}_2}\right]}.
\end{gather*}
Then by Proposition~\ref{sarti} (i):
\begin{equation}\label{odddiscr}
\begin{array}{rl}
\discr \pi_{*}(H^{3}(\Bl_\Delta(A\! \times\! A),\Z)^{\mathfrak{S}_2}\!\!\!\!&\oplus\; H^{5}(\Bl_\Delta(A\! \times\! A),\Z)^{\mathfrak{S}_2})\hspace{55pt}\\
\hspace{70pt} =\!\! & 2^{2\left( l_2^3(\Bl_\Delta(A\! \times\! A))+ l_2^5(\Bl_\Delta(A\! \times\! A))\right)+l_{1,+}^3(\Bl_\Delta(A\! \times\! A))+l_{1,+}^5(\Bl_\Delta(A\! \times\! A))}.
\end{array}
\end{equation}
By Remark~\ref{x+ix} and since $\pi_*(x+\iota^*(x))=2\pi_*(x)$, we have:
$$\frac{\pi_*(H^3(\Bl_\Delta(A\! \times\! A),\Z)\oplus H^5(\Bl_\Delta(A\! \times\! A),\Z))}{\pi_*(H^3(\Bl_\Delta(A\! \times\! A),\Z)^{\mathfrak{S}_2}\oplus H^5(\Bl_\Delta(A\! \times\! A),\Z)^{\mathfrak{S}_2})}=\left(\frac{\Z}{2\Z}\right)^{\oplus \left(l_2^3(\Bl_\Delta(A\! \times\! A))+l_2^5(\Bl_\Delta(A\! \times\! A))\right)}.$$
Then by (\ref{odddiscr}), (\ref{squareDiscr}), (\ref{l1}) and (\ref{l2}):
\begin{equation*}
\discr \pi_{*}(H^{3}(\Bl_\Delta(A\! \times\! A),\Z)\oplus H^{5}(\Bl_\Delta(A\! \times\! A),\Z))=2^{l_{1,+}^3(\Bl_\Delta(A\! \times\! A))+l_{1,+}^5(\Bl_\Delta(A\! \times\! A))}=2^{2b_1(A)}.
\qedhere
\end{equation*}
\end{proof}
The lattice $H^{3}(A^{[2]},\Z)\oplus H^{5}(A^{[2]},\Z)$ is unimodular. Hence by (\ref{squareDiscr}):
$$\frac{H^{3}(A^{[2]},\Z)\oplus H^{5}(A^{[2]},\Z)}{\pi_{*}(H^{3}(\Bl_\Delta(A\! \times\! A),\Z)\oplus H^{5}(\Bl_\Delta(A\! \times\! A),\Z))}=\left(\frac{\Z}{2\Z}\right)^{\oplus b_1(A)}.$$
However, from the last section, we know that $\frac{H^{3}(A^{[2]},\Z)}{\pi_{*}(H^{3}(\Bl_\Delta(A\! \times\! A),\Z))}=(\Z/2\Z)^{\oplus b_1(A)}$.
It follows that
$$
\frac{H^{5}(A^{[2]},\Z)}{\pi_{*}(H^{5}(\Bl_\Delta(A\! \times\! A),\Z))}={0},
$$
which proves (ii) of Proposition \ref{Alpha35}.

\section[Nakajima operators for Hilbert schemes of points on surfaces]{Nakajima operators for Hilbert schemes of points on surfaces%
\sectionmark{Nakajima operators}}
\sectionmark{Nakajima operators}
\label{Section_Hilbert}
Let $A$ be a smooth projective complex surface. 
Let $A\hilb{n}$ the Hilbert scheme of $n$ points on the surface, \ie the moduli space of finite subschemes of $A$ of length $n$.
$A\hilb{n}$ is again smooth and projective of dimension $2n$, cf.~\cite{Fogarty}. 
Their rational cohomology can be described in terms of Nakajima's~\cite{Nakajima} operators. First consider the direct sum
$$
\H \defIs  \bigoplus_{n=0}^{\infty} H^*(A\hilb{n},\Q).
$$
This space is bigraded by cohomological \emph{degree} and the \emph{weight}, which is given by the number of points $n$. The unit element in $H^0(A\hilb{0},\Q) \cong \Q$ is denoted by $\vac$, called the \emph{vacuum}.
\begin{defipro}
There are linear operators $\q_m(a)$, for each $m\geq 1$ and $a \in H^*(A,\Q)$, acting on $\H,$ which have the following properties: They depend linearly on $a$, and if $a\in H^k(A,\Q)$ is homogeneous, the operator $\q_{m}(a)$ is bihomogeneous of degree $k+2(m-1)$ and weight $m$:
$$
\q_{m}(a) : H^l(A\hilb{n}) \rightarrow H^{l+k+2(m-1)}(A\hilb{n+m}).
$$
To construct them, first define incidence varieties $\mathcal Z_m\subset A\hilb{n}\times A\times A\hilb{n+m}$ by
$$
\mathcal Z_m \defIs  \left\{(\xi,x,\xi')\, |\, \xi\subset\xi',\, \supp(\xi') -\supp(\xi) = mx \right\}.
$$
Then $\q_m(a)(\beta) $ is defined as the Poincar\'e dual of 
$$
\pr_{3*}\left( \left(\pr_2^*(\alpha)\cdot \pr_1^*(\beta)\right) \cap [\mathcal Z_m] \right).
$$
\end{defipro}
Consider now the superalgebra generated by the $\q_m(a)$. 
Every element in $\H$ can be decomposed uniquely as a linear combination of products of operators $\q_{m}(a)$, acting on the vacuum. 
In other words, the $\q_m(a)$ generate $\H$ and there are no algebraic relations between them (except the linearity in $a$ and the super-commutativity).

\begin{definition}
To give the cup product structure of $\H$, define operators $\G(a)$ for $a \in H^*(A)$. Let $\Xi_n \subset A\hilb{n}\times A$ be the universal subscheme. Then the action of $\G(a)$ on $H^*(A\hilb{n})$ is multiplication with the class
$$
\pr_{1*}\left( \ch(\mathcal{O}_{\Xi_n})\cdot \pr_2^*(\td(A)\cdot a) \right) \in H^*(A\hilb{n}).
$$
For $a \in H^k(A)$, we define $\G_i(a)$ as the component of $\G(a)$ of cohomological degree $k+2i$. A differential operator $\mathfrak{d}$ is given by $\G_1(1)$. It means multiplication with the first Chern class of the tautological sheaf $\pr_{1*}\left( \mathcal{O}_{\Xi_n}\right)$.
\end{definition}
In~\cite{LehnSorger} and~\cite{LiQinWang} we find various commutation relations between these operators, that allow to determine all multiplications in the cohomology of the Hilbert scheme. First of all, if $X$ and $Y$ are operators of degrees $d$ and $d'$, their commutator is defined in the super sense: 
$$
[X,Y] \defIs  XY - (-1)^{dd'}YX.
$$
The integral on $A\hilb{n}$ induces a non-degenerate bilinar form on $\H$: for classes $\alpha,\,\beta\in H^*(A\hilb{n})$ it is given by
$$
(\alpha,\beta)_{A\hilb{n}} \defIs   \int_{A\hilb{n}}\alpha\cdot\beta.
$$
If $X$ is a homogeneous linear operator of degree $d$ and weight $m$, acting on $\H$, define its adjoint $X^\dagger$ by
$$
(X(\alpha),\beta)_{A\hilb{n+m}}  = (-1)^{d|\alpha|}( \alpha , X^\dagger (\beta))_{A\hilb{n}}.
$$
We put $\q_0(a) \defIs 0$ and for $m<0$, $\q_m(a) \defIs  (-1)^m \q_{-m}(a)^\dagger$. Note that, for all $m\in\Z$, the bidegree of $\q_m(a)$ is $(m,|a| + 2(|m|-1))$. If $m$ is positive, $\q_m$ is called a \emph{creation operator}, otherwise it is called annihiliation operator. Now define
$$
\mathfrak{L}_m(a) \defIs  \left\{ 
\begin{array}{ll}
 \tfrac{1}{2}\sum\limits_{k\in\Z}\sum\limits_{i}\q_k( a_{(1)})\q_{m-k}( a_{(2)}), & \text{ if } m\neq 0, \vspace{4mm}\\
 \sum\limits_{k>0}\sum\limits_{i}\q_k( a_{(1)})\q_{-k}( a_{(2)}), & \text{ if } m= 0. \\
\end{array}
\right.
$$
where $\sum_i a_{(1)}\otimes  a_{(2)}$ is the push-forward of $a$ along the diagonal $\tau_2 :A \rightarrow A\times A$ (in Sweedler notation).
\begin{lemma}\cite[Thm.~2.16]{LiQinWang} Denote $K_A\in H^2(A,\Q)$ the class of the canonical divisor. We have:
\label{commutators}
\begin{align}
[\q_m(a), \q_n(b)] &= m\cdot \delta_{m+n} \cdot \int_A ab \\
\label{qLcommute}
[\mathfrak{L}_m(a),\q_n(b)] &= -n\cdot \q_{m+n}(ab) \\
\label{DiffNaka}
[\mathfrak{d},\q_m(a)] &= m \cdot \mathfrak{L}_m(a) + \tfrac{m(|m|-1)}{2} \q_m(K_A a) \\
[\G_k(a),\q_1(b)] &= \tfrac{1}{k!} \ad(\mathfrak{d})^k(\q_1(a b) ) 
\end{align}
\end{lemma}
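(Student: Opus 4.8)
The plan is to treat the four identities in increasing order of difficulty, deriving the algebraic ones from the geometric ones and isolating the genuine input for the boundary operator until the end.

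First I would prove the Heisenberg relation $[\q_m(a),\q_n(b)]=m\,\delta_{m+n}\int_A ab$ by intersection theory on the incidence correspondences $\mathcal Z_m$. Both composites $\q_m(a)\q_n(b)$ and $\q_n(b)\q_m(a)$ are pushforwards of intersection products along fibre products of two such correspondences inside a chain of three Hilbert schemes. When $m$ and $n$ have the same sign, both composites add (resp. remove) two clusters at generically distinct points of $A$, the two fibre products agree as cycles, and the super-commutator vanishes; this is the statement that the creation operators, and separately the annihilation operators, super-commute. The only case that can survive is $m>0>n$, where a created cluster of $m$ points may collide with an annihilated cluster of $|n|$ points. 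Localizing the resulting excess intersection along the locus where the two clusters share support and running Nakajima's dimension count, the sole contribution comes from configurations concentrated at a single common point; this forces $m=-n$, yields the factor $\delta_{m+n}$, the diagonal restriction of $a\otimes b$ integrates to $\int_A ab$, and the length of the punctual cluster supplies the multiplicity $m$.

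The second identity $[\mathfrak{L}_m(a),\q_n(b)]=-n\,\q_{m+n}(ab)$ is then purely formal. Expanding $\mathfrak{L}_m(a)=\tfrac12\sum_k\sum_i\q_k(a_{(1)})\q_{m-k}(a_{(2)})$ and applying the super-Leibniz rule $[XY,Z]=X[Y,Z]\pm[X,Z]Y$, each inner commutator is replaced by the scalar of the Heisenberg relation. The Kronecker deltas collapse the sum over $k$ to the indices $k=-n$ and $k=m+n$, and the Sweedler identity $\sum_i a_{(1)}\int_A a_{(2)}b=ab$, which merely restates that $\sum_i a_{(1)}\otimes a_{(2)}$ is the diagonal pushforward of $a$, turns both surviving terms into $-n\,\q_{m+n}(ab)$; the prefactor $\tfrac12$ is absorbed by the two equal contributions.

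The last two identities, involving $\mathfrak{d}=\G_1(1)$, are the geometric heart of the lemma and where I expect the main difficulty. To compute $[\mathfrak{d},\q_m(a)]$ one must understand how cup product with the tautological class $c_1(\pr_{1*}\mathcal O_{\Xi_n})$ restricts to the correspondence $\mathcal Z_m$: the generic part of $\mathcal Z_m$ produces the main term $m\,\mathfrak{L}_m(a)$, while the correction $\tfrac{m(|m|-1)}{2}\q_m(K_A a)$ is an excess contribution supported on the small diagonal, coming from the normal-bundle geometry of the punctual Hilbert scheme; the fact that it vanishes for $|m|=1$ signals that it is a higher-multiplicity effect, and pinning it down is Lehn's computation. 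Granting this, the final identity $[\G_k(a),\q_1(b)]=\tfrac{1}{k!}\ad(\mathfrak{d})^k(\q_1(ab))$ follows by induction on $k$: the base case records that $\G_0(a)$ acts on $\q_1(b)$ by $\q_1(ab)$, and each application of $\ad(\mathfrak{d})$ raises the Chern-character degree by one, sending the $\G_{k-1}$-contribution to the $\G_k$-contribution, with the factorial produced by the $k$ nested commutators. The principal hurdle throughout is the excess-intersection bookkeeping along the small diagonal that produces the $K_A$-correction.
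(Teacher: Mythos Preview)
The paper does not prove this lemma at all: it is stated with the citation \cite[Thm.~2.16]{LiQinWang} and used as a black box, with no accompanying proof. So there is no ``paper's own proof'' to compare your proposal against.

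That said, your outline is a faithful summary of how these identities are actually established in the literature. The Heisenberg relation is Nakajima's theorem, proved by the intersection-theoretic argument you describe; the Virasoro-type relation for $\mathfrak{L}_m$ is indeed a formal consequence of the Heisenberg relation via the Leibniz rule and the diagonal pushforward identity; the commutator $[\mathfrak{d},\q_m(a)]$ with its $K_A$-correction is Lehn's theorem, and the excess-intersection analysis along the punctual locus is exactly the hard step; and the final identity for $\G_k$ is the content of Li--Qin--Wang's work. Your sketch captures the architecture correctly, including where the real geometric input lies. If anything, one could note that the induction for the last identity is more delicate than a single line suggests---one must check that only the $\G_k$-component survives after $k$ iterations of $\ad(\mathfrak{d})$, which uses the grading and the vanishing of lower terms---but the strategy is right.
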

\begin{remark}\label{HRep}
Note (cf.~\cite[Thm.~3.8]{LehnSorger}) that (\ref{qLcommute}) together with (\ref{DiffNaka}) imply that 
\begin{equation}\label{NakaDel}
\q_{m+1}(a) = \tfrac{(-1)^m}{m!}(\ad\q')^m\left(\q_1(a)\right),
\end{equation}
so there are two ways of writing an element of $\H$: As a linear combination of products of creation operators $\q_m(a)$ or as a linear combination of products of the operators $\mathfrak{d}$ and $\q_1(a)$. This second representation is more suitable for computing cup-products, but not faithful. 
Equations (\ref{DiffNaka}) and (\ref{NakaDel}) permit now to switch between the two representations.
\end{remark}

\begin{remark}
We adopted the notation from~\cite{LiQinWang}, which differs from the conventions in~\cite{LehnSorger}. Here is part of a dictionary:
\begin{center}
\begin{tabular}{c|c} 
Notation from~\cite{LiQinWang} & Notation from~\cite{LehnSorger} \\\hline
operator of weight $w$ and degree $d$ & operator of weight $w$ and degree $d-2w$\\
$\q_m(a) $ & $\p_{-m}(a)$ \\
$ \mathfrak{L}_m(a) $ & $ - L_{-m}(a)$ \\
$\mathfrak{G}(a)$ & $a\hilb{\bullet}$\\
$ \mathfrak{d} $ & $ \partial $ \\
$\tau_{2*}(a)$& $-\Delta(a)$
\end{tabular}
\end{center}
\end{remark}

By sending a subscheme in $A$ to its support, we define a morphism
\begin{equation}\label{HilbertChow}
\rho : A\hilb{n} \longrightarrow \Sym^n(A),
\end{equation}
called the Hilbert--Chow morphism. The cohomology of $\Sym^n(A)$ is given by elements of the $n$-fold tensor power of $H^*(A)$ that are invariant under the action of the group of permutations $\mathfrak{S}_n$. A class in $H^*(A\hilb{n},\Q)$ which can be written using only the operators $\q_1(a)$ of weight $1$ comes from a pullback along $\rho$:
\begin{equation}
\label{qSym}
\q_1(b_1)\cdots \q_1(b_n)\vac = \rho^*\left( \sum_{\pi\in\mathfrak{S}_n } \pm b_{\pi(1)}\otimes\ldots\otimes b_{\pi(n)} \right), \quad b_i\in H^*(A,\Q),
\end{equation}
where signs arise from permuting factors of odd degrees. In particular,
\begin{gather} \label{q0primitive}
\frac{1}{n!}\q_1(b)^n \vac = \rho^*\big( b\otimes \ldots \otimes b\big),
\\ \label{q1primitive}
\frac{1}{(n-1)!}\q_1(b)\q^{n-1}\vac =\rho^*\Big( b\otimes\! 1\!\otimes\ldots\otimes\! 1\; + \;\ldots\; +\; 1\!\otimes\ldots\otimes\! 1\! \otimes b\Big) .
\end{gather}


\section{On integral cohomology of Hilbert schemes}\label{basisHilb2}

For the study of integral cohomology, first note that if $a \in H^*(A,\Z)$ is an integral class, then $\q_{m}(a) $ maps integral classes to integral classes. Operators satisfying this property are called integral. Qin and Wang studied them in~\cite{QinWang}. We need the following results:

\begin{lemma} \cite[Lem.~3.3, 3.6 and Thm.~4.5]{QinWang}\label{IntegralOperators}
The operators $\frac{1}{n!}\q_1(1)^n$ and $\frac{1}{2}\q_2(1) $ are integral.
Let $b\in H^2(A,\Z)$ be monodromy equivalent to a divisor. Then the operator $\frac{1}{2}\q_1(b)^2 - \frac{1}{2}\q_2(b)$ is integral. 
\end{lemma}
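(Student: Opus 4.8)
The plan is to exhibit each of these operators, \emph{including} its fractional coefficient, as the class of an honest algebraic correspondence between Hilbert schemes, so that integrality becomes automatic: the fundamental class of a complex subvariety of $A\hilb{m}\times A\hilb{m+k}$ lies in integral cohomology, and push--pull along such a cycle preserves the integral lattice. The only real work is then to match the Nakajima operators with these geometric correspondences and to see that the denominators $n!$ and $2$ are exactly the multiplicities by which the naive incidence varieties overcount.

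For $\tfrac{1}{n!}\q_1(1)^n$ I would use the nested incidence variety $W=\{(\xi,\xi')\in A\hilb{m}\times A\hilb{m+n} : \xi\subset\xi'\}$. Over the dense open locus where the $n$ added points are pairwise distinct and disjoint from $\supp(\xi)$, the iterated correspondence defining $\q_1(1)^n$ adds the points \emph{in order}, so it factors through $W$ with generic fibre the $\mathfrak S_n$-torsor of orderings; hence $\q_1(1)^n=n!\,[W]$ up to contributions from the diagonal strata, which a dimension count shows carry the same factor. Thus $\tfrac{1}{n!}\q_1(1)^n=[W]$ is integral. For $\tfrac12\q_2(1)$ the relevant correspondence is the punctual incidence $\{(\xi,\xi'):\xi\subset\xi',\ \xi'/\xi\text{ supported at one point}\}$; here the factor $2$ is exactly the divisibility already visible in Section~\ref{OddHilb2}, namely that the boundary class $\q_2(1)\vac$ is twice a primitive integral class (compare the $\tfrac12\pi_*j_*$ terms in Proposition~\ref{Alpha35}). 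Tracking this factor of $2$ along the correspondence gives integrality of $\tfrac12\q_2(1)$.

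For $\tfrac12\q_1(b)^2-\tfrac12\q_2(b)$ I would first treat the case where $b=[D]$ is the class of an effective divisor. Then $\tfrac12\q_1(b)^2\vac=\rho^*([D]\otimes[D])$ is integral by~(\ref{q0primitive}), and the combination is realized on each $A\hilb{n}$ by the fundamental class of the subvariety cut out by incidence with $D$ (subschemes contained in, or tangent to, $D$), hence is integral. To pass from genuine divisors to an arbitrary $b$ \emph{monodromy equivalent} to a divisor, I would invoke that the whole operator formalism is monodromy--equivariant and that the integral lattice $H^*(A\hilb{n},\Z)$ is preserved by monodromy; since integrality of an operator is a monodromy--invariant condition and it holds for some representative $[D]$ of the monodromy orbit of $b$, it holds for $b$ itself.

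The main obstacle is the multiplicity bookkeeping along the diagonal strata: away from the locus of distinct points the incidence varieties can become reducible or non-reduced, and one must check that these punctual contributions supply exactly the factors $n!$ and $2$ and introduce no new denominators. Equivalently, in the operator language one might instead prove the divisibility directly from the commutation relations of Lemma~\ref{commutators} and the switch~(\ref{NakaDel}) between the $\q$- and $\d$-representations, but either route requires controlling the boundary terms carefully; this, together with making the monodromy--invariance argument precise, is where the genuine content lies.
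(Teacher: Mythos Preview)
The paper does not prove this lemma at all: it is stated with an explicit citation to Qin--Wang~\cite{QinWang} (their Lemmas~3.3, 3.6 and Theorem~4.5), and no argument is given here. So there is no ``paper's own proof'' to compare against; what you have written is in effect a sketch of the Qin--Wang arguments.

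Your strategy---realize each operator, with its fractional coefficient, as the class of an actual algebraic correspondence between Hilbert schemes---is exactly the approach of~\cite{QinWang}, and your identification of the hard part (the multiplicity bookkeeping along diagonal/punctual strata) is accurate. Two small corrections to your sketch. First, your appeal to Proposition~\ref{Alpha35} for the integrality of $\tfrac12\q_2(1)$ is misplaced: that proposition concerns the \emph{odd} cohomology of $A\hilb{2}$ (the groups $H^3$ and $H^5$), whereas $\tfrac12\q_2(1)\vac$ is the degree-$2$ class $\delta$; the factor $2$ here comes instead from the fact that the exceptional divisor of the Hilbert--Chow morphism is $2\delta$ with $\delta$ the first Chern class of an honest line bundle (the determinant of the tautological sheaf). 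Second, for $\tfrac12\q_1(b)^2-\tfrac12\q_2(b)$ with $b=[D]$, noting that $\tfrac12\q_1(b)^2\vac$ is integral via~(\ref{q0primitive}) only checks the action on the vacuum; to prove the \emph{operator} is integral you need the correspondence description on every $A\hilb m$, which in Qin--Wang is obtained by relating this combination to the class of $D\hilb{2}\subset A\hilb{2}$ and then propagating via incidence. Your monodromy argument for passing from an actual divisor to any $b$ monodromy-equivalent to a divisor is correct and is how the paper itself uses the lemma (see Corollary~\ref{IntegralOperatorsTorus}).
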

\begin{remark}
Qin and Wang~\cite[Thm. 1.1 et seq.]{QinWang} conjecture that this works even without the restriction on $b\in H^2(A,\Z)$. 
\end{remark}

\begin{corollary} \label{IntegralOperatorsTorus}
If $A$ is a torus, the operator $\frac{1}{2}\q_1(b)^2 - \frac{1}{2}\q_2(b)$ is integral for all $b\in H^2(A,\Z)$. 
\end{corollary}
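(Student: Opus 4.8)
The plan is to deduce the statement from Lemma~\ref{IntegralOperators} by upgrading ``monodromy equivalent to a divisor'' to a condition satisfied by \emph{every} class on a torus. The key observation is that integrality of the operator $\frac{1}{2}\q_1(b)^2-\frac{1}{2}\q_2(b)$ is preserved under parallel transport of $b$ in a family of surfaces: the Nakajima operators are cut out by the universal incidence varieties, which deform in families, and the integral lattices $H^*(A^{[n]},\Z)$ form a local system. Consequently the set of classes $b\in H^2(A,\Z)$ for which this operator is integral is a union of monodromy orbits, and by Lemma~\ref{IntegralOperators} it contains every divisor class. Thus it suffices to show that each monodromy orbit in $H^2(A,\Z)$ meets the set of divisor classes.

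To analyse the orbits I would use the identification $H^2(A,\Z)=\wedge^2 H^1(A,\Z)$ together with the fact that all complex $2$-tori are deformation equivalent, being diffeomorphic to $(S^1)^4$. The monodromy of this family acts on $H^1(A,\Z)\cong\Z^4$ through the mapping class group image $\mathrm{SL}(H^1(A,\Z))=\mathrm{SL}_4(\Z)$, hence on $H^2(A,\Z)$ through $\wedge^2$ of the standard representation. A class $b\in\wedge^2 H^1(A,\Z)$ is the same datum as an alternating integral form, so by the elementary-divisor (Frobenius) normal form for alternating forms over $\Z$ there is a basis $e_1,\dots,e_4$ of $H^1(A,\Z)$ and integers $d_1,d_2$ with $b=d_1\,e_1\wedge e_2+d_2\,e_3\wedge e_4$. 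In other words, some element of $\mathrm{SL}_4(\Z)$ carries $b$ into the rank-two hyperbolic sublattice spanned by $e_1\wedge e_2$ and $e_3\wedge e_4$.

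It then remains to exhibit these normal-form classes as divisors. I would specialise the complex structure to a product $A'=E_1\times E_2$ of two elliptic curves; there the classes $e_1\wedge e_2$ and $e_3\wedge e_4$ become the two fibre classes $[\{pt\}\times E_2]$ and $[E_1\times\{pt\}]$, which are divisors with mutual intersection $1$ and self-intersection $0$. Hence every integral combination $d_1\,e_1\wedge e_2+d_2\,e_3\wedge e_4$ lies in $\NS(A')$ and is a divisor class. Therefore every $b\in H^2(A,\Z)$ is monodromy equivalent to a divisor, and Lemma~\ref{IntegralOperators} yields the integrality of $\frac{1}{2}\q_1(b)^2-\frac{1}{2}\q_2(b)$ for all $b$.

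The main obstacle is to make the monodromy step rigorous in exactly the form demanded by Lemma~\ref{IntegralOperators}: one must verify that Qin--Wang's notion of monodromy equivalence is governed by the $\mathrm{SL}(H^1(A,\Z))$-action on $\wedge^2 H^1(A,\Z)$, so that the orbit of $b$ genuinely reaches the product surface $E_1\times E_2$, and that parallel transport intertwines $\q_m(b)$ with $\q_m(\phi(b))$ at the integral level. Once this compatibility is in place, the normal-form computation and the identification of fibre classes as divisors are routine. One may, if desired, reduce to primitive classes at the outset: writing $b=nc$ with $c$ primitive, the identity
\[
\tfrac{1}{2}\q_1(nc)^2-\tfrac{1}{2}\q_2(nc)=n^2\left(\tfrac{1}{2}\q_1(c)^2-\tfrac{1}{2}\q_2(c)\right)+\tfrac{n(n-1)}{2}\,\q_2(c)
\]
reduces integrality for $b$ to that for $c$, using that $\q_2(c)$ is itself integral (as noted at the beginning of this section) and that $\tfrac{n(n-1)}{2}\in\Z$.
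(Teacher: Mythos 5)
Your proposal is correct and rests on the same pillars as the paper's proof: deformation invariance of the Nakajima operators and of integrality, the Qin--Wang input (Lemma~\ref{IntegralOperators}), and a monodromy argument moving an arbitrary class onto a divisor class of a conveniently chosen torus. The difference lies in how the monodromy orbits are analysed. The paper works entirely on $H^2(A,\Z)\cong U^{\oplus 3}$: it identifies $\Mon(A)$ with $O^{+,+}(H^2(A,\Z))$ via Borcea and Shioda, takes a torus with $\NS(A)=U_1$, and uses two explicit isometries exchanging $U_1$ with $U_2$ (resp.\ $U_3$); this moves each summand $U_2,U_3$ into the N\'eron--Severi group but only treats classes lying in a single summand, so for a general $b=b_1+b_2+b_3$ one still needs the (easy, but left implicit) observation that the set of $b$ for which $\frac{1}{2}\q_1(b)^2-\frac{1}{2}\q_2(b)$ is integral is closed under addition, since $\frac{1}{2}\q_1(b+c)^2-\frac{1}{2}\q_2(b+c)$ differs from the sum of the two individual operators by the integral operator $\q_1(b)\q_1(c)$. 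Your route through $H^1$ and the normal form for integral alternating forms sidesteps this: it shows that \emph{every} $b$ is carried by an element of $\mathrm{SL}(H^1(A,\Z))$ into the span of the two fibre classes of a product of elliptic curves, hence is monodromy equivalent to a divisor outright, and the monodromy input it needs is only that $\mathrm{SL}(H^1(A,\Z))$ is realized by monodromy --- which follows from the connectedness of the space of oriented linear complex structures on the real $4$-torus, i.e.\ from the same Shioda reference the paper already invokes --- rather than the full computation $\Mon(A)=O^{+,+}(H^2(A,\Z))$. The ``obstacle'' you flag is therefore genuine but routine and is covered by the paper's own citations; your closing reduction to primitive classes is a correct identity, though it becomes unnecessary once the normal form is in place.
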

\begin{proof}
The Nakajima operators are preserved under deformations of $A$. 
Moreover, 
the image $\Mon(A)$ of the monodromy representation on $H^2(A,\Z)$ is given by $O^{+,+}(H^2(A,\Z))$, the group of isometries on $H^2(A,\Z)$ preserving the orientation of the negative and positive definite part of $H^2(A,\R)$.
Indeed, by the last remark in \cite{Borcea}, the subgroup $\Diff(A)$ of $O(H^2(A,\Z))$ induced by the diffeomorphisms of $A$ is equal to  $O^{+,+}(H^2(A,\Z))$. 
Hence $\Mon(A)\subset \Diff(A) = O^{+,+}(H^2(A,\Z))$. Furthermore, by Theorem 1 and 2 in Section 4 and 5 of \cite{Shioda}, the moduli space of marked complex tori have 4 connected components. 
It follows that necessarily, $\Mon(A)$ has at most index 4 in $O(H^2(A,\Z))$. So $\Mon(A)= O^{+,+}(H^2(A,\Z))$.



Suppose now that the N\'eron-Severi group $\NS(A)$ contains a copy of the hyperbolic lattice $U$ (such $A$ exist).
Let us denote $H^2(A,\Z)=U_1\oplus U_2\oplus U_3$ with $\NS(A)=U_1$ and for all $i\in\{1,2,3\}$, $U_i$ is isometric to $U$.
We consider two isometries in $O^{+,+}(H^2(A,\Z))$, $\varphi_2$ and $\varphi_3$, defined in the following way:
$\varphi_2$ exchanges $U_1$ and $U_2$ and acts as $-\id$ on $U_3$ and $\varphi_3$ exchanges $U_1$ and $U_3$ and acts as $-\id$ on $U_2$.
Using these two isometries, all elements of $U_2$ and $U_3$ are monodromy equivalent to a divisor.
Then Lemma~\ref{IntegralOperators} establishes the corollary for that particular $A$. 
Now, since all tori are equivalent by deformation, a general torus can always be deformed to our special $A$. Since the integrality of an operator is a topological invariant, $\frac{1}{2}\q_1(b)^2 - \frac{1}{2}\q_2(b)$ remains integral for all $b\in H^2(A,\Z)$.
\end{proof}

\begin{proposition} Assume that $H^*(A,\Z)$ is free of torsion.
We are using Notation~\ref{TorusClasses}. Denote $b_i^*\in H^2(A,\Z)$ the dual element to $b_i$. 
Modulo torsion, the following classes form a basis of $H^2(A\hilb{n},\Z)$:
\begin{itemize}
 \item[] $\frac{1}{(n-1)!}\q_{1}(b_{i})\q_{1}(1)^{n-1}\vac = \G_0(b_i) 1$,
 \item[] $ \frac{1}{(n-2)!}\q_{1}(a_{i})\q_{1}(a_{j})\q_{1}(1)^{n-2}\vac = \G_0(a_i) \G_0(a_j)1,\  i < j$, 
 \item[] $ \frac{1}{2(n-2)!}\q_{2}(1) \q_{1}(1)^{n-2}\vac$. We denote this class by $\delta$.
\end{itemize}
Their respective duals in $H^{2n-2}(A\hilb{n},\Z)$ are given by
\begin{itemize}
 \item[] $\q_{1}(b_{i}^*)\q_{1}(x)^{n-1}\vac$,
 \item[] $\q_{1}(a_{j}^*)\q_{1}(a_{i}^*)\q_{1}(x)^{n-2}\vac,\  i < j$,
 \item[] $\q_2(x)\q_{1}(x)^{n-2} \vac$.
\end{itemize}
\end{proposition}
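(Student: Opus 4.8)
The plan is to prove three statements and then combine them formally. First, that the three families of degree-two classes are integral; second, that the three families of degree-$(2n-2)$ classes are integral; and third, that under the Poincar\'e pairing $(\alpha,\beta)=\int_{A\hilb{n}}\alpha\cdot\beta$ the second family is the dual basis of the first, i.e.\ the pairing matrix is the identity. Granting this, the statement follows at once: the Nakajima description of $H^2(A\hilb{n},\Q)$ shows that the degree-two monomials of weight $n$ are exactly the $\q_1(b)\q_1(1)^{n-1}\vac$, the $\q_1(a_i)\q_1(a_j)\q_1(1)^{n-2}\vac$ with $i<j$, and $\q_2(1)\q_1(1)^{n-2}\vac$, so our list has exactly $\rk H^2(A\hilb{n},\Z)=b_2(A)+\binom{b_1(A)}{2}+1$ members and spans rationally. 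If $v\in H^2(A\hilb{n},\Z)$ is written $v=\sum_\alpha c_\alpha e_\alpha$ over $\Q$ in terms of the degree-two classes $e_\alpha$, pairing with the integral dual class $f_\beta$ gives $c_\beta=(v,f_\beta)\in\Z$; hence $v$ is an integral combination and the $e_\alpha$ form a basis modulo torsion (and symmetrically for the $f_\beta$).

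\textbf{Integrality of the two families.} The degree-$(2n-2)$ classes are products of the operators $\q_1(b_i^*),\q_1(a^*),\q_1(x),\q_2(x)$ applied to $\vac$; as $b_i^*,a^*,x$ are integral, these operators are integral, and no factorials appear, these classes are manifestly integral. For the degree-two classes I would first record that $\G_0(a)$ is integral: by a degree count in the defining formula $\pr_{1*}(\ch(\mathcal{O}_{\Xi_n})\cdot\pr_2^*(\td(A)a))$, only the fundamental class $\ch_2(\mathcal{O}_{\Xi_n})=[\Xi_n]$ survives into the component of cohomological degree $|a|$, so $\G_0(a)$ is multiplication by the integral class $\pr_{1*}([\Xi_n]\cdot\pr_2^*a)$. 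Since the unit $1\in H^0(A\hilb{n},\Z)$ equals $\tfrac{1}{n!}\q_1(1)^n\vac$ by (\ref{q0primitive}) and $\G_0(a)\vac=0$ for $a$ of positive degree, the case $k=0$ of Lemma~\ref{commutators}, namely $[\G_0(a),\q_1(b)]=\q_1(ab)$, gives by induction $\G_0(b_i)1=\tfrac{1}{(n-1)!}\q_1(b_i)\q_1(1)^{n-1}\vac$ and $\G_0(a_i)\G_0(a_j)1=\tfrac{1}{(n-2)!}\q_1(a_i)\q_1(a_j)\q_1(1)^{n-2}\vac+\G_0(a_ia_j)1$; as the right-hand sides combine integral classes, the first two families are integral. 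Finally $\delta=\tfrac12\q_2(1)\cdot\big(\tfrac{1}{(n-2)!}\q_1(1)^{n-2}\vac\big)$ is integral because $\tfrac12\q_2(1)$ is integral and $\tfrac{1}{(n-2)!}\q_1(1)^{n-2}\vac=1\in H^0(A\hilb{n-2},\Z)$, both by Lemma~\ref{IntegralOperators}.

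\textbf{The duality computation (main step).} The heart of the proof is to evaluate the pairing matrix. For each pair $(e,f)$ I would turn the creation operators building $e$ into annihilation operators by adjunction, $\q_m(a)^\dagger=(-1)^m\q_{-m}(a)$, so that $(e,f)$ becomes a vacuum expectation $\big(\vac,\;(\text{annihilators})(\text{creators})\,\vac\big)$. Wick's theorem for the Heisenberg relation $[\q_m(a),\q_n(b)]=m\,\delta_{m+n}\int_A ab$ of Lemma~\ref{commutators} then writes this as a sum over complete contractions, each carrying a factor $\int_A(\,\cdot\,)$. The decisive point is that $\int_A$ is nonzero only on classes of degree four, which rigidly constrains the matchings: a weight-two operator can contract only with another weight-two operator; $\q_{-1}(b_i)$ can only meet $\q_1(b_j^*)$, giving $\int_A b_ib_j^*=\delta_{ij}$; each $\q_{-1}(a)$ must meet some $\q_1(a^*)$ and each $\q_{-1}(1)$ some $\q_1(x)$. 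These constraints force every cross-type pairing to vanish and make the matrix diagonal, while on the diagonal the number of admissible matchings produces exactly the factorial that cancels the normalisation, leaving $\pm1$. The reversed order $\q_1(a_j^*)\q_1(a_i^*)$ in the duals of the mixed $H^1$-classes is precisely what compensates the Koszul signs of the odd operators $\q_1(a_i)$, turning those diagonal entries into $+1$.

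\textbf{Main obstacle.} I expect the only genuine difficulty to lie in the sign bookkeeping of this last step: keeping track of the factors $(-1)^{d|\alpha|}$ in the adjunction identity, of the super-commutativity of the odd operators $\q_1(a_i)$, and of the sign in $\q_m(a)^\dagger=(-1)^m\q_{-m}(a)$. For the conclusion only unimodularity of the pairing matrix is needed, so even a diagonal entry that comes out as $-1$ (as happens for $\delta$ in a naive computation) is harmless: it can be absorbed by replacing the corresponding dual class with its negative, and the basis statement is unaffected.
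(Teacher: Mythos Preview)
Your proof is correct and follows the same three-step strategy as the paper: integrality of both families, full rank, and unimodularity of the pairing matrix. The only differences are in execution. For integrality of the degree-two classes, the paper simply applies the integral operators $\q_1(b_i)$, $\q_1(a_i)$, $\q_1(a_j)$ to the integral class $\tfrac{1}{(n-k)!}\q_1(1)^{n-k}\vac=1\in H^0(A\hilb{n-k},\Z)$ supplied by Lemma~\ref{IntegralOperators}; your detour through the integrality of $\G_0(a)$ is correct but more than needed. For the pairing, the paper works on the multiplication-operator side---writing $\delta$ as $\mathfrak{d}$, the first two families as $\G_0(b_i)$ and $\G_0(a_i)\G_0(a_j)$, and then invoking the commutators of Lemma~\ref{commutators} with $\mathfrak{L}_m$ and $\G_0$---rather than adjunction and Wick contractions, but the two computations are equivalent reformulations of the same Heisenberg-algebra calculation, and the paper also notes (as you do) that the relevant cross-terms vanish for degree reasons.
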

\begin{proof} It is clear from the above lemma that these classes are all integral.
G\"ottsche's formula~\cite[p.~35]{Gottsche} gives the Betti numbers of $A\hilb{n}$ in terms of the Betti numbers of $A$: 
$h^1(A\hilb{n}) = h^1(A)$, and $h^2(A\hilb{n}) = h^2(A)+ \frac{h^1(A)(h^1(A)-1)}{2} + 1$. It follows that the given classes span a lattice of full rank.

Next we have to show that the intersection matrix between these classes is in fact the identity matrix. Most of the entries can be computed easily using the simplification from (\ref{qSym}). For products involving $\delta$ (this is the action of $\mathfrak{d}$) or its dual, first observe that $\mathfrak{d}\q_1(x)^m\vac = 0 $ and $ \mathfrak{L}_1(a)\q_1(x)^m\vac =0$ for every class $a$ of degree at least 1. Then compute:

\begin{gather*}
\delta \cdot\q_2(x)\q_{1}(x)^{n-2} \vac = \mathfrak{d}\q_2(x)\q_{1}(x)^{n-2} \vac = 2 \mathfrak{L}_2(x) \q_{1}(x)^{n-2} \vac = \q_{1}(x)^{n}\vac,
\\
\mathfrak{d}\q_{1}(b_{i}^*)\q_{1}(x)^{n-1}\vac =  \mathfrak{L}_1(b_i^*) \q_{1}(x)^{n-1} \vac = 0,
\\
\mathfrak{d}\q_{1}(a_{j}^*)\q_{1}(a_{i}^*)\q_{1}(x)^{n-2}\vac = \left(\mathfrak{L}_1(a_j^*) +\q_{1}(a_{j}^*)\mathfrak{d}\right)\q_{1}(a_{i}^*)\q_{1}(x)^{n-2}\vac = 
  \\ =\left(-\q_1(a_i^*)\mathfrak{L}_1(a_j^*) + \q_{1}(a_{j}^*)\mathfrak{L}_1(a_i^*)\right)\q_{1}(x)^{n-2}\vac  = 0,
\\
\G_0(b_i)\q_2(x)\q_{1}(x)^{n-2} \vac = 0, 
\\
\G_0(a_i)\G_0(a_j)\q_2(x)\q_{1}(x)^{n-2} \vac = 0.
\end{gather*}

\end{proof}

\begin{remark}
If $A$ is a complex torus, a theorem of Markman~\cite{Markman} ensures that $H^*(A\hilb{n},\Z)$ is torsion free.
\end{remark} 

\begin{proposition} \label{A2Basis}
Let $A$ be a complex abelian surface. Using Notation~\ref{TorusClasses}, a basis of $H^*(A\hilb{2},\Z)$ is given by the following classes.
\begin{center}
\begin{tabular}{c|c|l|l}
 degree & Betti number & class & multiplication with class \\\hline
 0 & 1 & $\frac{1}{2}\q_1(1)^2\vac$ & $\id$ \\ \hline
 1 & 4 &  $\q_1(1)\q_1(a_i)\vac$ & $\G_0(a_i)$ \\ \hline
 2 & 13 & $\frac{1}{2}\q_2(1)\vac$ & $\d$ \\ 
   &  & $\q_1(a_i)\q_1(a_j)\vac$ for $i<j$ & $\G_0(a_i)\G_0(a_j)$ \\
   &  & $\q_1(1)\q_1(b_i)\vac$ & $\G_0(b_i)$ \\\hline
 3 & 32 & $\frac{1}{2}\q_2(a_i)\vac$  & $-\G_1(a_i) $ \\
   &  & $\q_1(a_i)\q_1(b_j)\vac$ & $\G_0(a_i)\G_0(b_j)$ \\ 
   &  & $\q_1(1)\q_1(a^*_i)\vac$ & $\G_0(a^*_i)$ \\\hline
 4 & 44 & $\left(\frac{1}{2}\q_1(b_i)^2-\frac{1}{2}\q_2(b_i)\right)\vac$ & $\frac{1}{2} \G_0(b_i)^2 + \G_1(b_i) $ \\
   &  & $\q_1(a_i)\q_1(a^*_j)\vac$ & $\G_0(a_i)\G_0(a^*_j)$ \\
   &  & $ \q_1(b_i)\q_1(b_j)\vac$ for $i\leq j$ &  $\G_0(b_i)\G_0(b_j)$ \\\hline
 5 & 32 & $\q_2(a^*_i)\vac$ & $-2\G_1(a^*_i)$ \\
   &  & $\q_1(a^*_i)\q_1(b_j)\vac$ & $ \G_0(a^*_i)\G_0(b_j)$ \\
   &  & $\q_1(a_i)\q_1(x)\vac$ & $\G_0(a_i)\G_0(x)$ \\\hline
 6 & 13 & $\q_2(x)\vac$ & $-2\G_1(x)$ \\
   &  & $\q_1(a^*_i)\q_1(a^*_j)\vac$ for $i<j$ & $\G_0(a^*_i)\G_0(a^*_j)$ \\
   &  & $\q_1(b_i)\q_1(x)\vac$ & $ \G_0(b_i)\G_0(x)$ \\\hline
 7 & 4 & $\q_1(a^*_i)\q_1(x)\vac$ & $\G_0(a^*_i)\G_0(x) $ \\\hline
 8 & 1 & $\q_1(x)^2\vac$ & $\G_0(x)^2$ 
\end{tabular}
\end{center}
\begin{proof}
The Betti numbers come from G\"ottsche's formula~\cite{Gottsche}.
One computes the intersection matrix of all classes under the Poincar\'e duality pairing and finds that it is unimodular. 
So it remains to show that all these classes are integral. By Lemma~\ref{IntegralOperators} this is clear for all classes except 
those of the form $\frac{1}{2}\q_2(a_i)\vac \in H^3(A\hilb{2},\Z)$.

Evaluating the Poincar\'e duality pairing between degrees 3 and 5 gives:
\begin{gather*}
 \q_2(a_i)\vac \cdot \q_2(a^*_i)\vac = 2, \\
 \q_1(a_i)\q_1(b_j)\vac \cdot  \q_1(a^*_i)\q_1(b^*_j)\vac = 1, \\
 \q_1(1)\q_1(a^*_i)\vac \cdot \q_1(x)\q_1(a_i)\vac = 1,
\end{gather*}
while the other pairings vanish. Therefore, one of $\q_2(a_i)\vac$ and $\q_2(a^*_i)\vac$ must be divisible by $2$. 
With the considerations from Section~\ref{OddHilb2} in mind, we can interpret $\q_2(a_i)\vac\in H^3(A\hilb{2},\Z)$ and $\q_2(a^*_i)\vac\in H^5(A\hilb{2},\Z)$ as classes concentrated on the exceptional divisor, that is, as elements of $\pi_* j_*H^*(E,\Z)$. Indeed,
the pushforward of a class $a\otimes 1 \in H^{k}(E,\Z)$ is given by 
$$
\pi_* j_*(a\otimes 1) = \q_2(a)\vac \in H^{k+2}(A\hilb{n},\Z).
$$
When pushing forward to the Hilbert scheme, the only possibility to get a factor $2$ is in degree $3$, by Proposition~\ref{Alpha35}. 
\end{proof}

\end{proposition}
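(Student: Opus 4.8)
The plan is to reduce the statement to two independent facts: that the listed classes are integral, and that they are unimodular under Poincar\'e duality. First I would check that in each degree the number of proposed classes equals the Betti number of $A\hilb{2}$. These are supplied by G\"ottsche's formula \cite{Gottsche}, and a tally degree by degree shows agreement (e.g. $h^2 = h^2(A)+\frac{h^1(A)(h^1(A)-1)}{2}+1 = 6+6+1 = 13$). Once the counts match, it suffices to verify that the intersection matrix of the proposed classes against their listed Poincar\'e duals is the identity, together with the integrality of every class inside the lattice $H^*(A\hilb{2},\Z)$, which is unimodular because $A\hilb{2}$ is a compact manifold with torsion-free cohomology. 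A full-rank integral sublattice that is itself unimodular inside a unimodular lattice must coincide with it, so this establishes that the classes form a $\Z$-basis.

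Next I would compute the pairings. Products of classes built purely from weight-one operators $\q_1(a)$ are pullbacks along the Hilbert--Chow morphism by (\ref{qSym}), so their intersection numbers reduce to intersection numbers on $A\times A$ and are read off from the K\"unneth decomposition. The pairings involving $\d = \G_1(1)$, equivalently the class $\delta = \frac{1}{2}\q_2(1)\vac$, and its dual $\q_2(x)\vac$ require the commutation relations of Lemma \ref{commutators}: using $[\d,\q_m(a)] = m\,\mathfrak{L}_m(a) + \frac{m(|m|-1)}{2}\q_m(K_A a)$ together with $\mathfrak{L}_1(a)\q_1(x)^m\vac = 0$ and $\d\,\q_1(x)^m\vac = 0$ for degree-positive $a$, each such pairing collapses to a single surviving term. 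This is routine bookkeeping and I would not dwell on it.

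The substantive point is integrality. For every class except those of the shape $\frac{1}{2}\q_2(a_i)\vac \in H^3(A\hilb{2},\Z)$, integrality is immediate from Lemma \ref{IntegralOperators}, and for the degree-$4$ classes $\frac{1}{2}\q_1(b_i)^2 - \frac{1}{2}\q_2(b_i)$ from Corollary \ref{IntegralOperatorsTorus}. The classes $\frac{1}{2}\q_2(a_i)\vac$ are where the real work lies, and this is the step I expect to be the main obstacle. The pairing computation gives $\q_2(a_i)\vac \cdot \q_2(a_i^*)\vac = 2$ while the remaining degree-$3$/degree-$5$ pairings are $0$ or $1$, so exactly one of $\q_2(a_i)\vac$ and $\q_2(a_i^*)\vac$ is divisible by $2$; the difficulty is deciding which. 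To break the symmetry I would invoke the geometry of Section \ref{OddHilb2} via the identification of $\q_2(a)\vac$ with the exceptional-divisor pushforward $\pi_* j_* b_{|E}^{*}(a)$: since $a_i \in H^1(\Delta,\Z)$ and $a_i^* \in H^3(\Delta,\Z)$, these two classes live in $\pi_*j_*b_{|E}^{*}(H^1(\Delta,\Z))$ and $\pi_*j_*b_{|E}^{*}(H^3(\Delta,\Z))$ respectively. Proposition \ref{Alpha35} then settles it: part (i) carries a factor $\frac{1}{2}$ in front of the $H^1$-contribution in degree $3$, whereas part (ii) has no such factor in degree $5$. Hence it is precisely $\q_2(a_i)\vac$ that is divisible by $2$, so $\frac{1}{2}\q_2(a_i)\vac$ is integral, completing the basis. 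The crux is thus the correct matching of the Nakajima classes with the exceptional-divisor classes, and the degree-$3$ versus degree-$5$ asymmetry recorded in Proposition \ref{Alpha35}.
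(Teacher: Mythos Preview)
Your proposal is correct and follows essentially the same approach as the paper: verify the Betti numbers via G\"ottsche, check unimodularity of the Poincar\'e pairing on the listed classes, reduce integrality to the single problematic family $\frac{1}{2}\q_2(a_i)\vac$, and resolve that case by identifying $\q_2(a)\vac$ with the exceptional-divisor pushforward and appealing to the degree-$3$/degree-$5$ asymmetry of Proposition~\ref{Alpha35}. Your write-up is in fact slightly more explicit than the paper's about why unimodularity plus integrality forces the sublattice to be the whole lattice, and about why exactly one (rather than at least one) of the two classes is divisible by $2$.
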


\section[Generalized Kummer varieties and the morphism to the Hilbert scheme]{Cohomology of generalized Kummer varieties via Hilbert scheme cohomology %
\sectionmark{Generalized Kummer varieties}}
\sectionmark{Generalized Kummer varieties}
\label{Section_GeneralKummer}
\begin{definition}
Let $A$ be a complex projective torus of dimension $2$ and $A\hilb{n}$, $n\geq 1$, the corresponding Hilbert scheme of points. Denote $\Sigma : A\hilb{n} \rightarrow A$ the summation morphism, a smooth submersion that factorizes via (\ref{HilbertChow}) the Hilbert--Chow morphism $A\hilb{n}\stackrel{\rho}{\rightarrow}\Sym^n(A)\stackrel{\sigma}{\rightarrow} A$. Then the generalized Kummer variety $\kum{A}{n-1}$ is defined as the fiber over $0$:
\begin{equation}\label{square}
\begin{CD}
\kum{A}{n-1} @>\theta >> A\hilb{n}\\
@VVV @VV\Sigma V\\
\{0\} @> >> A
\end{CD}
\end{equation}
\end{definition}
\begin{theorem}~\cite[Theorem 2]{Spanier}\label{torsion}
The cohomology of the generalized Kummer, $H^*(\kum{A}{n-1},\Z)$, is torsion free. 
\end{theorem}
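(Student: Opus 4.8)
The plan is to exploit the summation morphism $\Sigma\colon A\hilb{n}\to A$, whose fibre over $0$ is $K:=\kum{A}{n-1}$, and to compare $K$ against the cohomology of $A\hilb{n}$, which is already known to be torsion free (theorem of Markman, \cite{Markman}). The key geometric input is that $\Sigma$ becomes trivial after base change along the multiplication isogeny $[n]\colon A\to A$. Concretely, writing $t_a$ for translation by $a$, the morphism $\phi\colon A\times K\to A\hilb{n}$, $(a,\xi)\mapsto t_a\xi$, sits in a Cartesian square over $[n]$ (since $\Sigma(t_a\xi)=na$), and is a finite étale Galois cover with group $A[n]\cong(\Z/n\Z)^4$ acting by $b\cdot(a,\xi)=(a+b,\,t_{-b}\xi)$. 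Hence $\Sigma$ is isotrivial, and its monodromy is exactly the translation action of $A[n]$ on $H^*(K,\Z)$.

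With this in hand I would reduce the statement to two inputs: (a) torsion-freeness of $H^*(A\hilb{n},\Z)$, and (b) triviality of the monodromy, i.e.\ that $A[n]$ acts trivially on $H^*(K,\Z)$. Rational triviality of (b) is Beauville's computation \cite{Beauville}. Since translations by torsion points act trivially on $H^*(A,\Z)$ (the torus being a connected group), trivial monodromy upgrades to the statement that $A[n]$ acts trivially on the whole of $H^*(A\times K,\Z)=H^*(A,\Z)\otimes H^*(K,\Z)$, the Künneth decomposition being torsion-free on the $A$-factor.

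Granting (b), the prime $\ell\nmid n$ is then immediate. The Cartan--Leray spectral sequence $H^s\big(A[n];H^t(A\times K;\mathbb{F}_{\ell})\big)\Rightarrow H^{s+t}(A\hilb{n};\mathbb{F}_{\ell})$ collapses, because $|A[n]|=n^4$ is invertible mod $\ell$, giving $H^m(A\hilb{n};\mathbb{F}_{\ell})\cong H^m(A\times K;\mathbb{F}_{\ell})^{A[n]}=H^m(A\times K;\mathbb{F}_{\ell})$ by trivial action. Comparing dimensions, the left side has dimension $b_m(A\hilb{n})=\sum_{p+q=m}b_p(A)\,b_q(K)$ by torsion-freeness of $A\hilb{n}$, while the right side equals $\sum_{p+q=m}b_p(A)\dim_{\mathbb{F}_{\ell}}H^q(K;\mathbb{F}_{\ell})$. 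Universal coefficients give $\dim_{\mathbb{F}_{\ell}}H^q(K;\mathbb{F}_{\ell})\geq b_q(K)$, so the equality $\sum_{p+q=m}b_p(A)\big(\dim_{\mathbb{F}_{\ell}}H^q(K;\mathbb{F}_{\ell})-b_q(K)\big)=0$ forces each term to vanish (using $b_0(A)=1$), whence $H^*(K,\Z)$ has no $\ell$-torsion. Equivalently one may run the Leray spectral sequence of $\Sigma$ directly and deduce its $E_2$-degeneration from the injectivity of $[n]^*$ on $H^*(A,\Z)$ (multiplication by $n^p$ in degree $p$) together with naturality against the trivial bundle $A\times K$.

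The main obstacle is precisely the bad primes $\ell\mid n$, together with establishing (b) integrally rather than just rationally. For $\ell\mid n$ the isogeny $[n]$ has degree divisible by $\ell$, so $[n]^*\equiv 0$ in positive degrees modulo $\ell$, the Cartan--Leray collapse fails, and differentials in the Leray sequence can absorb potential torsion; thus torsion-freeness of $A\hilb{n}$ no longer visibly descends to $K$, and the dimension count above becomes consistent with (rather than contradicting) $\ell$-torsion. To close this I would use deformation invariance of the integral cohomology across the smooth family of generalized Kummer varieties to reduce to a conveniently symmetric abelian surface, and there exhibit an explicit integral basis via Nakajima operators and Briançon-type classes whose intersection form is unimodular --- essentially the strategy carried out for $\kum{A}{2}$ later in this paper --- thereby ruling out torsion at the remaining primes.
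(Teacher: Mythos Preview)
The paper does not give a proof of this statement; it simply cites Spanier. More to the point, your proposed argument has a fatal error at its core.

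Your hypothesis (b) --- that $A[n]$ acts trivially on $H^*(K,\Q)$, let alone on $H^*(K,\Z)$ --- is false, and it is not what Beauville proves. In fact the present paper exploits precisely the \emph{non}-triviality of this action: by Proposition~\ref{Hassettmonodromy}, translation by $\tau'\in A[3]$ sends $Z_\tau\mapsto Z_{\tau+\tau'}$ in $H^4(\kum{A}{2},\Z)$, and this is how the $80$-dimensional piece $\Pi'$ orthogonal to $\Sym^2 H^2$ enters the picture at all. A direct Betti-number check already kills your identity: for $n=3$ one has $b_4(A\hilb{3})=103$ (see the proof of Proposition~\ref{ImSym}), whereas
\[
\sum_{p+q=4} b_p(A)\,b_q(\kum{A}{2}) \;=\; 1\cdot 108 + 4\cdot 8 + 6\cdot 7 + 4\cdot 0 + 1\cdot 1 \;=\; 183.
\]
Thus $H^*(A\hilb{n},\Q)\cong H^*(A\times K,\Q)^{A[n]}$ is a \emph{proper} subspace of $H^*(A\times K,\Q)$, and the equality $b_m(A\hilb{n})=\sum_{p+q=m} b_p(A)\,b_q(K)$ on which your entire $\ell\nmid n$ dimension count rests is simply wrong. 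Once (b) fails, neither the Cartan--Leray collapse nor the comparison of $\mathbb F_\ell$-dimensions yields any constraint on $\dim_{\mathbb F_\ell}H^q(K;\mathbb F_\ell)$.

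There is a second, independent problem. Your fallback for the primes $\ell\mid n$ is to invoke the explicit integral basis of $H^4(\kum{A}{2},\Z)$ built later in this paper. But that construction --- the lattice and discriminant arguments of Section~\ref{integralbasisH4}, and already the duality argument in Proposition~\ref{oddcohomology} --- takes Theorem~\ref{torsion} as a standing assumption. So this route is circular.
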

Our first objective is to collect some information about the pullback diagram~(\ref{square}). 
We make use of Notation~\ref{TorusClasses}.

\begin{proposition}\label{KummerClass}
Set $\alpha_i \defIs  \frac{1}{(n-1)!}\kq_{1}(1)^{n-1}\kq_1(a_i)\vac = \G_0(a_i)1$. Then the corresponding class of 
$\kum{A}{n-1}$ in $H^4(A\hilb{n},\Z)$ is given by
$$
[\kum{A}{n-1}]=\alpha_1\cdot\alpha_2\cdot\alpha_3\cdot\alpha_4.
$$ 
\end{proposition}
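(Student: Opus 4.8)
The plan is to recognise $[\kum{A}{n-1}]$ as a pullback along the summation morphism $\Sigma$ and then to identify that pullback with the product $\alpha_1\alpha_2\alpha_3\alpha_4$ through the Nakajima description.

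Since $\Sigma\colon A\hilb{n}\to A$ is a smooth submersion and $\kum{A}{n-1}=\Sigma^{-1}(0)$ is the fiber over $0$ (diagram~(\ref{square})), the cohomology class of the fiber is the pullback of the point class, $[\kum{A}{n-1}]=\Sigma^*([0])$; indeed the normal bundle of the fiber is trivialized by $\Sigma$, so the Thom class is the pullback of the point's Thom class. As $A$ is a compact oriented real fourfold, $[0]$ is the generator of $H^4(A,\Z)$, which is $x$ in Notation~\ref{TorusClasses}; and since $H^*(A,\Z)=\Lambda^*H^1(A,\Z)$ for a torus, fixing orientations and the ordering of the $a_i$ we have $x=a_1a_2a_3a_4$. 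Thus $[\kum{A}{n-1}]=\Sigma^*(x)=\prod_{i=1}^4\Sigma^*(a_i)$, and everything reduces to proving $\alpha_i=\Sigma^*(a_i)$.

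For this I would use the factorization $\Sigma=\sigma\circ\rho$ through the Hilbert--Chow morphism~(\ref{HilbertChow}). Identifying $H^*(\Sym^n(A),\Z)$ with the $\mathfrak{S}_n$-invariants of $H^*(A,\Z)^{\otimes n}$, the essential point is that the summation map $\sigma$ sends a degree-one class to its symmetrization,
$$\sigma^*(a_i)=a_i\otimes1\otimes\cdots\otimes1+\cdots+1\otimes\cdots\otimes1\otimes a_i.$$
This is exactly the primitivity of $H^1$ of the torus: the addition law $m\colon A\times A\to A$ satisfies $m^*a=a\otimes1+1\otimes a$ on $H^1$, and one extends this inductively to the $n$-fold sum. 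Pulling back along $\rho$ and comparing with formula~(\ref{q1primitive}) yields $\alpha_i=\rho^*\sigma^*(a_i)=\Sigma^*(a_i)$, and hence $\alpha_1\alpha_2\alpha_3\alpha_4=\Sigma^*(x)=[\kum{A}{n-1}]$.

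I expect the middle identity $\alpha_i=\Sigma^*(a_i)$ to be the crux: one must correctly match the operator expression $\frac{1}{(n-1)!}\q_1(1)^{n-1}\q_1(a_i)\vac$ with the pullback of the diagonal class, which relies both on~(\ref{q1primitive}) and on the primitivity of $H^1(A,\Z)$ for the torus. The remaining point, $x=a_1a_2a_3a_4$, is a bookkeeping matter: because the degree-one classes $\alpha_i$ anticommute (equivalently the $\q_1(a_i)$ anticommute, by the first relation of Lemma~\ref{commutators}), one has only to fix orientation and ordering conventions so that no sign intervenes.
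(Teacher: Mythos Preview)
Your proof is correct and follows essentially the same approach as the paper: identify the class of the fiber as $\Sigma^*(x)$, factor $\Sigma=\sigma\circ\rho$, compute $\sigma^*(a_i)$ as the symmetrized tensor, and invoke~(\ref{q1primitive}) to conclude $\Sigma^*(a_i)=\alpha_i$. The paper's proof is a bit terser but uses exactly the same ingredients in the same order.
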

\begin{proof}
Since the generalized Kummer variety is the fiber over a point, its 
class must be the pullback of $x\in H^4(A)$ under $\Sigma$. But $\Sigma^* (x) = \Sigma^*(a_1)\cdot \Sigma^*(a_2)\cdot \Sigma^*(a_3)\cdot \Sigma^*(a_4)$, so we have to verify that $\Sigma^* (a_i) = \alpha_i$. To do this, we want to use the decomposition $\Sigma = \sigma\rho$.
The pullback along $\sigma$ of a class $a\in H^1(A,\Q)$ on $H^1(\Sym^n(A),\Q)$ 
is given by $a\otimes 1\otimes \cdots\otimes 1 + \ldots + 1\otimes \cdots\otimes 1\otimes a$. It follows from (\ref{q1primitive}) that $\Sigma^* (a_i) = \frac{1}{(n-1)!}\kq_{1}(1)^{n-1}\kq_1(a_i)\vac $.
\end{proof}
The morphism $\theta$ induces a homomorphism of graded rings
\begin{equation}
\theta^* :H^*(A\hilb{n})\longrightarrow H^*(\kum{A}{n-1})
\end{equation}
and by the projection formula, we have
\begin{equation}\label{projectionFormula}
\theta_*\theta^*(\alpha)  = [\kum{A}{n-1}]\cdot\alpha.
\end{equation}

\begin{lemma}\label{petitlemmeenplus}
 Let $\beta\in H^*(K_{n-1}(A),\Q)$. Then there is a class $B\in H^{*}(A\hilb{n},\Q)$ such that 
 $$\theta_*(\beta)=\frac{1}{n^4}B\cdot [\kum{A}{n-1}].$$
\end{lemma}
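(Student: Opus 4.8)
The plan is to factor $\theta$ through the natural map out of the product $A\times\kum{A}{n-1}$ and to transfer the computation there, where the fibre class becomes transparent. Concretely, I would introduce the morphism
\[
\nu\colon A\times\kum{A}{n-1}\lra A\hilb{n},\qquad (a,\xi)\longmapsto t_a(\xi),
\]
where $t_a$ is translation by $a$ on $A$, acting coordinatewise on a length-$n$ subscheme. This map is proper because all the spaces involved are compact, which is all that is needed for the Gysin pushforward $\nu_*$ and the projection formula; its degree (which happens to be $n^4$) will not actually be used. Two elementary compatibilities drive the argument: first, $\theta=\nu\circ\iota$ with $\iota\colon\kum{A}{n-1}\to A\times\kum{A}{n-1}$, $\xi\mapsto(0,\xi)$; and second, since translating the $n$ points of $\xi$ by $a$ adds $na$ to their sum, one has $\Sigma\circ\nu=[n]\circ\pr_A$, where $[n]$ denotes multiplication by $n$ on $A$ and $\pr_A$ the first projection.

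Next I would rewrite $\theta_*$ on the product. As $\iota$ is the inclusion of the fibre $\pr_A^{-1}(0)=\{0\}\times\kum{A}{n-1}$, whose class is $\pr_A^*(x)$, the projection formula for $\iota$ gives $\iota_*(\beta)=\pr_A^*(x)\cdot\pr_2^*(\beta)$, and functoriality of the pushforward yields $\theta_*(\beta)=\nu_*\big(\pr_A^*(x)\cdot\pr_2^*(\beta)\big)$. The crucial step is then to exhibit the fibre class $\pr_A^*(x)$ as a pullback under $\nu$. Because $[n]$ acts on $H^1(A,\Q)$ by multiplication by $n$, it acts on $H^4(A,\Q)=\bigwedge^4 H^1(A,\Q)$ by $n^4$, so $[n]^*(x)=n^4\,x$. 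Combining this with $\Sigma\circ\nu=[n]\circ\pr_A$ and with the identity $[\kum{A}{n-1}]=\Sigma^*(x)$ of Proposition~\ref{KummerClass} gives
\[
\pr_A^*(x)=\tfrac{1}{n^4}\,\pr_A^*\,[n]^*(x)=\tfrac{1}{n^4}\,\nu^*\Sigma^*(x)=\tfrac{1}{n^4}\,\nu^*[\kum{A}{n-1}].
\]

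Finally I would apply the projection formula for $\nu$ to the pulled-back class $\nu^*[\kum{A}{n-1}]$:
\[
\theta_*(\beta)=\nu_*\Big(\tfrac{1}{n^4}\,\nu^*[\kum{A}{n-1}]\cdot\pr_2^*(\beta)\Big)=\tfrac{1}{n^4}\,[\kum{A}{n-1}]\cdot\nu_*\big(\pr_2^*(\beta)\big),
\]
so the statement holds with $B\defIs\nu_*(\pr_2^*\beta)\in H^*(A\hilb{n},\Q)$. A degree count confirms consistency: $\dim(A\times\kum{A}{n-1})=2n=\dim A\hilb{n}$, so $\nu_*$ preserves cohomological degree, $B$ sits in degree $|\beta|$, and $B\cdot[\kum{A}{n-1}]$ lands in degree $|\beta|+4=|\theta_*\beta|$. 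The only genuinely non-formal ingredient is the construction of $\nu$ together with the relation $\Sigma\circ\nu=[n]\circ\pr_A$; everything else is the projection formula, Proposition~\ref{KummerClass}, and the action of $[n]$ on the top cohomology of the torus. I therefore expect the main (and essentially only) obstacle to be bookkeeping: verifying that $\nu$ is a well-defined morphism and that translation shifts $\Sigma$ by exactly $na$.
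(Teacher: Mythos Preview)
Your proof is correct and follows essentially the same approach as the paper: both introduce the translation morphism $\nu\colon A\times\kum{A}{n-1}\to A\hilb{n}$ (the paper writes it as $\Theta\colon\kum{A}{n-1}\times A\to A\hilb{n}$), identify the commutative square $\Sigma\circ\nu=[n]\circ\pr_A$, and conclude via the projection formula with $B=\nu_*(\pr_2^*\beta)$. Your write-up is somewhat more detailed in explaining why $\nu^*[\kum{A}{n-1}]=n^4\,\pr_A^*(x)$, but the argument is the same.
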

\begin{proof}
For a point $a\in A$, we denote by $t_a$ the morphism on $A\hilb{n}$ induced by the translation by $a$.
Then we consider the morphism $\Theta :\kum{A}{n-1}\times A \longrightarrow A\hilb{n}$ defined by $\Theta(\xi,a)=t_a(\theta(\xi))$. It fits in a pullback diagram
\begin{equation}
\begin{CD}
\kum{A}{n-1}\times A @>\Theta >> A\hilb{n}\\
@VV\pr_2V @VV\Sigma V\\
A @> n\cdot >> A
\end{CD}
\end{equation}
that realizes $\kum{A}{n-1}\times A$ as a $n^4$-fold covering of $A\hilb{n}$ over $A$.
Now, for $\beta\in H^*(K_{n-1}(A),\Q)$ set
$$
B:=\Theta_*(\beta\otimes 1).
$$
Then the projection formula gives
\begin{align*}
B\cdot [K_{n-1}(A)]&= \Theta_*\left(\beta\otimes 1\cdot \Theta^*[\kum{A}{n-1}]\right) \\
&=n^4 \Theta_*\left((\beta\otimes 1)\cdot  (1\otimes x)\right)\\
&=n^4 \Theta_*(\beta \otimes x)\\
&=n^4\theta_*(\beta).
\end{align*}

\end{proof}

\begin{proposition}\label{annihilator}
The kernel of $\theta^*$ is equal to the annihilator of $[\kum{A}{n-1}]$.
\end{proposition}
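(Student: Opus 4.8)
The plan is to establish the two inclusions separately, using the projection formula (\ref{projectionFormula}) as the bridge between $\theta^{*}$ and cup product with $[\kum{A}{n-1}]$. Throughout I write $K \defIs \kum{A}{n-1}$ for brevity and recall that $\theta_{*}\theta^{*}(\alpha) = [K]\cdot\alpha$.

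The inclusion $\Ker\theta^{*}\subseteq \mathrm{Ann}([K])$ I would get purely formally. If $\theta^{*}(\alpha)=0$, then applying $\theta_{*}$ and using (\ref{projectionFormula}) yields $[K]\cdot\alpha = \theta_{*}\theta^{*}(\alpha)=0$, so $\alpha$ annihilates the fundamental class. This direction requires nothing beyond the projection formula and holds with integral coefficients.

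The reverse inclusion carries the content, and here I would combine Poincar\'e duality on $K$ with Lemma~\ref{petitlemmeenplus}. Suppose $\alpha\cdot[K]=0$; the goal is $\theta^{*}(\alpha)=0$. Since $H^{*}(K,\Z)$ is torsion free by Theorem~\ref{torsion}, it suffices to prove that $\theta^{*}(\alpha)$ pairs to zero against every class $\gamma\in H^{*}(K,\Q)$. The projection formula together with the fact that push-forward commutes with integration ($\int_{A\hilb{n}}\theta_{*}\eta = \int_{K}\eta$) gives
$$\int_{K}\theta^{*}(\alpha)\cdot\gamma = \int_{A\hilb{n}}\alpha\cdot\theta_{*}(\gamma).$$
By Lemma~\ref{petitlemmeenplus} one may write $\theta_{*}(\gamma)=\tfrac{1}{n^{4}}B\cdot[K]$ for some $B\in H^{*}(A\hilb{n},\Q)$, so the right-hand side equals $\tfrac{1}{n^{4}}\int_{A\hilb{n}}B\cdot(\alpha\cdot[K])=0$ by hypothesis. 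Hence $\theta^{*}(\alpha)$ is orthogonal to all of $H^{*}(K,\Q)$ and vanishes by non-degeneracy of the Poincar\'e pairing, whence $\theta^{*}(\alpha)=0$ integrally.

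The conceptual obstacle to keep in view is that $\theta_{*}$ is \emph{not} injective in general (the normal bundle of the fiber $K$ in $A\hilb{n}$ is trivial, so its self-intersection vanishes), and therefore one cannot simply cancel $\theta_{*}$ from $\theta_{*}\theta^{*}(\alpha)=0$. Lemma~\ref{petitlemmeenplus} is precisely the device that circumvents this: it shows every class in the image of $\theta_{*}$ already lies in the ideal generated by $[K]$, which $\alpha$ kills. The remaining points are routine --- the compatibility of $\theta_{*}$ with integration and the passage between $\Z$- and $\Q$-coefficients, both of which are supplied by the torsion-freeness of $H^{*}(K,\Z)$.
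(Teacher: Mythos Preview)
Your proof is correct and follows essentially the same strategy as the paper: the easy inclusion via $\theta_{*}\theta^{*}(\alpha)=[K]\cdot\alpha$, and the other via Poincar\'e duality on $K$ combined with Lemma~\ref{petitlemmeenplus} to push $\theta_{*}(\gamma)$ into the ideal generated by $[K]$. The only cosmetic difference is that the paper argues the second inclusion by contrapositive (if $\theta^{*}(\alpha)\neq 0$ then $[K]\cdot\alpha\neq 0$), whereas you argue it directly.
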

\begin{proof}
Assume $\alpha\in \ker(\theta^*)$. Then we have
$
[\kum{A}{n-1}]\cdot \alpha = \theta_*\theta^*(\alpha) = 0
$. 
Conversely, if $\alpha\notin \ker(\theta^*)$,
let $\beta\in H^*(\kum{A}{n-1},\Q)$ be the Poincar\'e dual of $\theta^*(\alpha)$, so $\beta\cdot \theta^*(\alpha)\neq 0$.
Then by projection formula:
$
\theta_*(\beta)\cdot \alpha\neq 0.
$
By Lemma~\ref{petitlemmeenplus}, there exists $B\in H^*(A\hilb{n},\Q)$ such that 
$B\cdot [\kum{A}{n-1}]\cdot \alpha\neq 0$. It follows that $ [\kum{A}{n-1}]\cdot \alpha\neq 0$.
\end{proof}

\begin{corollary} \label{KummerEquality}
$\theta^*(\alpha) = \theta^*(\beta)$ if and only if $[\kum{A}{n-1}]\cdot \alpha = [\kum{A}{n-1}]\cdot \beta$. 
\qed
\end{corollary}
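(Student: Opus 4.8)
The plan is to obtain this as a purely formal consequence of Proposition~\ref{annihilator}, using only that $\theta^*$ is additive and that the cup product is bilinear. First I would rewrite the left-hand side: since $\theta^*$ is a homomorphism of graded rings, and in particular $\Q$-linear, the condition $\theta^*(\alpha)=\theta^*(\beta)$ is equivalent to $\theta^*(\alpha-\beta)=0$, that is, to $\alpha-\beta\in\ker\theta^*$.

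Next I would invoke Proposition~\ref{annihilator}, which identifies $\ker\theta^*$ with the annihilator of $[\kum{A}{n-1}]$. Thus $\alpha-\beta\in\ker\theta^*$ holds precisely when $[\kum{A}{n-1}]\cdot(\alpha-\beta)=0$, and by bilinearity of the cup product this is exactly $[\kum{A}{n-1}]\cdot\alpha=[\kum{A}{n-1}]\cdot\beta$. Chaining these equivalences yields the claim. I do not anticipate any genuine difficulty at this step: all of the geometric content has already been absorbed into Proposition~\ref{annihilator} (and, behind it, Lemma~\ref{petitlemmeenplus} together with the projection formula~(\ref{projectionFormula})), so this corollary is merely a linear-algebraic restatement of the annihilator description. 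The only point worth stating carefully is that the equivalence is read over the same coefficient ring as in Proposition~\ref{annihilator}.
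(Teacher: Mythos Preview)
Your argument is correct and matches the paper's approach exactly: the corollary is stated with a bare \qed, so the authors also regard it as an immediate restatement of Proposition~\ref{annihilator} via the linearity of $\theta^*$ and of the cup product. There is nothing to add.
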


\begin{proposition}\label{Annihideal}
The annihilator of $[\kum{A}{n-1}]$ in $H^*(A\hilb{n},\Q)$ is the ideal generated by $H^1(A\hilb{n})$. 
\end{proposition}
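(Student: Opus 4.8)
The plan is to reduce the statement, via Proposition~\ref{annihilator}, to the claim that $\ker\theta^*$ equals the ideal $I$ generated by $H^1(A\hilb{n})$. Recall from Proposition~\ref{KummerClass} that $H^1(A\hilb{n})$ is spanned by the classes $\alpha_i=\Sigma^*(a_i)$, $1\le i\le 4$, so $I=\Sigma^*(H^{>0}(A,\Q))\cdot H^*(A\hilb{n},\Q)$.

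The inclusion $I\subseteq\ker\theta^*$ is immediate. Since $\theta$ factors through the fiber $\Sigma^{-1}(0)$, the composite $\Sigma\circ\theta$ is the constant map to $0\in A$, so $\theta^*(\alpha_i)=(\Sigma\circ\theta)^*(a_i)=0$ for every $a_i\in H^1(A)$; as $\ker\theta^*$ is an ideal, it contains $I$.

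For the reverse inclusion I would exploit the covering $\Theta\colon\kum{A}{n-1}\times A\to A\hilb{n}$ of Lemma~\ref{petitlemmeenplus}. It is the pullback of the multiplication isogeny $n\colon A\to A$, hence a free Galois covering whose group $G=A[n]\cong(\Z/n\Z)^4$ of $n$-torsion points acts by $s\cdot(\xi,a)=(t_{-s}\xi,\,a+s)$; thus $A\hilb{n}=(\kum{A}{n-1}\times A)/G$ and $\Theta^*$ identifies $H^*(A\hilb{n},\Q)$ with the invariant ring $\left(H^*(\kum{A}{n-1},\Q)\otimes H^*(A,\Q)\right)^{G}$. Since translations act trivially on the cohomology of a torus, the $G$-action is supported on the first factor, giving a ring isomorphism $H^*(A\hilb{n},\Q)\cong H^*(\kum{A}{n-1},\Q)^{G}\otimes H^*(A,\Q)$. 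Restricting $\Theta$ along $\xi\mapsto(\xi,0)$ recovers $\theta$, and a short computation shows that under this isomorphism $\theta^*$ becomes the projection onto the $H^0(A)$-summand followed by the inclusion $H^*(\kum{A}{n-1})^{G}\hookrightarrow H^*(\kum{A}{n-1})$. Consequently $\ker\theta^*=H^*(\kum{A}{n-1},\Q)^{G}\otimes H^{>0}(A,\Q)$.

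It then remains to identify this kernel with $I$. Because $H^*(A,\Q)=\Lambda^{*}H^1(A,\Q)$, the augmentation ideal $H^{>0}(A,\Q)$ is generated by $H^1(A,\Q)$, so $\ker\theta^*$ is the ideal generated by $1\otimes H^1(A,\Q)$; and since $H^1(\kum{A}{n-1},\Q)=0$ (the generalized Kummer variety is simply connected), one has $H^1(A\hilb{n})=1\otimes H^1(A,\Q)$, whence the two ideals coincide. The main obstacle is the third paragraph: establishing the Galois structure of $\Theta$ with its explicit $G$-action, deducing the tensor decomposition of the cohomology ring, and verifying that $\theta^*$ is the stated projection. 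The triviality of the translation action on $H^*(A,\Q)$ is exactly what lets the invariants split off the $H^*(A)$-factor, and without it the decomposition and the identification of $\theta^*$ would fail.
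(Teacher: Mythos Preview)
Your argument is correct and takes a genuinely different route from the paper's own proof.

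The paper works purely algebraically inside the Nakajima operator formalism: it first proves Lemma~\ref{SuperFree}, that $\SSym^n(H)$ is a free $H$-module for $H=\Lambda^*H^1(A,\Q)$, then tensors the short exact sequence $0\to J\to H\xrightarrow{x\cdot}H$ (with $J$ the ideal generated by degree~$1$) up to $\SSym^n(H)\cong H^*(\Sym^nA,\Q)$, and finally transports the statement from $\Sym^nA$ to $A\hilb{n}$ by a comparison between the operator algebras generated by $\q_1(a)$ and by $\q_1(a),\mathfrak d$. Your proof instead exploits the Galois covering $\Theta$ of Lemma~\ref{petitlemmeenplus}: the K\"unneth decomposition together with the triviality of the translation action on $H^*(A,\Q)$ gives a ring isomorphism $H^*(A\hilb{n},\Q)\cong H^*(\kum{A}{n-1},\Q)^{A[n]}\otimes H^*(A,\Q)$, under which $\theta^*$ is visibly the projection killing $H^{>0}(A)$. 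Your argument is shorter and more geometric, bypasses the freeness lemma and the operator calculus entirely, and yields as a byproduct the explicit tensor decomposition of $H^*(A\hilb{n},\Q)$; the paper's approach has the virtue of remaining within the Nakajima machinery that drives the rest of the computations, and the freeness result it establishes may be of independent interest.
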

First, we need to recall some material on super algebras (see for instance \cite{DeligneMorgan}).
\begin{definition}
Let $V=V^{+}\oplus V^{-}$ be a super vector space and $n\geq 0$. Then the supersymmetric power $\SSym^n(V)$ of $V$ is a super vector space, given by
\begin{gather*}
\SSym^n(V) = \bigoplus_{p+q=n} \Sym^p(V^{+}) \!\otimes\! \Lambda^q(V^{-}), \\
\SSym^n(V)^{+}\! =\! \bigoplus_{\substack{p+q=n \\ q\text{ even} }} \Sym^p(V^{+}) \!\otimes\! \Lambda^q(V^{-}), \ \ 
\SSym^n(V)^{-}\! =\! \bigoplus_{\substack{p+q=n \\ q\text{ odd} }} \Sym^p(V^{+}) \!\otimes\! \Lambda^q(V^{-}).
\end{gather*}
\end{definition}
\begin{remark}
The supersymmetric power $\SSym^n (V)$ can be realized as a quotient of $V^{\otimes n}$ by an action of the symmetric group $\mathfrak S_n$. This action can be described as follows: If $\tau\in \mathfrak S_n$ is a transposition that exchanges two numbers $i<j$, then $\tau$ permutes the corresponding tensor factors in $v_1\otimes  \cdots\otimes v_n$ introducing a sign
$(-1)^{|v_i||v_j|+(|v_i|+|v_j|)\sum_{i<k<j} |v_k|}$.
\end{remark}

Now let $U$ be a vector space over $\Q$ and look at the exterior algebra $H\defIs  \Lambda^* U$. 
Since $H$ is a super vector space, we can construct the supersymmetric power $ \SSym^n( H)$.
We may identify $\SSym^n( H)$ with the space of $\mathfrak S_n$-invariants in $H^{\otimes n}$ by means of the linear projection operator
$$
\pr : H^{\otimes n} \longrightarrow \SSym^n( H) , \quad \pr = \frac{1}{n!}\sum_{\pi \in\mathfrak S_n} \pi.
$$
The multiplication in $H^{\otimes n}$ induces a multiplication on the subspace of invariants, which makes $\SSym^n( H)$ a supercommutative algebra.

Since $H$ is generated as an algebra by $U=\Lambda^1(U)\subset H$, we may define a homomorphism of algebras:
$$ s : H \longrightarrow \SSym^n( H), \quad s(u) = \pr( u \otimes 1\otimes\cdots\otimes 1)\text{ for }u\in U, $$
so $\SSym^n( H)$ becomes an algebra over $H$.
\begin{lemma}
\label{SuperFree}
The morphism $s$ turns $\SSym^n( H)$ into a free module over $H$, for $n\geq 1$.
\end{lemma}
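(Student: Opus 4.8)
The plan is to realise $\SSym^n(H)$ as the invariant subalgebra of $H^{\otimes n}$, recognise its $H$-module structure as coming from a diagonal embedding of exterior algebras, and then deduce freeness by a projective-implies-free argument over the graded-local ring $H$.

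First I would use the identification, via $\pr$, of $\SSym^n(H)$ with the invariant subalgebra $(H^{\otimes n})^{\mathfrak{S}_n}$ equipped with the componentwise product of $H^{\otimes n}$; under this identification $s(u)=\tfrac1n\sum_{i=1}^n 1^{\otimes(i-1)}\otimes u\otimes 1^{\otimes(n-i)}$ for $u\in U$. Since $H=\Lambda^* U$, the super-tensor product gives a canonical isomorphism $H^{\otimes n}\cong\Lambda^*(U^{\oplus n})$, and under it $s$ is the algebra map induced by the rescaled diagonal linear map $U\to U^{\oplus n}$, $u\mapsto\tfrac1n(u,\dots,u)$. The image $D$ of this linear map is a direct summand of $U^{\oplus n}$; choosing a complement $C$, so that $U^{\oplus n}=D\oplus C$, I obtain $\Lambda^*(U^{\oplus n})\cong s(H)\otimes\Lambda^* C$ as graded $s(H)$-modules. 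In particular $s$ is injective (so $s(H)\cong H$) and $H^{\otimes n}$ is a free graded $H$-module, a homogeneous basis of $\Lambda^* C$ serving as free basis.

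Next I would pass to invariants. Because $s(H)$ consists of $\mathfrak{S}_n$-invariant elements and $\mathfrak{S}_n$ acts by algebra automorphisms, each $g\in\mathfrak{S}_n$ acts $H$-linearly: $g(s(h)\cdot r)=s(g(h)\cdot r)=s(h)\cdot g(r)$ since $g(s(h))=s(h)$. Hence the averaging idempotent $e=\tfrac1{n!}\sum_{g}g$ is an $H$-linear, grading-preserving projector onto $\SSym^n(H)=(H^{\otimes n})^{\mathfrak{S}_n}$, and $H^{\otimes n}=eH^{\otimes n}\oplus(1-e)H^{\otimes n}$ is a decomposition of graded $H$-modules. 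Thus $\SSym^n(H)$ is a direct summand of a finitely generated free graded $H$-module, i.e. a finitely generated projective graded $H$-module.

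Finally I would invoke that $H=\Lambda^* U$ is graded-local: its augmentation ideal $\mathfrak{m}=\bigoplus_{k\geq1}\Lambda^k U$ is the unique maximal homogeneous ideal and is nilpotent, with $H/\mathfrak{m}\cong\Q$. By graded Nakayama, a finitely generated projective graded module over such a ring is free, which yields the claim. The only points that require care are the verification that the $\mathfrak{S}_n$-action is $H$-linear, so that taking invariants stays inside the category of $H$-modules and splits off a direct summand, and the projective-implies-free step in the graded/super-commutative setting; both are routine once the diagonal-summand picture above is in place, and no dimension count or case analysis is needed.
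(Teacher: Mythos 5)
Your proof is correct, and its first half coincides with the paper's: the paper also works inside $H^{\otimes n}$, makes it a free $H$-module via the embedding $h\mapsto h\otimes 1\otimes\cdots\otimes 1$, and then ``diagonalizes'' this module structure --- there by composing with the ring automorphism $\sigma$ of $H^{\otimes n}$ sending $u^{(1)}\mapsto u^{(1)}+\cdots+u^{(n)}$, which is the same device as your splitting $U^{\oplus n}=D\oplus C$ with $D$ the (rescaled) diagonal. Where you genuinely diverge is in the descent from $H^{\otimes n}$ to $\SSym^n(H)$. The paper applies the projector $\pr$ to an $H$-basis of $H^{\otimes n}$, obtaining a generating set of $\SSym^n(H)$ over $H$, and then asserts that ``eliminating linear dependent vectors (this is possible over the rationals)'' yields an $s$-basis; as written this is a spanning-and-minimality argument, which over a ring that is not a field does not by itself prove freeness. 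Your route --- note that the $\mathfrak{S}_n$-action is $s(H)$-linear because $s(H)$ lies in the invariants, split off $\SSym^n(H)$ as a direct summand of the free module $H^{\otimes n}$ via the averaging idempotent, and conclude freeness from finitely generated projectivity over the graded-local ring $H=\Lambda^*U$ with nilpotent augmentation ideal --- supplies exactly the justification that the paper's last sentence leaves implicit (and your formula $u^{(i)}=1^{\otimes(i-1)}\otimes u\otimes 1^{\otimes(n-i)}$ also corrects a typo in the paper's exponent). The only blemish is notational: in checking $H$-linearity of the action you wrote $g(s(h)\cdot r)=s(g(h)\cdot r)$ where you mean $g(s(h))\cdot g(r)=s(h)\cdot g(r)$; the conclusion is right. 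Overall your argument is a cleaner and fully rigorous version of the paper's proof.
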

\begin{proof}
We start with the tensor power $H^{\otimes n}$ and the ring homomorphism 
$$
\iota : H \longrightarrow H^{\otimes n},\quad h\longmapsto h\otimes 1\otimes\cdots\otimes 1
$$
that makes $H^{\otimes n}$ a free $H$-module. Note that $\pr \iota \neq s$, since $\pr$ is not a ring homomorphism.
(For example, $\pr(\iota(h))\neq s(h)$ for any nonzero $h\in\Lambda^2(U)$.)
We therefore modify the $H$-module structure of $H^{\otimes n}$:

For some $u\in U$, denote $u^{(i)} \defIs  1^{\otimes i-1}\otimes u\otimes 1^{\otimes n-i+1} \in H^{\otimes n}$. Then $H^{\otimes n}$ is generated as a $k$-algebra by the elements $\{u^{(i)}\,,\,u\in U\}$. Now consider the ring automorphism
$$
\sigma : H^{\otimes n} \longrightarrow H^{\otimes n}, \quad u^{(1)} \longmapsto u^{(1)} +u^{(2)} + \ldots + u^{(n)}, \quad
u^{(i)} \longmapsto u^{(i)} \text{ for } i>1.
$$
Then we have $\sigma\iota = s$ on $\SSym^n( H)$. On the other hand, if $\{b_i\}$ is a $k$-basis of $V$, then $\{b_i^{(j)},\,j>1\} $ is a $\iota$-basis for $H^{\otimes n}$, and $\{\sigma(b_i^{(j)})\}$ is a $\sigma\iota$-basis for $H^{\otimes n}$.
Now if we project the basis elements, we get a set $\{\pr(\sigma(b_i^{(j)}))\}$ that spans $\SSym^n( H)$. Eliminating linear dependent vectors (this is possible over the rationals), we get a $s$-basis of $\SSym^n( H)$.
\end{proof}

\begin{proof}[Proof of Proposition \ref{Annihideal}]
Set $H=H^*(A,\Q)\cong \Lambda^*(H^1(A,\Q))$ and consider the exact sequence of $H$-modules
$$
0 \longrightarrow 
J
\longrightarrow H \stackrel{x\cdot}{\longrightarrow} H.
$$
It is clear that $J$ is the ideal in $H$ generated by $H^{1}(A,\Q)$. 
Now denote $J^{(n)}$ the ideal generated by $H^1(\Sym^n(A),\Q)$ in $H^*(\Sym^n(A),\Q)\cong\SSym^n(H)$.
By the freeness result of Lemma~\ref{SuperFree}, tensoring with $\SSym^n(H)$ yields another exact sequence of $H$-modules
$$
0 \longrightarrow {J}^{(n)} \longrightarrow \SSym^n(H) \xrightarrow{\sigma(x)\cdot} \SSym^n(H).
$$
Now let $\mathfrak{H}$ be the operator algebra spanned by products of $\mathfrak d$ and $\q_1(a)$ for $a\in H^*(A)$. Let $\mathfrak C$ be the graded commutative subalgebra of $\mathfrak H$ generated by $\q_1(a)$ for $a\in H^*(A)$. The action of $\mathfrak H$ on $\vac$ gives $\H$ and the action of $\mathfrak C$ on $\vac$ gives $\rho^*(H^*(\Sym^n(A),\Q))\cong \SSym^n(H)$.
By sending $\mathfrak d$ to the identity, we define a linear map $c : \mathfrak H \rightarrow \mathfrak C$. 
Denote $J\hilb{n}$ the ideal generated by $H^1(A\hilb{n},\Q)$ in $H^*(A \hilb n,\Q)$. We claim that for every $\mathfrak y\in \mathfrak H$:
$$
\mathfrak y\vac \in J\hilb{n} \Leftrightarrow c(\mathfrak y)\vac \in J\hilb{n}.
$$
To see this, we remark that $H^1(A \hilb n,\Q) \cong H^1(A ,\Q)  $ and the multiplication with a class in $H^1(A \hilb n,\Q) $ is given by the operator $\mathfrak G_0(a)$ for some $a\in H^1(A ,\Q)$. Due to the fact that $\mathfrak d$ is also a multiplication operator (of degree 2), $\mathfrak G_0(a)$ commutes with $\mathfrak d$. It follows that for $\mathfrak y =\mathfrak G_0(a) \mathfrak r$ we have $c(\mathfrak y) = \mathfrak G_0(a) c(\mathfrak r)$.

Now denote $\mathfrak k$ the multiplication operator with the class $[\kum{A}{n-1}]$. We have:
$
[\mathfrak k, \mathfrak d] = 0.
$
Now let $y\in H^*(A\hilb{n},\Q)$ be a class in the annihilator of $[\kum{A}{n-1}]$. We can write $y= \mathfrak y\vac$ for a $\mathfrak y\in\mathfrak H$. Choose $\tilde y \in \SSym^n (H)$ in a way that $\rho^*(\tilde y) = c(\mathfrak y) \vac$. Then we have:
$$
0=[\kum{A}{n-1}]\cdot y = \mathfrak k\, \mathfrak y \vac =  \mathfrak k \,c(\mathfrak y)\vac = \rho^*(\sigma^*(x) \cdot \tilde y).
$$
Since $\rho^*$ is injective, $\tilde y$ is in the annihilator of $\sigma^*(x)$, so $\tilde y \in J^{(n)}$. It follows that $c(\mathfrak y)\vac$ and $y$ are in the ideal generated by $H^1(A\hilb{n},\Q)$.
\end{proof}

\begin{theorem}\cite[Th\'eor\`eme 4]{Beauville}
$\kum{A}{n-1}$ is a irreducible holomorphically symplectic manifold. In particular, it is simply connected and the canonical bundle is trivial.
\end{theorem}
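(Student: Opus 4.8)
The statement is Beauville's, so the plan is to realize $\kum{A}{n-1}$ as a smooth compact K\"ahler manifold carrying a holomorphic symplectic form, to prove that it is simply connected, and then to invoke the Beauville--Bogomolov decomposition theorem to upgrade ``simply connected with a symplectic form'' to ``irreducible holomorphically symplectic''. I would begin by producing a symplectic form on the ambient Hilbert scheme. Since $A$ is an abelian surface, $H^0(A,\Omega^2_A)$ is spanned by a translation-invariant symplectic form $\omega_A$; it induces an $\mathfrak{S}_n$-invariant symplectic form on $A^n$, which descends to the smooth locus of $\Sym^n(A)$ and pulls back along the Hilbert--Chow morphism $\rho$ to a holomorphic $2$-form on the open locus of $A\hilb{n}$ parametrizing $n$ distinct points. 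The first technical point is that this form extends to a holomorphic $2$-form $\tilde\omega$ on all of $A\hilb{n}$ and that the extension remains everywhere non-degenerate: the exceptional locus of $\rho$ has codimension $1$, so Hartogs does not apply, and one must argue instead from an explicit local model of $\rho$ along the exceptional divisor (in the manner of Fujiki and Beauville). This makes $A\hilb{n}$ holomorphically symplectic.

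Next I would establish simple connectivity of the fiber. The summation morphism $\Sigma : A\hilb{n}\to A$ is a proper submersion, hence by Ehresmann a $C^\infty$ fiber bundle with fiber $\kum{A}{n-1}=\Sigma^{-1}(0)$, which is therefore smooth and compact of dimension $2n-2$. Feeding this into the long exact homotopy sequence $\pi_2(A)\to \pi_1(\kum{A}{n-1})\to \pi_1(A\hilb{n})\xrightarrow{\Sigma_*}\pi_1(A)$, one uses that $\pi_2(A)=0$, that $\pi_1(A\hilb{n})\cong H_1(A\hilb{n},\Z)\cong H_1(A,\Z)\cong\Z^4$, and that $\Sigma_*$ is an isomorphism under this identification; exactness then yields $\pi_1(\kum{A}{n-1})=0$.

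I would then transport the symplectic form to the fiber. Using the $n^4$-fold \'etale cover $\Theta:\kum{A}{n-1}\times A\to A\hilb{n}$ of Lemma~\ref{petitlemmeenplus}, the pullback $\Theta^*\tilde\omega$ is again symplectic. Since $\kum{A}{n-1}$ is simply connected we have $H^{1,0}(\kum{A}{n-1})=0$, so by the K\"unneth formula every holomorphic $2$-form on $\kum{A}{n-1}\times A$ splits as $\pr_K^*\zeta+\pr_A^*\eta$ with $\zeta\in H^{2,0}(\kum{A}{n-1})$ and $\eta\in H^{2,0}(A)$. Applied to $\Theta^*\tilde\omega$, whose associated pairing is block-diagonal in $T\kum{A}{n-1}\oplus TA$, non-degeneracy forces both $\zeta$ and $\eta$ to be non-degenerate, and $\zeta$ is the pullback of $\theta^*\tilde\omega$. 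Hence $\kum{A}{n-1}$ carries a symplectic form, and its canonical bundle is trivial, being trivialized by $\zeta^{\wedge(n-1)}$.

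Finally I would conclude. As $\kum{A}{n-1}$ is a simply connected compact K\"ahler manifold with trivial canonical bundle, the Beauville--Bogomolov decomposition theorem writes it as a product of Calabi--Yau and irreducible holomorphic symplectic factors; the everywhere non-degenerate form rules out Calabi--Yau factors of dimension $\geq 3$, so every factor is symplectic and $h^{2,0}(\kum{A}{n-1})$ equals the number of factors. The main remaining obstacle is the Hodge-number computation $h^{2,0}(\kum{A}{n-1})=1$, which simultaneously pins the decomposition down to a single factor and gives uniqueness of the symplectic form up to $\C^*$; I expect to obtain it by computing the Hodge numbers of $A\hilb{n}$ (via the G\"ottsche--Soergel description) and descending through the $A[n]$-invariant part of $H^{2,0}(\kum{A}{n-1}\times A)$, the delicate step being the control of the $A[n]$-action on $H^{2,0}(\kum{A}{n-1})$.
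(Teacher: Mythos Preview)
The paper does not give a proof of this theorem at all; it merely cites Beauville's original paper \cite[Th\'eor\`eme~4]{Beauville} and moves on. So there is nothing in the present paper to compare your argument against.

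That said, your outline is essentially Beauville's own strategy and is correct as far as it goes. A couple of remarks. Your simple-connectivity argument via the long exact homotopy sequence of the submersion $\Sigma$ is fine, though it differs in detail from Beauville, who instead uses the \'etale cover $\Theta:\kum{A}{n-1}\times A\to A\hilb{n}$ to inject $\pi_1(\kum{A}{n-1}\times A)$ into $\pi_1(A\hilb{n})\cong\Z^4$ and compare indices. Either route works once one knows $\pi_1(A\hilb{n})\cong\Z^4$, which you use but do not justify; it follows from $\pi_1(A\hilb{n})\cong\pi_1(\Sym^nA)$ (resolution of a normal variety) and the elementary computation $\pi_1(\Sym^nA)=\pi_1(A^n)_{\mathfrak S_n}\cong H_1(A,\Z)$.

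The genuine gap is exactly where you flag it: the computation $h^{2,0}(\kum{A}{n-1})=1$. From the cover $\Theta$ one gets $H^{2,0}(A\hilb{n})=H^{2,0}(\kum{A}{n-1}\times A)^{A[n]}$; with $h^{1,0}(\kum{A}{n-1})=0$ and $h^{2,0}(A\hilb{n})=2$ this yields only $\dim H^{2,0}(\kum{A}{n-1})^{A[n]}=1$, not the full Hodge number. The missing ingredient is that the $A[n]$-action on $H^*(\kum{A}{n-1},\C)$ is trivial. Beauville's way of seeing this is that translation by $\tau\in A[n]$ on $\kum{A}{n-1}$ embeds into a connected family of diffeomorphisms: the action of $A$ on $A\hilb{n}$ by translations, transported through a $C^\infty$ trivialisation of the fiber bundle $\Sigma$ (equivalently, via the homogeneous-bundle description $A\hilb{n}\cong A\times_{A[n]}\kum{A}{n-1}$), gives a smooth $A$-action on $\kum{A}{n-1}$ extending the holomorphic $A[n]$-action. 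Connectedness of $A$ then forces triviality on cohomology, and in particular on the Hodge summand $H^{2,0}$. You should supply this step rather than leave it as an obstacle; once it is in place, your Beauville--Bogomolov decomposition argument finishes the proof.
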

This implies that $H^2(\kum{A}{n-1},\Z)$ admits an integer-valued non-degenerate symmetric bilinear form (the Beauville--Bogomolov form) $B_{\kum{A}{n-1}}$ which gives $H^2(\kum{A}{n-1},\Z)$ the structure of a lattice. Looking, for instance, in the useful table from the introduction of~\cite{Rapagnetta}, we know that this lattice is
isomorphic to $U^{\oplus 3}\oplus \left< -2n \right>$, for $n\geq 3$. 
We have the Fujiki formula for $\alpha\in H^2(\kum{A}{n-1},\Z)$:
\begin{equation} \label{fujiki}
\int_{\kum{A}{n-1}} \alpha^{2n-2} = n\cdot(2n-3)!!\cdot B_{\kum{A}{n-1}}(\alpha,\alpha)^{n-1}
\end{equation}

\begin{proposition}\label{H2Sur} Assume $n\geq 3$. Then
$\theta^*$ is surjective on $H^2(A\hilb{n},\Z)$.
\end{proposition}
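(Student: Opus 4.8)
The plan is to compute the image lattice $L\defIs\theta^*\!\big(H^2(A\hilb{n},\Z)\big)$ on the integral basis of $H^2(A\hilb{n},\Z)$ produced earlier (the classes $\G_0(b_i)1$, $\G_0(a_i)\G_0(a_j)1$ and $\delta$), and to show it is all of $H^2(\kum{A}{n-1},\Z)$ by matching it summand-by-summand with the known decomposition $H^2(\kum{A}{n-1},\Z)\cong U^{\oplus 3}\oplus\langle-2n\rangle$.

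First I would record that $\theta^*$ kills everything coming from $H^1$. By Proposition~\ref{KummerClass} the degree-one classes $\alpha_i=\G_0(a_i)1=\Sigma^*(a_i)$ are pullbacks along $\Sigma$; since $\Sigma\circ\theta$ is the constant map to $0\in A$ we get $\theta^*(\alpha_i)=0$, hence $\theta^*\big(\G_0(a_i)\G_0(a_j)1\big)=\theta^*(\alpha_i)\,\theta^*(\alpha_j)=0$. (Alternatively these products lie in the ideal generated by $H^1(A\hilb{n})$, which is $\Ker\theta^*$ by Propositions~\ref{annihilator} and~\ref{Annihideal}.) Thus $L$ is generated over $\Z$ by the seven classes $\theta^*(\G_0(b_i)1)$, $i=1,\dots,6$, together with $\theta^*(\delta)$.

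Next I would identify these seven images. The six classes $\G_0(b_i)1=\rho^*(\dots)$ are the images of $H^2(A,\Z)$ under the canonical map $H^2(A,\Z)\to H^2(A\hilb{n},\Z)$, and composing with $\theta^*$ yields exactly the canonical embedding $\kappa:H^2(A,\Z)\hookrightarrow H^2(\kum{A}{n-1},\Z)$, which is isometric onto the summand $U^{\oplus 3}$; since the $b_i$ form a $\Z$-basis of $H^2(A,\Z)$, their images form a $\Z$-basis of that summand. For the last class I would show $\theta^*(\delta)$ is orthogonal to $\kappa(H^2(A,\Z))$ for the Beauville--Bogomolov form (the exceptional class is orthogonal to the $H^2(A)$-part), so it lies in $\langle-2n\rangle=\kappa(H^2(A,\Z))^{\perp}$, and then compute its square by the Fujiki relation~(\ref{fujiki}): since $\int_{\kum{A}{n-1}}\theta^*(\delta)^{2n-2}=\int_{A\hilb{n}}\delta^{2n-2}\cdot[\kum{A}{n-1}]=\int_{A\hilb{n}}\delta^{2n-2}\,\alpha_1\alpha_2\alpha_3\alpha_4$, a Nakajima-calculus evaluation of this intersection number should give $B_{\kum{A}{n-1}}(\theta^*\delta,\theta^*\delta)=-2n$. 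As $-2n$ is the square of a generator of $\langle-2n\rangle$, this forces $\theta^*(\delta)$ to be such a generator; together with the previous step, $\theta^*$ surjects onto both summands, proving the claim. (Equivalently, one computes $\discr L=\discr(U^{\oplus 3})\cdot 2n=2n=\discr H^2(\kum{A}{n-1},\Z)$ and concludes $[H^2(\kum{A}{n-1},\Z):L]=1$ from~(\ref{squareDiscr}).)

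I expect the main obstacle to be the computation of the last square: carrying the factor $\tfrac12$ hidden in $\delta$ correctly through the restriction to $\kum{A}{n-1}$ and evaluating $\int_{A\hilb{n}}\delta^{2n-2}\alpha_1\alpha_2\alpha_3\alpha_4$ with the commutation relations of Lemma~\ref{commutators} so that it comes out precisely as $n\,(2n-3)!!\,(-2n)^{n-1}$. The orthogonality statement and the identification of the $\G_0(b_i)$-block with the cup-product lattice $U^{\oplus 3}$ of $A$ are comparatively routine once the standard structure of $H^2(\kum{A}{n-1},\Z)$ is invoked.
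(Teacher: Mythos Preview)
Your approach is correct and is essentially the same strategy as the paper's: show that the image lattice $L=\theta^*H^2(A\hilb{n},\Z)$ has the same discriminant as $H^2(\kum{A}{n-1},\Z)\cong U^{\oplus 3}\oplus\langle-2n\rangle$, hence index one. The paper's proof is much shorter because it cites Beauville for the surjectivity over $\C$ and then quotes Britze's Proposition~1 for the fact that the Beauville--Bogomolov form restricted to $\im\theta^*$ already gives the lattice $U^{\oplus 3}\oplus\langle-2n\rangle$; you are proposing to recompute Britze's result by hand via the Fujiki relation and Nakajima calculus. That is a legitimate route, and your identification of the generators of $L$ and the vanishing of the $H^1$-contribution are correct.

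One phrasing to clean up: when you say the composite $\kappa$ is ``isometric \emph{onto} the summand $U^{\oplus 3}$'', that surjectivity is precisely part of what you are proving, so as stated it is circular. What you actually need (and what your parenthetical discriminant argument uses) is only that $\kappa$ is an \emph{isometry} from $(H^2(A,\Z),\cup)$ into $(H^2(\kum{A}{n-1},\Z),B)$ and that $\theta^*(\delta)$ is orthogonal to its image with square $-2n$; then $\discr L=2n$ and~(\ref{squareDiscr}) finishes. So keep the discriminant version as the main argument rather than the ``matching summands'' version.
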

\begin{proof}
By~\cite[Sect.~7]{Beauville}, $\theta^{\ast} : H^2(A\hilb{n},\C) \rightarrow H^2(\kum{A}{n-1},\C)$ is surjective. 
But by Proposition 1 of~\cite{Britze}, the lattice structure of $\im \theta^*$ is the same as of $H^2(\kum{A}{n-1})$, so the image of $H^2(A\hilb{n},\Z)$ must be primitive. The result follows.
\end{proof}
\begin{notation}\label{BasisH2KA}
 We have seen that, for $n\geq 3$,
 $$
 H^2(\kum{A}{n-1},\Z) \cong H^2(A,\Z) \oplus\left<\theta^*(\delta)\right>.
 $$
We denote the injection $ : H^2(A,\Z) \rightarrow H^2(\kum{A}{n-1},\Z)$ by $j$. It can be described by 
$$
j : a \longmapsto \frac{1}{(n-1)!}\theta^*\left(\q_1(a)\q_1(1)^{n-1}\vac\right).
$$ 
Further, we set $e:=\theta^*(\delta)$. Using Notation~\ref{TorusClasses}, we give the following names for classes in $H^2(\kum{A}{n-1},\Z)$:
\begin{align*}
u_1 &:= j(a_1 a_2), & v_1 &:= j(a_1 a_3), & w_1 &:= j(a_1 a_4), \\ 
u_2 &:= j(a_3 a_4), & v_2 &:= j(a_4 a_2), & w_2 &:= j(a_2 a_3),
\end{align*}
These elements form a basis of $H^2(\kum{A}{n-1},\Z)$ with the following intersection relations under the Beauville-Bogomolov form:
\begin{align*}
B(u_1,u_2) &= 1, & B(v_1,v_2) &= 1, & B(w_1,w_2) &= 1,  &
B(e,e)&= -2n,
\end{align*}
and all other pairs of basis elements are orthogonal.

If $A=E_1\times E_2$ is the product of two elliptic curves, we choose the $a_i$ in a way such that $\{a_1,a_2\}$ 
and $\{a_3,a_4\}$ give bases of $H^1(E_1,\Z)$ and $H^1(E_2,\Z)$ in the decomposition $H^1(A) = H^1(E_1)\oplus H^1(E_2)$, respectively.
\end{notation}

\section{Integral cohomology of the generalized Kummer fourfold}
Now we come to the special case $n=3$, so we study $\kum{A}{2}$, the generalized Kummer fourfolds.
\begin{proposition}
The Betti numbers of $\kum{A}{2}$ are:
$
1,\,0,\,7,\,8,\,108,\,8,\,7,\,0,\,1.
$
\end{proposition}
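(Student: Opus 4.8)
\section*{Proof proposal}

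The plan is to pin the whole list down from a few already-available constraints together with one genuine cohomology count. Since $\kum{A}{2}$ is a simply connected IHS fourfold with torsion-free cohomology (Theorem~\ref{torsion} and Beauville's theorem), we have $b_0=b_8=1$ and $b_1=b_7=0$, while Notation~\ref{BasisH2KA} already identifies $H^2(\kum{A}{2},\Z)\cong H^2(A,\Z)\oplus\langle\theta^*(\delta)\rangle$, so $b_2=b_6=\rk H^2=7$. Poincar\'e duality on the fourfold gives $b_k=b_{8-k}$, so only $b_3$ and $b_4$ remain. Finally the Euler characteristic is known to be $\chi(\kum{A}{2})=3^3\cdot\sum_{d\mid 3}d=108$ (e.g.\ \cite{Gottsche}), and substituting the values above turns this into $108=2(1-0+7-b_3)+b_4=16-2b_3+b_4$, i.e.\ $b_4=92+2b_3$. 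Thus the entire proposition is controlled by the single number $b_3$.

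To compute $b_3$ I would exploit the \'etale cover $\Theta\colon \kum{A}{2}\times A\to A\hilb{3}$ of degree $3^4$ from Lemma~\ref{petitlemmeenplus}, whose deck group is $A[3]$ acting by $\tau\cdot(\xi,a)=(t_\tau\xi,\,a-\tau)$. This action is free (it is free on the $A$-factor), so by transfer $H^*(A\hilb{3},\Q)=\big(H^*(\kum{A}{2},\Q)\otimes H^*(A,\Q)\big)^{A[3]}$. As translations act trivially on $H^*(A,\Q)$, the invariants factor as $H^*(\kum{A}{2},\Q)^{A[3]}\otimes H^*(A,\Q)$, whence $P(A\hilb{3},t)=(1+t)^4\,P\big(H^*(\kum{A}{2})^{A[3]},t\big)$. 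Feeding in G\"ottsche's generating function \cite{Gottsche} for the Betti numbers of $A\hilb{3}$ (with $b_*(A)=1,4,6,4,1$) and dividing by $(1+t)^4$ then produces the $A[3]$-invariant Betti numbers of $\kum{A}{2}$; this recovers $b_2=7$ as a sanity check and isolates the invariant part of $H^3$.

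The \emph{main obstacle} is that this covering argument only sees the $A[3]$-invariant cohomology: the deck group acts nontrivially on $H^*(\kum{A}{2})$, exactly as in the case $n=2$, where $A[2]$ acts on the rank-$22$ space $H^2$ of the Kummer K3 surface with only a $7$-dimensional fixed subspace. One must therefore account for the non-invariant classes, which I expect to sit mostly in the middle degree $H^4$. I would settle this either by invoking the full G\"ottsche--Soergel computation of the cohomology of generalized Kummer varieties (which records the entire $A[3]$-representation rather than just its invariants), or, self-containedly, by computing the Lefschetz numbers $\trace\!\big(t_\tau^*\mid H^*(\kum{A}{2})\big)$ of the translation automorphisms from their fixed loci and reading off the isotypic multiplicities. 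Either route gives $b_3=8$ (the odd cohomology turning out to be $A[3]$-invariant), and then $b_4=92+2\cdot 8=108$ together with Poincar\'e duality yields the list $1,0,7,8,108,8,7,0,1$.
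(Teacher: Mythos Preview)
The paper's proof is a single sentence: ``This follows from G\"ottsche's formula~\cite[page 49]{Gottsche}.'' G\"ottsche's book already tabulates the Betti numbers of generalized Kummer varieties directly, so nothing beyond the citation is needed.

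Your proposal takes a much longer detour. The reductions you make are correct: $b_0,b_1,b_2$ and their Poincar\'e duals are pinned down by simple connectedness and Notation~\ref{BasisH2KA}, and the Euler characteristic constraint $b_4=92+2b_3$ is valid. The covering argument via $\Theta$ is also set up correctly and does yield the $A[3]$-invariant Poincar\'e polynomial of $\kum{A}{2}$.

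The gap is exactly where you say it is, and you do not close it. Knowing $\dim H^3(\kum{A}{2},\Q)^{A[3]}$ from the quotient is not the same as knowing $b_3$; you need the extra input that $A[3]$ acts trivially on $H^3$. You offer two ways out, but carry out neither. The first --- ``invoking the full G\"ottsche--Soergel computation'' --- is precisely what the paper does, only phrased differently; if you are willing to cite that, you might as well read off all the Betti numbers at once and skip the detour. The second --- Lefschetz numbers of the translations $t_\tau$ --- is a genuine alternative, but it requires you to describe the fixed loci of $t_\tau$ on $\kum{A}{2}$ for each nonzero $\tau\in A[3]$ and compute their Euler characteristics, none of which you do; the bare assertion ``either route gives $b_3=8$'' is not a proof. (Incidentally, the triviality of the $A[3]$-action on $H^3$ can later be read off from Corollary~\ref{actionH3}, but that result is downstream of the proposition you are trying to prove and depends on the explicit basis of $H^3$.)

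In short: your framework is sound but the crucial step is only promised, not delivered, and the cleanest way to deliver it collapses back to the paper's one-line citation.
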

\begin{proof}
This follows from G\"ottsche's formula~\cite[page 49]{Gottsche}.
\end{proof}

First, we deduce a description of the integral odd cohomology groups of $\X$ from Proposition \ref{A2Basis} and Section \ref{Section_GeneralKummer}.
The middle cohomology $H^4(\X,\Z)$ has been studied by Hassett and Tschinkel in \cite{Hassett}. We recall some of their results in Section \ref{HassetTschinkelSection},
then we proceed by using $\theta^*$ to give a partial description of $H^4(\X,\Z)$ in terms of the well-understood cohomology of $A\hilb{3}$ in Section \ref{SyminH4}. 
Finally in Section \ref{integralbasisH4}, we find a basis of $H^4(\X,\Z)$ using the action of the image of monodromy representation.
In order to use monodromy representation, we start by recalling some notions of monodromy on abelian surfaces in Section \ref{monodromyexplication}. 
We will also need some technical calculation related the the action of the symplectic group over finite fields (Section \ref{Section_Symplectic}).

In all the section, we use Notations \ref{TorusClasses} and \ref{BasisH2KA}.
\subsection{Odd Cohomology of the generalized Kummer fourfold}\label{oddcohoK2}

By means of the morphism $\theta^*$, we may express part of the cohomology of $\kum{A}{2}$ in terms of Hilbert scheme cohomology. We have seen in Proposition~\ref{H2Sur} that $\theta^*$ is surjective for degree $2$ and (by duality) also in degree $6$. 
The next proposition shows that this also holds true for odd degrees.
\begin{proposition}\label{oddcohomology}
A basis of $H^3(\X,\Z)$ is given by:
\begin{gather}
\label{A3_1}
\frac{1}{2}\theta^*\Big( \q_1(a^*_i)\q_1(1)^2\vac \Big), \\
\label{A3_2}
\frac{1}{2}\theta^*\Big(\q_2(a_i)\q_1(1)\vac\Big).
\end{gather}
and a dual basis of $H^5(\X,\Z)$ is given by:
\begin{gather}
\label{A5_1}
 \theta^*\Big( \q_1(a_ia_j)\q_1(a_j^*)\q_1(1) \vac \Big) \ \text{for any } j\neq i, \\
\label{A5_2}
\theta^*\Big( \q_2(a^*_i)\q_1(1)\vac \Big).
\end{gather}
\end{proposition}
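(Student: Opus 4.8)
The plan is to move everything to the Hilbert scheme $A\hilb{3}$ via $\theta$ and then to pit the two proposed families against each other under Poincar\'e duality. Since $H^*(\kum{A}{2},\Z)$ is torsion free (Theorem~\ref{torsion}), the cup-product pairing $H^3(\kum{A}{2},\Z)\times H^5(\kum{A}{2},\Z)\to\Z$ is perfect, and from the Betti numbers we have $b_3(\kum{A}{2})=b_5(\kum{A}{2})=8$, matching the $4+4$ classes listed in each degree. It therefore suffices to establish three facts: all $16$ classes are integral; the $8\times 8$ matrix of products between the degree-$3$ and degree-$5$ families is the identity; and, finally, a short determinant argument turns these two facts into the assertion that the families are dual $\Z$-bases. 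As a byproduct this reproves that $\theta^*$ is surjective in degrees $3$ and $5$.

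For the matrix entries I would use the projection formula together with Proposition~\ref{KummerClass}. Because $\theta$ is the inclusion of the fibre over $0$ with class $[\kum{A}{2}]=\alpha_1\alpha_2\alpha_3\alpha_4$, for $\alpha\in H^3(A\hilb{3},\Q)$ and $\beta\in H^5(A\hilb{3},\Q)$ one gets
$$
\int_{\kum{A}{2}}\theta^*(\alpha)\cdot\theta^*(\beta)=\int_{A\hilb{3}}\alpha_1\alpha_2\alpha_3\alpha_4\cdot\alpha\cdot\beta .
$$
Each entry is then a cup product of Nakajima monomials on $A\hilb{3}$ evaluated against the fundamental class, which I would compute mechanically from the commutation relations of Lemma~\ref{commutators} and the definition of the adjoint, exactly as in the proof of Proposition~\ref{A2Basis}; verifying that the ``wrong'' pairs vanish and the matching pairs normalise to $1$ is bookkeeping. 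The same reduction also shows that $\theta^*\big(\q_1(a_ia_j)\q_1(a_j^*)\q_1(1)\vac\big)$ is independent of the choice of $j\neq i$: the difference of two choices lies in $\ker(\theta^*)$, which by Propositions~\ref{annihilator} and~\ref{Annihideal} is the ideal generated by $H^1(A\hilb{3})$.

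Integrality is where the genuine work lies. Class~\eqref{A3_1} is harmless, since $\tfrac12\q_1(a_i^*)\q_1(1)^2\vac=\G_0(a_i^*)1$ is already integral on $A\hilb{3}$, and the two families~\eqref{A5_1} and~\eqref{A5_2} are plain $\theta^*$-pullbacks of integral Nakajima classes. The delicate point, which I expect to be the main obstacle, is the factor $\tfrac12$ in~\eqref{A3_2}, namely that $\theta^*\big(\q_2(a_i)\q_1(1)\vac\big)$ is divisible by $2$ in $H^3(\kum{A}{2},\Z)$. I would settle this by the same mechanism as the analogous divisibility in Section~\ref{OddHilb2}: the class $\q_2(a)\vac$ is the pushforward $\pi_*j_*(a\otimes 1)$ from the exceptional divisor of the Hilbert--Chow morphism, and along its generic stratum (two of the three points colliding) this morphism is \'etale-locally the $\mathfrak S_2$-quotient analysed there, so Proposition~\ref{Alpha35}(i) forces a factor of $2$ for classes pushed forward from $H^1$ of the diagonal. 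As this is local around the diagonal, it survives both the addition of the spectator point $\q_1(1)$ and the restriction to the Kummer.

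With integrality of all $16$ classes and the identity intersection matrix in hand, the conclusion is formal. Express the proposed $H^3$-classes as the columns of an integer matrix $P$ in a fixed $\Z$-basis of $H^3(\kum{A}{2},\Z)$, the $H^5$-classes as an integer matrix $Q$ in a $\Z$-basis of $H^5(\kum{A}{2},\Z)$, and let $G$ be the Gram matrix of the (perfect, hence unimodular) Poincar\'e pairing in these bases; then $P^{\top}GQ=I_8$. Taking determinants over $\Z$ gives $\det P\cdot\det G\cdot\det Q=1$ with $\det G=\pm1$, forcing $\det P=\det Q=\pm1$. Hence both matrices are invertible over $\Z$, so the two families are $\Z$-bases of $H^3(\kum{A}{2},\Z)$ and $H^5(\kum{A}{2},\Z)$, dual to one another, which is the assertion of the proposition.
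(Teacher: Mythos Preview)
Your overall strategy coincides with the paper's: verify integrality of all sixteen classes, compute the $8\times 8$ pairing matrix to be the identity, and conclude by unimodularity of Poincar\'e duality on the torsion-free groups of rank $8$. The paper carries out the pairing computations by rewriting the degree-$3$ classes as multiplication operators $\G_0(a_i^*)$ and $\G_1(a_i)$ acting on the degree-$5$ partners, but this is equivalent to your projection-formula reduction to an intersection with $[\kum{A}{2}]$ on $A\hilb{3}$.

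The one place where your argument is not rigorous is the integrality of~\eqref{A3_2}. Your claim that Proposition~\ref{Alpha35}(i) is ``local around the diagonal'' and therefore ``survives both the addition of the spectator point $\q_1(1)$ and the restriction to the Kummer'' does not stand on its own: divisibility by $2$ in integral cohomology is a global statement, and the torsion computation in Lemma~\ref{3} was carried out specifically for $A\hilb{2}$, not for $A\hilb{3}$ or for $\kum{A}{2}$. The paper closes this gap more cleanly, using ingredients you already have. By Proposition~\ref{A2Basis}, the class $\tfrac{1}{2}\q_2(a_i)\vac$ is already integral in $H^3(A\hilb{2},\Z)$; since $\q_1(1)$ is an integral operator, applying it produces the integral class $\tfrac{1}{2}\q_2(a_i)\q_1(1)\vac\in H^3(A\hilb{3},\Z)$, and then $\theta^*$ preserves integrality. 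So the divisibility is established on the Hilbert scheme before restricting, and no local-to-global argument on $\kum{A}{2}$ is required.
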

\begin{proof}
The classes (\ref{A3_1}) are Poincar\'e dual to (\ref{A5_1}) and the classes (\ref{A3_2}) are Poincar\'e dual to (\ref{A5_2}) by direct computation:
\begin{gather*}
\frac{1}{2}\theta^*\Big( \q_1(a^*_i)\q_1(1)^2\vac \Big)\cdot \theta^*\Big( \q_1(a_ia_j)\q_1(a_j^*)\q_1(1) \vac \Big) \hspace{80pt}
\\ = \frac{1}{2} \theta^*\Big(  \G_0(a^*_i) \q_1(a_ia_j)\q_1(a_j^*)\q_1(1) \vac \Big)\\
 =  \frac{1}{2}[\X]\cdot \q_1(a_ia_j)\q_1(a_j^*)\q_1(a_i^*) = 1, \\
 \frac{1}{2}\theta^*\Big(\q_2(a_i)\q_1(1)\vac\Big)\cdot \theta^*\Big( \q_2(a^*_i)\q_1(1)\vac \Big) =\theta^*\Big( \G_1(a_i)\q_2(a^*_i)\q_1(1)\vac  \Big) \\
 = [\X]\cdot \left( 2\q_3(x) - \q_1(x)^2\q_1(1) \right) \vac = 0-1=-1.
\end{gather*}
It remains to show that all classes are integral.
It is clear from Lemma~\ref{IntegralOperators} that (\ref{A3_1}) is integral, while the integrality of (\ref{A5_1}) and (\ref{A5_2}) is obvious. By Proposition~\ref{A2Basis}, $\frac{1}{2}\q_2(a_i)\vac$ is integral as well. If the operator $ \q_1(1)$ is applied, we get again an integral class.
\end{proof}
\begin{cor}\label{actionH3}
Let $A$ be an abelian surface and let $g$ be an automorphism of $A$. Let $g^{[[3]]}$ be the automorphism induced by $g$ on $K_2(A)$.
By Proposition~\ref{oddcohomology}, $H^3(K_2(A),\Z)\cong H^1(A,\Z)\oplus H^3(A,\Z)$ and the action of $g^{[[3]]}$ on $H^3(K_2(A),\Z)$ is given by the action of $g$ on $H^1(A,\Z)\oplus H^3(A,\Z)$.
\end{cor}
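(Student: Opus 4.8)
The plan is to read off the action directly from the explicit basis of $H^3(\X,\Z)$ supplied by Proposition~\ref{oddcohomology}. Under the isomorphism $H^3(\X,\Z)\cong H^1(A,\Z)\oplus H^3(A,\Z)$, the class $a_i\in H^1(A,\Z)$ corresponds to $\frac{1}{2}\theta^*\big(\q_2(a_i)\q_1(1)\vac\big)$ and the class $a_i^*\in H^3(A,\Z)$ corresponds to $\frac{1}{2}\theta^*\big(\q_1(a_i^*)\q_1(1)^2\vac\big)$; so it suffices to compute $(g^{[[3]]})^*$ on these two families and to check that the resulting transformation matrix is that of $g^*$ on $H^1(A,\Z)\oplus H^3(A,\Z)$.

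First I would record the elementary compatibility with the morphism $\theta$. By definition $g^{[[3]]}$ is the restriction to the fibre $\kum{A}{2}=\Sigma^{-1}(0)$ of the automorphism $g\hilb{3}$ induced by $g$ on $A\hilb{3}$, so $\theta\circ g^{[[3]]}=g\hilb{3}\circ\theta$. Taking pullbacks gives the operator identity $(g^{[[3]]})^*\circ\theta^*=\theta^*\circ(g\hilb{3})^*$, which reduces the problem to understanding $(g\hilb{3})^*$ on the Nakajima classes appearing in Proposition~\ref{oddcohomology}.

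The key step, and the one requiring genuine care, is the naturality of the Nakajima operators under $g$: for all $m\geq1$ and $a\in H^*(A)$ one has
$$
(g\hilb{n+m})^*\circ\q_m(a)=\q_m(g^*a)\circ(g\hilb{n})^*
$$
as operators $H^*(A\hilb{n})\to H^*(A\hilb{n+m})$. This follows from the geometric definition of $\q_m(a)$ via the incidence variety $\mathcal Z_m\subset A\hilb{n}\times A\times A\hilb{n+m}$: the diagonal automorphism $G\defIs g\hilb{n}\times g\times g\hilb{n+m}$ preserves $\mathcal Z_m$, since the support condition $\supp(\xi')-\supp(\xi)=mx$ is invariant under $g$, so $G^*[\mathcal Z_m]=[\mathcal Z_m]$. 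Combining this with the relations $\pr_i\circ G=g\hilb{\cdot}\circ\pr_i$ and the base-change identity $(g\hilb{n+m})^*\circ\pr_{3*}=\pr_{3*}\circ G^*$ yields the claim. Applying it repeatedly to the vacuum (with $(g\hilb{0})^*\vac=\vac$) shows that $(g\hilb{3})^*$ sends any monomial $\q_{m_1}(c_1)\cdots\q_{m_k}(c_k)\vac$ to $\q_{m_1}(g^*c_1)\cdots\q_{m_k}(g^*c_k)\vac$.

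It then remains a formal computation. Using $g^*1=1$, the degree-preservation of $g^*$, and the linearity of $\q_1,\q_2$ in their arguments, I obtain
\begin{align*}
(g^{[[3]]})^*\tfrac{1}{2}\theta^*\big(\q_1(a_i^*)\q_1(1)^2\vac\big)&=\tfrac{1}{2}\theta^*\big(\q_1(g^*a_i^*)\q_1(1)^2\vac\big),\\
(g^{[[3]]})^*\tfrac{1}{2}\theta^*\big(\q_2(a_i)\q_1(1)\vac\big)&=\tfrac{1}{2}\theta^*\big(\q_2(g^*a_i)\q_1(1)\vac\big).
\end{align*}
Expanding $g^*a_i^*$ in the basis $(a_j^*)$ of $H^3(A,\Z)$ and $g^*a_i$ in the basis $(a_j)$ of $H^1(A,\Z)$, and noting that the two families are never mixed because $g^*$ respects the degree, shows that the matrix of $(g^{[[3]]})^*$ in the basis of Proposition~\ref{oddcohomology} is exactly the block-diagonal matrix of $g^*$ acting on $H^1(A,\Z)\oplus H^3(A,\Z)$. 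The only real obstacle is the naturality identity above; everything else is bookkeeping.
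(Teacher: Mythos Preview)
Your proof is correct and follows essentially the same route as the paper: reduce via the commutation $\theta\circ g^{[[3]]}=g\hilb{3}\circ\theta$ to the Hilbert scheme, and then use that $(g\hilb{3})^*$ sends $\q_m(a)$ to $\q_m(g^*a)$ on monomials in the vacuum. The only difference is that you spell out the naturality $(g\hilb{n+m})^*\circ\q_m(a)=\q_m(g^*a)\circ(g\hilb{n})^*$ from the incidence-variety definition, whereas the paper simply invokes it as ``by definition of $g^{[3]}$''; your version is the same argument with that step made explicit.
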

\begin{proof}
By Proposition \ref{oddcohomology}, we have an isomorphism:
$$f:H^1(A,\Z)\oplus H^3(A,\Z)\rightarrow H^3(K_2(A),\Z),$$
given by $f(a_i)=\frac{1}{2}\theta^*\Big(\q_2(a_i)\q_1(1)\vac\Big)$ and $f(a_i^*)=\frac{1}{2}\theta^*\Big( \q_1(a^*_i)\q_1(1)^2\vac \Big)$, for all $i\in\{1,...,4\}$.
We want to prove that $f\circ g^*=g^{[[3]]*}\circ f$. 
To do so, it is enough to show that $f\circ g^*(a_i)=g^{[[3]]*}\circ f(a_i)$ and $f\circ g^*(a_i^*)=g^{[[3]]*}\circ f(a_i^*)$ for all $i\in\{1,...,4\}$.
Let $g^{[3]}$ be the morphism induced by $g$ on $A^{[3]}$. By definition of $g^{[[3]]}$:
\begin{equation}
g^{[[3]]}\circ \theta=\theta\circ g^{[3]}.
\label{commuteThetag}
\end{equation}
Then by (\ref{commuteThetag}) and by defintion of $g^{[3]}$:
\begin{align*}
 g^{[[3]]*}\circ f(a_i)&=g^{[[3]]*}\circ\theta^*\Big(\q_2(a_i)\q_1(1)\vac\Big)\\
 &=\theta^*\circ g^{[3]*}\Big(\q_2(a_i)\q_1(1)\vac\Big)\\
 &=\theta^*\Big(\q_2(g^*(a_i))\q_1(1)\vac\Big)\\
 &=f\circ g^*(a_i).
\end{align*}
We prove $f\circ g^*(a_i^*)=g^{[[3]]*}\circ f(a_i^*)$ with the same method.

\end{proof}

\subsection{A monodromy representation on abelian surfaces and generalized Kummer fourfolds}\label{monodromyexplication}


Let $A$ be an abelian surface. We recall that a \emph{principal polarization} of $A$ is a polarization $L$ such that there exists a basis of $H_1(A,\Z)$, with respect to which the symplectic bilinear form on $H_1(A,\Z)$ induced by $c_1(L)$:
\begin{equation}
\omega_L(x,y)=x\cdot c_1(L)\cdot y,
\label{symplecticprinc}
\end{equation}
is given by the matrix:
$$\left( {\begin{array}{cccc}
   0 & 0 & 1 & 0 \\    0 &  0 & 0 & 1\\ -1 & 0 & 0 & 0\\ 0 & -1 & 0 & 0     
   \end{array} } \right).$$

We recall the following result. 
\begin{prop}
Let $(A,L)$ be a principally polarized abelian surface. The group $H_1(A,\Z)$ is endowed with the symplectic from $\omega_L$ defined in (\ref{symplecticprinc}). Let $\Mon (H_1(A,\Z))$ be the image of monodromy representations on $H_1(A,\Z)$.
Then $\Mon (H_1(A,\Z))\supset\Sp(H_1(A,\Z))$.
\end{prop}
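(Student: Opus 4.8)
The plan is to realize every symplectic automorphism as the monodromy of the universal family of principally polarized abelian surfaces. Fix a symplectic basis of $H_1(A,\Z)$ as in (\ref{symplecticprinc}), so that $\Sp(H_1(A,\Z))$ is identified with $\Sp(4,\Z)$ and $\omega_L$ becomes the standard form. Over the Siegel upper half-space $\mathfrak{H}_2$ (symmetric $2\times 2$ complex matrices with positive definite imaginary part) there is a tautological family whose fiber over $Z$ is the principally polarized surface $\C^2/(\Z^2+Z\,\Z^2)$, together with its canonical identification $H_1\cong\Z^4$. This family is $\Sp(4,\Z)$-equivariant: $\gamma=\left(\begin{smallmatrix}A&B\\C&D\end{smallmatrix}\right)$ sends the fiber over $Z$ isomorphically onto the fiber over $(AZ+B)(CZ+D)^{-1}$, and the induced map on $H_1$ is the standard action of $\gamma$.

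First I would record the easy inclusion: since the polarization class $c_1(L)$, hence the form $\omega_L$, is preserved along any family of principally polarized surfaces, one has $\Mon(H_1(A,\Z))\subseteq\Sp(H_1(A,\Z))$. The substance is the reverse inclusion. Because $\mathfrak{H}_2$ is a bounded symmetric domain it is contractible, and $\Sp(4,\Z)$ acts on it with quotient the moduli space $\mathcal{A}_2$ of principally polarized abelian surfaces; thus $\mathfrak{H}_2$ is the (orbifold) universal cover of $\mathcal{A}_2$ with deck group $\Sp(4,\Z)$. Each $\gamma\in\Sp(4,\Z)$ then corresponds to a loop in $\mathcal{A}_2$ based at the point representing $(A,L)$, and by the equivariance above the monodromy of the tautological family along this loop acts on $H_1(A,\Z)$ exactly as $\gamma$. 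Hence every element of $\Sp(H_1(A,\Z))$ lies in $\Mon(H_1(A,\Z))$.

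The main obstacle is the orbifold nature of $\mathcal{A}_2$: it is a quotient stack rather than a manifold, so the deck-group argument must be made precise. One clean way is to pass to a principal level-$n$ structure with $n\geq 3$, over which $\mathcal{A}_2[n]$ is a smooth fine moduli space carrying a genuine universal family; its topological fundamental group surjects onto the congruence subgroup $\Sp(4,\Z)[n]$, which is thereby realized as monodromy. It then remains to account for the finite quotient $\Sp(4,\Z)/\Sp(4,\Z)[n]$, coming from the residual automorphisms of the covering $\mathcal{A}_2[n]\to\mathcal{A}_2$ and their action on $H_1$; assembling the two pieces yields all of $\Sp(4,\Z)$. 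Alternatively, and perhaps more transparently, one may use that $\Sp(4,\Z)$ is generated by symplectic transvections and realize each transvection as a Picard--Lefschetz monodromy around a one-parameter semi-abelian degeneration in which a single vanishing cycle collapses, with the collapsing class ranging over primitive isotropic vectors. I expect the bookkeeping showing that the realized transvections exhaust a generating set of $\Sp(4,\Z)$ to be the part demanding the most care.
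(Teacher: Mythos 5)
Your argument is correct in outline, but it follows a genuinely different route from the paper. The paper reduces the statement to curves: by the Torelli theorem the moduli space $\mathcal{M}_2$ of genus-$2$ curves sits inside $\mathcal{A}_2$ via the Jacobian, and Theorem 6.4 of Farb--Margalit says the mapping class group of a genus-$2$ surface surjects onto $\Sp(4,\Z)$ through its action on $H_1$; since $H_1$ of the curve and of its Jacobian are isometric, the monodromy of the universal curve already realizes all of $\Sp(H_1(A,\Z))$. You instead work directly on the period domain $\mathfrak{H}_2$ with the $\Sp(4,\Z)$-equivariant tautological family, and handle the stackiness of $\mathcal{A}_2$ either by level structures or by Picard--Lefschetz transvections. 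Both are sound. The paper's route is shorter because it outsources the entire topological content to a standard mapping-class-group theorem, at the cost of a small implicit step (a general principally polarized abelian surface need not be a Jacobian, so one deforms into the Torelli-dense locus); your route is self-contained within abelian varieties and also yields the opposite inclusion $\Mon\subseteq\Sp$ for free, at the cost of the bookkeeping you already flag. On that bookkeeping: the level-$n$ step gives you the congruence subgroups $\Sp(4,\Z)[n]$ for all $n\geq 3$ as honest monodromy of fine moduli spaces, and a quick way to finish without analyzing the residual deck action is to note that $\Sp(4,\Z)[3]$ and $\Sp(4,\Z)[4]$ already generate $\Sp(4,\Z)$ (their images in $\Sp(4,\Z/12)$ are the two complementary factors of the kernel of reduction, by strong approximation); alternatively, the residual quotient is realized by parallel transport along non-closed paths between isomorphic fibers, which is admissible under the parallel-transport definition of monodromy used in this paper.
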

\begin{proof}
It can be seen as follows.
Let $\mathcal{M}_2$ be the moduli space of curves of genus $2$ and $\mathcal{A}_2$ be the moduli space of principally polarized abelian surfaces.
By the Torelli theorem (see for instance~\cite[Theorem 12.1]{Milne}), we have an injection $J:\mathcal{M}_2\hookrightarrow \mathcal{A}_2$ given by taking the Jacobian of the curve endowed with its canonical polarization. Moreover, the moduli spaces $\mathcal{M}_2$ and $\mathcal{A}_2$ are both of dimension 3. 

Now if $\mathscr{C}_2$ is a curve of genus 2, we have by Theorem 6.4 of~\cite{Farb}: 
$$\Mon (H_1(\mathscr{C}_2,\Z))\supset \Sp(H_1(\mathscr{C}_2,\Z)),$$
where the symplectic form on $H_1(\mathscr{C}_2,\Z)$ is given by the cup product. 
Then the result follows from the fact that the lattices $H_1(\mathscr{C}_2,\Z)$ and $H_1(J(\mathscr{C}_2),\Z)$ are isometric.
\end{proof}
\begin{rmk}\label{SPA2}
Let $(A,L)$ be a principally polarized abelian surface and $p$ a prime number. The group $H_1(A,\Z)$ tensorized by $\mathbb{F}_p$ can be seen as the group $A[p]$ of points of $p$-torsion on $A$ and the form $\omega_L\otimes\mathbb{F}_p$ provides a symplectic form on $A[p]$. Then $\Mon (A[p])$, the image of the monodromy representation on $A[p]$ contains the group $\Sp(A[p])$. 
\end{rmk}
Now, we are ready to recall Proposition 5.2 of~\cite{Hassett} on the monodromy of the generalized Kummer fourfold.
\begin{prop}\label{Hassettmonodromy}
Let $A$ be an abelian surface and $K_2(A)$ the associated generalized Kummer fourfold. 
The image of the monodromy representation on $\Pi=\left\langle\left. Z_\tau\right|\ \tau\in A[3]\right\rangle$ contains the semidirect product
$\Sp(A[3])\ltimes A[3]$ which acts as follows:
$$f\cdot Z_\tau= Z_{f(\tau)}\ \text{and}\ \tau'\cdot Z_\tau= Z_{\tau+\tau'},$$
for all $f\in \Sp(A[3])$ and $\tau'\in A[3]$.
\end{prop}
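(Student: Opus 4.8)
The plan is to realize the two factors of $\Sp(A[3])\ltimes A[3]$ by two geometrically distinct sources of monodromy and to track how each acts on the classes $Z_\tau$. Recall that $Z_\tau$ is the class of the irreducible surface $\Hilb^3_\tau$ of length-three subschemes concentrated at the point $\tau\in A[3]$: since $3\tau=0$ such a subscheme lies on $\kum{A}{2}$, and by Brian\c con's irreducibility the punctual Hilbert scheme is a two-dimensional locus, so $Z_\tau\in H^4(\kum{A}{2},\Z)$. Any monodromy operator arising from a geometric family carries such a surface to a surface of the same type, hence permutes the set $\{Z_\tau\}_{\tau\in A[3]}$; the content of the statement is that these permutations exhaust the affine symplectic action on the index set $A[3]$.

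First I would produce the factor $\Sp(A[3])$ from the monodromy of the abelian surface itself. The relative generalized Kummer construction is functorial for families of principally polarized abelian surfaces, so monodromy on $H_1(A,\Z)$ is compatible with the induced monodromy on $\kum{A}{2}$, in the same manner as the functoriality $g\mapsto g^{[[3]]}$ of Corollary~\ref{actionH3}. The monodromy representation on $H_1(A,\Z)$ preserves $\omega_L$, hence acts symplectically on $A[3]=H_1(A,\Z)\otimes\mathbb F_3$, and by Remark~\ref{SPA2} its image contains $\Sp(A[3])$. A monodromy transformation inducing $f\in\Sp(A[3])$ permutes the three-torsion points by $f$, and therefore carries the surface over $\tau$ to the surface over $f(\tau)$, giving $f\cdot Z_\tau=Z_{f(\tau)}$.

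The harder factor is the translation action, which I would obtain from the monodromy of the submersion $\Sigma:A\hilb{3}\to A$ along loops in the base. The central fibre $\Sigma^{-1}(0)$ is $\kum{A}{2}$, and $\Sigma$ is proper, so by Ehresmann it is a fibre bundle whose monodromy is a homomorphism $\pi_1(A,0)\to\pi_0(\Diff(\kum{A}{2}))$. The key input is Lemma~\ref{petitlemmeenplus}: the pullback of $\Sigma$ along $3\cdot:A\to A$ is the \emph{trivial} bundle $\pr_2:\kum{A}{2}\times A\to A$. Since $3\cdot$ acts as multiplication by $3$ on $\pi_1(A)\cong\Z^4$, the monodromy of $\Sigma$ annihilates $3\,\pi_1(A)$ and factors through $\pi_1(A)/3\,\pi_1(A)=A[3]$. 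Unwinding the identification $\kum{A}{2}\times A=A\hilb{3}\times_{A,\,3\cdot}A$, the deck group $A[3]$ acts by $(\xi,a)\mapsto(t_{-\tau'}\xi,\,a+\tau')$, so the monodromy around the loop representing $\tau'$ is the automorphism of $\kum{A}{2}$ induced by the translation $t_{\tau'}$. As $t_{\tau'}$ sends a subscheme supported at $\tau$ to one supported at $\tau+\tau'$, this yields $\tau'\cdot Z_\tau=Z_{\tau+\tau'}$ (the sign being immaterial, as $-\tau'$ ranges over all of $A[3]$).

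Finally, the two families of operators together generate $\Sp(A[3])\ltimes A[3]$ with the asserted affine action, and all of them lie in the monodromy group, which proves the inclusion. I expect the main obstacle to be the third paragraph: checking that the monodromy of $\Sigma$ is genuinely the translation action (and not trivial) requires carefully chasing the trivialization of the pulled-back family and the compatibility of the deck-transformation action with the covering $\Theta$ of Lemma~\ref{petitlemmeenplus}, rather than any deep new ingredient.
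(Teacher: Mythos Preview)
The paper does not prove this proposition: it is quoted verbatim from Hassett--Tschinkel \cite[Prop.~5.2]{Hassett}, so there is no ``paper's own proof'' to compare against. Your argument is, however, a correct and self-contained reconstruction, and it is essentially the same strategy as in the original reference: the symplectic factor comes from monodromy of principally polarized abelian surfaces (exactly your first paragraph, using Remark~\ref{SPA2}), and the translation factor comes from parallel transport in the Albanese fibration $\Sigma:A\hilb{3}\to A$. Your use of the pullback square from Lemma~\ref{petitlemmeenplus} to trivialize the family after the isogeny $3\cdot$ and hence to factor the monodromy through $\pi_1(A)/3\pi_1(A)\cong A[3]$, with the deck action forcing the translation formula, is clean and correct; this is a legitimate family of IHS manifolds in the sense of the definition in Section~\ref{Involution}, so the resulting operators are parallel-transport operators.

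One inaccuracy to fix: you write that $Z_\tau$ is ``the class of the irreducible surface of length-three subschemes concentrated at $\tau$''. That surface is $W_\tau$; by Notation~\ref{TheZs} one has $Z_\tau=Y_p-W_\tau$. This does not damage your argument, because $Y_p$ is independent of $p$ and lies in $\Sym$ (Corollary~\ref{Classuvw}), hence is fixed by every monodromy operator you produce; so the permutation action on $\{W_\tau\}$ and on $\{Z_\tau\}$ coincide. Just correct the identification so the reader is not confused.
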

\subsection{Actions of the symplectic group over finite fields}\label{Section_Symplectic}
The aim of this subsection is to provide some special computations used in Section~\ref{integralbasisH4}.

Let $V$ be a symplectic vector space of dimension $n\in 2\mathbb{N}$ over a field $k$ with a nondegenerate symplectic form $\omega : \Lambda^2 V \rightarrow k$. A line is a one-dimensional subspace of $V$ through the origin, a plane is a two-dimensional subspace of $V$. A plane $P\subset V$ is called isotropic, if $\omega (x,y)=0$ for any $x,y\in P$, otherwise non-isotropic.  The symplectic group $\Sp V$ is the set of all linear maps $\phi : V\rightarrow V$ with the property $\omega(\phi(x),\phi(y)) = \omega(x,y)$ for all $ x,y\in V$.
\begin{proposition}\label{transitively}
The symplectic group $\Sp V$ acts transitively on the set of non-isotropic planes as well as on the set of isotropic planes.
\end{proposition}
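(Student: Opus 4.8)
The plan is to reduce both transitivity claims to the single fact that $\Sp V$ acts transitively on the set of symplectic bases of $V$, and then to verify that a suitable basis of each type of plane extends to a symplectic basis of standard shape.

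Recall that a symplectic basis of $V$, with $\dim V = 2m$, is an ordered basis $e_1, f_1, \ldots, e_m, f_m$ with $\omega(e_i, f_i) = 1$ and all other pairings among basis vectors equal to zero. Given two symplectic bases, the unique linear map sending one to the other preserves $\omega$ by construction, hence lies in $\Sp V$; thus $\Sp V$ acts (simply) transitively on symplectic bases. First I would treat the non-isotropic case. If $P$ is non-isotropic, then $\omega|_P$ is a nonzero alternating form on a two-dimensional space, hence nondegenerate, so $P$ has a basis $e, f$ with $\omega(e,f) = 1$. I would then use the symplectic basis extension lemma: setting $W = \langle e, f\rangle$, nondegeneracy of $\omega|_W$ gives $V = W \oplus W^\perp$ with $W^\perp$ symplectic of dimension $2m-2$; by induction on $m$, $W^\perp$ has a symplectic basis, and prepending $(e,f)$ yields a symplectic basis of $V$ whose first hyperbolic pair spans $P$. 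Given two non-isotropic planes, both extend to symplectic bases of this shape, and the symplectic map matching the two bases carries the first plane onto the second.

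For the isotropic case I would pick a basis $u_1, u_2$ of $P$, so $\omega(u_1, u_2) = 0$. The key step is to extend $\{u_1, u_2\}$ to a symplectic basis in which $u_1 = e_1$ and $u_2 = e_2$ occupy the two creation slots. Since $v \mapsto \omega(v,-)$ is injective, the functionals $\omega(u_1,-)$ and $\omega(u_2,-)$ are independent, so one may solve for $f_1, f_2$ with $\omega(e_i, f_j) = \delta_{ij}$; a single correction $f_1 \mapsto f_1 - \omega(f_1,f_2)\,u_2$ then arranges $\omega(f_1,f_2) = 0$ without disturbing the other relations. Passing to the symplectic complement of $\langle e_1, f_1, e_2, f_2\rangle$ and completing by induction produces the desired symplectic basis; this uses $\dim V \geq 4$, which holds in the application where $V = A[3]$ is four-dimensional over $\mathbb{F}_3$. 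Transitivity follows as before: two isotropic planes give two symplectic bases with the planes spanned by $(e_1, e_2)$, and the matching symplectic map sends one to the other.

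The main obstacle is precisely the isotropic extension lemma, namely producing the dual pair $f_1, f_2$ with $\omega(f_1, f_2) = 0$ while keeping $\omega(e_1, e_2) = 0$; this is a symplectic Gram--Schmidt argument relying on the nondegeneracy of $\omega$. It is in fact the symplectic case of Witt's extension theorem, so one may alternatively invoke that theorem directly to obtain both transitivity statements simultaneously, the isometries between the two chosen planes extending to elements of $\Sp V$.
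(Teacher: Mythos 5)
Your proposal is correct and follows essentially the same route as the paper: choose spanning vectors of the two planes with matching pairing values, complete each pair to a symplectic basis of $V$, and let the symplectic map matching the bases carry one plane to the other. The paper leaves the basis-extension step implicit ("it is now easy to see"), whereas you spell out the symplectic Gram--Schmidt correction needed in the isotropic case; this is a useful elaboration but not a different argument.
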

\begin{proof}
Given two planes $P_1$ and $P_2$, we may choose vectors $v_1,v_2,w_1,w_2$ such that $v_1,v_2$ span $P_1$ and $w_1,w_2$ span $P_2$ and $\omega(v_1,v_2) =\omega(w_1,w_2)$. We complete $\{v_1,v_2\}$ as well as $\{w_1,w_2\}$ to a symplectic basis of $V$.
Then define $\phi(v_1)=w_1$ and $\phi(v_2)=w_2$. 
It is now easy to see that the definition of $\phi$ can be extended to the remaining basis elements to give a symplectic morphism.
\end{proof}
\begin{remark} \label{simplePlanes}
The set of planes in $V$ can be identified with the simple tensors in $\Lambda^2V$ up to multiples. Indeed, given a simple tensor $v\wedge w \in \Lambda^2 V$, the span of $v$ and $w$ yields the corresponding plane. Conversely, any two spanning vectors $v$ and $w$ of a plane give the same element $v\wedge w$ (up to multiples).
\end{remark}
From now on, we assume that $k$ is finite of cardinality $q$.
\begin{remark} \label{PlaneTriple}
 If $k$ is the field with two elements, then the set of planes in $V$ can be identified with the set $\{\{x,y,z\}\;|\;x,y,z\in V\backslash\{0\},\,x+y+z=0\}$. Observe that for such a $\{x,y,z\}$, $\omega(x,y)=\omega(x,z)=\omega(y,z)$ and this value gives the criterion for isotropy.
\end{remark}
\begin{proposition}\label{OrbitesSp}
\begin{align}
&\text{The number of lines in $V$ is }\frac{q^n-1}{q-1}, \\
&\text{the number of planes in $V$ is }\frac{(q^n-1)(q^{n-1}-1)}{(q^2-1)(q-1)}, \\
&\text{the number of isotropic planes in $V$ is }\frac{(q^n-1)(q^{n-2}-1)}{(q^2-1)(q-1)}, \\
&\text{the number of non-isotropic planes in $V$ is }\frac{q^{n-2}(q^n-1)}{q^2-1}.
\end{align}
\end{proposition}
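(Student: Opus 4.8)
The plan is to establish all four formulas by the elementary device of counting ordered bases of the relevant type and dividing by the number of ordered bases carried by a single fixed subspace of that type; note that the transitivity of $\Sp V$ from Proposition~\ref{transitively} is not actually needed for these counts, so this can be done purely combinatorially.

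First I would dispose of the count of lines: the $q^n-1$ nonzero vectors of $V$ partition into lines, each containing $q-1$ of them, which immediately gives $\frac{q^n-1}{q-1}$ lines. For the total number of planes, I would count ordered pairs $(v,w)$ of linearly independent vectors, obtaining $(q^n-1)(q^n-q)$ choices ($v$ any nonzero vector, $w$ any vector outside $\langle v\rangle$); since a fixed plane admits $(q^2-1)(q^2-q)$ ordered bases, dividing and cancelling the common factor $q(q-1)$ yields $\frac{(q^n-1)(q^{n-1}-1)}{(q^2-1)(q-1)}$.

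The one genuinely new ingredient enters with the isotropic count, and I expect it to be the crux. For a nonzero $v$, nondegeneracy of $\omega$ makes $w\mapsto\omega(v,w)$ a nonzero linear functional, so its kernel $v^\perp$ has dimension $n-1$, hence $q^{n-1}$ elements; the subtle point to keep track of is that $v\in v^\perp$ because $\omega(v,v)=0$, so $\langle v\rangle\subset v^\perp$. An ordered basis $(v,w)$ of an isotropic plane is then a nonzero $v$ together with $w\in v^\perp\smallsetminus\langle v\rangle$, giving $(q^n-1)(q^{n-1}-q)$ ordered pairs; dividing by $(q^2-1)(q^2-q)$ and simplifying produces $\frac{(q^n-1)(q^{n-2}-1)}{(q^2-1)(q-1)}$.

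Finally, for non-isotropic planes I would either subtract the isotropic count from the total, using $(q^{n-1}-1)-(q^{n-2}-1)=q^{n-2}(q-1)$ to watch the difference collapse to $\frac{q^{n-2}(q^n-1)}{q^2-1}$, or count directly by taking $v$ nonzero and $w\notin v^\perp$ (which automatically forces linear independence), for $(q^n-1)(q^n-q^{n-1})$ ordered pairs, and divide as before. Apart from the dimension count for $v^\perp$, everything reduces to routine factorization, so the only place where care is required is tracking that $\langle v\rangle$ sits \emph{inside} $v^\perp$ in the isotropic case while being disjoint from the admissible $w$'s in the non-isotropic case.
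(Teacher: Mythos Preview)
Your proof is correct and follows essentially the same approach as the paper: both arguments are elementary counts obtained by choosing generating data and dividing by the overcount. The only cosmetic difference is that the paper phrases the count in terms of lines (pick a line $\ell_1$, then a second line in $V/\ell_1$ or in $\ell_1^\perp/\ell_1$, and divide by the $q+1$ lines in a plane), whereas you work with ordered pairs of vectors and divide by $(q^2-1)(q^2-q)$; this is the same computation one level down.
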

\begin{proof}
A line $\ell$ in $V$ is determined by a nonzero vector. There are $q^n - 1$ nonzero vectors in $V$ and $q-1$ nonzero vectors in $\ell$. A plane $P$ is determined by a line $\ell_1 \subset V$ and a unique second line $\ell_2\in V/\ell_1$. We have $\frac{q^2-1}{q-1}$ choices for $\ell_1$ in $P$. The number of planes is therefore
$$
\frac{ \frac{q^n-1}{q-1} \cdot\frac{q^{n-1}-1}{q-1}}{\frac{q^2-1}{q-1} } = \frac{(q^n-1)(q^{n-1}-1)}{(q^2-1)(q-1)}.
$$
For an isotropic plane we have to choose the second line from $\ell_1^\perp/\ell_1$. This is a space of dimension $n-2$, hence the formula. The number of non-isotropic planes is the difference of the two previous numbers.
\end{proof}

We want to study the free $k$-module $k[V]$ with basis $\{X_i \,|\, i\in V\}$. It carries a natural $k$-algebra structure, given by
$X_i X_j \defIs  X_{i+j}$ with unit $1=X_0$. This algebra is local with maximal ideal $\mathfrak m$ generated by all elements of the form $(X_i-1)$.

We introduce an action of $\Sp (4,k)$ on $k[V]$ by setting $\phi(X_i) = X_{\phi(i)}$. Furthermore, the underlying additive group of $V$ acts on $k[V]$ by $v( X_i) = X_{i+v} =X_iX_v$. 
\begin{definition}
For a line $\ell\subset V$ define $S_\ell \defIs  \sum_{i\in\ell} X_i$. For a vector $0\neq v\in \ell$ we set $S_v\defIs S_\ell$.
\end{definition}
\begin{lemma}\label{SympLemma}
Let $P\subset V$ be a plane and $\ell_1,\ell_2\subset P$ two different lines spanning $P$. Then we have
$$
S_{\ell_1}S_{\ell_2}=\sum_{i\in P}X_i = \sum_{\ell\subset P}S_\ell.
$$
\end{lemma}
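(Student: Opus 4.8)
The plan is to expand the product $S_{\ell_1}S_{\ell_2}$ directly using the algebra relation $X_iX_j=X_{i+j}$, then establish the second equality by a counting argument, being careful about the characteristic of $k$. The two equalities are essentially independent, so I would treat them in turn.

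First I would prove the left-hand equality. Since $\ell_1$ and $\ell_2$ are distinct lines, they meet only in the origin, so $\ell_1\cap\ell_2=\{0\}$; being two distinct one-dimensional subspaces of the plane $P$, they span it, whence $P=\ell_1\oplus\ell_2$ and the addition map $\ell_1\times\ell_2\to P$, $(i,j)\mapsto i+j$, is a bijection. Using $X_iX_j=X_{i+j}$ I would then compute
$$
S_{\ell_1}S_{\ell_2}=\sum_{i\in\ell_1}\sum_{j\in\ell_2}X_iX_j=\sum_{i\in\ell_1}\sum_{j\in\ell_2}X_{i+j}=\sum_{p\in P}X_p,
$$
where the last step is exactly the bijection just described. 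This part is purely formal.

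For the right-hand equality I would count how often each basis element occurs in $\sum_{\ell\subset P}S_\ell=\sum_{\ell\subset P}\sum_{i\in\ell}X_i$. Every nonzero vector of $P$ lies on exactly one line of $P$, so each such $X_p$ is counted once; the origin lies on all lines, and since $P$ has $q^2-1$ nonzero vectors with $q-1$ on each line, $P$ contains $(q^2-1)/(q-1)=q+1$ lines, so $X_0$ is counted $q+1$ times. This gives
$$
\sum_{\ell\subset P}S_\ell=\sum_{p\in P}X_p+q\cdot X_0.
$$
The main (and genuinely the only) delicate point is that the spurious term vanishes: because $k$ is a finite field, its characteristic divides $q$, so $q\cdot 1=0$ in $k$ and hence $q\cdot X_0=0$ in $k[V]$. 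Thus $\sum_{\ell\subset P}S_\ell=\sum_{p\in P}X_p$, which together with the first computation completes the chain of equalities. I expect the characteristic argument to be the crux, since without it the overcounting of the origin would obstruct the identity.
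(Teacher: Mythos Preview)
Your proof is correct and follows essentially the same approach as the paper's. The paper's own proof is terse---it calls the first equality ``clear'' and justifies the second by the remark that ``every point $i\in P$ is contained in one line, if we count modulo $q$''---which is exactly your counting argument together with the observation that $q\cdot X_0=0$ in $k[V]$.
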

\begin{proof}
The first equality is clear. For the second equality observe that every point $i\in P$ is contained in one line, if we count modulo $q$.
\end{proof}

\begin{definition} \label{SymplecticIdeal}
We define two subsets of $k[V]$:
\begin{gather*}
M \defIs  \left\{\sum_{i\in P}X_i \,|\, P\subset V \text{ plane}\right\}, \\
N  \defIs  \left\{\sum_{i\in P}X_i \,|\, P\subset V \text{ non-isotropic plane}\right\}.
\end{gather*}
Let $(M)$ and $(N)$ be the ideals generated by $M$ and $N$, respectively. 
Further, let $D$ be the linear span of $\{v(b) - b \,|\, b\in N, v\in V \}$. Then $D$ is in fact an ideal, namely the product of ideals $\mathfrak m\cdot (N)$.
\end{definition}
\begin{proposition}
We have $(M)=(N)$.
\end{proposition}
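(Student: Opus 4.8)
The plan is to prove the two inclusions separately. Since $N\subseteq M$ by Definition~\ref{SymplecticIdeal}, we get $(N)\subseteq (M)$ for free, and the whole content is the reverse inclusion $(M)\subseteq(N)$. As $(M)$ is generated by the plane-sums $\sum_{i\in P}X_i$, and those attached to non-isotropic planes already lie in $N$, it suffices to show that \emph{every isotropic plane-sum lies in $(N)$}. Here I would exploit symmetry: an element $\phi\in\Sp V$ acts on $k[V]$ as a ring automorphism with $\phi\big(\sum_{i\in P}X_i\big)=\sum_{i\in\phi(P)}X_i$, and since $\phi$ preserves $\omega$ it permutes the non-isotropic planes, so $(N)$ is $\Sp V$-stable. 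By Proposition~\ref{transitively}, $\Sp V$ acts transitively on isotropic planes; hence it is enough to treat a single isotropic plane $P_0$.

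Then I would fix a symplectic basis $e_1,f_1,e_2,f_2$ (so $\omega(e_i,f_i)=1$ and all other pairings vanish) and take $P_0=\langle e_1,e_2\rangle$, which is isotropic. Writing $S_v$ for the line-sum through $v$, Lemma~\ref{SympLemma} gives $\sum_{i\in P_0}X_i=S_{e_1}S_{e_2}$. The key is to introduce the planes $Q_c:=\langle e_1,\,e_2+cf_1\rangle$ for $c\in k^\times$ together with $Q_0:=\langle e_1,f_1\rangle$, all of which are \emph{non-isotropic} because $\omega(e_1,e_2+cf_1)=c\neq 0$ and $\omega(e_1,f_1)=1$. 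Summing their plane-sums and simplifying with Lemma~\ref{SympLemma} (using $q\equiv 0$ in $k$ to collapse the degenerate terms), I expect to arrive at the identity
\[
\Big(\sum_{i\in P_0}X_i\Big)\,(1-S_{f_1}) \;=\; -\sum_{c\in k^\times}\Big(\sum_{i\in Q_c}X_i\Big)\;-\;\sum_{i\in Q_0}X_i,
\]
whose right-hand side is a combination of non-isotropic plane-sums and therefore lies in $(N)$.

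To finish, I would observe that $k[V]$ is a local Artinian ring with nilpotent maximal ideal $\mathfrak m$, and that $S_{f_1}=\sum_{i\in\langle f_1\rangle}(X_i-1)+q\cdot 1=\sum_{i\in\langle f_1\rangle}(X_i-1)\in\mathfrak m$, since $q=0$ in $k$. Hence $S_{f_1}$ is nilpotent, $1-S_{f_1}$ is a unit in $k[V]$, and multiplying the displayed identity by its inverse (which is legitimate, as $(N)$ is an ideal) yields $\sum_{i\in P_0}X_i\in(N)$. Combined with the transitivity reduction this gives $M\subseteq(N)$, hence $(M)\subseteq(N)$ and finally $(M)=(N)$.

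I expect the main obstacle to be pinning down the right identity in the middle step. The naive averages — over all planes, or over all planes through a fixed line — all vanish, because each such count is a multiple of $q$ and $q=0$ in $k$. The maneuver that unlocks the proof is instead to multiply the target $S_{e_1}S_{e_2}$ by the transverse line-sum $S_{f_1}$ and to recognize $1-S_{f_1}$ as a unit; checking that the cross-terms organize exactly into non-isotropic plane-sums is the one computation that needs genuine care.
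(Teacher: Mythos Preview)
Your argument is correct. The displayed identity checks out: expanding $S_{e_1}S_{e_2}S_{f_1}$ via the lines of the (isotropic) plane $\langle e_2,f_1\rangle$ gives $S_{e_1}S_{e_2}+S_{e_1}S_{f_1}+\sum_{c\in k^\times}S_{e_1}S_{e_2+cf_1}$, and each of the latter terms is a non-isotropic plane-sum since $\omega(e_1,f_1)=1$ and $\omega(e_1,e_2+cf_1)=c$; the rest follows exactly as you say, because $S_{f_1}\in\mathfrak m$ and $k[V]$ is local Artinian.

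The paper's proof uses the same triple product $S_uS_vS_w$ but organizes it differently: it expands along the \emph{non-isotropic} plane $P'=\langle u,v\rangle$ instead, obtaining $S_vS_w=S_uS_vS_w-\sum_{\lambda\in k}S_{u+\lambda v}S_w$. Here $S_uS_vS_w\in(N)$ because $S_uS_v\in N$, and the remaining terms are non-isotropic plane-sums provided $\omega(u,w)\neq 0$ (a condition the paper tacitly assumes in addition to $\omega(u,v)\neq 0$). So the paper gets a direct expression without inverting anything, at the cost of needing $u$ transverse to both $v$ and $w$; your route trades that hypothesis for the observation that $1-S_{f_1}$ is a unit. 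Both are short and elementary; your version has the small advantage of making the role of the local ring structure explicit and avoiding the unstated extra condition on $u$.
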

\begin{proof}
We have to show that $\sum_{i\in P}X_i \in (N)$ for an isotropic plane $P$. Let $v,w$ be two spanning vectors of $P$ and $u$ a vector with $\omega(u,v)\neq 0$. Denote $P'$ the non-isotropic plane spanned by $u $ and $v$. By Lemma~\ref{SympLemma}, we have
$$
S_uS_vS_w = \sum_{\ell\subset P'} S_{\ell}S_w= \left(S_v+\sum_{\lambda\in k}S_{u + \lambda v}\right)S_w.
$$
Now $w$ spans a non-isotropic plane with every line in $P'$, except one, namely the line that contains $v$. So it follows that
$$
\sum_{i\in P}X_i = S_vS_w = S_uS_vS_w  - \sum_{\lambda\in k} S_{u + \lambda v}S_w, 
$$
and we see that the right hand side is an element of $(N)$.
\end{proof}
For the rest of this section, we assume $\dim_k V=4$. 
\begin{proposition} \label{SymplecticIdealsDimension}
The following table illustrates the dimensions of $(N)$ and $D$ for some $k$.
\vspace{2mm}
\begin{center}
\begin{tabular}{c||c|c|c}
 $k$ & $\dim_k(N)$ & $\dim_k D$ \\
\hline
$\mathbb F_2$   & 11 &  5  \\
$\mathbb F_3$  & 50 & 31  \\
$\mathbb F_5$  &355 &270
\end{tabular}
\end{center}
\end{proposition}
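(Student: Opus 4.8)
The plan is to determine, for $k=\mathbb{F}_p$ (all three fields in the table are prime), the exact structure of the ideals $(N)$ and $D$ inside $k[V]$ and then read off the three pairs of dimensions from a single Hilbert--series computation. Choosing a basis $f_1,\dots,f_4$ of $V$ and setting $s_i\defIs X_{f_i}-1$, the relation $X_{f_i}^p=X_0=1$ forces $s_i^p=0$, so $k[V]\cong\mathbb{F}_p[s_1,\dots,s_4]/(s_1^p,\dots,s_4^p)$ is a local ring with maximal ideal $\mathfrak{m}=(s_1,\dots,s_4)$, and its associated graded ring is $R\defIs\operatorname{gr}k[V]=\mathbb{F}_p[L_1,\dots,L_4]/(L_i^p)$, where $L_i=\bar s_i$ generate $R_1\cong\mathfrak{m}/\mathfrak{m}^2\cong V$. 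The Hilbert series of $R$ is $(1+t+\dots+t^{p-1})^4$. The heart of the proof will be the clean structural statement $(N)=\mathfrak{m}^{2(p-1)}$, from which $D=\mathfrak{m}\cdot(N)=\mathfrak{m}^{2p-1}$.

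First I would compute the generators explicitly. For $0\neq u\in V$, writing $s_u\defIs X_u-1\in\mathfrak{m}$, the geometric sum over the line $\ell=\langle u\rangle$ is $S_\ell=\sum_{c\in\mathbb{F}_p}X_{cu}=\sum_{c=0}^{p-1}(1+s_u)^c=\frac{(1+s_u)^p-1}{s_u}=s_u^{\,p-1}$, using $(1+s_u)^p=1+s_u^p$ in characteristic $p$. By Lemma~\ref{SympLemma} every plane sum then factors as $S_P=S_{\ell_1}S_{\ell_2}=s_u^{\,p-1}s_w^{\,p-1}\in\mathfrak{m}^{2(p-1)}$. Since $\mathfrak{m}^{2(p-1)}$ is an ideal, this already gives the easy inclusion $(M)\subseteq\mathfrak{m}^{2(p-1)}$, and the real work is the reverse inclusion.

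For the reverse inclusion I would pass to $R$ and show that the leading forms $L_u^{\,p-1}L_w^{\,p-1}$ of the generators span the homogeneous component $R_{2(p-1)}$. This splits into (a) the $(p-1)$-st powers $\{\ell^{\,p-1}:\ell\in R_1\}$ span $\Sym^{p-1}(R_1)=R_{p-1}$, and (b) the multiplication $R_{p-1}\otimes R_{p-1}\to R_{2(p-1)}$ is surjective. Statement (b) is elementary: any monomial of degree $2(p-1)$ with all exponents $\leq p-1$ can be split into two factors of degree $p-1$ with exponents $\leq p-1$. Granting (a) and (b), the products $\ell^{\,p-1}m^{\,p-1}$ span $R_{2(p-1)}$; the proportional pairs vanish (as $2(p-1)\geq p$ gives $\ell^{\,2(p-1)}=0$), so it is exactly the leading forms $L_u^{\,p-1}L_w^{\,p-1}$ over all planes that span $R_{2(p-1)}=\mathfrak{m}^{2(p-1)}/\mathfrak{m}^{2p-1}$. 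A Nakayama argument now upgrades this: the plane sums $S_P$ lie in $\mathfrak{m}^{2(p-1)}$ and their images span $\mathfrak{m}^{2(p-1)}/\mathfrak{m}\cdot\mathfrak{m}^{2(p-1)}$, so they generate $\mathfrak{m}^{2(p-1)}$, i.e.\ $(M)=\mathfrak{m}^{2(p-1)}$. Invoking the equality $(M)=(N)$ proved above, I conclude $(N)=\mathfrak{m}^{2(p-1)}$ and hence $D=\mathfrak{m}^{2p-1}$.

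The dimensions then follow by summing the coefficients of $(1+t+\dots+t^{p-1})^4$: one has $\dim_k(N)=\sum_{d\geq 2(p-1)}[t^d]$ and $\dim_k D=\sum_{d\geq 2p-1}[t^d]$, which for $p=2,3,5$ give precisely $(11,5)$, $(50,31)$ and $(355,270)$. The main obstacle is statement (a): over $\mathbb{F}_p$ the spanning of $\Sym^{p-1}$ by $(p-1)$-st powers of linear forms is a genuinely characteristic-$p$ phenomenon and the point most likely to need care. I would justify it by observing that the span is a nonzero $\GL(R_1)=\GL_4(\mathbb{F}_p)$-submodule and that $\Sym^{p-1}(R_1)$ is irreducible, its highest weight $(p-1)\varpi_1$ being $p$-restricted; hence the span is everything. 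For the three primes in the table this last point can equally well be settled by a direct finite rank computation, which also serves as an independent check of the whole result.
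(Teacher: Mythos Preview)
Your argument is correct and gives considerably more than the paper does. The paper offers no proof at all beyond the one-line remark that ``an easy way to get these numbers is to count elements in the respective vector spaces using a computer,'' so the numerical table is presented there as the output of a brute-force enumeration. Your approach is genuinely different: you identify the ideals structurally as $(N)=\mathfrak m^{2(p-1)}$ and $D=\mathfrak m^{2p-1}$, and the table then drops out of the Hilbert series $(1+t+\cdots+t^{p-1})^4$. This is a real improvement --- it explains \emph{why} the numbers are what they are and gives a uniform statement valid for every prime $p$, not just the three listed.

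Two minor comments. First, your verification that $S_\ell=s_u^{p-1}$ goes through cleanly once one expands $\sum_{c=0}^{p-1}(1+s_u)^c$ term by term and uses the hockey-stick identity $\sum_{c=j}^{p-1}\binom{c}{j}=\binom{p}{j+1}\equiv 0\pmod p$ for $j<p-1$; this is perhaps worth making explicit rather than relying on a formal geometric-series identity in a ring with zero-divisors. Second, the point you flag as delicate --- that the $(p-1)$-st powers of linear forms span $\Sym^{p-1}(R_1)$ over $\mathbb F_p$ --- admits a more elementary justification than the irreducibility argument you sketch: the classical polarization identity expresses $d!\cdot x_1\cdots x_d$ as a $\Z$-linear combination of $d$-th powers of linear forms, and for $d=p-1$ one has $(p-1)!\equiv -1\not\equiv 0\pmod p$ by Wilson's theorem. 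This avoids invoking Steinberg's restriction theorem for $\GL_4(\mathbb F_p)$ and makes the argument entirely self-contained.
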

An easy way to get these numbers is to count elements in the respective vector spaces using a computer.
\begin{rmk}\label{c2}
We remark that $X\defIs \sum_{i\in V}X_i\in D$. Indeed, let $P$, $P'$ be two non-isotropic planes with $P \cap P' = 0$. Then $X = \left( \sum_{i\in P}X_i\right)\left(  \sum_{i\in P'}X_i \right)$ and both factors are contained in $(N)\subset \mathfrak m$, so $X\in \mathfrak m \cdot (N) = D$.
\end{rmk}
Let us now consider some special orthogonal sums. Take two vectors $v,w\in V$ with $\omega(v,w)=1$ and set $x\defIs  (v\wedge w)^2\in \Sym^2 (\Lambda^2V)$. Denote $P$ the plane spanned by $v$ and $w$ and set $y\defIs  \sum_{i\in P}X_i\in  k[V]$.

We consider now the action of $\Sp V$ on $\Sym^2 (\Lambda^2V)\oplus k[V]$. 
Denote $O$ the vector space spanned by the elements $\phi(x)\oplus \phi(z),$ for $\phi \in \Sp V,$ $z \in (y)$
and $U$ the vector space spanned by the elements $\phi(x),$ for $\phi \in \Sp V$.
\begin{proposition}\label{CombinedSymplectic}
Then we have by numerical computation:
\vspace{2mm}
\begin{center}
\begin{tabular}{c||c|c}
 $k$ & $\dim_k O$  & $\dim_k U$\\
\hline
$\mathbb F_2$ & 11 & 6 \\
$\mathbb F_3$ & 51  & 20 \\
$\mathbb F_5$ & 375  & 20 
\end{tabular}
\end{center}
\end{proposition}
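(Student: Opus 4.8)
The proposition is phrased as a numerical computation, and the plan is to set that computation up cleanly and then run it. First I would fix a symplectic basis $e_1,e_2,f_1,f_2$ of $V$ with $\omega(e_i,f_j)=\delta_{ij}$ and all other pairings zero. In these coordinates $\Lambda^2V$ carries the standard six-element basis of wedges, so $\Sym^2(\Lambda^2V)$ is a $21$-dimensional space with an explicit monomial basis, while $k[V]$ is the $q^4$-dimensional space on the basis $\{X_i\}_{i\in V}$. The two distinguished elements are the square $x=(v\wedge w)^2$ and the plane-sum $y=\sum_{i\in P}X_i$ attached to the fixed non-isotropic plane $P=\langle v,w\rangle$, where $\omega(v,w)=1$.

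The key structural input is Proposition~\ref{transitively}. Since $\omega(\phi(v),\phi(w))=\omega(v,w)=1$, we have $\phi(x)=\eta_{\phi(P)}^2$, where for a non-isotropic plane $P'$ I write $\eta_{P'}=v'\wedge w'$ for any spanning pair with $\omega(v',w')=1$; this decomposable vector is well defined because the stabiliser of $P'$ acts on it through the group $\Sp(P')$, all of whose elements have determinant one (and in any event only its square enters). Transitivity of $\Sp V$ on non-isotropic planes then shows that $\phi(x)$ depends only on the plane $\phi(P)$ and that these planes exhaust all $q^2(q^2+1)$ non-isotropic planes counted in Proposition~\ref{OrbitesSp}. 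Hence $U=\mathrm{span}\{\eta_{P'}^2\}$ as $P'$ runs over non-isotropic planes, and the generators $\phi(x)\oplus\phi(z)$ of $O$ are likewise indexed by this finite set together with the associated data coming from $(y)$. So instead of enumerating the whole group $\Sp(4,q)$ I would enumerate the non-isotropic planes, form for each the coordinate vector of $\eta_{P'}^2$ in $\Sym^2(\Lambda^2V)$ (for $U$) together with its companion vector or vectors in $k[V]$ (for $O$), stack these as rows of a matrix over $\mathbb{F}_q$, and read off $\dim_kU$ and $\dim_kO$ as its rank.

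I expect the difficulty to lie in scale and in the rigour of the enumeration rather than in any idea. For $q=5$ the ambient $k[V]$ already has dimension $625$ with $650$ non-isotropic planes, so the row reduction must be done efficiently and in exact arithmetic over $\mathbb{F}_5$, and one must certify that the list of planes (and of the $k[V]$-generators produced from $(y)$) is complete, so that the computed rank really is the dimension of the span. A conceptual shortcut via the representation theory of $\Sp_4$ is not available here: in characteristic $0$ one could decompose $\Sym^2(\Lambda^2V)$ into irreducibles and locate the span of the squares $\eta_{P'}^2$, but the three fields have characteristic $2,3,5$ dividing the group order, so the modular behaviour differs from field to field. This is visible in the table itself, where the excess of $\dim_kO$ over $\dim_kU$ equals $\dim_kD$ for $q=2,3$ but jumps to $\dim_k(N)$ for $q=5$; no uniform closed formula ties the columns together, which is exactly why a direct computation is the safe route.

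As independent checks I would confirm the characteristic-free facts that pin down the ambient data and catch coding errors: that $\dim\Sym^2(\Lambda^2V)=21$, that $U$ is a proper subspace of it (of codimension one once $q\geq3$, reflecting a single $\Sp V$-invariant linear relation among the $\eta_{P'}^2$), and that the collapse to $\dim_kU=6$ at $q=2$ is accounted for by characteristic-two degeneracies. On the $k[V]$-side, the element $X=\sum_{i\in V}X_i$ lies in $D$ by Remark~\ref{c2}, which gives a convenient test vector for validating the companion vectors before trusting the final ranks.
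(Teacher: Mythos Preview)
Your approach is the paper's own: the proposition is asserted purely ``by numerical computation'', and your reduction from $\Sp(4,q)$ to the set of non-isotropic planes via Proposition~\ref{transitively} is a legitimate efficiency. The one point to make explicit is the generating set for $O$. If you read ``$z\in(y)$'' literally as the whole ideal, then $z=0$ is allowed, which puts every $(\phi(x),0)$ among the generators, forces $U\oplus 0\subset O$, and collapses $O$ to $U\oplus(N)$; that matches the table only at $q=5$. The intended meaning, visible from the application in Section~\ref{integralbasisH4} where $O$ models the span of the $\Sp(A[3])\ltimes A[3]$-orbit of $\mathfrak N$, is that $z$ ranges over the translation orbit $\{yX_v:v\in V\}$. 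So the rows of your matrix for $O$ must be the vectors $\eta_{P'}^2\oplus y_{P'}X_w$ indexed by pairs $(P',w)$ with $P'$ non-isotropic and $w\in V$, not just one companion vector per plane. Your own observation that $\dim_kO-\dim_kU=\dim_kD$ for $q=2,3$ already fits this reading: the differences of generators with the same $P'$ but different $w$ contribute $(0,y_{P'}\mathfrak m)$, and summing over $P'$ gives $(0,\mathfrak m\cdot(N))=(0,D)\subset\ker(\pr_1|_O)$.
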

Now we prove the following lemma that we will need for a divisibility argument in Section~\ref{integralbasisH4}.
\begin{lemme}\label{cleffinclassesdiv}
We assume that $k=\mathbb F_3$. Let $\pr_1: \Sym^2 (\Lambda^2V)\oplus k[V]\rightarrow \Sym^2 (\Lambda^2V)$ and $\pr_2: \Sym^2 (\Lambda^2V)\oplus k[V]\rightarrow k[V]$ be the projections. 
We have: 
\begin{itemize}
\item[(i)]
$\dim \ker\pr_{2|O}=1$.
\item[(ii)]
$\dim \ker\pr_{1|O}=31$.
\end{itemize}
\end{lemme}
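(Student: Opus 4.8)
The plan is to derive both equalities from the rank--nullity theorem applied to the restrictions $\pr_{1|O}$ and $\pr_{2|O}$, feeding in the dimensions already recorded in Propositions~\ref{SymplecticIdealsDimension} and~\ref{CombinedSymplectic}. All the genuine work lies in identifying the two images $\pr_1(O)$ and $\pr_2(O)$ with spaces of known dimension; once this is done the kernel dimensions are forced. Throughout I read $O$ as the span of the orbit of $x\oplus y$ under the monodromy group $\Sp V\ltimes V$ of Proposition~\ref{Hassettmonodromy}, where the translation part $V$ acts on $k[V]$ as in Definition~\ref{SymplecticIdeal} and trivially on the summand $\Sym^2(\Lambda^2V)$.

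Part (ii) is essentially formal. Every generator of $O$ has first component of the shape $\phi(x)$, and these are exactly the generators of $U$; since translations act trivially on $\Sym^2(\Lambda^2V)$ they do not enlarge the first component. Hence $\pr_1(O)=U$, and rank--nullity together with Proposition~\ref{CombinedSymplectic} gives
$$
\dim\ker\pr_{1|O}=\dim O-\dim U=51-20=31.
$$

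For part (i) the target is $\pr_2(O)=(N)$, after which Propositions~\ref{CombinedSymplectic} and~\ref{SymplecticIdealsDimension} give $\dim\ker\pr_{2|O}=\dim O-\dim(N)=51-50=1$. The inclusion $\pr_2(O)\subseteq(N)$ is clear, since the second components $\phi(y)$ and their translates lie in $(N)$. For the reverse inclusion I would use that, as $\phi$ runs over $\Sp V$, the class $\phi(y)$ runs over the sums $\sum_{i\in P'}X_i$ over all non-isotropic planes $P'$ by the transitivity of Proposition~\ref{transitively}, while the translation part of the monodromy moves these to the sums over all affine cosets $P'+v$; by Definition~\ref{SymplecticIdeal} these affine-coset sums span the ideal $(N)$. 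An equivalent route is to identify $\ker\pr_{1|O}=\{0\}\oplus D$ directly: the differences $(\phi(x)\oplus v(\phi(y)))-(\phi(x)\oplus\phi(y))=0\oplus(v(\phi(y))-\phi(y))$ lie in $O$ and span $\{0\}\oplus D$ by Definition~\ref{SymplecticIdeal}, and then $\dim D=31$ recovers the count of part (ii) and forces $\dim\pr_2(O)=50$.

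The main obstacle is this single identification $\pr_2(O)=(N)$: one must ensure that the projection of $O$ fills out the whole $50$-dimensional ideal and not merely the smaller linear span of the through-origin non-isotropic plane sums. This is precisely the step where the translations in $\Sp(A[3])\ltimes A[3]$ play their role, and where Remark~\ref{c2} (that $X=\sum_{i\in V}X_i\in D$) serves as a useful consistency check. Everything else reduces to routine dimension arithmetic on the numbers supplied by Propositions~\ref{SymplecticIdealsDimension} and~\ref{CombinedSymplectic}.
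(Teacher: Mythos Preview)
Your proof is correct and follows the same approach as the paper: identify $\pr_1(O)=U$ and $\pr_2(O)=(N)$, then apply rank--nullity with the dimensions from Propositions~\ref{SymplecticIdealsDimension} and~\ref{CombinedSymplectic} to get $51-20=31$ and $51-50=1$. You supply more justification for the two image identifications than the paper does (it simply asserts them), and your explicit reading of $O$ via the $\Sp V\ltimes V$-orbit of $x\oplus y$ is exactly the one that is consistent with the computed dimension $\dim_k O=51$ and with how $O$ is used in Section~\ref{integralbasisH4}.
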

\begin{proof}
We have $\pr_{1}(O)=U$ and $\pr_{2}(O)=(N)$. Using the dimension tables from Propositions~\ref{CombinedSymplectic} and~\ref{cleffinclassesdiv}, we get
\begin{gather*}
\dim \ker\pr_{1|O} = \dim O - \dim U  = 31,\\
\dim \ker\pr_{2|O} = \dim O - \dim (N) = 1.
\end{gather*}

\end{proof}

\subsection{Recall of Hassett and Tschinkel's results}\label{HassetTschinkelSection} \label{Middle}
\begin{notation}\label{TheZs}
For each $\tau \in A$, denote $W_\tau$ the Brian\c con subscheme of $A\hilb{3}$ consisting of the elements supported entirely at the point $\tau$. If $\tau\in A[3]$ is a point of three-torsion, $W_\tau$ is actually a subscheme of $\X$. We will also use the symbol $W_\tau$ for the corresponding class in $H^4(\X,\Z)$. Further, set 
$$
W := \sum_{\tau\in A[3]} W_\tau.
$$
For $p\in A$, denote $Y_p$ the locus of all $\{x,y,p\}$ in $\X$. The corresponding class $Y_p \in H^4(\X,\Z)$ is independent of the choice of the point $p$. Then set $Z_\tau := Y_p - W_\tau$ and denote $\Pi$ the lattice generated by all $Z_\tau$, $\tau \in A[3]$.
\end{notation}
\begin{proposition}
Denote by $\Sym := \Sym^2\left(H^2\left(\X,\Z\right)\right) \subset H^4\left(\X,\Z\right) $ the span of products of integral classes in degree $2$.
Then 
$$
\Sym+\Pi \ \subset\  H^4\left(\X,\Z\right)
$$
is a sublattice  of full rank.  
\end{proposition}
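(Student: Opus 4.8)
The statement has two parts, and the first is immediate: since $H^4(\X,\Z)$ is torsion free by Theorem~\ref{torsion}, both $\Sym$ and $\Pi$ are subgroups of a free abelian group, so $\Sym+\Pi$ is automatically a sublattice. Everything therefore reduces to the rank assertion, which I would prove rationally: it suffices to show $\dim_\Q(\Sym_\Q+\Pi_\Q)=b_4(\X)=108$. I would obtain this from
\[
\dim_\Q(\Sym_\Q+\Pi_\Q)=\dim_\Q\Sym_\Q+\dim_\Q\Pi_\Q-\dim_\Q(\Sym_\Q\cap\Pi_\Q)
\]
by computing the three terms to be $28$, $81$ and $1$.

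First I would show $\dim_\Q\Sym_\Q=28$. The cup-product map $\mu:\Sym^2 H^2(\X,\Q)\to H^4(\X,\Q)$ is injective: polarizing the Fujiki relation~(\ref{fujiki}), which for $n=3$ reads $\int_\X\alpha^4=9\,B(\alpha,\alpha)^2$, gives the four-linear form $\int_\X\alpha_1\alpha_2\alpha_3\alpha_4=3\big(B(\alpha_1,\alpha_2)B(\alpha_3,\alpha_4)+B(\alpha_1,\alpha_3)B(\alpha_2,\alpha_4)+B(\alpha_1,\alpha_4)B(\alpha_2,\alpha_3)\big)$, so that the symmetric pairing $(\alpha\beta,\gamma\delta)\mapsto\int_\X\alpha\beta\gamma\delta$ on $\Sym^2 H^2$ is a nonzero multiple of the second symmetric power of the Beauville--Bogomolov form $B$, hence nondegenerate. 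A class in $\ker\mu$ lies in the radical of this pairing and so vanishes (this is the Bogomolov--Verbitsky injectivity of $\Sym^{\le2}H^2$ into $H^{\le4}$ for an IHS fourfold). Since $b_2(\X)=7$, we get $\dim_\Q\Sym_\Q=\dim_\Q\Sym^2 H^2=\binom{8}{2}=28$.

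Next, $\dim_\Q\Pi_\Q=81$. The Briançon classes $W_\tau$ have pairwise disjoint supports, since they sit over distinct points of $A[3]$, so $W_\tau\cdot W_{\tau'}=0$ in $H^8(\X,\Z)\cong\Z$ for $\tau\neq\tau'$, while the self-intersection $W_\tau^2\neq0$ is computed by Hassett and Tschinkel. Thus the Gram matrix of $\{W_\tau\}_{\tau\in A[3]}$ is a nonzero multiple of the identity, the $81=|A[3]|$ classes $W_\tau$ are linearly independent, and since $Z_\tau=Y_p-W_\tau$ with $Y_p\in\im\theta^*$ while the $W_\tau$ lie outside $\im\theta^*$ (Theorem~\ref{thetaTheorem}), one has $Y_p\notin\langle W_\tau\rangle$ and the $Z_\tau$ are independent as well, giving $\dim_\Q\Pi_\Q=81$.

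Finally, and this is where I expect the real work, I would show $\dim_\Q(\Sym_\Q\cap\Pi_\Q)=1$. The numerology $28+81>108$ already forces the intersection to be at least one-dimensional, so the point is the upper bound: there is \emph{at most} one relation among the $109$ generators. The natural candidate is the monodromy-invariant combination $\sum_{\tau\in A[3]}Z_\tau=81\,Y_p-W$, which by Proposition~\ref{Hassettmonodromy} is fixed by $G=\Sp(A[3])\ltimes A[3]$; using the Hassett--Tschinkel intersection numbers relating $Y_p$, $W=\sum_\tau W_\tau$ and the square of $e=\theta^*(\delta)$, this class can be rewritten through degree-$2$ classes and hence lies in $\Sym$. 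The hard part is to verify that no \emph{further} element of $\Pi$ lands in $\Sym$, i.e. that the full $109\times109$ Gram matrix of the generators under the (unimodular) Poincaré pairing has rank exactly $108$ with one-dimensional kernel spanned by this invariant relation. Granting this, $\dim_\Q(\Sym_\Q+\Pi_\Q)=28+81-1=108=b_4(\X)$, and $\Sym+\Pi$ has full rank.
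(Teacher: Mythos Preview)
Your strategy is sound in outline, but the proof as written has a genuine gap and a circularity. The paper itself proves this proposition in one line by citing \cite[Proposition~4.3]{Hassett}; later, in Corollary~\ref{Pi'}, it runs the dimension count in the \emph{opposite} direction to yours: from $\rk\Sym=28$, $\rk\Pi=81$, and the already-established full rank $108$, it \emph{deduces} that $\Sym\cap\Pi$ is one-dimensional. You try to go the other way, which is fine, but then you must actually prove the upper bound $\dim_\Q(\Sym_\Q\cap\Pi_\Q)\le 1$, and you explicitly do not: you write ``Granting this'' and leave the $109\times109$ Gram computation as a promise.

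The gap is easily closed with tools already on the table, without any large matrix. By~(\ref{ZT}) each difference $Z_\tau-Z_{\tau'}$ is orthogonal to all of $\Sym$, so $\Pi'=\langle Z_\tau-Z_0\rangle\subset\Sym^\perp$. The Gram matrix of $\{Z_\tau\}$ is $3I+J$ (from $Z_\tau^2=4$, $Z_\tau\cdot Z_{\tau'}=1$), which is invertible, so the Poincar\'e form is nondegenerate on $\Pi$ and $\Pi'$ has rank $80$. Any $x\in\Sym_\Q\cap\Pi_\Q$ pairs to zero with every element of $\Pi'\subset\Sym^\perp$, hence lies in the orthogonal complement of $\Pi'$ inside $\Pi$, which has rank $81-80=1$. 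That gives $\dim_\Q(\Sym_\Q\cap\Pi_\Q)\le1$, and together with your lower bound (or with $3\cc=\sum_\tau Z_\tau\in\Sym\cap\Pi$ from (\ref{sumZ}) and Proposition~\ref{ChernSym}) you are done.

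Two smaller issues. Your argument that the $Z_\tau$ are independent invokes Theorem~\ref{thetaTheorem}, which comes \emph{after} this proposition in the paper and in fact relies on it; just use the nondegenerate Gram matrix $3I+J$ directly. And your claim that $W_\tau^2\neq0$ is not how Hassett--Tschinkel argue (the $W_\tau$ are isotropic for Poincar\'e duality on a fourfold); the independence of the $W_\tau$ is better obtained from the $Z_\tau$ or from their pairing with other cycles.
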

\begin{proof}
This follows from \cite[Proposition 4.3]{Hassett}.
\end{proof}

In Section 4 of \cite{Hassett}, one finds the following formula:
\begin{equation}
Z_{\tau}\cdot D_{1}\cdot D_{2}=2\cdot B_{\X}(D_{1},D_{2}),
\label{ZT}
\end{equation}
for all $D_{1}$, $D_{2}$ in $H^{2}(\X,\Z)$, $\tau\in A[3]$ and $B_{\X}$ the Beauville-Bogomolov form on $\X$.

\begin{definition}\label{defiPi}
We define $\Pi' := \Pi \cap \Sym^{\perp}$. It follows from (\ref{ZT}) that $\Pi'$ can be described as the span of all classes of the form $Z_\tau -Z_0$ or alternatively as the set of all
$
\sum_\tau \alpha_\tau Z_\tau $, such that $ \sum_\tau \alpha_\tau =0$.
Note that in \cite{Hassett} the symbol $\Pi'$ denotes something different.
\end{definition}
\begin{remark}
Since $\rk \Sym = 28$ and $\rk \Pi' = 80$, the lattice $\Sym\oplus\Pi'\subset H^4\left(\X,\Z\right)$ has full rank.
\end{remark}

\begin{proposition}
The class $W$ can be written with the help of the square of half the diagonal as
\begin{align} 
W &= \theta^*\Big( \q_3(1)\vac \Big) \\
\label{WFormula}
&= 9 Y_p + e^2.
\end{align}
The second Chern class is non-divisible and given by 
\begin{align}
\label{sumZ}
\cc &= \frac{1}{3}\sum_{\tau\in A[3] } Z_\tau \\
\label{ChernY}
&= \frac{1}{3} \Big(72Y_p-e^2 \Big). 
\end{align}
\end{proposition}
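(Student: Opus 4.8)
The plan is to prove the two descriptions of $W$ first and then extract both formulas for $\cc$ from them by elementary manipulation.

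To get $W=\theta^*(\q_3(1)\vac)$ I would argue geometrically. By the defining incidence variety $\mathcal Z_3$, the class $\q_3(1)\vac\in H^4(A\hilb{3},\Z)$ is Poincar\'e dual to the closure of the punctual locus $\{\xi\in A\hilb{3}: |\supp\xi|=1\}$, which maps to $A$ by sending $\xi$ to its support point $t$. Such a punctual length-$3$ subscheme has summation $\Sigma(\xi)=3t$, so it lies on $\X=\Sigma^{-1}(0)$ exactly when $t\in A[3]$; since $\Sigma$ is a submersion the restriction is transverse and reduced, so $\theta^*(\q_3(1)\vac)$ is the class of $\bigsqcup_{\tau\in A[3]}W_\tau$, namely $W$.

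For $W=9Y_p+e^2$ I would compute $\theta^*(\q_3(1)\vac)$ in the Nakajima calculus, exploiting that $A$ is a torus, so $K_A=0$, and that $\ker\theta^*$ is the ideal generated by $H^1(A\hilb{3})$ (Proposition~\ref{Annihideal}). Writing $\delta=\tfrac12\q_2(1)\q_1(1)\vac$, whose cup-product action is $\d$, relations (\ref{DiffNaka}) and (\ref{qLcommute}) give $[\d,\q_2(1)]=2\mathfrak L_2(1)$ and $[\mathfrak L_2(1),\q_1(1)]=-\q_3(1)$, whence $\delta^2=\mathfrak L_2(1)\q_1(1)\vac$ equals, up to the sign fixed by the conventions, $\pm\q_3(1)\vac+\q_1(1)\mathfrak L_2(1)\vac$. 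Thus $\q_3(1)\vac$ is $\pm\delta^2$ plus the weight-one class $\q_1(1)\mathfrak L_2(1)\vac$, and since $\theta^*$ is a ring map, $\theta^*(\delta^2)=e^2$. Expanding $\mathfrak L_2(1)\vac=\tfrac12\sum\q_1(1_{(1)})\q_1(1_{(2)})\vac$ through the K\"unneth decomposition of the torus diagonal $\tau_{2*}(1)$ and applying $\theta^*$, every term carrying a factor $\q_1(a_i)$ dies by Proposition~\ref{Annihideal}; the $\q_1(1)^2\q_1(x)$-term restricts to $2Y_p$ (because $\tfrac12\q_1(x)\q_1(1)^2\vac=\rho^*(x\otimes1\otimes1+\cdots)$ is the locus of subschemes through the point dual to $x$), while the $H^2\otimes H^2$-part of the diagonal contributes classes in $\Sym$. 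Regrouping these with $\pm e^2$, and tracking all signs and diagonal coefficients, collapses the sum to $W=9Y_p+e^2$.

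For the Chern class I would use that $\X=\Sigma^{-1}(0)$ has trivial normal bundle in $A\hilb{3}$ (pulled back from $T_0A$), so $c(TA\hilb{3})$ restricts and in particular $\cc=\theta^*(c_2(A\hilb{3}))$. As $c_1(A)=c_2(A)=0$ for a torus, the standard formula for $c_2(A\hilb{3})$ reduces to boundary terms; restricting it via $\theta^*$ and rewriting with $W=9Y_p+e^2$ and (\ref{ZT}) identifies the result as $\tfrac13\sum_{\tau\in A[3]}Z_\tau$, the integrality of $\cc$ showing at the same time that $\sum_\tau Z_\tau$ is divisible by $3$. A cross-check is that $\cc$ is monodromy invariant, so by Proposition~\ref{Hassettmonodromy} its $\Pi$-component is a multiple of $\sum_\tau Z_\tau$, the coefficient being fixed by comparing $\int_\X\cc\cdot\alpha^2$ with $\int_\X(\sum_\tau Z_\tau)\alpha^2=2\cdot 81\,B_\X(\alpha,\alpha)$ from (\ref{ZT}). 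Formula (\ref{ChernY}) is then immediate from $\sum_\tau Z_\tau=\sum_\tau(Y_p-W_\tau)=81\,Y_p-W=72\,Y_p-e^2$; for non-divisibility I would write $\cc$ in the integral basis of Theorem~\ref{integralbasistheorem} and check its content is $1$, equivalently exhibit a class pairing with $\cc$ to $\pm1$.

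The main obstacle is the coefficient bookkeeping in the second step: controlling the $H^2\otimes H^2$ part of the diagonal and all the signs so that the many $\Sym$- and $Y_p$-contributions collapse exactly to $9Y_p+e^2$ rather than to some other combination. The analogous care is what pins the factor $\tfrac13$ and forces the primitivity of $\cc$ in the final step.
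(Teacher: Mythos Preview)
Your approach differs substantially from the paper's, which is much shorter: the paper simply quotes two identities from Section~4 of \cite{Hassett},
\[
W=\tfrac{3}{8}\bigl(\cc+3e^2\bigr),\qquad Y_p=\tfrac{1}{72}\bigl(3\cc+e^2\bigr),
\]
and solves this linear system to obtain (\ref{WFormula}) and (\ref{ChernY}); equation~(\ref{sumZ}) and the non-divisibility of $\cc$ are likewise cited directly from \cite[Proposition~5.1]{Hassett}. The identity $W=\theta^*(\q_3(1)\vac)$ is treated as definitional.

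Your self-contained route via the Nakajima calculus is sound in outline and in fact parallels what the paper does \emph{later}, in Proposition~\ref{ChernSym}, where Boissi\`ere's formula gives $c_2(A\hilb{3})=3\q_1(1)\mathfrak L_2(1)\vac-\tfrac{1}{3}\q_3(1)\vac$ and restriction produces $\cc$; combined with Remark~\ref{afterClassuvw} this is exactly the mechanism you describe. Your derivation of (\ref{ChernY}) from $\sum_\tau Z_\tau=81Y_p-W$ and (\ref{WFormula}) is correct and clean. What your approach buys is independence from \cite{Hassett}; what the paper's buys is that the delicate sign and diagonal-coefficient tracking (which you correctly flag as the main obstacle, and which does require care with the conventions of \cite{LiQinWang} versus \cite{LehnSorger}) is outsourced.

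One genuine gap: invoking Theorem~\ref{integralbasistheorem} to certify non-divisibility of $\cc$ is circular, since that theorem is established downstream in Section~\ref{integralbasisH4} and its proof uses both (\ref{sumZ}) and (\ref{WW}). Your alternative of exhibiting a class pairing to $\pm1$ with $\cc$ is the right fix, but you have to actually produce such a class rather than assert its existence.
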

\begin{proof} 
In Section 4 of \cite{Hassett} one finds the equations
\begin{gather}
W = \frac{3}{8}\Big(\cc + 3e^2\Big),\label{WW} \\
Y_p = \frac{1}{72}\Big(3\cc + e^2\Big),
\end{gather}
from which we deduce (\ref{WFormula}) and (\ref{ChernY}).
Equation (\ref{sumZ}) and the non-divisibility are from \cite[Proposition 5.1]{Hassett}.
\end{proof}
\subsection{Properties of $\Sym^2H^2(K_2(A),\Z)$ in $H^4(K_2(A),\Z)$}\label{SyminH4}
\begin{proposition}\label{ImSym}
The image of $H^4(A\hilb{3},\Q)$ under the morphism $\theta^*$ is equal to $\Sym^2H^2(\X,\Q)$.
\end{proposition}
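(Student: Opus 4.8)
The plan is to prove the two inclusions separately, and I would avoid the Nakajima--Li--Qin--Wang calculus entirely in favour of a geometric orthogonality argument.

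\emph{Easy inclusion $\Sym^2 H^2(\X,\Q)\subseteq\im\theta^*$.} Since $\theta^*$ is a homomorphism of graded rings and is surjective in degree $2$ by Proposition~\ref{H2Sur} (over $\Q$), every $D\in H^2(\X,\Q)$ is of the form $\theta^*(\tilde D)$. Hence for $D_1,D_2\in H^2(\X,\Q)$ we get $D_1 D_2=\theta^*(\tilde D_1\tilde D_2)\in\theta^*(H^4(A\hilb 3,\Q))$, and as $\Sym^2 H^2(\X,\Q)$ is spanned by such products, it lies in the image.

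\emph{Reduction of the hard inclusion to an orthogonality.} First I would set up the orthogonal decomposition of $H^4(\X,\Q)$. By formula~(\ref{ZT}) every $Z_\tau$ pairs with $\Sym:=\Sym^2 H^2(\X,\Q)$ by $Z_\tau\cdot D_1 D_2=2B_{\X}(D_1,D_2)$, independently of $\tau$; hence $\Pi'=\langle Z_\tau-Z_0\rangle$ (Definition~\ref{defiPi}) is orthogonal to $\Sym$. The cup-product pairing is nondegenerate on $\Sym$, since the Fujiki relation~(\ref{fujiki}) for fourfolds identifies it up to scalar with the nondegenerate form induced by $B_{\X}$ on $\Sym^2 H^2$. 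Combined with $\rk\Sym=28$, $\rk\Pi'=80$ and $\dim_\Q H^4(\X,\Q)=108$, this forces the orthogonal decomposition $H^4(\X,\Q)=\Sym\oplus\Pi'$ with $\Pi'=\Sym^{\perp}$. It therefore suffices to show $\im\theta^*\perp\Pi'$, i.e. $\theta^*(\xi)\cdot(Z_\tau-Z_0)=0$ for all $\xi\in H^4(A\hilb3,\Q)$ and $\tau\in A[3]$.

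\emph{The orthogonality via translations.} This is the heart of the argument. For $\tau\in A[3]$ the translation $t_\tau$ preserves $\X=\Sigma^{-1}(0)$, because it changes $\Sigma$ by $3\tau=0$, and it satisfies $\theta\circ t_\tau=t_\tau\circ\theta$; pulling back gives $(t_\tau)^*\theta^*=\theta^*(t_\tau)^*$ on cohomology. Since the translations of $A$ form a connected family, $t_\tau^*$ acts trivially on $H^*(A\hilb3,\Q)$, so $(t_\tau)^*\theta^*(\xi)=\theta^*(\xi)$. As $W_\tau=(t_\tau)_*W_0$ (Notation~\ref{TheZs}) and $(t_\tau)_*$ is the identity on $H^8(\X,\Q)\cong\Q$, the projection formula gives $\theta^*(\xi)\cdot W_\tau=(t_\tau)_*\bigl((t_\tau)^*\theta^*(\xi)\cdot W_0\bigr)=\theta^*(\xi)\cdot W_0$. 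Because $Z_\tau-Z_0=W_0-W_\tau$, this yields $\theta^*(\xi)\cdot(Z_\tau-Z_0)=0$, hence $\im\theta^*\subseteq(\Pi')^{\perp}=\Sym$; with the easy inclusion this gives equality.

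\emph{Main obstacle.} I expect the delicate point not to be the translation computation itself but the rank bookkeeping establishing $\Pi'=\Sym^{\perp}$: one must be careful that the cup-product form is genuinely nondegenerate on $\Sym$ (so that $\Sym\cap\Sym^\perp=0$ and the dimensions $28+80=108$ close up exactly). A secondary point needing justification is that $t_\tau$ really restricts to a holomorphic automorphism of $\X$ for $\tau\in A[3]$ and that $\theta$ is genuinely $t_\tau$-equivariant; once these are in place, the homotopy-triviality of $t_\tau^*$ on $H^*(A\hilb3,\Q)$ does all the remaining work. An alternative route, via $\ker\theta^*$ being the ideal generated by $H^1(A\hilb3,\Q)$ (Proposition~\ref{Annihideal}), would instead require showing that $H^4(A\hilb3,\Q)$ is spanned by products of degree-$2$ classes modulo that ideal, which forces a less transparent case-by-case reduction of the $\q_2$- and $\q_3$-generators and is the part I would rather circumvent.
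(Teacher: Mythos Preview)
Your argument is correct and takes a genuinely different route from the paper's own proof.

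The paper proceeds by a dimension count in the Nakajima--Li--Qin--Wang model: it lists a universal set of $111$ multiplication operators generating $H^4(A\hilb n,\Q)$, observes that for $n=3$ exactly $8$ relations cut this down to a $103$-dimensional space, and then notes that $75$ of these generators contain a factor $\G_0(a_i)$ and are therefore killed by multiplication with $[\X]=\G_0(a_1)\G_0(a_2)\G_0(a_3)\G_0(a_4)$, hence lie in $\ker\theta^*$ by Proposition~\ref{annihilator}. This forces $\dim\im\theta^*\le 103-75=28$, and the easy inclusion finishes. Your approach avoids this operator bookkeeping entirely: you use the orthogonal decomposition $H^4(\X,\Q)=\Sym\oplus\Pi'$ (which is available at this point via~(\ref{ZT}), the ranks $28+80=108$, and the nondegeneracy of the cup form on $\Sym$ coming from polarizing the Fujiki relation), and then a clean translation argument showing that $\im\theta^*$ is $A[3]$-translation invariant while the $W_\tau$ get permuted, so $\im\theta^*\perp\Pi'$.

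What each approach buys: the paper's method is self-contained within the Hilbert-scheme calculus already set up in Sections~\ref{Section_Hilbert}--\ref{Section_GeneralKummer} and gives explicit generators of the kernel as a byproduct, which feeds directly into the later description of $\ker\theta^*$ in Theorem~\ref{thetaTheorem}. Your method is more conceptual and transparent, isolating the geometric reason (translation invariance of classes restricted from the Hilbert scheme) why nothing outside $\Sym$ can be hit; it also makes no reference to the operator formalism, so it would generalize more readily. The one point you flagged as delicate --- nondegeneracy of the cup product on $\Sym$ --- is indeed the only place requiring care, but it does follow from the polarized Fujiki formula for fourfolds and is independent of the later discriminant computation~(\ref{DiscrSym}), so there is no circularity.
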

\begin{proof}
We start by giving a set of universal generators of $H^4(A\hilb{n},\Q)$, $n\geq 0$. Theorem 5.30 of \cite{LiQinWang} ensures that it is possible to do this in terms of multiplication operators. To enumerate elements of $H^*(A,\Q)$, we follow Notation \ref{TorusClasses}. Basis elements of $H^2(A,\Q)$ will be denoted by $b_i$ for $1\leq i\leq 6$. Then our set of generators is given by:
\begin{center}
\begin{tabular}{l|c}
multiplication operator & number of classes \\ \hline
$\G_0(a_1)\G_0(a_2)\G_0(a_3)\G_0(a_4) $  & $1$ \\
$\G_0(a_i)\G_0(a_j)\G_0(b_k)$ for $i<j$  & $\binom{4}{2}\cdot 6 = 36$ \\
$\G_0(a_i)\G_0(a_j^*)$ & $4\cdot 4 = 16$ \\
$\G_0(b_i)\G_0(b_j)$ for $i\leq j$& $\binom{6+1}{2}= 21$ \\
$\G_0(x)$ &  $1$  \\ \hline
$\G_0(a_i)\G_0(a_j)\G_1(1)$ for $i<j$ & $\binom{4}{2} = 6$ \\
$\G_0(a_i)\G_1(a_j)$ & $4\cdot 4=16$ \\
$\G_0(b_i)\G_1(1)$ & $6$ \\
$\G_1(b_i)$ & $6$ \\
$\G_1(1)^2 $ & $1$ \\ \hline
$\G_2(1) $ &$1$
\end{tabular}
\end{center}
Any multiplication operator of degree $4$ can be written as a linear combination of these $111$ classes. Likewise, the dimension of $H^4(A\hilb{n},\Q)$ is $111$ for all $n\geq 4$, according to G\"ottsche's formula \cite[p.~35]{Gottsche}. However, for smaller $n$, there must be relations of linear dependence. For $n=3$, the $8$ classes $\G_0(x)$, $\G_1(b_i)$ and $\G_2(1) $ can be expressed as linear combinations of the others, so we are left with $103$ linearly independent classes that form a basis of $H^4(A\hilb{3},\Q)$. Multiplication with the class $[\X]$ is given by the operator $\G_0(a_1)\G_0(a_2)\G_0(a_3)\G_0(a_4) $ and annihilates every class that contains an operator of the form $\G_0(a_i)$. There are $75$ such classes, so by Proposition \ref{annihilator}, $\ker \theta^*\subset H^4(A\hilb{3},\Q)$ has dimension at least $75$ and $\im\theta^*$ has dimension at most $103-75=28$. However, since the image of $\theta^*$ must contain $\Sym^2H^2(\X,\Q)$, which is $28$-dimensional, equality follows.
\end{proof}

\begin{proposition} \label{ChernSym}
We have:
\begin{equation}
\cc= 4 u_1u_2 + 4v_1v_2 + 4 w_1 w_2 - \frac{1}{3} e^2. 
\end{equation}
In particular, $\cc\in \Sym \otimes\Q $.
\end{proposition}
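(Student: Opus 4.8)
The plan is to exploit that both sides of the claimed identity live in $\Sym^2 H^2(\X,\Q)$, and that a class there is pinned down by its cup products against all of $H^2(\X,\Q)$; so the strategy is to put both sides into $\Sym\otimes\Q$ and then match intersection numbers. The first point, that $\cc\in\Sym\otimes\Q$, is what actually requires an idea. I would argue it as follows: $W=\theta^*\big(\q_3(1)\vac\big)$ lies in $\im\theta^*$, which by Proposition~\ref{ImSym} equals $\Sym^2 H^2(\X,\Q)$, so $W\in\Sym\otimes\Q$. Eliminating $Y_p$ between the relations (\ref{WFormula}) and (\ref{ChernY}) yields $\cc=\tfrac83 W-3e^2$, and since $e=\theta^*(\delta)\in H^2(\X,\Z)$ gives $e^2\in\Sym$, this exhibits $\cc$ as an element of $\Sym\otimes\Q$. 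The right-hand side $\gamma:=4u_1u_2+4v_1v_2+4w_1w_2-\tfrac13 e^2$ is manifestly in $\Sym\otimes\Q$, so the whole identity takes place inside $\Sym^2 H^2(\X,\Q)$ and the ``in particular'' clause will follow immediately from the equality.

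Next I would compute the cup product of each side against an arbitrary pair $D_1,D_2\in H^2(\X,\Z)$. For $\cc$, formula (\ref{sumZ}) together with $|A[3]|=3^4=81$ and formula (\ref{ZT}) give $\cc\cdot D_1\cdot D_2=\tfrac13\sum_{\tau\in A[3]}Z_\tau\cdot D_1\cdot D_2=\tfrac13\cdot 81\cdot 2\,B(D_1,D_2)=54\,B(D_1,D_2)$. For $\gamma$ I would use the polarized Beauville--Fujiki relation: specializing (\ref{fujiki}) to $n=3$ gives Fujiki constant $9$, whose polarization reads $\int_{\X}D_1D_2D_3D_4=3\big(B(D_1,D_2)B(D_3,D_4)+B(D_1,D_3)B(D_2,D_4)+B(D_1,D_4)B(D_2,D_3)\big)$. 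Evaluating $\gamma\cdot D_1\cdot D_2$ with this relation and the intersection table of Notation~\ref{BasisH2KA} (where $\langle u_1,u_2\rangle,\langle v_1,v_2\rangle,\langle w_1,w_2\rangle$ are hyperbolic planes orthogonal to each other and to $e$, with $B(e,e)=-6$) one finds $\gamma\cdot D_1\cdot D_2=54\,B(D_1,D_2)$ as well. It is enough to verify this for $D_1,D_2$ ranging over the seven basis vectors, which is a short finite check.

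Finally, since $\cc-\gamma\in\Sym^2 H^2(\X,\Q)$ pairs to zero against every product $D_1\cdot D_2$, I would conclude $\cc=\gamma$ from the non-degeneracy of the quadruple-intersection form on $\Sym^2 H^2(\X,\Q)$; this is a standard consequence of the polarized relation above and the non-degeneracy of $B$, and can also be read off directly from the explicit Gram matrix in the basis of Notation~\ref{BasisH2KA}. I expect the main obstacle to be exactly the preliminary step that $\cc$ genuinely lies in $\Sym\otimes\Q$: without it, matching intersection numbers would leave undetected any component of $\cc$ in the orthogonal complement of $\Sym$ inside the $108$-dimensional space $H^4(\X,\Q)$. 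Once that membership and the Fujiki constant are secured, the remainder is a mechanical computation.
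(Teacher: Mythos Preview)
Your argument is correct. It differs from the paper's proof in both steps, though the paper explicitly flags your second step as an alternative in the remark immediately following the proposition.

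For the membership $\cc\in\Sym\otimes\Q$, the paper argues directly that the normal bundle of $\X$ in $A\hilb{3}$ is trivial (since diagram~(\ref{square}) is a pullback of a point inclusion), hence $\cc=\theta^*(c_2(A\hilb{3}))$, and then invokes Proposition~\ref{ImSym}. You instead use the already-established identity $W=\theta^*(\q_3(1)\vac)$ together with $\cc=\tfrac{8}{3}W-3e^2$, which you correctly obtain from (\ref{WFormula}) and (\ref{ChernY}). Both routes go through Proposition~\ref{ImSym}; yours avoids mentioning the normal bundle, the paper's avoids the detour through the Hassett--Tschinkel relations.

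For the explicit coefficients, the paper computes $c_2(A\hilb{3})$ via Boissi\`ere's commutation relation and then pushes through $\theta^*$ using Corollary~\ref{KummerEquality}. You instead match the bilinear forms $D_1,D_2\mapsto \cc\cdot D_1\cdot D_2$ and $D_1,D_2\mapsto\gamma\cdot D_1\cdot D_2$ using (\ref{sumZ}), (\ref{ZT}) on one side and the polarized Fujiki relation on the other, concluding by non-degeneracy of the pairing on $\Sym^2 H^2(\X,\Q)$ (available, e.g., from (\ref{DiscrSym})). This is exactly what the paper's subsequent remark alludes to. The trade-off: the paper's route imports Boissi\`ere's result but is a single direct computation; yours is self-contained within the Hassett--Tschinkel setup and the Fujiki formalism, at the cost of the extra non-degeneracy step, which you rightly identify as the only place where the preliminary membership in $\Sym\otimes\Q$ is essential.
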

\begin{proof}
First note that the defining diagram (\ref{square}) of the Kummer manifold is the pullback of the inclusion of a point, so the normal bundle of $\X$ in $A\hilb{3}$ is trivial. The Chern class of the tangent bundle of $\X$ is therefore given by the pullback from $A\hilb{3}$: $c(\X) = \theta^*\left(c(A\hilb{3})\right)$. Proposition \ref{ImSym} allows now to conclude that $\cc\in \Sym \otimes \Q$.

To obtain the precise formula, we use a result of Boissi\`ere, \cite[Lemma 3.12]{Boissiere}, giving a commutation relation for the Chern character multiplication operator on the Hilbert scheme. We get:
\begin{align*}
c_2(A\hilb{3}) & = 3\q_1(1)\mathfrak L_2(1)\vac - \frac{1}{3} \q_3(1)\vac \\
 & =\frac{8}{3}\q_1(1)\mathfrak L_2(1)\vac - \frac{1}{3} \delta^2 .
\end{align*}
With Corollary \ref{KummerEquality} one shows now, that $\cc$ is given as stated.
\end{proof}
\begin{rmk}
Proposition \ref{ChernSym} can also be proven using (\ref{sumZ}) and (\ref{ZT}).
\end{rmk}

\begin{corollary}\label{Pi'}
The intersection $\Sym\cap \Pi$ is one-dimensional and spanned by $3\cc$. 
\end{corollary}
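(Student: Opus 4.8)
The plan is to prove the two inclusions $\Z\cdot 3\cc \subseteq \Sym\cap\Pi$ and $\Sym\cap\Pi\subseteq \Z\cdot 3\cc$ separately, the second being where the real work lies. For the easy inclusion I would first observe that $3\cc$ lies in both lattices. On one hand, equation (\ref{sumZ}) gives $3\cc = \sum_{\tau\in A[3]}Z_\tau$, which is visibly an element of $\Pi$. On the other hand, Proposition \ref{ChernSym} yields the explicit expression $3\cc = 12u_1u_2 + 12v_1v_2 + 12w_1w_2 - e^2$, an integral combination of products of elements of the basis $\{u_1,u_2,v_1,v_2,w_1,w_2,e\}$ of $H^2(\X,\Z)$ from Notation \ref{BasisH2KA}, hence an element of $\Sym$. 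This already gives $\Z\cdot 3\cc\subseteq\Sym\cap\Pi$ and $\rk(\Sym\cap\Pi)\geq 1$.

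For the reverse inclusion the key is to pin down $\Sym\cap\Pi$ rationally. First I would record the decomposition $\Pi\otimes\Q = (\Pi'\otimes\Q) + \Q\cc$: by Definition \ref{defiPi}, $\Pi'\otimes\Q$ consists of the combinations $\sum_\tau\alpha_\tau Z_\tau$ with $\sum_\tau\alpha_\tau = 0$, and since $\cc=\frac13\sum_\tau Z_\tau$, any $\sum_\tau\alpha_\tau Z_\tau$ splits as $\sum_\tau(\alpha_\tau-\tfrac{s}{81})Z_\tau + \tfrac{s}{27}\cc$ with $s=\sum_\tau\alpha_\tau$, the first summand having coefficient-sum zero and so lying in $\Pi'\otimes\Q$. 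Now take $w\in\Sym\cap\Pi$ and write $w = p' + \lambda\cc$ with $p'\in\Pi'\otimes\Q$ and $\lambda\in\Q$. Since both $w$ and $\cc$ lie in $\Sym\otimes\Q$ (the latter by Proposition \ref{ChernSym}), we get $p' = w-\lambda\cc\in\Sym\otimes\Q$. But the remark that $\Sym\oplus\Pi'\subset H^4(\X,\Z)$ is a direct sum of full rank $28+80=108$ means $\Sym\cap\Pi'=0$, whence $(\Sym\otimes\Q)\cap(\Pi'\otimes\Q)=0$; therefore $p'=0$ and $w=\lambda\cc\in\Q\cc$. This shows $\Sym\cap\Pi\subseteq\Q\cc$, so the intersection has rank exactly $1$.

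It remains to identify the integral generator, and this is where the factor $3$ emerges. Since the map $\Sym^2 H^2(\X,\Z)\to H^4(\X,\Z)$ is injective (its image $\Sym$ has rank $28=\rk\Sym^2 H^2$, and the domain is torsion-free), the symmetric products $x_ix_j$ of the basis vectors form a $\Z$-basis of $\Sym$; in particular $e^2$ is a primitive basis vector. For $w=\lambda\cc\in\Sym$, the coefficient of $e^2$ is $-\tfrac{\lambda}{3}$, which must be an integer, forcing $3\mid\lambda$ and hence $w\in\Z\cdot 3\cc$. Together with the first inclusion this gives $\Sym\cap\Pi=\Z\cdot 3\cc$. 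I expect the reverse inclusion to be the main obstacle, its subtlety being conceptual rather than computational: although $\cc$ is an integral (indeed primitive, by (\ref{sumZ})) class of $H^4(\X,\Z)$, its expression inside the sublattice $\Sym$ is only rational, with denominator $3$ coming from the $e^2$-term. The argument must therefore combine the rational location of $\Sym\cap\Pi$ inside $\Q\cc$ — extracted from the direct sum $\Sym\oplus\Pi'$ — with this integrality obstruction in order to produce exactly the class $3\cc$ and no smaller multiple.
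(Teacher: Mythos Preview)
Your proof is correct. Compared with the paper's argument, you take a somewhat different and more detailed route: the paper simply observes that $3\cc\in\Sym\cap\Pi$ by (\ref{sumZ}) and Proposition~\ref{ChernSym}, and then appeals to the rank count $\rk\Sym+\rk\Pi-\rk H^4(\X,\Z)=28+81-108=1$ (using the earlier Proposition that $\Sym+\Pi$ has full rank) to conclude that the intersection cannot be larger. Your argument instead passes through $\Pi'$, using the rational decomposition $\Pi\otimes\Q=(\Pi'\otimes\Q)\oplus\Q\cc$ together with $(\Sym\otimes\Q)\cap(\Pi'\otimes\Q)=0$ to force any element of $\Sym\cap\Pi$ into $\Q\cc$.

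What you gain is an explicit verification that $3\cc$ is the \emph{integral} generator: you extract the factor $3$ from the $e^2$-coefficient in the basis of $\Sym$. The paper's terse proof leaves this point implicit; one could alternatively argue from the $\Pi$ side that $3\cc=\sum_\tau Z_\tau$ is primitive in $\Pi$ (the $Z_\tau$ being a $\Z$-basis since $\rk\Pi=81$), which also forces $\Sym\cap\Pi\subseteq\Z\cdot 3\cc$. Either way your conclusion stands, and your write-up makes the integrality step transparent.
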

\begin{proof}
By Proposition \ref{ChernSym} and (\ref{sumZ}), $3\cc\in \Sym\cap \Pi$. Since the ranks of $\Sym$, $\Pi$ and $H^4(\X,\Z)$ are $28$, $81$ and $108$, respectively, the intersection cannot contain more.
\end{proof}

\begin{corollary}\label{Classuvw}
\begin{equation} \label{YSym}
Y_p =  \frac{1}{6}\Big(u_1u_2 + v_1v_2 +  w_1 w_2 \Big).
\end{equation}
\end{corollary}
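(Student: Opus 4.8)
The plan is to obtain the formula by comparing the two expressions for the second Chern class $\cc$ that are already available. First I would recall equation (\ref{ChernY}), which gives $\cc = \tfrac{1}{3}(72Y_p - e^2)$, an expression of the second Chern class in terms of the class $Y_p$ and the square of the exceptional class $e$. This is the formula that isolates $Y_p$, so it is the natural right-hand side to solve against.

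Next I would invoke Proposition \ref{ChernSym}, which provides the alternative expression
\[
\cc = 4u_1u_2 + 4v_1v_2 + 4w_1w_2 - \tfrac{1}{3}e^2
\]
purely in terms of the symmetric classes $u_i,v_i,w_i$ coming from $\Sym^2 H^2(\X,\Z)$. The key observation is that the $e^2$-contribution is identical on both sides: each formula contributes exactly $-\tfrac{1}{3}e^2$. Setting the two expressions for $\cc$ equal and cancelling this common term, one is left with
\[
24\,Y_p = 4\big(u_1u_2 + v_1v_2 + w_1w_2\big),
\]
and dividing by $24$ yields the stated identity (\ref{YSym}).

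The computation is a rational identity in $H^4(\X,\Q)$, which is legitimate: both input formulas hold there, and since $H^*(\X,\Z)$ is torsion free by Theorem \ref{torsion}, the class $Y_p$ is determined by its image in $H^4(\X,\Q)$. There is essentially no obstacle here beyond making sure the two Chern-class formulas are genuinely compatible; the only thing to check is that the $e^2$ coefficients match so that the cancellation is exact, which they do by inspection. Accordingly I expect this corollary to be a short deduction rather than a step requiring new ideas, with all the real content already carried by Proposition \ref{ChernSym} and the Hassett--Tschinkel formula behind (\ref{ChernY}).
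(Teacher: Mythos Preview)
Your argument is correct and is precisely the intended deduction: the paper states this as an immediate corollary of Proposition~\ref{ChernSym}, and the only step is to equate that formula for $\cc$ with the Hassett--Tschinkel expression (\ref{ChernY}) and cancel the common $-\tfrac{1}{3}e^{2}$ term, exactly as you do.
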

\begin{remark}\label{afterClassuvw}
Using Nakajima operators, we may write
\begin{gather}
Y_p = \frac{1}{9} \theta^*\Big( \q_1(1)\mathfrak L_2(1) \vac \Big) =  \frac{1}{2}\theta^*\Big(\q_1(x)\q_1(1)^2\vac\Big).
\end{gather}
\end{remark}
Now, we can summarize all the divisible classes found in $\Sym$ in the following proposition. 
\begin{proposition}\label{classedivisibleSym}
Let $\{e,u_1,u_2,v_1,v_2,w_1,w_2\}$ be the integral basis of $H^2(\X,\Z)$ as defined in Notation \ref{BasisH2KA}. 
We have:
\begin{enumerate}
\item
the class $e^2$ is divisible by $3$, 
\item
the class $u_1u_2 + v_1v_2+w_1w_2$ is divisible by 6,
\item
for $y\in\{u_1,u_2,v_1,v_2,w_1,w_2\}$, the class 
$
e \cdot y
$
is divisible by $3$ and 
$
 y^2 - \frac{1}{3} e\cdot y
$
is divisible by $2$.
\end{enumerate}
\end{proposition}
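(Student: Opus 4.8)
The plan is to obtain (1) and (2) directly from the formulas already in hand, and to reduce the two clauses of (3) to the finite-field computations of Section~\ref{Section_Symplectic} via the Fujiki relation, equation~(\ref{ZT}), and the integral Nakajima operators.

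For (1), rewrite (\ref{ChernY}) as $e^2 = 72\,Y_p - 3\,\cc = 3\,(24\,Y_p - \cc)$; since $Y_p$ and $\cc$ are integral, so is $24\,Y_p - \cc$, and $e^2$ is divisible by $3$. For (2), Corollary~\ref{Classuvw} gives $u_1u_2 + v_1v_2 + w_1w_2 = 6\,Y_p$, which is divisible by $6$.

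For (3) write $y = j(b)$ with $b$ one of the six products $a_ia_j$, so that $b^2 = 0$ and, by Notation~\ref{BasisH2KA}, $B(e,y) = 0$. The conceptual source of the divisibility by $3$ of $e\cdot y$ is that this class, which lies in $\Sym$, pairs divisibly by $3$ with the full-rank sublattice $\Sym\oplus\Pi'$: polarising the Fujiki relation~(\ref{fujiki}) (whose constant equals $n(2n-3)!! = 9$ for $n=3$) yields
\begin{equation*}
\int_{\X} e\,y\,\alpha\,\beta \;=\; 3\big(B(e,\alpha)B(y,\beta) + B(e,\beta)B(y,\alpha)\big)\;\in\;3\Z
\end{equation*}
for all $\alpha,\beta\in H^2(\X,\Z)$, the term $B(e,y)B(\alpha,\beta)$ having dropped out, while $\int_{\X} e\,y\,Z_\tau = 2\,B(e,y) = 0$ by~(\ref{ZT}). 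The difficulty is that $\Sym\oplus\Pi'$ is only of full rank, so these congruences do not by themselves force $\tfrac13\,e\cdot y\in H^4(\X,\Z)$ in the unimodular lattice $H^4(\X,\Z)$.

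To close this gap — the main obstacle — I would reduce modulo $3$ and transport the question into the $\Sp(A[3])$-module of Section~\ref{Section_Symplectic}. Under $H^1(A)\otimes\Ft\cong A[3] = V$ one has $\Sym^2 H^2 \leftrightarrow \Sym^2(\Lambda^2 V)$ and $\Pi\leftrightarrow k[V]$, and~(\ref{ZT}) together with~(\ref{sumZ}) records how $\cc$ and the products $e\cdot y$ sit in these modules; the monodromy of Proposition~\ref{Hassettmonodromy} makes the relevant spans $\Sp(V)$-stable, so that Lemma~\ref{cleffinclassesdiv} and the dimension counts of Proposition~\ref{CombinedSymplectic} detect exactly the divisibility by $3$. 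For the last clause I would first use Corollary~\ref{IntegralOperatorsTorus}: since $\tfrac12\q_1(b)^2 - \tfrac12\q_2(b)$ is integral and $b^2 = 0$ gives $y^2 = \theta^*(\q_1(b)^2\q_1(1)\vac)$ (from $\G_0(b)^2\,1$), the class $\tfrac12\,y^2 - \tfrac12\,\theta^*(\q_2(b)\q_1(1)\vac)$ is integral. A short computation with the commutator~(\ref{DiffNaka}) (with $K_A = 0$) gives $e\cdot y = \theta^*(\d\,\G_0(b)\,1) = -\theta^*(\q_2(b)\q_1(1)\vac) - \tfrac12\,\theta^*(\q_1(b)\q_2(1)\vac)$; combining these two identities with part~(3a) shows that $y^2 - \tfrac13\,e\cdot y$ is divisible by $2$ precisely when $\theta^*(\q_1(b)\q_2(1)\vac)$ is divisible by $4$. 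This class is already divisible by $2$ because $\tfrac12\q_2(1)$ is integral (Lemma~\ref{IntegralOperators}), and the residual factor $2$ is again a three-torsion phenomenon established by the analogous mod-$2$ count in Section~\ref{Section_Symplectic} (Proposition~\ref{SymplecticIdealsDimension}). The main obstacle throughout is this passage from divisibility of intersection numbers to divisibility in $H^4(\X,\Z)$, for which the finite-field computations are indispensable.
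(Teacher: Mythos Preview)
Your arguments for (i) and (ii) are correct and coincide with the paper's: both follow immediately from the formulas for $\cc$ and $Y_p$ already established.

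For (iii), however, you have missed the short route and replaced it with one that does not close. The paper proceeds by a direct Nakajima computation: writing $y=\theta^*\big(\tfrac12\q_1(b)\q_1(1)^2\vac\big)$ with $b\in H^2(A,\Z)$, one checks on $\X$ (i.e.\ after simplifying modulo $\ker\theta^*$, cf.\ Corollary~\ref{KummerEquality}) that
\[
e\cdot y \;=\; 3\,\theta^*\!\big(\q_2(b)\q_1(1)\vac\big),
\qquad
y^2 \;=\; \theta^*\!\big(\q_1(b)^2\q_1(1)\vac\big),
\]
so $e\cdot y$ is visibly divisible by $3$, and then $\tfrac12 y^2-\tfrac16 e\cdot y=\theta^*\!\big((\tfrac12\q_1(b)^2-\tfrac12\q_2(b))\q_1(1)\vac\big)$ is integral by Corollary~\ref{IntegralOperatorsTorus}. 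You actually begin this computation yourself, obtaining $\d\,\G_0(b)\,1$ on $A\hilb{3}$, but you stop before simplifying under $\theta^*$; the two Nakajima monomials $\q_2(b)\q_1(1)\vac$ and $\q_1(b)\q_2(1)\vac$ become proportional once restricted to $\X$, and this is where the factor $3$ appears.

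Your proposed substitute for this step has two concrete problems. First, the finite-field apparatus of Section~\ref{Section_Symplectic} is built on $\Sym^2(\Lambda^2 V)\oplus k[V]$ with $V=A[3]$; this models the $j(H^2(A))$--part of $\Sym$ together with $\Pi$, but the class $e$ does not sit in $\Lambda^2 V$, so neither Lemma~\ref{cleffinclassesdiv} nor Proposition~\ref{CombinedSymplectic} says anything about the divisibility of $e\cdot y$. Second, your reduction of the last clause to ``$\theta^*(\q_1(b)\q_2(1)\vac)$ is divisible by $4$'' is algebraically correct, but invoking Proposition~\ref{SymplecticIdealsDimension} for this is unfounded: that proposition records dimensions of ideals in $k[V]$ and carries no information about $2$--divisibility of classes involving $e$. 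Finally, note that the way Section~\ref{integralbasisH4} applies those finite-field counts \emph{uses} Proposition~\ref{classedivisibleSym} as input (to isolate the class $\mathfrak N$), so appealing to that machinery here would require care to avoid circularity.

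In short: complete the Nakajima calculation under $\theta^*$; the detour through Fujiki pairings and $\Sp(A[3])$ is both unnecessary and, as it stands, does not reach the conclusion.
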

\begin{proof}
\begin{enumerate}
\item
From Proposition \ref{ChernSym} we see that
$e^2$ is divisible by $3$ and by Corollary \ref{Classuvw}
the class $u_1u_2 + v_1v_2+w_1w_2$ is divisible by 6.
\item
We have $y= \theta^*\left(\q_1(a)\q_1(1)^2\vac\right)$ for some $a\in H^2(A,\Z)$. A computation yields:
$$
e\cdot y = 3\cdot \theta^*\Big( \q_2(a)\q_1(1)\vac \Big)
\quad \text{and}\quad
y^2 = \theta^*\Big(\q_1(a)^2\q_1(1)\vac\Big)
$$
so $e\cdot y$ is divisible by $3$. Furthermore, by Corollary \ref{IntegralOperatorsTorus}, the class 
$
\frac{1}{2} \q_1(a)^2\q_1(1)\vac - \frac{1}{2} \q_2(a)\q_1(1)\vac 
$
is contained in $H^4(A\hilb{3},\Z)$, so its pullback
$
 \frac{1}{2}y^2 - \frac{1}{6} e\cdot y
$
is an integral class, too.
\end{enumerate}
\end{proof}
\subsection{Integral basis of $H^4(K_2(A),\Z)$}\label{integralbasisH4}

From the intersection properties $Z_\tau \cdot Z_{\tau'} = 1$ for $\tau\neq \tau'$ and $Z_\tau^2 = 4$ from Section 4 of \cite{Hassett}, we compute
\begin{equation}
 \discr \Pi' = 3^{84}.
 \label{discrPi}
\end{equation}
On the other hand, a formula developed in \cite{Kapfer} evaluates
\begin{equation} \label{DiscrSym}
\discr \Sym \ = 2^{14}\cdot 3^{38},
\end{equation}
so the lattices cannot be primitive. Denote $\Sym^{sat}$ and $\Pi'^{sat}$ the respective primitive overlattices of $\Sym$ and $\Pi'$ in $H^4(\X,\Z)$. $\Sym \oplus\Pi'$ is a sublattice of $H^4(\X,\Z)$ of index $2^{7}\cdot 3^{61}$ and we claim that $\Sym^{sat}\oplus \Pi'^{sat}$ has index $3^{22}$. 
We have already found in Proposition~\ref{classedivisibleSym}
\begin{itemize}
 \item $7$ linearly independent classes in $\Sym$ divisible by $2$,
 \item $8$ linearly independent classes in $\Sym$ divisible by $3$,
\end{itemize}
To obtain a basis of $H^4(\X,\Z)$, 
we are now going to find
\begin{itemize}
 \item $31$ linearly independent classes in $\Pi'$ divisible by $3$ and
 \item $20$ linearly independent classes in $\Sym^{sat}\oplus \Pi'^{sat}$, one divisible by $3^3$ and $19$ divisible by $3$.
\end{itemize}

The first thing to note is that $\Pi'$ is defined topologically for all deformations of $\X$ and the primitive overlattice of $\Pi'$ is a topological invariant, too.  
By applying a suitable deformation, we may therefore assume without loss of generality that $A$ is the product of two elliptic curves $A=E_1\times E_2$. Here according to Notation \ref{BasisH2KA}, $u_1:=j(a_1 a_2)$ where $\{a_1,a_2\}$ can be seen as a basis of $H^1(E_1)$ (it is necessary to obtain the following expression (\ref{LagrangianPlaneClass})).

Hassett and Tschinkel in Proposition 7.1 of \cite{Hassett} provide the class of a Lagrangian plane (i.e. a sub-varieties Lagrangian with respect to the holomorphic 2-form of $\X$ and isomorphic to the projective plane $\mathbb{P}^2$) $P\subset K_2(A)$, which can be expressed as follows:
\begin{equation}
\left[P\right]=\frac{1}{216}(6u_1-3e)^2+\frac{1}{8}\cc-\frac{1}{3}\sum_{\tau\in \Lambda'} Z_{\tau},
\label{LagrangianPlaneClass}
\end{equation}
where $\Lambda'=E_1[3]\times 0\subset A[3]$.
Hence by translating this plane by an element $\tau'\in A[3]$, we obtain another plane $P'$ that can be written:
$$\left[P'\right]=\frac{1}{216}(6u_1-3e)^2+\frac{1}{8}\cc-\frac{1}{3}\sum_{\tau\in \Lambda'+\tau'} Z_{\tau}.$$
By substracting these two expressions, we obtain a first class divisible by 3 in $\Pi'$:
\begin{equation}
 \frac{1}{3}\sum_{\tau\in\Lambda'} \Big(Z_{\tau} - Z_{\tau+\tau'}\Big)
 \label{first31}
\end{equation}

By Proposition \ref{Hassettmonodromy}, the image of the monodromy representation on the $Z_\tau$ contains the symplectic group $\Sp(4,\mathbb F_3)$. 
We know by Proposition \ref{transitively} that $\Sp(4,\mathbb F_3)$ acts transitively on the non-isotropic planes of $A[3]$. Hence, modulo $\Pi'$, the orbit by $\Sp(4,\mathbb F_3)$ of the classes (\ref{first31}) is a $\mathbb F_3$-vector space naturally isomorphic to $D$ as introduced in Definition \ref{SymplecticIdeal}, so by Proposition \ref{SymplecticIdealsDimension}, we get a subspace of $\Pi'$ of rank $31$ of classes divisible by $3$.
We add the thirds of these classes to $\Pi'$ and we get an over-lattice $\Pi'^{over}$ of $\Pi'$ such that:
\begin{equation}
 \frac{\Pi'^{over}}{\Pi'}=\left(\frac{\Z}{3\Z}\right)^{\oplus 31}.
 \label{Piover}
\end{equation}
This subspace can be determined by a computer. We describe it in Proposition~\ref{XXXI} (we will see later in this section that $\Pi'^{over}=\Pi'^{sat}$).  

Now we are going to find the classes divisible by 3 in $\Sym^{sat}\oplus \Pi'^{sat}$.
The class $Z_0$ is not contained in $\Sym$ nor in $\Pi'$.
It can be written as follows:
$$
Z_0=\frac{\sum_{\tau\in A[3]}Z_\tau-\sum_{\tau\in A[3]}(Z_\tau-Z_0)}{81}
\stackrel{(\ref{sumZ})}{=} \frac{c_2(K_2(A))-\frac{1}{3}\sum_{\tau\in A[3]}(Z_\tau-Z_0)}{27},
$$
where $\frac{1}{3}\sum_{\tau\in A[3]}(Z_\tau-Z_0)$ can be expressed as a linear combination of the 31 classes of Proposition \ref{XXXI} by Remark \ref{c2}.
Hence $Z_0$ is the class in $\Sym^{sat}\oplus \Pi'^{sat}$ divisible by $27$. 
Let us now find the remaining $19$ classes divisible by $3$.

We rearrange (\ref{LagrangianPlaneClass}) using (\ref{WW}):
\begin{align*}
\left[P\right]&=\frac{1}{216}(6u_1-3e)^2+\frac{1}{8}\cc-\frac{1}{3}\sum_{\tau\in \Lambda'} Z_{\tau}\\
&=\frac{36u_1^2+9e^2-36u_1\cdot e}{216} +\frac{W}{3}-\frac{3}{8}e^2-\frac{1}{3}\sum_{\tau\in \Lambda'} Z_{\tau}\\
&=\frac{u_1^2-2e^2-u_1\cdot e}{6} +\frac{W}{3}-\frac{1}{3}\sum_{\tau\in \Lambda'} Z_{\tau}.
\end{align*}
By Proposition \ref{classedivisibleSym}, the classes $e^2$ and $u_1\cdot e$ are both divisible by 3 and by (\ref{WW}), $W$ is divisible by 3, so the following class is integral:
$$
\mathfrak{N}:=\frac{u_1^2+\sum_{\tau\in \Lambda'} (Z_{\tau}-Z_0)}{3}
.
$$
From the considerations in Section \ref{monodromyexplication}, we know that the group $\Sp(A[3])\ltimes A[3]\subset \Mon(\Pi)$ is a natural extension of 
$\Sp(H_1(A,\Z))\ltimes A[3]\subset \Mon(H_1(A,\Z))$. Isometries of the image of the monodromy representation on $H_1(A,\Z)$ extend naturally to isometries of the image of the monodromy representation of $H^4(K_2(A),\Z)$ acting on $\Pi$ as describe in Proposition \ref{Hassettmonodromy} and acting on $\Sym$ by commuting with the map $j$ defined in Notation \ref{BasisH2KA}.
Hence the group $\Sp(H_1(A,\Z))\ltimes A[3]$ can be seen as a subgroup of $\Mon(H^4(K_2(A),\Z))$. 

Now we will conclude using this monodromy action of $\Sp(H_1(A,\Z))\ltimes A[3]$ on the element $\mathfrak{N}$ and the considerations from Section \ref{Section_Symplectic}. 
Proposition \ref{CombinedSymplectic} states now that
the orbit of $\mathfrak{N}$ under the action of $\Sp(A[3])\ltimes A[3]$ is spanning a space of rank $51$ modulo $\Sym\oplus\Pi'$. However, by Lemma \ref{cleffinclassesdiv}, the intersection of that space with $\Sym^{sat}$ is one-dimensional and the intersection with $\Pi'^{sat}$ has dimension $31$, so we are left with $19$ linearly independent elements which provide 19 elements in $\frac{H^4(K_2(A),\Z)}{\Sym^{sat}\oplus\Pi'^{sat}}$.
These 19 independent classes can be enumerated using a computer, see Proposition~\ref{XIX}. 

Now, we will check that we found all the classes in $H^4(\X,\Z)$.
Let $\Sym^{over}$ be the overlattice of $\Sym$ obtained by adding all the classes from Proposition \ref{classedivisibleSym}:
\begin{equation}
\frac{\Sym}{\Sym^{over}}=(\Z/2\Z)^{\oplus 7}\oplus(\Z/3\Z)^{\oplus 8}. 
\label{symover} 
\end{equation}
Let denote by $F$ the over-lattice of $\Sym^{sat}\oplus \Pi'^{sat}$ obtained by adding $Z_0$ and the thirds of all classes of Proposition \ref{XIX}:
\begin{equation}
\frac{F}{\Sym^{sat}\oplus \Pi'^{sat}}=(\Z/27\Z)\oplus(\Z/3\Z)^{\oplus 19}.
\label{Fpietsym}
\end{equation}
We have to show that 
\begin{equation}
\Sym^{over}=\Sym^{sat},\ \Pi'^{over}=\Pi'^{sat}
\label{PietSym}
\end{equation}
and $F=H^4(\X,\Z)$.
It can be seen calculating the descriminents. 
We have $\Sym^{over}\subset \Sym^{sat}$ and $\Pi'^{over}\subset\Pi'^{sat}$, hence to prove (\ref{PietSym}), we only have to show that 
$\discr \Sym^{over}=\discr \Sym^{sat}$ and $\discr \Pi'^{over}=\discr\Pi'^{sat}$.
By (\ref{DiscrSym}), (\ref{symover}) and (\ref{squareDiscr}), the lattice $\Sym^{over}$ has discriminant $3^{22}$. Moreover by (\ref{discrPi}), (\ref{Piover}) and (\ref{squareDiscr}), 
the lattice $\Pi'^{over}$ has discriminant $3^{22}$. Therefore:
\begin{equation}
\discr\left(\Sym^{over}\oplus \Pi'^{over}\right)=3^{44}.
\label{discrover}
\end{equation}
It follows that $\discr \left(\Sym^{sat}\oplus \Pi'^{sat}\right) | 3^{44}$ (here the symbol "$|$" is the divisibility relation).
Hence, by (\ref{squareDiscr}) and \ref{Fpietsym}, $\discr F | 1$, so necessarily $\discr F=1$. Hence necessarily $F=H^4(K_2(A),\Z)$. Moreover, $\discr \left(\Sym^{sat}\oplus \Pi'^{sat}\right)=3^{44}$ 
which show by (\ref{discrover}) that $\Sym^{over}=\Sym^{sat} \text{ and } \Pi'^{over}=\Pi'^{sat}.$ 

We summarize the description of the integral basis of $H^{4}(K_2(A),\Z)$ in the following theorem.
\begin{thm}\label{integralbasistheorem}
Let $A$ be an abelian variety. We use Notation \ref{BasisH2KA} and \ref{TheZs}. 
\begin{enumerate}
\item 
Let $\Sym^{sat}$ be the primitive overlattice of $\Sym^2\left(H^2\left(\X,\Z\right)\right)$ in $H^4(K_2(A),\Z)$.
The group $\frac{\Sym^{sat}}{\Sym^2\left(H^2\left(\X,\Z\right)\right)}=(\Z/2\Z)^{\oplus 7}\oplus(\Z/3\Z)^{\oplus 8}$ is generated by the elements:
$$\frac{e \cdot y}{3},\ \frac{y^2 - \frac{1}{3} e\cdot y}{2} \text{ for } y\in\{u_1,u_2,v_1,v_2,w_1,w_2\},\ 
\frac{e^2}{3} \text{ and } \frac{u_{1}\cdot u_{2}+v_{1}\cdot v_{2}+w_{1}\cdot w_{2}}{6}.$$
\item
Let $\Pi'$ be the lattice from Definition \ref{defiPi} and let $\Pi'^{sat}$ be the primitive over lattice of $\Pi'$ in $H^4(K_2(A),\Z)$.
The group $\frac{\Pi'^{sat}}{\Pi'\ \ \ }=(\Z/3\Z)^{\oplus 31}$ is generated by the classes:
$$\frac{1}{3}\sum_{\tau\in\Lambda} \Big(Z_{\tau} - Z_{\tau+\tau'}\Big),
$$
with $\Lambda$ a non-isotropic group and $\tau'\in A[3]$. Moreover a basis of $\frac{\Pi'^{sat}}{\Pi'\ \ \ }$ is provided by the 31 classes described in Proposition \ref{XXXI}. 
\item
We have 
$$\frac{H^4(K_2(A),\Z)}{\Sym^{sat}\oplus\Pi'^{sat}}=\left(\frac{\Z}{27\Z}\right)\oplus\left(\frac{\Z}{3\Z}\right)^{\oplus 19}.$$
Moreover, this group is generated by the class $Z_0$ and the 19 classes described in Proposition \ref{XIX}.
\end{enumerate}
\end{thm}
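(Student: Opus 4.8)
The statement is a summary of the construction carried out in Section~\ref{integralbasisH4}, so the proof amounts to collecting the pieces already assembled there; the decisive mechanism throughout is the discriminant identity~(\ref{squareDiscr}), which upgrades each explicitly constructed over-lattice to the corresponding saturation once the discriminants are matched. The plan is to treat the three quotients separately and then close the loop with a single global discriminant comparison.

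For part~(i), I would invoke Proposition~\ref{classedivisibleSym}, which exhibits the $7$ classes $\tfrac{1}{2}\bigl(y^2-\tfrac{1}{3}e\cdot y\bigr)$ divisible by $2$ and the $8$ classes $\tfrac{1}{3}e\cdot y$, $\tfrac{1}{3}e^2$ and $\tfrac{1}{6}(u_1u_2+v_1v_2+w_1w_2)$ divisible by $3$. Adjoining these to $\Sym$ yields the over-lattice $\Sym^{over}$ with $\Sym^{over}/\Sym=(\Z/2\Z)^{\oplus 7}\oplus(\Z/3\Z)^{\oplus 8}$ as in~(\ref{symover}), and combining~(\ref{DiscrSym}) with~(\ref{squareDiscr}) gives $\discr\Sym^{over}=3^{22}$. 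It then remains to identify $\Sym^{over}$ with $\Sym^{sat}$, which will follow from the final discriminant count below.

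For part~(ii), the generators come from the Lagrangian-plane class~(\ref{LagrangianPlaneClass}): subtracting the translate by $\tau'\in A[3]$ produces the class~(\ref{first31}), which is divisible by $3$ and lies in $\Pi'$. Spreading~(\ref{first31}) over its $\Sp(4,\mathbb{F}_3)$-orbit, using the transitivity of Proposition~\ref{transitively} on non-isotropic planes, identifies the resulting span modulo $\Pi'$ with the ideal $D$ of Definition~\ref{SymplecticIdeal}, which has dimension $31$ by Proposition~\ref{SymplecticIdealsDimension}. This gives $\Pi'^{over}$ with $\Pi'^{over}/\Pi'=(\Z/3\Z)^{\oplus 31}$ as in~(\ref{Piover}), and~(\ref{discrPi}) together with~(\ref{squareDiscr}) yields $\discr\Pi'^{over}=3^{22}$; the explicit $31$ generators are those of Proposition~\ref{XXXI}.

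For part~(iii), I would first record that $Z_0$ contributes a $\Z/27\Z$ factor: by~(\ref{sumZ}) one has $27Z_0=\cc-\tfrac{1}{3}\sum_{\tau\in A[3]}(Z_\tau-Z_0)$, where $\cc\in\Sym^{sat}$ and, by Remark~\ref{c2}, the last sum lies in $\Pi'^{sat}$. The remaining $19$ generators arise from the integral class $\mathfrak{N}=\tfrac{1}{3}\bigl(u_1^2+\sum_{\tau\in\Lambda'}(Z_\tau-Z_0)\bigr)$, whose orbit under $\Sp(H_1(A,\Z))\ltimes A[3]$ spans a space of rank $51$ modulo $\Sym\oplus\Pi'$ by Proposition~\ref{CombinedSymplectic}; Lemma~\ref{cleffinclassesdiv} peels off one direction into $\Sym^{sat}$ and $31$ into $\Pi'^{sat}$, leaving exactly $19$ new classes, namely those of Proposition~\ref{XIX}. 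The main obstacle, and the step that makes everything consistent, is the final reconciliation: from $\Sym^{over}\subset\Sym^{sat}$ and $\Pi'^{over}\subset\Pi'^{sat}$ one gets $\discr(\Sym^{sat}\oplus\Pi'^{sat})\mid 3^{44}$, so by~(\ref{squareDiscr}) and~(\ref{Fpietsym}) the over-lattice $F$ built from $Z_0$ and the $19$ classes satisfies $\discr F\mid 1$, forcing $F=H^4(\X,\Z)$ by unimodularity. This simultaneously pins down $\discr(\Sym^{sat}\oplus\Pi'^{sat})=3^{44}$, and comparing with~(\ref{discrover}) forces $\Sym^{over}=\Sym^{sat}$ and $\Pi'^{over}=\Pi'^{sat}$, completing all three parts.
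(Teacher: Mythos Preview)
Your proposal is correct and follows essentially the same route as the paper: you build $\Sym^{over}$ from Proposition~\ref{classedivisibleSym}, $\Pi'^{over}$ from the translated Lagrangian-plane class spread by $\Sp(4,\mathbb F_3)$-monodromy, and the remaining $\Z/27\Z\oplus(\Z/3\Z)^{19}$ from $Z_0$ together with the $\Sp\ltimes A[3]$-orbit of $\mathfrak N$, then close everything simultaneously via the discriminant comparison~(\ref{squareDiscr}) forcing $\discr F=1$. This is exactly the argument of Section~\ref{integralbasisH4}.
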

Moreover since $\Sym^{over}=\Sym^{sat}$, from the proofs of Proposition \ref{ChernSym}, \ref{classedivisibleSym} and Remark \ref{afterClassuvw}, we obtain the following corollary.
\begin{corollary}\label{SymSatImage}
The image of $H^4(A\hilb{3},\Z)$ under $\theta^*$ is equal to $\Sym^{sat}$. \qed
\end{corollary}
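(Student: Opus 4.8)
The plan is to prove the equality by two opposite inclusions. The inclusion $\theta^*(H^4(A\hilb{3},\Z))\subseteq\Sym^{sat}$ is the easy one: Proposition~\ref{ImSym} gives $\theta^*(H^4(A\hilb{3},\Q))=\Sym\otimes\Q$, and since $\theta^*$ carries integral classes to integral classes, every element of $\theta^*(H^4(A\hilb{3},\Z))$ lies in $H^4(\X,\Z)\cap(\Sym\otimes\Q)$, which is by definition the saturation $\Sym^{sat}$. So this direction is immediate from the rational computation already carried out.

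The substance is the reverse inclusion $\Sym^{sat}\subseteq\theta^*(H^4(A\hilb{3},\Z))$. Here I would use the already-established identity $\Sym^{over}=\Sym^{sat}$, so that $\Sym^{sat}$ is generated over $\Z$ by $\Sym$ together with the finitely many divisible classes listed in Theorem~\ref{integralbasistheorem}(1). First, $\Sym=\Sym^2(H^2(\X,\Z))$ is contained in the image: by Proposition~\ref{H2Sur} the map $\theta^*$ is surjective in degree $2$, so each degree-$2$ generator is $\theta^*$ of an integral class, and as $\theta^*$ is a ring homomorphism the products lie in the image as well. It then remains to realize each saturation generator as $\theta^*$ of an explicit integral class on $A\hilb{3}$, reading the Nakajima-operator expressions off the proofs of Propositions~\ref{ChernSym} and \ref{classedivisibleSym} and Remark~\ref{afterClassuvw}.

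Concretely, I would exhibit $\frac{e\cdot y}{3}=\theta^*\big(\q_2(a)\q_1(1)\vac\big)$ and $\frac{y^2-\frac13 e\cdot y}{2}=\theta^*\big(\tfrac12\q_1(a)^2\q_1(1)\vac-\tfrac12\q_2(a)\q_1(1)\vac\big)$, the latter operator being integral by Corollary~\ref{IntegralOperatorsTorus} (applied with the integral operator $\q_1(1)$); the class $\frac{e^2}{3}=4(u_1u_2+v_1v_2+w_1w_2)-\cc$, which lies in the image because $\cc=\theta^*(c_2(A\hilb{3}))$ and the products $u_iu_j$ already belong to $\Sym$; and finally $\frac{u_1u_2+v_1v_2+w_1w_2}{6}=Y_p=\theta^*\big(\tfrac12\q_1(x)\q_1(1)^2\vac\big)$ from Remark~\ref{afterClassuvw}. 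For this last generator I would verify that $\tfrac12\q_1(x)\q_1(1)^2\vac$ is genuinely integral on $A\hilb{3}$: by (\ref{q1primitive}) it equals $\rho^*$ of the symmetric class $x\otimes1\otimes1+1\otimes x\otimes1+1\otimes1\otimes x$, which is the class of the integral subvariety $[\{p\}+\Sym^2 A]$ on $\Sym^3 A$, i.e. the locus of length-three subschemes containing a fixed point $p$.

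The step I expect to be most delicate is precisely the integrality of these fractional operators, above all for $Y_p$, the generator divisible by $6$. Unlike the others it cannot be obtained from Lemma~\ref{IntegralOperators} alone; one must invoke either the torus-specific Corollary~\ref{IntegralOperatorsTorus} or, as above, the identification via (\ref{q1primitive}) of $\tfrac12\q_1(x)\q_1(1)^2\vac$ with a genuine subvariety class pulled back along $\rho$. Once every saturation generator is pinned down as a $\theta^*$-pullback of an integral class, the two inclusions combine to give $\theta^*(H^4(A\hilb{3},\Z))=\Sym^{sat}$.
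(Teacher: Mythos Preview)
Your proposal is correct and follows essentially the same route as the paper: the paper simply states that since $\Sym^{over}=\Sym^{sat}$, the corollary follows from the proofs of Proposition~\ref{ChernSym}, Proposition~\ref{classedivisibleSym} and Remark~\ref{afterClassuvw}, and you have unpacked exactly these references (the Nakajima-operator expressions for $\tfrac{e\cdot y}{3}$, $\tfrac{y^2-\frac13 e\cdot y}{2}$, $Y_p$, and $\cc=\theta^*(c_2(A\hilb{3}))$). Your integrality check for $\tfrac12\q_1(x)\q_1(1)^2\vac$ is also fine; one could alternatively note that $\tfrac{1}{2}\q_1(1)^2$ is an integral operator by Lemma~\ref{IntegralOperators} and then apply the integral operator $\q_1(x)$.
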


\subsection{Conclusion on the morphism to the Hilbert scheme}
Let us summarize our results on $\theta^*$:
\begin{theorem}\label{thetaTheorem}
Let $A$ be an abelian variety and $(b_i)\subset H^2(A,\Z)$ an integral basis. Let $\theta: \kum{A}{2}\hookrightarrow A^{[3]}$ be the embedding. We also use Notation \ref{TorusClasses}.

The homomorphism $\theta^*:H^*(A\hilb{3},\Z)\rightarrow H^*(\kum{A}{2},\Z)$ of graded rings is surjective in every degree except $4$. Moreover, the image of $H^4(A\hilb{3},\Z)$ is the primitive overlattice of $\Sym^2(H^2(\kum{A}{2},\Z))$. 
The kernel of $\theta^*$ is the ideal generated by $H^1(A\hilb{3},\Z)$.
The image by $\theta^*$ of the following integral classes provide a basis of $\im\theta^*$:
\begin{center}
\begin{tabular}{c|l|l}
degree & preimage of class & alternative name  \\
\hline
0 & $\frac{1}{6} \q_1(1)^3\vac$ & 1 \\
\hline
2 &  $\frac{1}{2}\q_1(b_i) \q_1(1)^2\vac$ for $1\leq i\leq 6$ & $j(b_i)$ \\
 & $\frac{1}{2} \q_2(1)\q_1(1)\vac $  & $e$\\
\hline
3 & $\frac{1}{2}\q_1(a^*_i)\q_1(1)^2\vac$ & \\
  & $\frac{1}{2}\q_2(a_i)\q_1(1)\vac$ & \\
\hline
4 & $\q_1(b_i)\q_1(b_j)\q_1(1)\vac$ for $1\leq i\leq j\leq 6$, but $(b_i,b_j)\neq(a_1a_2,a_3a_4)$ &\\
  & $\frac{1}{2}\q_1(x)\q_1(1)^2\vac$ (instead of the missing case above)  & $Y_p$\\
  & $\frac{1}{2}\left(\q_1(b_i)^2-\q_2(b_i)\right)\q_1(1)\vac$ & \\
  & $\frac{1}{3} \q_3(1)\vac$ & $W$ \\
\hline
5 & $\q_1(a_ia_j)\q_1(a_j^*)\q_1(1) \vac$ for any choice of $j\neq i$ &\\
  & $\q_2(a^*_i)\q_1(1)\vac $ &\\
\hline
6 & $\q_1(a_i^*)\q_1(a_j^*)\q_1(1)\vac$ for $1\leq i< j\leq 4$ & \\
  & $\q_2(x)\q_1(1)\vac$ & \\
\hline
8 & $\q_1(x)^3\vac$ & top class
\end{tabular}
\end{center}
\end{theorem}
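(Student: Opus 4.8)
The statement collects the results proved above, so the plan is to verify each assertion degree by degree and then assemble. \textbf{Surjectivity.} In degrees $0$ and $8$ it is immediate (constant class and top class), and in degrees $1$ and $7$ it is vacuous because $b_1(\X)=b_7(\X)=0$. Surjectivity in degree $2$ is Proposition~\ref{H2Sur}, and in degrees $3$ and $5$ it is Proposition~\ref{oddcohomology}, which in fact already supplies the bases recorded in the table. For degree $6$ I would use that $\X$ is compact with torsion-free cohomology (Theorem~\ref{torsion}), so the cup-product pairing $H^2(\X,\Z)\times H^6(\X,\Z)\to\Z$ is unimodular; pairing the explicit degree-$2$ basis of Notation~\ref{BasisH2KA} against the listed degree-$6$ classes and transporting the computation to $A\hilb{3}$ via the projection formula~(\ref{projectionFormula}), the identity $[\X]=\alpha_1\alpha_2\alpha_3\alpha_4$ of Proposition~\ref{KummerClass}, and the commutation relations of Lemma~\ref{commutators}, one checks that the resulting matrix is unimodular. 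This shows at once that $\theta^*$ is surjective in degree $6$ and that the listed classes form a basis there. Degree $4$ is the exceptional case: by Corollary~\ref{SymSatImage} the image is exactly the saturation $\Sym^{sat}$ of $\Sym^2(H^2(\X,\Z))$, which has rank $28<108=b_4(\X)$, so $\theta^*$ is not surjective.

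\textbf{Image and basis in degree $4$.} The identification of $\im\theta^*$ in degree $4$ with $\Sym^{sat}$ is precisely Corollary~\ref{SymSatImage}. To see that the $28$ listed degree-$4$ classes form a $\Z$-basis of $\Sym^{sat}$, I would match them against the generators of $\Sym^{sat}/\Sym^2(H^2(\X,\Z))\cong(\Z/2\Z)^{\oplus 7}\oplus(\Z/3\Z)^{\oplus 8}$ displayed in Theorem~\ref{integralbasistheorem}(1): the products $\q_1(b_i)\q_1(b_j)\q_1(1)\vac$ pull back into $\Sym$, whereas the integral combinations $\tfrac12\bigl(\q_1(b_i)^2-\q_2(b_i)\bigr)\q_1(1)\vac$, the preimage $\tfrac12\q_1(x)\q_1(1)^2\vac$ of $Y_p$ (Remark~\ref{afterClassuvw}) and $W=\tfrac13\q_3(1)\vac$ recover exactly the fractional generators, using the pullback identities established in the proof of Proposition~\ref{classedivisibleSym} (the classes $e\cdot y$ and $y^2$ being $\theta^*$ of $\q_2(a)\q_1(1)\vac$ and $\q_1(a)^2\q_1(1)\vac$ up to scalars).

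\textbf{Kernel.} Propositions~\ref{annihilator} and~\ref{Annihideal} identify $\ker\theta^*$ with the annihilator of $[\X]$, and the latter rationally with the ideal generated by $H^1(A\hilb{3})$. To upgrade this to $\Z$, I would observe that, the target $H^*(\X,\Z)$ being torsion-free, $\ker\theta^*$ is a saturated subgroup of $H^*(A\hilb{3},\Z)$; it then suffices to check that the integral ideal $J$ generated by $H^1(A\hilb{3},\Z)$ is itself saturated, equivalently that $H^*(A\hilb{3},\Z)/J$ is torsion-free. Concretely, once the explicit integral basis of $\im\theta^*$ by classes $\theta^*(p_i)$, $p_i\in H^*(A\hilb{3},\Z)$, is in hand, the torsion-free map $\theta^*$ splits as $H^*(A\hilb{3},\Z)=\langle p_i\rangle\oplus\ker\theta^*$, so the kernel is pinned down as soon as $J$ is seen to have index $1$ in it. Integrality of all the preimages listed in the table follows from Lemma~\ref{IntegralOperators}, Corollary~\ref{IntegralOperatorsTorus} and Proposition~\ref{A2Basis} (the last for half-integral operators such as $\tfrac12\q_2(a_i)$).

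\textbf{Main obstacle.} The hard part will be twofold: organizing the Nakajima intersection computations on $A\hilb{3}$ carefully enough to verify unimodularity in degree $6$ and the precise divisibilities separating $\Sym$ from $\Sym^{sat}$ in degree $4$; and, more conceptually, the integral (rather than merely rational) identification of the kernel, which rests on the saturation of the ideal generated by $H^1$.
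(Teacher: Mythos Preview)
Your proposal is correct and follows essentially the same route as the paper: the theorem is a summary, and both you and the paper assemble it by citing Proposition~\ref{H2Sur} for degree~$2$, Poincar\'e duality (via the projection formula~(\ref{projectionFormula})) for degree~$6$, Proposition~\ref{oddcohomology} for degrees~$3$ and~$5$, Corollary~\ref{SymSatImage} for degree~$4$, and Propositions~\ref{annihilator} and~\ref{Annihideal} for the kernel.

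One point worth noting: you are more careful than the paper about the kernel. The paper simply cites Propositions~\ref{annihilator} and~\ref{Annihideal} and stops, but as you correctly observe, Proposition~\ref{Annihideal} is stated and proved only over~$\Q$, while the theorem asserts an integral equality. Your saturation argument (the kernel of a map to a torsion-free group is saturated, so it suffices to check that the ideal generated by $H^1(A\hilb{3},\Z)$ is saturated, or equivalently that it has full index in the kernel once the explicit splitting by the $p_i$ is in hand) is the right way to close this; the paper leaves it implicit. So your ``main obstacle'' is not so much an obstacle in your argument as a genuine detail the paper does not spell out.
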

\begin{proof}
The table is established by the following results:
For degree 2, see Proposition \ref{H2Sur}. Since the Poincar\'e duality pairing on $\kum{A}{2}$ can be evaluated using projection formula (\ref{projectionFormula}), the dual classes of degree 6 are easily computed.
The odd degrees are treated by Proposition \ref{oddcohomology}. Classes of degree 4 are studied in Sections \ref{SyminH4} and \ref{integralbasisH4}. The classes are chosen in a way that they give a basis of $\Sym^{sat}$, which is possible by Corollary \ref{SymSatImage}. The condition $(b_i,b_j)\neq(a_1a_2,a_3a_4)$ is more or less arbitrary, but we had to remove one class to avoid a relation of linear dependence.

The kernel of $\theta^*$ is described by the Propositions \ref{annihilator} and \ref{Annihideal}.
\end{proof}

\section{Symplectic involutions on $K_{2}(A)$}\label{Involution} 
By Section \ref{oddcohoK2} and \cite{MongWanTari}, it is now possible to classify the symplectic involutions on $K_{2}(A)$.

Let $X$ be an irreducible symplectic manifold. Denote $$\nu: \Aut (X)\rightarrow \Aut (H^{2}(X,\Z))$$
the natural morphism. Hassett and Tschinkel (Theorem 2.1 in \cite{Hassett}) have shown that $\Ker \nu$ is a deformation invariant. 
Let $X$ be an irreducible symplectic fourfold of Kummer type. Then Oguiso in \cite{Oguiso} has shown that $\Ker \nu =(\Z/3\Z)^{\oplus3}\rtimes\Z/2\Z$.

Let $A$ be an abelian variety and $g$ an automorphism of $A$. Let us denote by $T_{A[3]}$ the group of translations of $A$ by elements of $A[3]$. 
If $g\in  T_{A[3]}\rtimes\Aut_{\Z} (A)$, then $g$ induces a natural automorphism on $K_2(A)$. 
We denote the induced automorphism by $g^{[[3]]}$. If there is no ambiguity, we also denote the induced automorphism by the same letter $g$
to avoid too complicated formulas.

When $X=K_2(A)$, $\Ker \nu$ can be precisely described using: 
\begin{cor}\cite[Corollary 3.3]{BNS2}\label{BNS2cor}
 Let $A$ be a complex torus and $\nu: \Aut (K_2(A))\rightarrow \Aut (H^{2}(K_2(A),\Z))$ the natural map.
 Then $$\Ker \nu = T_{A[3]}\rtimes(-\id_A)^{[[3]]}.$$ 
 \end{cor}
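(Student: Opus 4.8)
The plan is to prove the two inclusions separately: the inclusion $T_{A[3]}\rtimes\langle(-\id_A)^{[[3]]}\rangle\subseteq\Ker\nu$ by a direct cohomological computation, and the reverse inclusion by comparing with Oguiso's abstract determination of $\Ker\nu$.

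First I would show that every natural automorphism $g^{[[3]]}$ acts on $H^2(K_2(A),\Z)$ through the action of $g$ on $H^2(A,\Z)$. Using the decomposition $H^2(K_2(A),\Z)=j(H^2(A,\Z))\oplus\langle e\rangle$ from Notation~\ref{BasisH2KA}, together with the identity $g^{[[3]]}\circ\theta=\theta\circ g^{[3]}$ from~(\ref{commuteThetag}) and the naturality of the Nakajima operators, one obtains $g^{[[3]]*}\circ j=j\circ g^*$ and $g^{[[3]]*}(e)=e$, exactly as in Corollary~\ref{actionH3} for $H^3$; here $e=\theta^*(\delta)$ is fixed because $\delta$ only involves the operators $\q_m(1)$ and $g^*$ fixes $1\in H^0(A,\Z)$. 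Hence $g^{[[3]]}\in\Ker\nu$ if and only if $g^*=\id$ on $H^2(A,\Z)$. A translation $t_a$ is homotopic to the identity, so $t_a^*=\id$ on all of $H^*(A,\Z)$; and $-\id_A$ acts on $H^2(A,\Z)=\Lambda^2 H^1(A,\Z)$ by $(-1)^2=\id$. Therefore both $T_{A[3]}$ and $(-\id_A)^{[[3]]}$ lie in $\Ker\nu$, which gives the inclusion $\supseteq$.

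Next I would record the group structure. The subgroup $T_{A[3]}$ is normal, since conjugating a translation by any automorphism of $A$ again yields a translation; $(-\id_A)^{[[3]]}$ has order two and normalizes $T_{A[3]}$ by $t_a\mapsto t_{-a}$; and since $-\id$ has no nonzero fixed point on $A[3]$ the two subgroups meet trivially. So the right-hand side is genuinely the stated semidirect product, whose order I can then compute explicitly.

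Finally, for the reverse inclusion I would invoke Oguiso's theorem, which determines $\Ker\nu$ as an abstract finite group; by the deformation invariance of $\Ker\nu$ established by Hassett and Tschinkel, this description applies to our $K_2(A)$. Comparing the order of the subgroup $T_{A[3]}\rtimes\langle(-\id_A)^{[[3]]}\rangle$ with the order of $\Ker\nu$ predicted by Oguiso shows that they coincide, and a subgroup that equals the ambient finite group in cardinality must be the whole group. The main obstacle is exactly this last step: a priori $\Ker\nu$ might contain cohomologically trivial automorphisms not induced from $\Aut(A)$, and ruling these out is precisely what Oguiso's computation (through deformation invariance) supplies. The concrete content of the corollary is then the identification of his abstract group with the explicit natural automorphisms, so I would be most careful with the order bookkeeping of $T_{A[3]}\cong A[3]$ and how it matches Oguiso's count.
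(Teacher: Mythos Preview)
The paper does not give its own proof of this statement: it is quoted as \cite[Corollary~3.3]{BNS2} and used as a black box. So there is nothing in the paper to compare your argument against; what you have sketched is essentially the line of reasoning behind the cited result.

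Your strategy is sound. The inclusion $T_{A[3]}\rtimes\langle(-\id_A)^{[[3]]}\rangle\subseteq\Ker\nu$ follows exactly as you say, via $g^{[[3]]*}\circ j=j\circ g^*$ and $g^{[[3]]*}(e)=e$. For the reverse inclusion, matching orders against Oguiso's determination also works, and your caution about the bookkeeping is well placed: note that $A[3]\cong(\Z/3\Z)^{4}$ for an abelian \emph{surface}, so $|T_{A[3]}\rtimes\langle(-\id_A)^{[[3]]}\rangle|=2\cdot 3^{4}=162$. The paper's display of Oguiso's result just above the corollary reads $(\Z/3\Z)^{\oplus 3}\rtimes\Z/2\Z$, which must be a typo for $(\Z/3\Z)^{\oplus 4}\rtimes\Z/2\Z$; taken literally it would already be contradicted by the inclusion you established. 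With the correct exponent the cardinalities agree and your argument closes.

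One small point: your justification for $T_{A[3]}\cap\langle(-\id_A)^{[[3]]}\rangle=\{1\}$ (``$-\id$ has no nonzero fixed point on $A[3]$'') is not the relevant fact; the intersection is trivial simply because the two subgroups have coprime orders $3^{4}$ and $2$. The absence of nonzero fixed points is rather what ensures the semidirect product is not a direct product.
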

\subsection{Torelli theorem}
To prove Theorem \ref{SymplecticInvo} (i) we will need to use the global Torelli theorem for IHS manifolds stated by Markman in \cite{Markmansurvey}. We recall this theorem in this section.

Let $X_1$ and $X_2$ be IHS manifolds. We say that the isomorphism $f: H^*(X_1,\Z) \stackrel{\cong}{\rightarrow} H^*(X_2,\Z)$ is a \emph{parallel-transport operator} if there exist a smooth and proper family $\pi : \mathcal{X}\rightarrow B$ of IHS manifolds over an analytic base $B$, points $b_i\in B$, isomorphisms $\psi_i : X_i \rightarrow \mathcal{X}_{b_i}$ for $i=1,2$, and a continuous path $\gamma: \left[ 0, 1 \right] \rightarrow B$, satisfying $\gamma(0)=b_1$, $\gamma(1)=b_2$, such that the parallel transport in the local system $R\pi_* \Z$ along $\gamma$ induces the homomorphism $\psi_{2*}\circ f\circ \psi_1^{*}: H^*(\mathcal{X}_{b_1},\Z)\stackrel{\cong}{\rightarrow} H^*(\mathcal{X}_{b_2},\Z)$. 
An isomorphism $g: H^{2}(X_1,\Z)\stackrel{\cong}{\rightarrow} H^{2}(X_2,\Z)$ is said to be a \emph{parallel-transport operator} if it is the $2$-th graded summand of a parallel-transport operator $f$ as above. 
Remark that an automorphism $g: H^{2}(X,\Z)\stackrel{\cong}{\rightarrow} H^{2}(X,\Z)$ of the second cohomology group of an IHS manifolds $X$ which is a parallel-transport operator is a monodromy operator. We denote by $\Mon^2(X)\subset \mathcal{O}(H^2(X,\Z)$ the subgroup of monodromy operators of $X$.

\begin{thm}\cite[Theorem 1.3]{Markmansurvey} \label{Torelli}
Let $X$ and $Y$ be two IHS manifolds, which are deformation equivalent.
Then,
$X$ and $Y$ are bimeromorphic if and only if there exists a parallel transport operator $f : H^2(X,\Z) \to H^2(Y,\Z)$, which is a Hodge isometry.
\end{thm}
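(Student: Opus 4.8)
The plan is to deduce the statement from Verbitsky's global Torelli theorem, passing through the Teichm\"uller space and the period map. Fix a compact oriented differentiable manifold $M$ underlying $X$ (and hence $Y$, since the two are deformation equivalent), and let $\mathcal{T}$ denote its Teichm\"uller space, the space of complex structures of IHS type modulo the group of diffeomorphisms isotopic to the identity. The Beauville--Bogomolov form $q$ on $H^2(M,\Z)$ defines a period domain
$$
\Omega = \{ [\sigma]\in \mathbb{P}(H^2(M,\C)) \mid q(\sigma,\sigma)=0,\ q(\sigma,\bar\sigma)>0 \},
$$
and sending a complex structure to its line $H^{2,0}$ yields a period map $\mathcal{P}:\mathcal{T}\to\Omega$. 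The first step is local Torelli: by the Bogomolov--Tian--Todorov unobstructedness theorem the deformations of an IHS manifold are unobstructed and governed by $H^1(X,T_X)\cong H^{1,1}(X)\subset H^2$, so $\mathcal{P}$ is a local biholomorphism.

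The heart of the argument is Verbitsky's theorem that $\mathcal{P}$ restricts to a bijection on the Hausdorff reduction of each connected component $\mathcal{T}^\circ$ of $\mathcal{T}$. I would establish this by the two standard ingredients: first, that $\mathcal{P}$ is surjective with discrete fibres, via the density of twistor lines, which realize $\Omega$ as swept out by the projective lines coming from hyperk\"ahler rotations; and second, that two points of $\mathcal{T}^\circ$ with the same period are necessarily inseparable. The inseparability statement rests on analysing limits of converging sequences of twistor lines and uses that the monodromy group acts on $\Omega$ with dense orbits.

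Next I would identify inseparable points of $\mathcal{T}$ with bimeromorphic complex structures, following Huybrechts: two non-separated points of $\mathcal{T}$ give IHS manifolds that agree outside closed analytic subsets of codimension $\geq 2$, hence are bimeromorphic; conversely a bimeromorphism between IHS manifolds induces a Hodge isometry on $H^2$ and leaves the period point unchanged, so bimeromorphic structures are inseparable. Combining this with Verbitsky's bijection, two complex structures in the same component of $\mathcal{T}$ are bimeromorphic exactly when their periods coincide, that is, exactly when the tautological identification of their $H^2$ is a Hodge isometry.

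Finally I would translate this into the parallel-transport formulation. A connected component $\mathcal{T}^\circ$ produces precisely the parallel-transport operators between the fibres over its points, and $\Mon^2$ is the image in $\mathcal{O}(H^2(M,\Z))$ of $\pi_1$ of the base of the universal family. Given $X,Y$ deformation equivalent together with a parallel-transport Hodge isometry $f$, one transports $X$ to a complex structure on $M$ lying in the component of $Y$ and inducing $f$ on $H^2$; by the previous paragraph the two structures then share a period and are bimeromorphic, while the reverse implication is the easy direction already noted. The main obstacle is Verbitsky's global injectivity of $\mathcal{P}$ on components: proving the inseparability of equi-periodic points requires the delicate ergodic-theoretic and twistor-geometric input and is by far the deepest part of the argument.
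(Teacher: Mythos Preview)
The paper does not prove this theorem at all: it is quoted verbatim as Theorem~1.3 of Markman's survey \cite{Markmansurvey} and used as a black box in the proof of Theorem~\ref{SymplecticInvo}~(i). So there is no ``paper's own proof'' to compare your attempt against.

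That said, your outline is an accurate high-level sketch of how the global Torelli theorem is actually established in the literature (Verbitsky, with the expositions by Markman and Huybrechts): local Torelli via Bogomolov--Tian--Todorov, Verbitsky's injectivity of the period map on the Hausdorff reduction of a Teichm\"uller component using twistor lines and an ergodicity argument, Huybrechts' identification of inseparable points with bimeromorphic structures, and the reformulation in terms of parallel-transport operators. You are right that the deepest step is Verbitsky's injectivity, and what you have written would not constitute a proof of it, only a pointer to where the work lies; but as a roadmap your proposal is sound. For the purposes of the present paper, however, the intended ``proof'' is simply the citation.
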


To use this theorem is important to know the group $\Mon^2(X)$. In the  case of an irreducible symplectic manifold of Kummer type $X$, the group $\Mon^2(X)$ was described by Mongardi in \cite{MongardiMono}. Let $O^{+}(H^2(X,\Z))$ be the sub-group of $O(H^2(X,\Z)$ that preserve the orientation of the positive cone. Let $\mathcal{W}(X)$ be the sub-group of $O^{+}(H^2(X,\Z))$ acting as $\pm1$ on the discriminant group $A_{H^{2}(X,\Z)}:=\frac{H^{2}(X,\Z)^{*}}{H^{2}(X,\Z)}$. Let $\chi$ be the character corresponding to the action on $A_{H^{2}(X,\Z)}$. We denote by $\mathcal{N}(X)$ the kernel of $\det\circ \chi:\mathcal{W}(X)\rightarrow \left\{\pm1\right\}$.
\begin{thm}\cite[Theorem 2.3]{MongardiMono}\label{MonodromyMong}
let $X$ be an irreducible symplectic manifold of Kummer type. Then $\Mon^2(X)=\mathcal{N}(X)$.
\end{thm}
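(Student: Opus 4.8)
The plan is to prove the equality by establishing the two inclusions $\Mon^2(X)\subseteq\mathcal{N}(X)$ and $\mathcal{N}(X)\subseteq\Mon^2(X)$ separately. Since the monodromy group $\Mon^2$ is a deformation invariant, I may fix the deformation type and pass to a convenient model, namely a generalized Kummer variety $\kum{A}{n-1}$ (for the fourfolds relevant here, $\kum{A}{2}$, where $H^2(X,\Z)\cong U^{\oplus 3}\oplus\langle -6\rangle$ has cyclic discriminant group $A_{H^2(X,\Z)}\cong\Z/6\Z$). On such a model the monodromy operators of Proposition~\ref{Hassettmonodromy}, together with the isometries induced by $\Aut(A)$ and by translations in $T_{A[3]}$, furnish a concrete supply of elements of $\Mon^2$, and the embedding $H^2(A,\Z)\hookrightarrow H^2(X,\Z)$ of Notation~\ref{BasisH2KA} lets me relate monodromy of $X$ to monodromy of the abelian surface.

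For the inclusion $\Mon^2(X)\subseteq\mathcal{N}(X)$ I would proceed in three steps. First, every parallel-transport operator preserves the orientation of the positive cone in $H^2(X,\R)$, so $\Mon^2(X)\subseteq O^{+}(H^2(X,\Z))$. Second, a monodromy operator is an integral isometry of the Beauville--Bogomolov lattice and hence induces an automorphism of the cyclic discriminant group; following Markman's integral constraints on the monodromy this automorphism is $\pm\id$, so $\Mon^2(X)\subseteq\mathcal{W}(X)$ and the character $\chi$ is defined on it. Third, and most importantly, one must verify the relation $\det(g)=\chi(g)$, that is $g\in\ker(\det\circ\chi)$. Here I would use that a genuine monodromy operator acts compatibly on the \emph{whole} cohomology ring, not only on $H^2$: the action on $H^2$ is coupled, through the rigidity of the embedded cohomology of the abelian surface (for instance the induced action on the odd cohomology $H^1(A,\Z)\oplus H^3(A,\Z)$ of Corollary~\ref{actionH3}), to the discriminant character in a way that forces $\det(g)=\chi(g)$. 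In particular this must exclude the reflections $r_v$ in classes $v\in U^{\oplus 3}$ orthogonal to the exceptional class $e$, for which $\det(r_v)=-1$ but $\chi(r_v)=+1$.

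For the reverse inclusion $\mathcal{N}(X)\subseteq\Mon^2(X)$ the goal is to produce enough monodromy operators to generate all of $\mathcal{N}(X)$, the subgroup of $\mathcal{W}(X)$ of index at most two cut out by $\det=\chi$. I would realise generators geometrically: the monodromy $\Sp(H_1(A,\Z))$ of the abelian surface (Section~\ref{monodromyexplication}) acts through $\Lambda^2$ on $H^2(A,\Z)\subset H^2(\kum{A}{2},\Z)$ while fixing $e$, contributing the part with $\det=\chi=1$; while operators with $\det=\chi=-1$, such as the reflection in the exceptional class $e$, are obtained from Picard--Lefschetz transformations attached to degenerations of $\kum{A}{2}$, each of which one checks to lie in $\ker(\det\circ\chi)$ rather than outside it. A direct lattice-theoretic computation in $U^{\oplus 3}\oplus\langle -2n\rangle$ then identifies the subgroup generated by these operators with $\mathcal{N}(X)$, closing the argument.

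The hard part is the determinant--discriminant constraint $\det(g)=\chi(g)$, which is precisely what distinguishes the Kummer case from the $K3^{[n]}$ case, where the monodromy fills out the larger group acting by $\pm\id$ on the discriminant. The difficulty is twofold. On the upper-bound side one must isolate a single $\Z/2$-valued deformation invariant that simultaneously records the positive-cone orientation and the action on the discriminant, and prove its monodromy-invariance, thereby ruling out the spurious reflections $r_v$. On the lower-bound side one must check that the explicitly constructed operators all land in, and together generate, exactly the index-two subgroup $\ker(\det\circ\chi)$. Reconciling the two bounds, so that they meet without a gap, is where the delicate index bookkeeping in the lattice concentrates.
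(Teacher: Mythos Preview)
The paper does not prove this theorem at all: it is quoted verbatim from \cite[Theorem~2.3]{MongardiMono} and used as a black box (for instance in the proof of Corollary~\ref{Torellicoro}). There is therefore nothing in the present paper to compare your proposal against.

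That said, a few remarks on your outline relative to Mongardi's actual argument. Your upper bound $\Mon^2(X)\subseteq\mathcal{W}(X)$ is essentially Markman's; the refined constraint $\det=\chi$ is the new content, and Mongardi obtains it not via the odd cohomology but via the monodromy-invariant extension of $H^2(X,\Z)$ into a larger Mukai-type lattice $\widetilde\Lambda$ (of signature $(4,4)$ in the Kummer case), where the orientation of the four positive directions is preserved and this forces the determinant--discriminant coupling. Your idea of using Corollary~\ref{actionH3} and the action on $H^3$ is appealing in the fourfold case, but you would need to show that \emph{every} monodromy operator on $H^2$ lifts compatibly to one on $H^3$, and then extract the sign relation; this is not immediate and is not how the published proof goes. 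For the lower bound, Mongardi relies on the density results of Markman--Mehrotra \cite{Markmanou} together with explicit parallel-transport operators coming from moduli of sheaves on abelian surfaces, rather than the $\Sp(H_1(A,\Z))$ monodromy you invoke; the latter acts through $\Lambda^2$ and hence always with determinant $+1$ on $H^2(A,\Z)$, so by itself it cannot reach the $\det=\chi=-1$ component, and your sketch of how to obtain the reflection in $e$ via a degeneration is too vague to assess. Finally, note that the theorem is stated for all $\kum{A}{n-1}$, not only the fourfolds, so an argument tailored to $n=3$ would not suffice.
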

Let denote by $\Lambda_n$ the lattice $U^{\oplus 3}\oplus(-2-2n)$. Let $X$ be an irreducible symplectic $2n$-fold of Kummer type, an isometry $\varphi:H^2(X,\Z)\rightarrow \Lambda_n$ is called a \emph{mark} and the couple $(X,\varphi)$ is called a \emph{marked irreducible symplectic $2n$-fold of Kummer type}. We denote by $\mathcal{M}_{\Lambda_n}$ the moduli space of marked irreducible symplectic $2n$-fold of Kummer type. Moreover, we recall that the period map is defined as follows:

$$\xymatrix@R0pt{
\mathscr{P}:& \mathcal{M}_{\Lambda_n}\ar[r] & \Omega_{\Lambda}:=\left\{\left.x\in\mathbb{P}(\Lambda_n\otimes\C)\right|\ x^2=0\ \text{and}\ x\cdot\overline{x}>0\right\}\\
&(X,\varphi)   \ar[r] & \varphi(H^{0}(X,\Omega_X^2)).
   }$$
\begin{cor}\label{Torellicoro}
Let $X$ be an irreducible symplectic $2n$-fold of Kummer type, with $n+1$ a prime power. Let $A$ be a 2-dimensional complex torus, we denote by $A^*$ its dual torus.
If there exists a Hodge isometry $f : H^2(X,\Z) \to H^2(K_n(A),\Z)$, then $X$ is bimeromorphic to $K_n(A)$ or to $K_n(A^*)$.
\end{cor}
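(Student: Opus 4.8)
The plan is to deduce the statement from Markman's global Torelli theorem (Theorem~\ref{Torelli}). Since $X$ and $K_n(A)$ are both of Kummer type, they are deformation equivalent, so by Theorem~\ref{Torelli} it suffices to exhibit a \emph{parallel-transport operator} $H^2(X,\Z)\to H^2(K_n(A),\Z)$ (or $\to H^2(K_n(A^{*}),\Z)$) that is a Hodge isometry. First I would fix one parallel-transport operator $g\colon H^2(X,\Z)\to H^2(K_n(A),\Z)$, whose existence is exactly the content of deformation equivalence. Replacing $f$ by $-f$ if necessary (still a Hodge isometry, and since $-\id$ reverses the orientation of the positive cone exactly one of $\pm f$ lies in $O^{+}$), I may assume $f\in O^{+}(H^2(K_n(A),\Z))$. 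Then $\phi:=f\circ g^{-1}$ is an orientation-preserving isometry of $H^2(K_n(A),\Z)$, and because the parallel-transport operators form a left torsor under $\Mon^2(K_n(A))$, the map $f$ is itself a parallel-transport operator if and only if $\phi\in\Mon^2(K_n(A))$.

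Next I would bring in Mongardi's description (Theorem~\ref{MonodromyMong}), namely $\Mon^2(K_n(A))=\mathcal{N}(K_n(A))=\ker\!\big(\det\circ\chi\colon\mathcal{W}(K_n(A))\to\{\pm1\}\big)$. This is where the hypothesis that $n+1$ is a prime power is used: the Beauville--Bogomolov lattice is $\Lambda_n=U^{\oplus3}\oplus\langle-2n-2\rangle$, whose discriminant group is cyclic of order $2n+2$, and for $n+1$ a prime power its discriminant form admits only the isometries $\pm\id$; hence $\mathcal{W}(K_n(A))=O^{+}(H^2(K_n(A),\Z))$ and $\mathcal{N}(K_n(A))$ has index two in $O^{+}$, the quotient being detected by the single character $\det\circ\chi$. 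Consequently there are exactly two cases. If $\det\circ\chi(\phi)=1$, then $\phi\in\Mon^2(K_n(A))$, so $f$ is a parallel-transport operator which is a Hodge isometry, and Theorem~\ref{Torelli} gives that $X$ is bimeromorphic to $K_n(A)$.

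It remains to treat the case $\det\circ\chi(\phi)=-1$, where the dual torus enters and which I expect to be the main obstacle. The key input is the classical Torelli theorem for the weight-two Hodge structure of an abelian surface: $H^2(A,\Z)$ determines $A$ up to isomorphism and duality, so $H^2(A,\Z)$ and $H^2(A^{*},\Z)$ are Hodge isometric, and this isometry extends (acting suitably on the class of the half-diagonal) to a Hodge isometry $\tilde\delta\colon H^2(K_n(A),\Z)\to H^2(K_n(A^{*}),\Z)$. Fixing a parallel-transport operator $g^{*}\colon H^2(X,\Z)\to H^2(K_n(A^{*}),\Z)$, a bookkeeping of characters through compatible markings shows that $(\tilde\delta\circ f)\circ (g^{*})^{-1}$ has character $\det\circ\chi$ equal to the product of $\det\circ\chi(\phi)=-1$ with $\det\circ\chi(\tilde\delta)$; hence it lies in $\Mon^2(K_n(A^{*}))$ precisely when $\det\circ\chi(\tilde\delta)=-1$. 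Thus the heart of the argument is to prove that the duality isometry $\tilde\delta$ is orientation preserving yet generates the nontrivial coset of $O^{+}/\mathcal{N}$, i.e. $\det\circ\chi(\tilde\delta)=-1$; granting this, $\tilde\delta\circ f$ is a parallel-transport Hodge isometry and Theorem~\ref{Torelli} yields that $X$ is bimeromorphic to $K_n(A^{*})$. The hardest points to verify are this character computation for $\tilde\delta$ and the fact that the prime-power hypothesis really forces $[O^{+}:\mathcal{N}]=2$, so that no third possibility can occur; these can be checked against the period map $\mathscr{P}$ on $\mathcal{M}_{\Lambda_n}$, whose components encode exactly the $\Mon^2$-orbits that $K_n(A)$ and $K_n(A^{*})$ realize.
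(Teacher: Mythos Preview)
Your proposal is correct and follows essentially the same strategy as the paper: reduce to Markman's global Torelli theorem by showing that either $f$ or its composition with the duality isometry is a parallel-transport operator, using Mongardi's identification $\Mon^2=\mathcal{N}$ together with the prime-power hypothesis to force the relevant dichotomy. The paper packages this dichotomy by first invoking Lemma~4.3 of \cite{Markmanou} to arrange that the half-diagonal class $\delta$ is fixed and then applying Proposition~4.6 of \cite{Markmanou} to the explicit duality isometry $\tau$ (built from Shioda's $\overline{\tau}$ on $H^2(A,\Z)$), whereas you argue directly via the character $\det\circ\chi$ and the index $[O^{+}:\mathcal{N}]=2$; the underlying content is the same, and the two verifications you flag as ``hardest'' are exactly what those cited results from \cite{Markmanou} supply.
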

\begin{proof}
We are going to use Theorem \ref{Torelli}. To do so, we have to understand when $f$ is a parallel transport operator.
Let $\varphi$ be a mark of $X$, since $X$ is of Kummer type, we can find a mark $\psi$ of $K_n(A)$ such that $(K_n(A),\psi)$ and $(X,\varphi)$ are in the same connected component $\mathcal{M}_{\Lambda_n}^o$ of the moduli space $\mathcal{M}_{\Lambda_n}$. In particular, it means that $\psi^{-1}\circ\varphi $ is a parallel transport operator. Now, we are going to consider $f\circ\varphi^{-1}\circ \psi\in O(H^2(K_n(A),\Z))$. We can assume that $f\circ\varphi^{-1}\circ \psi\in O^+(H^2(K_n(A),\Z))$, by changing $f$ by $-f$ if necessary.
Moreover, since $n+1$ is a prime power, by Lemma 4.3 of \cite{Markmanou}, we can find $\nu\in \mathcal{N}(K_n(A))=\Mon(K_n(A))$ such that $f\circ\varphi^{-1}\circ \psi\circ\nu(\delta)=\delta$, where $\delta$ is half the class of the diagonal divisor in $K_n(A)$. Hence we can exchange the mark $\psi$ for the mark $\psi':=\psi\circ\nu$ keeping $(K_n(A),\psi')$ in $\mathcal{M}_{\Lambda_n}^o$.

Now, as done in Section 4 of \cite{Markmanou}, we consider $A^*$ the dual complex torus of $A$. Then $H^1(A^*,\Z)$ is isomorphic to $H^1(A,\Z)^*$. Let $\overline{\tau}$ be the composition of natural isomorphisms $H^2(A,\Z)\simeq H^2(A,\Z)^*\simeq H^2(A^*,\Z)$, where the first isomorphism is induced by the intersection pairing. Let $\tau: H^2(K_2(A),\Z)\rightarrow H^2(K_2(A^*),\Z)$ be the isomorphism restricting to $\delta^\bot$ as $-\overline{\tau}$ and mapping the class $\delta$ to half the class of the diagonal divisor in $K_2(A^*)$. The isometry $\overline{\tau}$ is also constructed in Lemma 3 of \cite{Shioda} and it is shown that it preserves the period. Moreover by proposition 4.6 of \cite{Markmanou}, 
$f\circ\varphi^{-1}\circ \psi'\in \mathcal{N}(K_n(A))=\Mon(K_n(A))$ or $\tau\circ f\circ\varphi^{-1}\circ \psi'\in \mathcal{N}(K_n(A))=\Mon(K_n(A))$. So $f$ or $\tau\circ f$ is a parallel transport operator. Then, we conclude the proof with Theorem \ref{Torelli}.
\end{proof}
\subsection{Uniqueness and fixed locus}
\begin{thm}\label{SymplecticInvo}
Let $X$ be an irreducible symplectic fourfold of Kummer type and $\iota$ a symplectic involution on $X$. Then:
\begin{enumerate}
\item
We have $\iota\in \Ker \nu$.
\item
Let $A$ be an abelian surface. Then 
the couple $(X,\iota)$ is deformation equivalent to $(K_2(A),t_\tau \circ (-\id_A)^{[[3]]})$,
where $t_\tau$ is the morphism induced on $K_2(A)$ by the translation by $\tau\in A[3]$.
\item
The fixed locus of $\iota$ is given by a K3 surface and 36 isolated points.
\end{enumerate}
\end{thm}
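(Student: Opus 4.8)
The plan is to establish the three assertions in order, reducing (iii) to the explicit model furnished by (ii). For (i), I would argue that the coinvariant lattice of $\iota$ on $H^2$ vanishes. Since $\iota$ is symplectic, $\iota^*$ is an isometry of $H^2(X,\Z)$ fixing the class of the holomorphic $2$-form, so the invariant lattice $H^2(X,\Z)^\iota$ contains $H^{2,0}\oplus H^{0,2}$ and a Kähler class, and the coinvariant lattice $S_\iota:=\bigl(H^2(X,\Z)^\iota\bigr)^{\perp}$ is negative definite. I would then invoke the classification of symplectic automorphisms of fourfolds of Kummer type in \cite{MongWanTari}, together with the description of the $H^3$-action in Corollary \ref{actionH3}: the coinvariant lattices realized by symplectic automorphisms are tabulated there, and none is realized by an involution. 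The underlying reason is that $\rk H^2(X,\Z)=7$ is too small to accommodate a nonzero coinvariant lattice of a symplectic involution (primitively embedded into $U^{\oplus 3}\oplus\langle -6\rangle$). Hence $\iota^*=\id$ on $H^2(X,\Z)$, i.e.\ $\iota\in\Ker\nu$.

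For (ii), I would use that $\Ker\nu$ is a deformation invariant (Theorem 2.1 of \cite{Hassett}) and that every fourfold of Kummer type is deformation equivalent to some $K_2(A)$. Because $\iota$ acts trivially on $H^2$ it imposes no period constraint, so the pair $(X,\iota)$ deforms $\iota$-equivariantly to a pair $(K_2(A),\iota')$ with $\iota'\in\Ker\nu(K_2(A))$; Corollary \ref{Torellicoro} lets me identify the central fibre with $K_2(A)$ (or $K_2(A^*)$) and transport the involution. By Corollary \ref{BNS2cor}, $\Ker\nu(K_2(A))=T_{A[3]}\rtimes(-\id_A)^{[[3]]}$. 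A direct computation using $(-\id_A)^{[[3]]}\circ t_\sigma=t_{-\sigma}\circ(-\id_A)^{[[3]]}$ shows that every $t_\tau\circ(-\id_A)^{[[3]]}$ is an involution, while $T_{A[3]}\cong(\Z/3\Z)^{4}$ has no $2$-torsion; hence the involutions of $\Ker\nu$ are exactly the $t_\tau\circ(-\id_A)^{[[3]]}$, $\tau\in A[3]$, and $\iota'$ is one of them.

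For (iii), since the topological type of the fixed locus of a symplectic automorphism is a deformation invariant, it suffices to analyse the model $\iota=(-\id_A)^{[[3]]}$. I would classify the fixed length-$3$ subschemes $\xi$ with $\Sigma(\xi)=0$ and $-\xi=\xi$ by the type of their support. The reduced subschemes $\{p,-p,0\}$, $p\in A$, sweep out the Kummer surface $\widetilde{A/\{\pm1\}}$, which (being the fixed surface of a symplectic involution, hence smooth) is a K3; this is the component $Z_0$. The isolated fixed points come from the $35$ reduced triples of three distinct $2$-torsion points summing to $0$ (the $2$-planes in $A[2]\cong\F^{4}$) together with the single non-curvilinear subscheme $\mathfrak m_0^{2}$ at the origin; on each the differential of $\iota$ is $-\id$, so they are isolated, giving $35+1=36$ points. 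As a cross-check I would compute the topological Lefschetz number $\sum_i(-1)^i\trace\bigl(\iota^*|H^i\bigr)$, using $\iota^*=\id$ on $H^2$ and $H^6$, the action $-\id$ on $H^1(A)\oplus H^3(A)$ from Corollary \ref{actionH3} (trace $-8$ on $H^3$ and on $H^5$), and the splitting $H^4_\Q=\Sym_\Q\oplus\Pi'_\Q$, on which $\iota$ acts trivially on $\Sym$ (rank $28$) and with trace $0$ on $\Pi'$; this yields $\chi(\Fix(\iota))=32+28=60=24+36$, confirming the count.

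The main obstacle is the equivariant deformation in (ii): promoting a deformation of $X$ to $K_2(A)$ into a deformation of the pair $(X,\iota)$ relies essentially on the triviality of the $H^2$-action together with the deformation-invariance of $\Ker\nu$ and the Torelli input of Corollary \ref{Torellicoro}. The second delicate point is the scheme-theoretic bookkeeping of the non-reduced fixed subschemes in (iii): one must check that the straight (curvilinear) triple points at the origin, and the doubled $2$-torsion points, already lie on $Z_0$ (they are the degenerations of $\{p,-p,0\}$ resolving the $16$ nodes of $A/\{\pm1\}$), so that $W_0$ contributes exactly the one extra isolated point $\mathfrak m_0^{2}$. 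Getting this boundary behaviour right is the most error-prone step, and the Lefschetz count above is the safest way to confirm the final number $36$.
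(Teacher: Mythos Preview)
Your arguments for (ii) and (iii) are essentially those of the paper (the paper cites \cite{Tari} for the fixed-locus computation you carry out by hand, and your Lefschetz check is a nice confirmation). The gap is in (i).

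The classification in \cite{MongWanTari} does \emph{not} exclude a nontrivial involution action on $H^2$: it leaves exactly one candidate, with invariant lattice $U\oplus(2)^{\oplus2}\oplus(-6)$ and coinvariant lattice $(-2)^{\oplus2}$. Your sentence ``none is realized by an involution'' is therefore not what that reference provides, and your heuristic ``rank $7$ is too small'' is simply false---a rank-$2$ negative-definite coinvariant sublattice embeds primitively into $U^{\oplus3}\oplus\langle-6\rangle$ without obstruction. Ruling out this residual case is precisely the content of the paper's proof of (i), and it is not a formality.

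The paper's argument runs as follows. Assuming $\iota$ realizes the candidate action, one equivariantly deforms $(X,\iota)$ to a pair $(Y,\iota')$ whose period lies at a generic point of the sub-period domain $\Omega_{\Lambda_\iota}$, arranged so that $\NS(A)\cong(-2)^{\oplus2}$ for a suitable torus $A$. Using the Torelli input (Corollary~\ref{Torellicoro} together with Theorem~\ref{MonodromyMong}) one identifies $Y$ bimeromorphically with $K_2(A)$ and transports the involution to some $i$ on $K_2(A)$. The key step is then to produce, via \cite{MongWanTari0} and Shioda's Torelli for tori, a \emph{natural} automorphism $g$ of $A$ of order~$4$ whose induced action on $H^2(K_2(A),\Z)$ coincides with that of $i$; one checks $g\circ i\in\Ker\nu$. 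Finally Corollary~\ref{actionH3} forces the $H^3$-actions of $g$ and of $i$ (or $i\circ(-\id_A)$) to agree up to $(-\id_A)$, which is impossible since $g$ acts with order~$4$ on $H^1(A)\oplus H^3(A)$ while any involution acts with order~$\leq2$. This contradiction is where the $H^3$-information actually enters; merely citing Corollary~\ref{actionH3} without the Torelli/deformation reduction to a natural order-$4$ model does not give you (i).
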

\begin{proof}[Proof of (i)]

If $\iota\notin \Ker \nu$, by the classification of Section 5 of \cite{MongWanTari}, the unique possible action of $\iota$ on $H^{2}(X,\Z)$ is given by $H^{2}(X,\Z)^{\iota}=U\oplus (2)^{\oplus2}\oplus(-6)$. We will show that it is impossible. Let us assume that $H^{2}(X,\Z)^{\iota}=U\oplus (2)^{\oplus2}\oplus(-6)$, we will find a contradiction. Let denote by $\Lambda_\iota$ the sublattice $U\oplus (2)^{\oplus2}\oplus(-6)$ of $\Lambda_2$. The proof is organized as follows. First, we will show that $(X,\iota)$ deforms to a couple $(K_2(A),i)$ where $i$ is a natural involution (that is an involution induced by an involution on $A$). Then we will see that this is impossible using Section 4 of \cite{MongWanTari0} and Corollary \ref{actionH3}. 

As explained after Remark 2 of \cite{MongardiDef}, we can find $(Y,\iota')$ which is deformation equivalent to $(X,\iota)$ such that there exist two marks $\varphi$, $\varphi'$ and a generic complex torus $A$ with 
$$\mathscr{P}(Y,\varphi)=\mathscr{P}(K_2(A),\varphi').$$
We recall in few words how this is shown in \cite{MongardiDef}. First, since $\iota$ is symplectic, a period of $X$ is contained in the sub-period domain $\Omega_{\Lambda_\iota}:= \left\{\left.x\in\mathbb{P}(\Lambda_\iota\otimes\C)\right|\ x^2=0\ \text{and}\ x\cdot\overline{x}>0\right\}$.
Moreover we can find $x\in \Omega_{\Lambda_\iota}$ such that $x^\bot\cap\Lambda_\iota=\Z d$ with $d^2=-6$ and $d\cdot \Lambda_2=6\Z$. Furthermore we can link $x$ to a period of $X$ by a chain of twisted lines in $\Omega_{\Lambda_\iota}$ (see for instance Proposition 3.7 of \cite{HuybrechtsSurvey}). 
Then Remark 1 of \cite{MongardiDef} explains that there exists a couple $(Y,\iota')$ deformation equivalent to $(X,\iota)$ and a mark $\varphi$ such that $\mathscr{P}(Y,\varphi)=x$. 
Moreover, $d^\bot$ in $\Lambda_2$ is isomorphic to the torus lattice, so by surjectivity of the period map of 2-dimensional torus, we can find a torus $A$ and a mark $\varphi'$ such that $\mathscr{P}(K_2(A),\varphi')=x$. 
Remark, in addition, that we have chosen $x$ such that the $NS(A)$ is minimal, that is $NS(A)\simeq \mathcal{S}_{\iota'}:=(H^2(Y,\Z)^{\iota'})^\bot$. To be more precise, if we denote by $j:H^2(K_2(A),\Z)\rightarrow H^2(A,\Z)\oplus\Z\delta$ the natural Hodge isometry with $\delta$ half the class of the diagonal divisor in $K_2(A)$, then 
\begin{equation}
\NS (A)=j\circ\varphi'^{-1}\circ \varphi(\mathcal{S}_{\iota'}).
\label{MongNS}
\end{equation}

Then Corollary \ref{Torellicoro} implies that $Y$ is bimeromorphic to $K_2(A)$ or to $K_2(A^*)$.
Let assume that we have a bimeromorphism $r:Y\rightarrow K_2(A)$ if $Y$ is bimeromorphic to $K_2(A^*)$, the proof is similar. 
Then the involution $\iota'$ provides an involution $i:=r\circ\iota'\circ r^{-1}$ not necessarily regular on $K_2(A)$.

On the other hand by (\ref{MongNS}), $\NS(A)\simeq (-2)^{\oplus2}$.
Now we construct an involution $g$ on $H^{2}(A,\Z)$ given by $-\id$ on $(-2)^{\oplus2}$ and $\id$ on $((-2)^{\oplus2})^{\bot}$ and extended to an involution on $H^{2}(A,\Z)$ by Corollary 1.5.2 of \cite{Lattice}. Then by Theorem 1 of \cite{Shioda}, $g$ provides a symplectic automorphism on $A$ with: $H^{2}(A,\Z)^{g}=((-2)^{\oplus2})^\bot=U\oplus (2)^{\oplus2}$.
It follows from the classification of Section 4 of \cite{MongWanTari0}, that $A=\C/\Lambda$ with $\Lambda=\left\langle (1,0),(0,1),(x,-y),(y,x)\right\rangle$, $(x,y)\in \C^2\smallsetminus \R^2$ and $g=\left(
\begin{array}{cc}
0 & -1\\
1 & 0 
\end{array} \right)$. 
We are going to show that $g\circ i$ acts trivially on $H^2(K_2(A),\Z)$. The automorphisms $g$ and $i$ are both symplectic, so acts trivialy on $T_A:=\NS(K_2(A))^\bot$ the transcendental lattice of $K_2(A)$. Hence, we only have to prove that $g\circ i$ acts trivially on $\NS(K_2(A))$. 
We have $\NS(K_2(A))=j^{-1}(\NS(A))\oplus\Z\delta\simeq (-2)^{\oplus2}\oplus(-6)$. We know that $g$ acts on $\NS(K_2(A))$ by fixing $\delta$ and by $-\id$ on $j^{-1}(\NS(A))$. Moreover, we know that $\mathcal{S}_i:=(H^2(K_2(A),\Z)^i)^\bot\subset \NS(K_2(A))$ and $\mathcal{S}_i\simeq (-2)^{\oplus2}$. Let $(\alpha,\beta)$ be a basis of $\mathcal{S}_i$, we can write $i(\delta)=\lambda\delta+\mu_1\alpha+\mu_2\beta$ with $\lambda$, $\mu_1$, $\mu_2$ integers. Then $i(\delta)^2=-6=-6\lambda^2-2\mu_1^2-2\mu_2^2$, so necessarily, $\lambda=1$ and $\mu_1=\mu_2=0$.
This implies that $i(\delta)=\delta$ and $\mathcal{S}_i=j^{-1}(\NS(A))$. That proves that $g\circ i$ acts trivially on $H^2(K_2(A),\Z)$.

Hence by Corollary 3.3 and Lemma 3.4 of \cite{FujikiK}, $g\circ i$ extends to an automorphism of $K_2(A)$. In particular, $i=g^{-1}\circ g\circ i$ extends to a symplectic involution on $K_2(A)$ and $g\circ i\in \Ker \nu$. It allows us to compare the action of $i$ and $g$ on $H^{3}(K_2(A),\Z)$.

By Corollary \ref{actionH3}, $t_\tau$ acts trivially on $H^{3}(K_2(A),\Z)$. Hence by Corollary \ref{BNS2cor}, we have necessarily:
$$g^{*}_{|H^{3}(K_2(A),\Z)}=i^{*}_{|H^{3}(K_2(A),\Z)}\circ (-\id_A)^{*}_{|H^{3}(K_2(A),\Z)}\ \text{or}\ g^{*}_{|H^{3}(K_2(A),\Z)}=i^{*}_{|H^{3}(K_2(A),\Z)}.$$
But by Corollary \ref{actionH3}, $g^{*}_{|H^{3}(K_2(A),\Z)}$ has order 4 and $i^{*}_{|H^{3}(K_2(A),\Z)}\circ (-\id_A)^{*}_{|H^{3}(K_2(A),\Z)}$ and $i^{*}_{|H^{3}(K_2(A),\Z)}$ have order 2, which is a contradiction.
\vspace{3pt}\\
\emph{Proof of (ii).} 
Let $X$ be a irreducible symplectic fourfold of Kummer type and $\iota$ a symplectic involution on $X$. By (i) of the above theorem, we have $\iota\in\Ker \nu$. Then by Theorem 2.1 of \cite{Hassett}, the couple $(X,\iota)$ deforms to a couple $(K_2(A),\iota')$ with $A$ an abelian surface and $\iota'\in\Ker \nu$ a symplectic involution on $ K_2(A)$. Then we conclude with Corollary \ref{BNS2cor}.
\vspace{3pt}\\
\emph{Proof of (iii).} 
Let $A$ be an abelian surface. By Section 1.2.1 of \cite{Tari}, the fixed locus of $t_\tau \circ (-\id_A)^{[[3]]}$ on $K_2(A)$ consists of a K3 surface an 36 isolated points. 
Now let $X$ be an irreducible symplectic fourfold of Kummer type and $\iota$ a symplectic involution on $X$. 
By (ii) of the above theorem, $\Fix\iota$ deforms to the disjoint union of a K3 surface and 36 isolated points. 
Moreover, $\iota$ is a symplectic involution, so the holomorphic 2-form of $X$ restricts to a non-degenerate holomorphic 2-form on $\Fix\iota$. So $\Fix\iota$ can only contain K3 surfaces, complex tori and isolated points.
It follows, necessarily for topological reason, that $\Fix\iota$ consists of a K3 surface and 36 isolated points.
\end{proof}
\begin{rmk}
With the same ideas as in proof of Theorem \ref{SymplecticInvo} (i), when $n+1$ is a prime power, we can show that a numerically standard symplectic automorphism on an irreducible symplectic $2n$-fold of Kummer type is standard (see \cite{MongardiDef} for the definition of standard and numerically standard).
\end{rmk}
\begin{rmk}\label{RemarkSymplecticInv}
\begin{itemize}
\item[(1)]
We also remark that the K3 surface fixed by $(t_\tau \circ (-\id_A))$ is given by the sub-manifold $$Z_{-\tau}=\overline{\left\{\left.(a_1,a_2,a_3)\ \right|\ a_1=-\tau,\ a_2=-a_3+\tau,\ a_2\neq -\tau\right\}}$$ defined in Section 4 of \cite{Hassett}. 
\item[(2)]
Considering the involution $-\id_A$,
the set $$\mathcal{P}:=\left\{\left.\xi\in K_{2}(A)\right|\ \Supp \xi= \left\{a_{1},a_{2},a_{3}\right\},\ a_{i}\in A[2]\smallsetminus \left\{0\right\}, 1\leq i\leq 3 \right\}$$ provides 35 fixed points and the vertex of $$W_{0}:=\left\{\left.\xi\in K_{2}(A)\right|\ \Supp \xi=\left\{0\right\}\right\}$$ supplies the 36th point. We denote by $p_1,...,p_{35}$ the points of $\mathcal{P}$ and by $p_{36}$ the vertex of $W_{0}$.
\end{itemize}
\end{rmk}

\subsection{Action on the cohomology}\label{actioncoh}
By Theorem \ref{SymplecticInvo}, we can assume that $X=K_2(A)$ and $\iota=-\id_A$. To consider $t_\tau \circ (-\id_A)$ instead of  $-\id_A$ only has the effect of exchanging the role of $[Z_0]$ and $[Z_{-\tau}]$.
Hence we do not lose any generality assuming that $\iota=-\id_A$. Now we calculate the invariants $l_i^j(K_{2}(A))$ defined in Definition-Proposition \ref{defiprop}. It will be used in Section \ref{BeauvilleForm}.

From Theorem \ref{SymplecticInvo} (1), the involution $\iota$ acts trivially on $H^{2}(K_{2}(A),\Z)$.
It follows that
\begin{equation}
l_{2}^2(K_{2}(A))=l_{1,-}^2(K_{2}(A))=0 \text{ and } l_{1,+}^2(K_{2}(A))=7.
\label{l22}
\end{equation}
From Corollary \ref{actionH3}, the involution $\iota$ acts as $-\id$ on $H^{3}(K_{2}(A),\Z)$.
It follows that
\begin{equation}
l_{2}^3(K_{2}(A))=l_{1,+}^3(K_{2}(A))=0 \text{ and } l_{1,-}^3(K_{2}(A))=8.
\label{l3}
\end{equation}
By Definition \ref{defiPi}, we have:
$$H^{4}(K_{2}(A),\Q)=\Sym^{2} H^{2}(K_{2}(A),\Q)\oplus^{\bot} \Pi'\otimes\Q,$$
where $\Pi'=\left\langle Z_{\tau}-Z_{0},\ \tau\in A[3]\smallsetminus \left\{0\right\}\right\rangle$. The involution
$\iota^*$ fixes $\Sym^{2} H^{2}(K_{2}(A),\Z)$ and $\iota^*(Z_{\tau}-Z_{0})=Z_{-\tau}-Z_{0}$. It provides the following proposition.
\begin{prop}\label{invariants}
We have $l_{1,-}^4(K_{2}(A))=0$, $l_{1,+}^4(K_{2}(A))=28$ and $l_{2}^4(K_{2}(A))=40$.
\end{prop}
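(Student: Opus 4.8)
The plan is to exhibit a $G$-stable sublattice of $H^4(\X,\Z)$ of \emph{odd} index whose $\Z[G]$-module structure is completely transparent, and to observe that the three invariants $l_2^4$, $l_{1,+}^4$, $l_{1,-}^4$ are insensitive to odd-index extensions. Indeed, by Definition-Proposition \ref{defiprop} these invariants are recovered from the $\Q[G]$-module $H^4(\X,\Q)$ together with the $\F[G]$-module $H^4(\X,\F)$ (the multiplicity of $N_2$ gives $r=l_2$, the trivial multiplicity gives $s+t$, and the rational $\pm 1$-eigenspaces split $s$ from $t$). If $L\subset H^4(\X,\Z)$ is a $G$-stable finite-index sublattice with $[H^4(\X,\Z):L]$ odd, then $L\otimes\Q\xrightarrow{\sim}H^4(\X,\Q)$ and, since $\Tor_1(H^4/L,\F)=(H^4/L)\otimes\F=0$, also $L\otimes\F\xrightarrow{\sim}H^4(\X,\F)$ as modules over the group ring; hence $L$ and $H^4(\X,\Z)$ share all three invariants.

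Concretely I would take $L:=\Sym^{sat}\oplus\Pi'$. By Theorem \ref{integralbasistheorem}(3) the index $[H^4(\X,\Z):\Sym^{sat}\oplus\Pi'^{sat}]=27\cdot 3^{19}=3^{22}$, and by Theorem \ref{integralbasistheorem}(2) the index $[\Pi'^{sat}:\Pi']=3^{31}$; both are powers of $3$, so $[H^4(\X,\Z):L]=3^{53}$ is odd and $L$ is a legitimate surrogate. The lattice $L$ is $G$-stable: $\iota$ fixes $\Sym^{sat}$ because it acts trivially on $H^2(\X,\Z)$ (Theorem \ref{SymplecticInvo}) and hence rationally on $\Sym\otimes\Q=\Sym^{sat}\otimes\Q$, forcing $\iota=\id$ on the lattice $\Sym^{sat}$; and $\iota$ permutes $\Pi'$ by $Z_\tau-Z_0\mapsto Z_{-\tau}-Z_0$.

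Next I would read off the $\Z[G]$-module structure of $L$. As a module $\Sym^{sat}\cong\Z^{\oplus 28}$ with trivial $G$-action, contributing $l_{1,+}=28$ and $l_2=l_{1,-}=0$. For $\Pi'$, the basis $\{Z_\tau-Z_0\mid\tau\in A[3]\smallsetminus\{0\}\}$ is permuted by $\tau\mapsto-\tau$; since $|A[3]|=81$ is odd, this involution is free on the $80$ nonzero elements and has exactly $40$ orbits, so $\Pi'$ is the permutation module of a free $G$-set with $40$ orbits, i.e. $\Pi'\cong\Z[G]^{\oplus 40}$. Finally I would identify $\Z[G]\cong(\mathcal{O}_{K},1)$: writing the two swapped generators as $e_1,e_2$, the pair $(y,k):=(e_1,e_1+e_2)$ satisfies $\iota(k)=k$ and $\iota(y)=e_2=-y+k$, which is exactly the defining action of $(\mathcal{O}_{K},1)$, with $a=1$ odd as required. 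Hence $L\cong\Z^{\oplus 28}\oplus(\mathcal{O}_{K},1)^{\oplus 40}$, giving $l_2=40$, $l_{1,+}=28$, $l_{1,-}=0$, and by the odd-index invariance these are the invariants of $H^4(\X,\Z)$.

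The routine but essential obstacle is the bookkeeping of parities: one must confirm that every index separating $L$ from $H^4(\X,\Z)$ is odd (each is a power of $3$) and, crucially, use $\Sym^{sat}$ rather than $\Sym$, since $[\Sym^{sat}:\Sym]=2^7\cdot 3^8$ is even and $\Sym\otimes\F$ would yield the wrong $\F[G]$-module. Beyond this the only point needing care is the explicit isomorphism $\Z[G]\cong(\mathcal{O}_{K},1)$; once it is in hand the computation is immediate, and as a sanity check the rational $\iota$-eigenspace dimensions, namely $68$ for $+1$ (matching $l_2+l_{1,+}$ by Proposition \ref{sarti}(i)) and $40$ for $-1$ (matching $l_2+l_{1,-}$), are consistent with the claimed values.
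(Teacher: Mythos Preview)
Your proof is correct and follows essentially the same approach as the paper: both pass to a $G$-stable sublattice of odd (in fact $3$-power) index that splits as a trivially-acted piece of rank $28$ direct sum a free permutation piece $\Pi'\cong\Z[G]^{\oplus 40}$, and read off the invariants from there. The paper uses the $2$-saturation $\mathcal S$ of $\Sym$ in place of your $\Sym^{sat}$ and argues directly at the level of $\F[G]$-modules, while you spell out more carefully the odd-index invariance principle and the identification $\Z[G]\cong(\mathcal O_K,1)$; the substance is the same.
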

\begin{proof}

Let $\mathcal{S}$ be the over-lattice of $\Sym^{2} H^{2}(K_{2}(A),\Z)$ where we add all the classes divisible by 2 in $H^{4}(K_{2}(A),\Z)$.
From Section \ref{integralbasisH4}, we know that the discriminant of $\mathcal{S}$ is not divisible by 2.
Hence, we have:
$$H^{4}(K_{2}(A),\F)=\mathcal{S}\otimes\F\oplus \Pi'\otimes\F.$$
Moreover, we have: $$\iota^{*}(Z_{\tau}-Z_{0})=Z_{-\tau}-Z_{0},$$
for all $\tau\in A[3]\smallsetminus \left\{0\right\}$.
Hence $\Vect_{\F}(Z_{\tau}-Z_{0},Z_{-\tau}-Z_{0})$ is isomorphic to $N_{2}$ as a $\F[G]$-module  (see the notation in Definition-Proposition \ref{defiprop}).
Moreover $H^{2}(K_{2}(A),\Z)$ is invariant by the action of $\iota$, hence $\Sym^{2} H^{2}(K_{2}(A),\Z)$ and $\mathcal{S}$ is also invariant by the action of $\iota$. 
It follows that $\mathcal{S}\otimes\F=\mathcal{N}_{1}$ and $\Pi'\otimes\F=\mathcal{N}_{2}$.
Since $\rk \mathcal{S}=28$, we have $l_{1,+}^{4}+l_{1,-}^{4}=28$.
However, $\mathcal{S}$ is invariant by the action of $\iota$, it follows that $l_{1,-}^{4}=0$ and $l_{1,+}^{4}=28$.
On the other hand $\rk \Pi'=80$, it follows that $l_{2}^{4}=40$.
\end{proof}
\section{Proof of Theorem \ref{theorem}}\label{BeauvilleForm}
Since all generalized Kummer fourfolds are deformation equivalent, by Theorem \ref{SymplecticInvo} all the couples $(X,\iota)$, where $X$ is a fourfold of Kummer type and $\iota$ a symplectic involution, are deformation equivalent.
Moreover, the Beauville--Bogomolov form is a topological invariant, 
hence without loss of generality it is enough to prove Theorem \ref{theorem} for a particular couple $(X,\iota)$.
We can assume that $X$ is a generalized Kummer fourfold and $\iota=-\id_A$. As it will be useful in proving Lemma \ref{Ddelta}, we can assume moreover that $A=E_\xi\times E_\xi$, where 
$$E_\xi:=\frac{\C}{\Lambda_0},$$
with $\xi:=e^{\frac{2i\pi}{6}}$ and $\Lambda_0 := \left<1,\xi\right>$.
This abelian surface has the interest to carry enough automorphisms.
\begin{definition}\label{elliptic6}
Define a group $G_\xi$ of automorphisms of $E_\xi\times E_\xi$ by the following generators in $\GL(2,\End(\Lambda_0))$:
\begin{align*}
g_1 &= \left( {\begin{array}{cc}
   \xi & 0 \\       0 & 1      
   \end{array} } \right),
 &
g_2 &= \left( {\begin{array}{cc}
   0 & 1 \\       1 & 0      
   \end{array} } \right),
 &
g_3 &= \left( {\begin{array}{cc}
   1 & 1 \\       0 & 1     
   \end{array} } \right).
\end{align*}
\end{definition}
For $A=E_\xi\times E_\xi$, let $V =A[2]$ be the (fourdimensional) $\mathbb F_2$-vector space of $2$-torsion points on $A$ and let $\mathfrak T$ be the set of planes in $V$. Note that by Remark~\ref{PlaneTriple} a plane in $V$ can be identified with an unordered triple $\{x,y,z\}$ with $0\neq x,y,z\in V$ and $x+y+z=0$. The action of $G_\xi$ on $A$ induces actions of $G_\xi$ on $A[2]$ and $\mathfrak T$. 
The following lemma will be used to prove Theorem \ref{fin}.
\begin{lemma}\label{orbitesG}
There are two orbits of $G_\xi$ on $\mathfrak T$, of cardinalities $5$ and $30$.
\end{lemma}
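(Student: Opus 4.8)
The plan is to recognize the natural $\mathbb F_4$-module structure on $V=A[2]$, to identify the image of $G_\xi$ in $\GL(V)$ with the full group $\GL(2,\mathbb F_4)$, and then to read off the two orbits as the five $\mathbb F_4$-lines and the thirty remaining planes.

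First I would reduce the endomorphisms modulo $2$. Since $\Lambda_0=\mathbb Z[\xi]$ with $\xi^2-\xi+1=0$, we have $\mathbb Z[\xi]/2\mathbb Z[\xi]\cong\mathbb F_2[t]/(t^2+t+1)\cong\mathbb F_4$, and $\xi$ reduces to a generator $\zeta$ of $\mathbb F_4^\times$ satisfying $\zeta^2+\zeta+1=0$. Each factor $E_\xi[2]=\tfrac12\Lambda_0/\Lambda_0\cong\Lambda_0/2\Lambda_0$ is therefore a one-dimensional $\mathbb F_4$-vector space, so $V=A[2]\cong\mathbb F_4\oplus\mathbb F_4=\mathbb F_4^2$. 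The generators of $G_\xi$ in Definition \ref{elliptic6} have entries in $\mathbb Z[\xi]$, hence act $\mathbb F_4$-linearly on $V$; explicitly they become $g_1=\mathrm{diag}(\zeta,1)$, $g_2=\left(\begin{smallmatrix}0&1\\1&0\end{smallmatrix}\right)$ and $g_3=\left(\begin{smallmatrix}1&1\\0&1\end{smallmatrix}\right)$ in $\GL(2,\mathbb F_4)$. Thus the $G_\xi$-action on $V$, and so on $\mathfrak T$, factors through $\GL_{\mathbb F_4}(V)\cong\GL(2,\mathbb F_4)$.

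Next I would show this image is all of $\GL(2,\mathbb F_4)$. Conjugating $g_3$ by the powers of $g_1$ produces the transvections $\left(\begin{smallmatrix}1&\zeta^k\\0&1\end{smallmatrix}\right)$, and taking products yields every upper transvection $\left(\begin{smallmatrix}1&a\\0&1\end{smallmatrix}\right)$, $a\in\mathbb F_4$; conjugating these by $g_2$ gives every lower transvection, and together they generate $\mathrm{SL}(2,\mathbb F_4)$. As $\det g_1=\zeta$ generates $\mathbb F_4^\times$, the determinant map is surjective, so $G_\xi=\GL(2,\mathbb F_4)$, a group of order $180$. It is worth noting that $g_1$, the only generator of determinant $\neq 1$, is genuinely needed: $\mathrm{SL}(2,\mathbb F_4)$ alone is \emph{not} transitive on the thirty planes below.

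Finally I would split $\mathfrak T$. By Proposition \ref{OrbitesSp} there are $35$ planes in $V$. Exactly $5$ of them are $\mathbb F_4$-stable, namely the $\mathbb F_4$-lines, since $|\mathbb P^1(\mathbb F_4)|=(4^2-1)/(4-1)=5$; the other $30$ are not $\mathbb F_4$-stable. Because $G_\xi$ acts $\mathbb F_4$-linearly, this partition is $G_\xi$-invariant, so it suffices to prove transitivity on each part. On the five lines, $\GL(2,\mathbb F_4)$ acts transitively (indeed sharply $3$-transitively through $\mathrm{PGL}(2,\mathbb F_4)$) on $\mathbb P^1(\mathbb F_4)$. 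For the thirty, fix the standard plane $P_0=\langle e_1,e_2\rangle_{\mathbb F_2}$ attached to an $\mathbb F_4$-basis $e_1,e_2$: an $\mathbb F_4$-linear map preserving $P_0$ must carry $e_1,e_2$ to an $\mathbb F_2$-basis of $P_0$ and hence lies in $\GL(2,\mathbb F_2)$, so the stabilizer is $\GL(2,\mathbb F_2)$ of order $6$ and the orbit has size $180/6=30$. This exhausts the thirty planes, giving the two orbits of cardinalities $5$ and $30$. The only real obstacle is conceptual rather than computational: spotting the $\mathbb F_4$-structure (which is what makes $35=5+30$ meaningful) and verifying that $G_\xi$ contains an element of determinant $\zeta$, without which the larger orbit would break up.
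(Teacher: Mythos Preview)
Your proof is correct and takes a genuinely different route from the paper. The paper simply appeals to a computer calculation and then lists the orbit of size five explicitly for verification; you instead recognise the $\mathbb F_4$-module structure on $V=A[2]$ coming from $\Lambda_0/2\Lambda_0\cong\mathbb F_4$, identify the image of $G_\xi$ in $\GL(V)$ with all of $\GL(2,\mathbb F_4)$, and then read off the two orbits as the five $\mathbb F_4$-lines versus the thirty $\mathbb F_2$-planes that are not $\mathbb F_4$-stable, using orbit--stabiliser for the latter. This is more than a cosmetic difference: your argument \emph{explains} the numbers $5=|\mathbb P^1(\mathbb F_4)|$ and $30=|\GL(2,\mathbb F_4)|/|\GL(2,\mathbb F_2)|$, and it exactly recovers the paper's explicit list of five planes (each of the five triples listed there is visibly of the form $\mathbb F_4\cdot v$ for a nonzero $v\in\mathbb F_4^2$). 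The paper's approach has the advantage of being immediate and requiring no structural insight; yours avoids any machine verification and makes transparent why the generator $g_1$ is essential, since without an element of determinant $\zeta$ the group would drop to $\mathrm{SL}(2,\mathbb F_4)$ and the orbit of size thirty would split.
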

\begin{proof}
Note that the generators $g_2$ and $g_3$ exist because $A$ is of the form $E\times E$, while $g_1$ exists only in the special case $E=E_\xi$. Indeed, multiplication with $\xi$ induces a cyclic permutation on $E_\xi[2]$. 
The orbits can be explicitely determined by a suitable computer program. For verification, we give one of the orbits explicitely.
Denote $x_1,x_2,x_3$ the non-zero points in $E_\xi[2]$. The orbit of cardinality five is then given by
\begin{gather*}
\{(0,x_1),(0,x_2),(0,x_3)\} , \quad  \{(x_1,0),(x_2,0),(x_3,0)\},\quad  \{(x_1,x_1),(x_2,x_2),(x_3,x_3)\}  \\
 \{(x_1,x_2),(x_2,x_3),(x_3,x_1)\}, \quad  \{(x_1,x_3),(x_2,x_1),(x_3,x_2)\}. \qedhere
\end{gather*}
\end{proof}
\subsection{Overview on the proof of Theorem \ref{theorem} and notation}\label{nota}
The proof is divided into the following steps:
\begin{itemize}
\item[(1)]
First (\ref{l22}), (\ref{l3}), Proposition \ref{invariants} and Theorem \ref{utile'} will provide the $H^4$-normality of $(K_2(A),\iota)$ in Section \ref{H4}. The notion of $H^k$-normality is recalled in the beginning of the section.
\item[(2)]
The knowledge of the elements divisible by 2 in $\Sym^{2} H^{2}(K_2(A),\Z)$ from Section \ref{integralbasisH4} and the $H^4$-normality allow us to prove the $H^2$-normality of $(K_2(A),\iota)$ in Section \ref{H2}. 
\item[(3)]
An adaptation of the $H^2$-normality (Lemma \ref{primitive2}) and several lemmas in Section \ref{basisinteK'} will provide an integral basis of $H^{2}(K',\Z)$ (Theorem \ref{fin}).
\item[(4)]
Knowing an integral basis of $H^{2}(K',\Z)$, we end the calculation of the Beauville--Bogomolov form in Section \ref{beauK'} using intersection theory and the Fujiki formula.
\end{itemize}

Now we provide some notation that we will be used during the proof.
Let $K_{2}(A)$ be a generalized Kummer fourfold endowed with the symplectic involution $\iota$ induced by $-\id_A$.
We denote by $\pi$ the quotient map $K_{2}(A)\rightarrow K_{2}(A)/\iota$.
From Theorem \ref{SymplecticInvo}, we know that the singular locus of the quotient $K_{2}(A)/\iota$ is the K3 surface, image by $\pi$ of $Z_{0}$, and 36 isolated points. We denote $\overline{Z_{0}}:=\pi(Z_{0})$. 
We consider $r':K'\rightarrow K_{2}(A)/\iota$ the blow-up of $K_{2}(A)/\iota$ in $\overline{Z_{0}}$ and we denote by $\overline{Z_{0}}'$ the exceptional divisor.
We also denote by $s_{1}:N_{1}\rightarrow K_{2}(A)$ the blowup of $K_{2}(A)$ in $Z_{0}$; and denote by $Z_{0}'$ the exceptional divisor in $N_{1}$. Denote by $\iota_{1}$ the involution on $N_{1}$ induced by $\iota$. We have $K'\simeq N_{1}/\iota_{1}$, and we denote $\pi_{1}:N_{1}\rightarrow K'$ the quotient map.

Consider the blowup $s_{2}:N_{2}\rightarrow N_{1}$ of $N_{1}$ in the 36 points $p_1,...,p_{36}$ fixed by $\iota_{1}$ and the blowup $\widetilde{r}:\widetilde{K}\rightarrow K'$ of $K'$ in its 36 singulars points. We denote the exceptional divisors by $E_{1},...,E_{36}$ and $D_{1},...,D_{36}$ respectively. We also denote $\widetilde{\overline{Z_{0}}}=\widetilde{r}^{*}(\overline{Z_{0}}')$ and $\widetilde{Z_{0}}=s_{2}^{*}(Z_{0}')$.
Denote $\iota_{2}$ the involution induced by $\iota$ on $N_{2}$ and $\pi_{2}:N_{2}\rightarrow N_{2}/\iota_{2}$ the quotient map. 
We have $N_{2}/\iota_{2}\simeq \widetilde{K}$. To finish, we denote $V=K_{2}(A)\smallsetminus \Fix \iota$ and $U=V/\iota$. We collect this notation in a commutative diagram
\begin{equation}
\xymatrix{
 \widetilde{K}\ar[r]^{\widetilde{r}} & K' \ar[r]^{r'}&  K_{2}(A)/\iota&U\ar@{_{(}->}[l]\\
  N_{2}\ar@(dl,dr)[]_{\iota_{2}} \ar[r]^{s_{2}} \ar[u]^{\pi_{2}}& N_{1}\ar@(dl,dr)[]_{\iota_{1}} \ar[r]^{s_{1}} \ar[u]^{\pi_{1}} & K_{2}(A)\ar@(dl,dr)[]_{\iota}\ar[u]^{\pi}&V\ar[u]\ar@{_{(}->}[l]
   }
   \label{commutativediagram}
\end{equation} 
Also, we set $s=s_2\circ s_1$ and $r=\widetilde{r}\circ r'$. We denote also $e$ the half of the class of the diagonal in $H^{2}(K_2(A),\Z)$ as in Notation \ref{BasisH2KA}.

\begin{rmk}\label{commut2}
We can commute the push-forward maps and the blow-up maps as proved in Lemma 3.3.21 of \cite{Lol}.
Let $x\in H^{2}(N_1,\Z)$, $y\in H^{2}(K_2(A),\Z)$, we have:
$$\pi_{2*}(s_2^{*}(x))=\widetilde{r}^{*}(\pi_{1*}(x)),$$
$$\pi_{1*}(s_1^{*}(y))=r'^{*}(\pi_{*}(y)),$$
\end{rmk}
Moreover, we will also use the notation provided in Notation \ref{BasisH2KA} and in Section \ref{Middle}.
\subsection{The couple $(K_{2}(A),\iota)$ is $H^{4}$-normal}\label{H4}
We will use the notion of $H^k$-normality from Definition 3.3.4 of \cite{Lol} that we recall here.
\begin{defi}
Let $X$ be a compact complex manifold and $\iota$ be an involution. 
Let $0\leq k\leq 2n$, and assume that $H^{k}(X,\Z)$ is torsion free. 
Then if the map $\pi_{*}:H^{k}(X,\Z)\rightarrow H^{k}(X/G,\Z)/\tors$ is surjective, we say that $(X,\iota)$ is \emph{$H^{k}$-normal}.
\end{defi}
\begin{rmk}\label{Hnormal}
The $H^{k}$-normal property is equivalent to the following property.
For $x\in H^{k}(X,\Z)^{\iota}$, $\pi_{*}(x)$ is divisible by 2 if and only if there exists $y\in H^{k}(X,\Z)$ such that 
$x=y+\iota^{*}(y)$.
\end{rmk}
\begin{defi}\label{negligible}
Let $X$ be a compact complex manifold of dimension $n$ and $G$ an automorphism group of prime order $p$. 
\begin{itemize}
\item[1)]
We will say that $\Fix G$ is negligible if the following conditions are verified:
\begin{itemize}
\item[$\bullet$]
$H^{*}(\Fix G,\Z)$ is torsion-free.
\item[$\bullet$]
$\codim \Fix G\geq \frac{n}{2}+1$.
\end{itemize}
\item[2)]
We will say that $\Fix G$ is almost negligible if the following conditions are verified:
\begin{itemize}
\item[$\bullet$]
$H^{*}(\Fix G,\Z)$ is torsion-free.
\item[$\bullet$]
$n$ is even and $n\geq 4$.
\item[$\bullet$]
$\codim \Fix G =\frac{n}{2}$, and the purely $\frac{n}{2}$-dimensional part of $\Fix G$ is connected and simply connected. We denote the $\frac{n}{2}$-dimensional component by $Z$.
\item[$\bullet$]
The cocycle $\left[Z\right]$ associated to $Z$ is primitive in $H^{n}(X,\Z)$.
\end{itemize}
\end{itemize}
\end{defi}
We will use the following theorem (Corollary 3.5.18 of \cite{Lol}) to prove the $H^4$-normality of $(K_{2}(A),\iota)$.
\begin{thm}\label{utile'}
Let $G$ be a group of order 2 acting by automorphisms on a Kähler manifold $X$ of dimension $2n$. 
We assume:
\begin{itemize}
\item[i)]
$H^{*}(X,\Z)$ is torsion-free,
\item[ii)]
$\Fix G$ is negligible or almost negligible,
\item[iii)]
$l_{1,-}^{2k}(X)=0$ for all $1\leq k \leq n$, and
\item[iv)]
$l_{1,+}^{2k+1}(X)=0$ for all $0\leq k \leq n-1$, when $n>1$.
\item[v)]

$l_{1,+}^{2n}(X)+2\left[\sum_{i=0}^{n-1}l_{1,-}^{2i+1}(X)+\sum_{i=0}^{n-1}l_{1,+}^{2i}(X)\right]
= \sum_{k=0}^{\dim \Fix G}h^{2k}(\Fix G,\Z).$

\end{itemize}
Then $(X,G)$ is $H^{2n}$-normal.
\end{thm}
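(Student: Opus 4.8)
Since the statement is quoted as Corollary 3.5.18 of \cite{Lol}, I only propose the strategy of a proof. By Remark \ref{Hnormal}, establishing $H^{2n}$-normality is the same as proving that every $\iota$-invariant class $x\in H^{2n}(X,\Z)^{\iota}$ whose push-forward $\pi_*(x)$ is divisible by $2$ can already be written as $y+\iota^*(y)$. Equivalently, using $\pi^*\pi_*=\id+\iota^*$ and $\pi_*\pi^*=2$, I would show that $\pi_*$ is surjective onto $H^{2n}(X/G,\Z)/\tors$ by controlling the $2$-adic size of its cokernel. The natural device is the spectral sequence of equivariant cohomology
\[
E_2^{p,q}=H^p\big(G,H^q(X,\Z)\big)\Longrightarrow H^{p+q}_G(X,\Z),
\]
whose $E_2$-page is entirely encoded by the $\F[G]$-module invariants of Definition-Proposition \ref{defiprop}: the summands isomorphic to $N_2$ account for the $l_2^k(X)$, while the trivial summands account for $l_{1,+}^k(X)+l_{1,-}^k(X)$.

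First I would replace the quotient by a smooth model. Blowing up $X$ along $\Fix G$ yields a manifold $\tilde X$ on which the lifted involution fixes only the exceptional divisor; its quotient $\tilde X/\tilde\iota$ is then smooth, and is the blow-up of $X/G$ along the image of $\Fix G$. Hypothesis (ii) is exactly what confines the exceptional contributions away from degree $2n$: if $\Fix G$ is negligible, so $\codim\Fix G\ge n+1$, the fixed locus does not affect $H^{2n}$ of the quotient, whereas if it is almost negligible the only middle-degree input is the fundamental class $[Z]$ of the purely $n$-dimensional component $Z$.

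Next I would make the numerical comparison. Combining the Gysin sequences relating $X$, $\tilde X$ and their quotients with Proposition \ref{sarti}, the $2$-adic valuation of the index of $\pi_*\big(H^{2n}(X,\Z)\big)$ inside $H^{2n}(X/G,\Z)/\tors$ is expressible through the $l_i^k(X)$ in degrees below $2n$ and the even Betti numbers of $\Fix G$. Hypotheses (iii) and (iv), namely $l_{1,-}^{2k}(X)=0$ and $l_{1,+}^{2k+1}(X)=0$, kill all terms except those occurring in (v); a Smith-theoretic inequality then bounds the failure of surjectivity by the difference of the two sides of (v), so the equality assumed there forces the cokernel to have no $2$-torsion and $\pi_*$ to be surjective modulo torsion in degree $2n$.

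The hard part will be the almost-negligible case. There $\codim\Fix G=n$, so the component $Z$ produces a genuine class in $H^{2n}$ of the smooth quotient that could obstruct surjectivity, and one must invoke the primitivity of $[Z]$ in $H^{2n}(X,\Z)$ (the last condition of Definition \ref{negligible}) to guarantee that this class lies in the image of $\pi_*$ rather than contributing an extra factor of $2$. Matching this divisorial contribution with the global count in (v) is the delicate step; once it is settled, the remainder is bookkeeping with the $\F[G]$-module structure of $H^*(X,\Z)$.
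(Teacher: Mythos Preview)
The paper does not prove this theorem; it is quoted verbatim as Corollary~3.5.18 of \cite{Lol} and used as a black box. The only information the paper gives about the proof is Remark~\ref{primitive1}, which records that the argument in \cite{Lol} proceeds by first establishing that $\pi_{2*}(s^*(H^{2n}(X,\Z)))$ is primitive in $H^{2n}$ of the smooth resolution $\widetilde{K}$ (the quotient of the blow-up of $X$ along $\Fix G$), and then deduces $H^{2n}$-normality from this.

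Your sketch is consistent with this: you correctly pass to the blow-up $\tilde X\to X$ along $\Fix G$ and work on the smooth quotient, you identify Smith theory and the equivariant spectral sequence as the bookkeeping devices, and you single out the primitivity of $[Z]$ as the key input in the almost-negligible case. So at the level of strategy there is no discrepancy to report. One point worth sharpening: your formulation that hypothesis~(v) forces the cokernel of $\pi_*$ to have ``no $2$-torsion'' is slightly off --- the cokernel $H^{2n}(X/G,\Z)/\pi_*H^{2n}(X,\Z)$, if nonzero modulo torsion, would have free rank rather than $2$-torsion; what the Smith-theoretic count actually shows (as Remark~\ref{primitive1} indicates) is that the image is already a primitive sublattice of full rank in the torsion-free quotient, hence equal to it. Beyond this phrasing issue your outline is a faithful summary of the approach, but since the paper itself contains no proof there is nothing further to compare.
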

\begin{prop}\label{H4norm}
The couple $(K_{2}(A),\iota)$ is $H^{4}$-normal.
\end{prop}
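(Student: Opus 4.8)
The plan is to apply Theorem~\ref{utile'} to $X=\X$, the group $G=\langle\iota\rangle$ of order $2$, and $2n=4$, so that the asserted $H^{2n}$-normality becomes exactly the $H^4$-normality of Proposition~\ref{H4norm}. The proof then reduces to verifying the five hypotheses i)--v), almost all of which are immediate from invariants already computed in Section~\ref{actioncoh}. First I would note that hypothesis i) holds by Theorem~\ref{torsion}, as $H^*(\X,\Z)$ is torsion free.

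For hypothesis ii) I would invoke Theorem~\ref{SymplecticInvo}~(iii): $\Fix\iota$ is the disjoint union of the K3 surface $Z_0$ and $36$ isolated points, so its cohomology is torsion free. Since $\dim X=4$, the K3 component has $\codim=2=\tfrac{\dim X}{2}$ and is connected and simply connected, while the $36$ points have codimension $4$; thus $\Fix\iota$ is almost negligible in the sense of Definition~\ref{negligible} once we know that the cocycle $[Z_0]$ is primitive in $H^4(\X,\Z)$. This I would settle using the intersection numbers from Section~4 of~\cite{Hassett}: since $Z_0\cdot Z_\tau=1$ for any $\tau\in A[3]\smallsetminus\{0\}$, every integer dividing $[Z_0]$ must divide $1$, so $[Z_0]$ is primitive.

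Hypotheses iii) and iv) are read off directly. For iii) I use $l_{1,-}^2(\X)=0$ from~(\ref{l22}) and $l_{1,-}^4(\X)=0$ from Proposition~\ref{invariants}. For iv) I use $l_{1,+}^1(\X)=0$, which holds because $H^1(\X,\Z)=0$, together with $l_{1,+}^3(\X)=0$ from~(\ref{l3}).

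The only part that requires genuine bookkeeping is hypothesis v), which I expect to be the main (though still minor) obstacle. With $n=2$ the left-hand side is
$$l_{1,+}^{4}+2\bigl(l_{1,-}^{1}+l_{1,-}^{3}+l_{1,+}^{0}+l_{1,+}^{2}\bigr)=28+2(0+8+1+7)=60,$$
using $l_{1,+}^4=28$ and $l_{1,+}^2=7$ from Proposition~\ref{invariants} and~(\ref{l22}), $l_{1,-}^3=8$ from~(\ref{l3}), $l_{1,-}^1=0$ (as $H^1=0$), and $l_{1,+}^0=1$. On the right-hand side, $\Fix\iota$ contributes $\sum_k h^{2k}(\Fix\iota,\Z)=(1+22+1)+36=60$, the first term coming from the K3 surface and the second from the $36$ points. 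Since both sides equal $60$, hypothesis v) holds, and Theorem~\ref{utile'} yields the $H^4$-normality of $(\X,\iota)$. Everything beyond the primitivity check in ii) and this numerical identity is a direct substitution of previously established invariants.
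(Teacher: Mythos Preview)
Your proof is correct and follows essentially the same route as the paper: both apply Theorem~\ref{utile'} and verify hypotheses i)--v) using Theorem~\ref{torsion}, the description of $\Fix\iota$ from Theorem~\ref{SymplecticInvo}~(iii) together with the primitivity of $[Z_0]$ via $Z_0\cdot Z_\tau=1$, the invariants in~(\ref{l22}),~(\ref{l3}), Proposition~\ref{invariants}, and the identical numerical check $28+2(0+8+1+7)=60=36+1+22+1$.
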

\begin{proof}
We apply Theorem \ref{utile'}.
\begin{itemize}
\item[i)]
By Theorem \ref{torsion}, $H^{*}(K_{2}(A),\Z)$ is torsion-free. 
\item[ii)]
From Remark \ref{RemarkSymplecticInv} (1), we know that the connected component of dimension 2 of $\Fix \iota$  is given by $Z_{0}$ which is a K3 surface, hence is simply connected. 
Moreover by Proposition 4.3 of \cite{Hassett} $Z_{0}\cdot Z_{\tau}=1$ for all $\tau\in A[3]\smallsetminus \left\{0\right\}$. Hence the class of $Z_{0}$ in $H^{4}(K_{2}(A),\Z)$ is primitive. It follows that $\Fix \iota$ is almost negligible (Definition \ref{negligible}). 
\item[iii)]
By (\ref{l22}) and Proposition \ref{invariants}, we have $l_{1,-}^{2}(K_{2}(A))=l_{1,-}^{4}(K_{2}(A))=0.$
\item[iv)]
By (\ref{l3}), we have $l_{1,+}^{3}(K_{2}(A))=0.$ Moreover $H^{1}(K_{2}(A))=0$, so $l_{1,+}^{1}(K_{2}(A))=0.$
\item[v)]
We have to check the following equality:
\begin{align*}
&l_{1,+}^{4}(K_{2}(A))+2\left[l_{1,-}^{1}(X)+l_{1,-}^{3}(X)+l_{1,+}^{0}(X)+l_{1,+}^{2}(X)\right]\\
&= 36h^{0}(pt)+h^{0}(Z_{0})+h^{2}(Z_{0})+h^{4}(Z_{0}).
\end{align*}
By (\ref{l22}), (\ref{l3}) and Proposition \ref{invariants}:
$$l_{1,+}^{4}(K_{2}(A))+2\left[l_{1,-}^{1}(X)+l_{1,-}^{3}(X)+l_{1,+}^{0}(X)+l_{1,+}^{2}(X)\right]=28+2(8+1+7)=60.$$
Moreover since $Z_{0}$ is a K3 surface, we have:
$$36h^{0}(pt)+h^{0}(Z_{0})+h^{2}(Z_{0})+h^{4}(Z_{0})=36+1+22+1=60.$$
\end{itemize}
It follows from Corollary \ref{utile'} that $(K_{2}(A),\iota)$ is $H^4$-normal.
\end{proof} 
\begin{rmk}\label{primitive1}
As explained in Proposition 3.5.20 of \cite{Lol}, the proof of Theorem \ref{utile'} provide first that $\pi_{2*}(s^*(H^4(K_2(A),\Z)))$ is primitive in $H^4(\widetilde{K},\Z)$ and then the $H^4$ normality. 
So, the lattice $\pi_{2*}(s^*(H^4(K_2(A),\Z)))$ is primitive in $H^4(\widetilde{K},\Z)$.
\end{rmk}
\subsection{The couple $(K_{2}(A),\iota)$ is $H^{2}$-normal}\label{H2}
\begin{prop}
The couple $(K_{2}(A),\iota)$ is $H^{2}$-normal.
\end{prop}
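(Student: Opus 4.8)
The plan is to reduce the degree-$2$ statement to the degree-$4$ one, where $H^{4}$-normality is already available. By Theorem~\ref{SymplecticInvo} and (\ref{l22}) the involution acts trivially on $H^{2}(K_{2}(A),\Z)$, so $H^{2}(K_{2}(A),\Z)^{\iota}=H^{2}(K_{2}(A),\Z)$ and $y+\iota^{*}(y)=2y$; thus by Remark~\ref{Hnormal} it suffices to prove that, for $x\in H^{2}(K_{2}(A),\Z)$, divisibility of $\pi_{*}(x)$ by $2$ implies divisibility of $x$ by $2$. First I would square: Proposition~\ref{commut} gives $\pi_{*}(x)^{2}=2\pi_{*}(x^{2})$, so if $\pi_{*}(x)=2z$ then $\pi_{*}(x^{2})=2z^{2}$ is divisible by $2$. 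Since $x^{2}\in H^{4}(K_{2}(A),\Z)^{\iota}$, the $H^{4}$-normality of $(K_{2}(A),\iota)$ (Proposition~\ref{H4norm}) together with Remark~\ref{Hnormal} produces $w\in H^{4}(K_{2}(A),\Z)$ with $x^{2}=w+\iota^{*}(w)$.

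Next I would transport this into $\Sym^{sat}$. Using the orthogonal decomposition $H^{4}(K_{2}(A),\Q)=\Sym^{2}H^{2}(K_{2}(A),\Q)\oplus^{\bot}\Pi'\otimes\Q$ and the projection $p_{S}$ onto the first factor, the facts that $\iota^{*}=\id$ on $\Sym^{2}H^{2}$ and $\Pi'\subset\Sym^{\bot}$ (Definition~\ref{defiPi}) give $x^{2}=2\,p_{S}(w)$, hence $\tfrac12 x^{2}=p_{S}(w)\in p_{S}(H^{4}(K_{2}(A),\Z))$. Because $p_{S}(\Pi'^{sat})=0$, the quotient $p_{S}(H^{4}(K_{2}(A),\Z))/\Sym^{sat}$ is a quotient of $H^{4}(K_{2}(A),\Z)/(\Sym^{sat}\oplus\Pi'^{sat})$, which by Theorem~\ref{integralbasistheorem} is the $3$-group $(\Z/27\Z)\oplus(\Z/3\Z)^{\oplus19}$. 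As $x^{2}\in\Sym^{2}H^{2}(K_{2}(A),\Z)\subset\Sym^{sat}$, the class of $\tfrac12 x^{2}$ in this $3$-group is killed by $2$, hence vanishes; that is, $\tfrac12 x^{2}\in\Sym^{sat}$, i.e. $x^{2}\in 2\Sym^{sat}$.

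The crux is then to show that $x^{2}\in 2\Sym^{sat}$ forces $x\in 2H^{2}(K_{2}(A),\Z)$, and this is where I would use the explicit description of $\Sym^{sat}$. I would localize at $2$ (so that $\tfrac13$ is a unit) and reduce modulo $2$. Writing $x$ in the basis $\{e,u_{1},u_{2},v_{1},v_{2},w_{1},w_{2}\}$, all cross terms of $x^{2}$ carry a factor $2$, so $\overline{x^{2}}=\sum_{f}\epsilon_{f}f^{2}$ in $\Sym^{2}H^{2}(K_{2}(A),\F)$, where $\epsilon_{f}\in\F$ is the parity of the $f$-coefficient of $x$ (the monomials $f_{i}f_{j}$ are independent modulo $2$ since $\Sym^{2}H^{2}\cong\Sym$ has rank $28=\binom{8}{2}$). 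By Theorem~\ref{integralbasistheorem}(i), the classes of $\Sym^{sat}$ that are divisible by $2$ modulo $\Sym^{2}H^{2}$ are generated, after inverting $3$, by $y^{2}-\tfrac13 e\cdot y$ for $y\in\{u_{1},\dots,w_{2}\}$ and by $u_{1}u_{2}+v_{1}v_{2}+w_{1}w_{2}$; their reductions span $K=\langle\,y^{2}+e\cdot y,\ u_{1}u_{2}+v_{1}v_{2}+w_{1}w_{2}\,\rangle\subset\Sym^{2}H^{2}(K_{2}(A),\F)$, and $x^{2}\in 2\Sym^{sat}$ is equivalent to $\overline{x^{2}}\in K$. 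Since each generator of $K$ carries an off-diagonal monomial ($e\cdot y$, respectively the $u_{1}u_{2}$-type terms) absent from the purely diagonal $\overline{x^{2}}$, comparing coefficients forces all combining scalars, and hence $\overline{x^{2}}$ itself, to vanish. Therefore every $\epsilon_{f}=0$, i.e. $\tfrac12 x\in H^{2}(K_{2}(A),\Z)\otimes\Z_{(2)}$; as $2$ is invertible at every odd prime, $\tfrac12 x$ is then integral at all primes and so lies in $H^{2}(K_{2}(A),\Z)$, whence $x$ is divisible by $2$, as required. I expect the main obstacle to be exactly this last mod-$2$ linear-algebra step: it relies on recording the off-diagonal parts of the divisible generators correctly and on the $3$-primary nature of the cokernels in Theorem~\ref{integralbasistheorem}, which is what lets the detour through degree $4$ return clean $2$-adic information.
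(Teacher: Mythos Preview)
Your argument is correct and follows essentially the same route as the paper: square, invoke $H^{4}$-normality to write $x^{2}=w+\iota^{*}(w)$, project onto the $\Sym$-component (the paper multiplies by a power of $3$ and works with the $2$-primary overlattice $\mathcal{S}$ of $\Sym$, while you localize at $2$ and use $\Sym^{sat}$ together with the $3$-primary nature of $H^{4}/(\Sym^{sat}\oplus\Pi'^{sat})$), and finish by the same mod-$2$ comparison of the diagonal expression $\sum\epsilon_{f}f^{2}$ against the explicit generators of Theorem~\ref{integralbasistheorem}(i). The only cosmetic differences are this bookkeeping of the $3$-part; the substantive steps coincide.
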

\begin{proof}
We want to prove that the pushforward  
$\pi_{*}:H^{2}(K_{2}(A),\Z)\rightarrow H^{2}(K_{2}(A)/\iota,\Z)/\tors$ is surjective. 
By Remark \ref{Hnormal}, it is equivalent to prove that for all $x\in H^{2}(K_{2}(A),\Z)^{\iota}$,
$\pi_{*}(x)$ is divisible by 2 if and only if there exists $y\in H^{2}(K_{2}(A),\Z)$ such that $x=y+\iota^{*}(y)$. 

Let $x\in H^{2}(K_{2}(A),\Z)^{\iota}=H^{2}(K_{2}(A),\Z)$ such that $\pi_{*}(x)$ is divisible by 2, we will show that there exists $y\in H^{2}(K_{2}(A),\Z)$ such that $x=y+\iota^{*}(y)$.
By Proposition \ref{commut}, $\pi_{*}(x^2)$ is divisible by 2.
However, $x^2\in H^{4}(K_{2}(A),\Z)^{\iota}$; since $(K_{2}(A),\iota)$ is $H^{4}$-normal by Proposition \ref{H4norm}, it means that there is $z\in H^{4}(K_{2}(A),\Z)$ such that
$x^2=z+\iota^{*}(z)$.

Let $\mathcal{S}$ be, as before, the over-lattice of $\Sym^{2} H^{2}(K_{2}(A),\Z)$ where we add all the classes divisible by 2 in $H^{4}(K_{2}(A),\Z)$.
By Definition \ref{defiPi} and (\ref{discrPi}), there exist $z_{s}\in \mathcal{S}$, $z_{p}\in \Pi'$ and $\alpha\in \mathbb{N}$ such that:
$3^\alpha\cdot z= z_{s}+z_{p}$. 
Hence, we have:
$$3^\alpha\cdot x^2=2z_{s}+z_{p}+\iota^{*}(z_{p}).$$
Since $x^2\in \Sym$, $z_{p}+\iota^{*}(z_{p})=0$.
It follows:
\begin{equation}
3^\alpha\cdot x^2=2z_{s}.
\label{haha}
\end{equation}
let $(u_{1},u_{2},v_{1},v_{2},w_{1},w_{2},e)$ be the integral basis of $H^{2}(K_{2}(A),\Z)$ introduced in Notation~\ref{BasisH2KA}.
We can write:
$$x=\alpha_{1}u_{1}+\alpha_{2}u_{2}+\beta_{1}v_{1}+\beta_{2}v_{2}+\gamma_{1}w_{1}+\gamma_{2}w_{2}+de.$$
Then $$3^\alpha\cdot x^{2}=\alpha_{1}^{2}u_{1}^{2}+\alpha_{2}^{2}u_{2}^{2}+\beta_{1}^{2}v_{1}^{2}+\beta_{2}^{2}v_{2}^{2}+\gamma_{1}^{2}w_{1}^{2}+\gamma_{2}^{2}w_{2}^{2}+d^{2}e^{2}\mod 2H^{4}(K_{2}(A),\Z).$$
It follows by (\ref{haha}) that 
$\alpha_{1}^{2}u_{1}^{2}+\alpha_{2}^{2}u_{2}^{2}+\beta_{1}^{2}v_{1}^{2}+\beta_{2}^{2}v_{2}^{2}+\gamma_{1}^{2}w_{1}^{2}+\gamma_{2}^{2}w_{2}^{2}+d^{2}e^{2}$ is divisible by 2.
However, by Theorem \ref{integralbasistheorem} (i), we have:
\begin{equation}
\mathcal{S}=\left\langle \Sym^2 H^{2}(K_{2}(A),\Z); \frac{u_{1}\cdot u_{2}+v_{1}\cdot v_{2}+w_{1}\cdot w_{2}}{2};\frac{u_{i}^{2}-\frac{1}{3}u_{i}\cdot e}{2};\frac{v_{i}^{2}-\frac{1}{3}v_{i}\cdot e}{2};\frac{w_{i}^{2}-\frac{1}{3}w_{i}\cdot e}{2},i\in\left\{1,2\right\}\right\rangle.
\label{generatorsS}
\end{equation}
The $\frac{1}{2}(\alpha_{1}^{2}u_{1}^{2}+\alpha_{2}^{2}u_{2}^{2}+\beta_{1}^{2}v_{1}^{2}+\beta_{2}^{2}v_{2}^{2}+\gamma_{1}^{2}w_{1}^{2}+\gamma_{2}^{2}w_{2}^{2}+d^{2}e^{2})$ is in $\mathcal{S}$ and so can be expressed as a linear combination of the generators of $\mathcal{S}$.
Then, it follows from (\ref{generatorsS}) that all the coefficients of $\alpha_{1}^{2}u_{1}^{2}+\alpha_{2}^{2}u_{2}^{2}+\beta_{1}^{2}v_{1}^{2}+\beta_{2}^{2}v_{2}^{2}+\gamma_{1}^{2}w_{1}^{2}+\gamma_{2}^{2}w_{2}^{2}+d^{2}e^{2}$ are divisible by 2.
It means that $x$ is divisible by 2. This is what we wanted to prove.
\end{proof}
With exactly the same proof working in $H^4(\widetilde{K},\Z)$ and using Remark \ref{primitive1}, we provide the following lemma.
\begin{lemme}\label{primitive2}
The lattice $\pi_{2*}(s^*(H^2(K_2(A),\Z)))$ is primitive in $H^2(\widetilde{K},\Z)$.
\end{lemme}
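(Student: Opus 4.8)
The plan is to run the proof of the preceding $H^2$-normality statement almost word for word, with two changes: the pushforward $\pi_*$ is replaced by $\phi:=\pi_{2*}\circ s^*\colon H^2(K_2(A),\Z)\to H^2(\widetilde K,\Z)$, and the use of the $H^4$-normality of $(K_2(A),\iota)$ is replaced by Remark \ref{primitive1}. Since $\iota$ acts trivially on $H^2(K_2(A),\Z)$ by Theorem \ref{SymplecticInvo}, the class $s^*(x)$ is $\iota_2$-invariant for every $x$, so the relation $\pi_2^*\circ\pi_{2*}=\id+\iota_2^*$ gives $\pi_2^*(\phi(x))=2\,s^*(x)$; in particular $\phi$ is injective. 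Moreover $s$ is an iterated blow-up along smooth centres, so $s^*\big(H^2(K_2(A),\Z)\big)$ is a direct summand, hence a primitive sublattice, of $H^2(N_2,\Z)$.

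First I would reduce the primitivity of $\im\phi$ to the single prime $2$. Suppose $w\in H^2(\widetilde K,\Z)$ and $p\,w=\phi(x)$ for an odd prime $p$. Then $2\,s^*(x)=\pi_2^*(\phi(x))=p\,\pi_2^*(w)$; since $p$ is odd this forces $p\mid s^*(x)$ in $H^2(N_2,\Z)$, and primitivity of $s^*(H^2(K_2(A),\Z))$ then gives $p\mid x$, whence $w=\phi(x/p)\in\im\phi$. Thus $H^2(\widetilde K,\Z)/\im\phi$ can only carry $2$-torsion, and (using that $H^2(\widetilde K,\Z)$ is torsion-free) it suffices to prove the analogue of the $H^2$-normality statement: if $\phi(x)$ is divisible by $2$ then $x$ is divisible by $2$.

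So assume $\phi(x)=2w$. Squaring and using Proposition \ref{commut} for $(N_2,\iota_2)$ gives $\phi(x)\cdot\phi(x)=2\,\pi_{2*}\big(s^*(x^2)\big)$, so that $\pi_{2*}\big(s^*(x^2)\big)=2w^2$ is divisible by $2$ in $H^4(\widetilde K,\Z)$. By Remark \ref{primitive1} the lattice $\pi_{2*}\big(s^*(H^4(K_2(A),\Z))\big)$ is primitive, hence there is $z\in H^4(K_2(A),\Z)$ with $\pi_{2*}\big(s^*(x^2)\big)=2\,\pi_{2*}\big(s^*(z)\big)$. Applying $\pi_2^*$, using $\pi_2^*\pi_{2*}=\id+\iota_2^*$ and the $\iota$-invariance of $x^2$, yields $2\,s^*(x^2)=2\,s^*\big(z+\iota^*z\big)$, and the injectivity of $s^*$ gives $x^2=z+\iota^*(z)$ in $H^4(K_2(A),\Z)$.

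This is precisely the identity produced by $H^4$-normality in the proof of the preceding proposition, so from here the argument is identical. Decomposing $3^\alpha z=z_s+z_p$ with $z_s\in\mathcal S$ and $z_p\in\Pi'$ (Definition \ref{defiPi} and (\ref{discrPi})), the invariance of $x^2$ together with $\mathcal S\cap\Pi'=0$ forces $z_p+\iota^*z_p=0$ and hence $3^\alpha x^2=2z_s$. Since $3^\alpha x^2$ is therefore divisible by $2$, expanding $x$ in the basis $(u_1,u_2,v_1,v_2,w_1,w_2,e)$ and reducing modulo $2H^4(K_2(A),\Z)$ shows that $\sum(\text{coeff})^2(\text{basis})^2$ is divisible by $2$, so $\tfrac12\sum(\text{coeff})^2(\text{basis})^2\in\mathcal S$; comparing with the explicit generators of $\mathcal S$ in (\ref{generatorsS}) forces every coefficient of $x$ to be even. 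Thus $x$ is divisible by $2$, $w=\phi(x/2)\in\im\phi$, and $\im\phi$ is primitive. The only point not already contained verbatim in the $H^2$-normality proof is the descent of divisibility from the images $\phi(x)$ and $\pi_{2*}(s^*(x^2))$ back to $x$ and $x^2$ via $\pi_2^*$, the primitivity of $s^*$, and Remark \ref{primitive1}; this is where I expect the (still routine) main work to lie.
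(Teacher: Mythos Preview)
Your proof is correct and follows essentially the same approach the paper indicates: replace $\pi_*$ by $\pi_{2*}\circ s^*$ and replace the $H^4$-normality input by Remark~\ref{primitive1}, then rerun the $H^2$-normality argument verbatim. The paper gives no further details, so your explicit reduction to the prime~$2$ and the descent via $\pi_2^*\pi_{2*}=\id+\iota_2^*$ and the injectivity of $s^*$ are exactly the unwritten steps the paper is pointing at.
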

\subsection{Calculation of $H^{2}(K',\Z)$}\label{basisinteK'}
This section is devoted to prove the following theorem.
\begin{thm}\label{fin}
Let $K'$, $\pi_1$, $s_1$ and $\overline{Z_0}'$ be respectively the variety, the maps and the class defined in Section \ref{nota}. 
We have $$H^{2}(K',\Z)=\pi_{1*}(s_{1}^{*}(H^2(K_2(A),\Z)))\oplus\Z\left(\frac{\pi_{1*}(s_{1}^{*}(e))+\overline{Z_{0}}'}{2}\right)\oplus\Z\left(\frac{\pi_{1*}(s_{1}^{*}(e))-\overline{Z_{0}}'}{2}\right).$$
\end{thm}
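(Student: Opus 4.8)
The plan is to realize $H^2(K',\Z)$ as a full-rank primitive sublattice spanned by the stated classes, in four steps: a rank count, the primitivity of the pulled-back part, the integrality of the two half-classes, and a saturation argument. Throughout I would use that $\iota$ acts trivially on $H^2(K_2(A),\Z)$ (Section \ref{actioncoh}), so that $\iota_1$ acts trivially on $H^2(N_1,\Z)=s_1^*(H^2(K_2(A),\Z))\oplus\Z Z_0'$, together with the relations $\pi_1^*\pi_{1*}=\id+\iota_1^*=2\,\id$ on $H^2(N_1,\Z)$, $\pi_1^*(\overline{Z_0}')=2Z_0'$, $\pi_{1*}(Z_0')=\overline{Z_0}'$ and $\pi_{1*}(s_1^*(y))=r'^*(\pi_*(y))$ from Remark \ref{commut2}. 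Since $K'=N_1/\iota_1$ and $\iota_1$ acts trivially, $H^2(K',\Q)=H^2(N_1,\Q)$ has rank $8$; as $K'$ is a simply connected symplectic V-manifold, $H^2(K',\Z)$ is torsion-free, so it is enough to produce a primitive sublattice of rank $8$ generated by the listed classes. Write $g_\pm:=\tfrac12\big(\pi_{1*}(s_1^*(e))\pm\overline{Z_0}'\big)$.

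Second I would show that $L_0:=\pi_{1*}(s_1^*(H^2(K_2(A),\Z)))$ is primitive in $H^2(K',\Z)$. Writing $s=s_1\circ s_2\colon N_2\to K_2(A)$, Remark \ref{commut2} applied to $x=s_1^*(y)$ gives $\pi_{2*}(s^*(y))=\widetilde r^*(\pi_{1*}(s_1^*(y)))$, hence $\pi_{2*}(s^*(H^2(K_2(A),\Z)))=\widetilde r^*(L_0)$. By Lemma \ref{primitive2} the left-hand side is primitive in $H^2(\widetilde K,\Z)$, and since $\widetilde r$ is a resolution of the $36$ isolated quotient singularities of $K'$, the pullback $\widetilde r^*$ is injective with saturated image. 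A short chase then shows that any $w\in H^2(K',\Z)$ with $mw\in L_0$ satisfies $\widetilde r^*(w)\in\widetilde r^*(L_0)$, so $w\in L_0$; thus $L_0$ is primitive.

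Third — and I expect this to be the main obstacle — I would prove that $g_+$ (hence $g_-=g_+-\overline{Z_0}'$) is integral. Since $\pi_1^*\big(\pi_{1*}(s_1^*(e))+\overline{Z_0}'\big)=2\big(s_1^*(e)+Z_0'\big)$, integrality of $g_+$ is equivalent to the descent statement $s_1^*(e)+Z_0'\in\pi_1^*(H^2(K',\Z))$: if $s_1^*(e)+Z_0'=\pi_1^*(\gamma)$ then, applying $\pi_{1*}$, $2\gamma=\pi_{1*}(s_1^*(e))+\overline{Z_0}'$, so $\gamma=g_+$. The difficulty is that neither $\tfrac12\overline{Z_0}'$ nor $\tfrac12\pi_{1*}(s_1^*(e))$ is individually integral, yet their noninteger parts must cancel. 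This is a statement concentrated on $\Fix\iota$: near a generic point of the fixed K3 surface the half-diagonal class $e$ does not meet $Z_0$ and $\pi_1$ is the standard $A_1$-cover, so the defect can only be supported at the $36$ isolated fixed points. I would therefore analyse the quotient explicitly at each $p_j$ through the local model $\mathbb C^4/\{\pm1\}$, using the description of $e$ as half the diagonal, to check that the noninteger contributions of $\tfrac12\overline{Z_0}'$ and $\tfrac12\pi_{1*}(s_1^*(e))$ agree there, equivalently that $s_1^*(e)+Z_0'$ lies in $\pi_1^*(H^2(K',\Z))$; this is the role of the auxiliary lemmas of this section.

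Finally I would check that $L:=L_0+\Z g_+$, which has rank $8$ and contains $\overline{Z_0}'=2g_+-\pi_{1*}(s_1^*(e))$ and $g_-$, exhausts $H^2(K',\Z)$. Suppose $pz\in L$ with $z\in H^2(K',\Z)$ and $p$ prime, and write $pz=c+m\,g_+$ with $c\in L_0$. Applying $\pi_1^*$ and using $\pi_1^*(g_+)=s_1^*(e)+Z_0'$ and $\pi_1^*(c)\in 2\,s_1^*(H^2(K_2(A),\Z))$, the $Z_0'$-component of $\pi_1^*(pz)$ in $H^2(N_1,\Z)=s_1^*(H^2(K_2(A),\Z))\oplus\Z Z_0'$ equals $m$ and is divisible by $p$; hence $p\mid m$, so $z-\tfrac{m}{p}g_+\in H^2(K',\Z)$ and $p\big(z-\tfrac{m}{p}g_+\big)=c\in L_0$. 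Primitivity of $L_0$ from the second step then forces $z-\tfrac{m}{p}g_+\in L_0$, whence $z\in L$. Thus $H^2(K',\Z)/L$ is torsion-free of rank $0$, so $L=H^2(K',\Z)$, which gives the asserted decomposition with $g_++g_-=\pi_{1*}(s_1^*(e))$ and $g_+-g_-=\overline{Z_0}'$.
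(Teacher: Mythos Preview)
Your scaffold for steps one, two, and four is sound and in line with the paper: the primitivity of $L_0=\pi_{1*}(s_1^*(H^2(K_2(A),\Z)))$ is exactly Lemma~\ref{Fulton}(iii), obtained by pulling back Lemma~\ref{primitive2} along $\widetilde r$, and your final saturation argument via the $Z_0'$-component of $\pi_1^*$ is a clean way to finish.

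The real gap is your step three. The proposed local analysis at the $p_j$ through the model $\C^4/\{\pm 1\}$ cannot determine which combination of exceptional divisors makes $\pi_{2*}(s^*(e))+D_e$ divisible by $2$ in $H^2(\widetilde K,\Z)$: the local models at all $36$ points are identical and insensitive to the difference between $e$ and the $u_i,v_i,w_i$, yet for different $y\in H^2(K_2(A),\Z)$ the resulting $D_y$ is different. This is genuinely global information. The paper's argument is of a different nature. It works on the full resolution $\widetilde K$ and first uses Smith theory (Lemma~\ref{exist}) to prove that for every $y$ there exists some $D_y\in\langle D_1,\ldots,D_{36}\rangle$ with $\tfrac12(\pi_{2*}(s^*(y))+D_y)$ integral, obtaining seven such half-classes. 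To identify $D_e$ (Lemma~\ref{Ddelta}) it then invokes two group actions on $\{D_1,\ldots,D_{35}\}$ that both fix $\pi_{2*}(s^*(e))$ and $D_{36}$: the monodromy $\Sp(A[2])$ (orbits of sizes $15$ and $20$) and the automorphism group $G_\xi$ of the special torus $A=E_\xi\times E_\xi$ (orbits of sizes $5$ and $30$, Lemma~\ref{orbitesG}). Together these act transitively on $\{D_1,\ldots,D_{35}\}$, so the only invariant possibilities for $D_e$ are $D_{36}$, $D_1+\cdots+D_{35}$, or $D_1+\cdots+D_{36}$; the integrality of $\big(\tfrac12(\pi_{2*}(s^*(e))+D_e)\big)^4=(324-d)/2$ forces $d$ even, hence $d=36$. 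Combining with the known divisibility of $\widetilde{\overline{Z_0}}+D_1+\cdots+D_{36}$ by $2$ (Lemma~\ref{Fulton}(iv)) yields that $\pi_{2*}(s^*(e))+\widetilde{\overline{Z_0}}$ is divisible by $2$, which then pushes down to $K'$ as a Cartier divisor (Lemma~\ref{dernierlemme}). None of this is visible from the local quotient model you propose.
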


First we need to calculate some intersections.
\begin{lemme}\label{Fulton}
\begin{itemize}
\item[(i)]
We have $E_{l}\cdot E_{k}=0$ if $l\neq k$, $E_{l}^{4}=-1$ and $E_{l}\cdot z=0$ for all $(l,k)\in \left\{1,...,28\right\}^{2}$
and for all $z\in s^{*}(H^{2}(K_2(A),\Z))$.
\item[(ii)]
We have $e^4=324$.\newline
\end{itemize}
We already have some properties of primitivity:

\begin{itemize}
\item[(iii)]
$\pi_{1*}(s_1^{*}(H^{2}(K_{2}(A),\Z)))$ is primitive in $H^{2}(K',\Z)$,
\item[(iv)]
The group $\widetilde{\mathcal{D}}=\left\langle \widetilde{\overline{Z_0}},D_{1},...,D_{36},\frac{\widetilde{\overline{Z_0}}+D_{1}+...+D_{36}}{2}\right\rangle$ is primitive in $H^{2}(\widetilde{K},\Z)$.
\item[(v)]
$\overline{Z_{0}}'$ is primitive in $H^{2}(K',\Z)$,
\end{itemize}
\end{lemme}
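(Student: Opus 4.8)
The plan is to dispatch the explicit intersection numbers (i)--(ii) directly, and then to reduce the three primitivity statements (iii)--(v) to facts already in the excerpt, isolating the one genuinely delicate point. For (i), the equality $E_l\cdot E_k=0$ ($l\neq k$) is disjointness of the exceptional divisors, the centres being distinct points of $N_1$. The value $E_l^4=-1$ is the self-intersection of the exceptional divisor of the blow-up of a smooth point on a fourfold, where $E_l\cong\mathbb{P}^3$ and $\mathcal{O}(E_l)|_{E_l}\cong\mathcal{O}_{\mathbb{P}^3}(-1)$, so $E_l^4=\int_{\mathbb{P}^3}(-H)^3=-1$. For $E_l\cdot z$ with $z=s^*(w)=s_2^*(s_1^*w)$, I would apply the projection formula: writing $\jmath_l\colon E_l\hookrightarrow N_2$ for the inclusion, $E_l\cdot s_2^*(s_1^*w)=\jmath_{l*}\big((s_2\circ\jmath_l)^*(s_1^*w)\big)$, and since $s_2\circ\jmath_l$ factors through the point $p_l$ it kills the positive-degree class $s_1^*w$, giving $E_l\cdot z=0$. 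For (ii) I would feed $B_{\X}(e,e)=-2n=-6$ (Notation~\ref{BasisH2KA}) into the Fujiki formula (\ref{fujiki}) with $n=3$, obtaining $e^4=3\cdot 3!!\cdot(-6)^2=9\cdot 36=324$.

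For (iii), I would combine Lemma~\ref{primitive2}---primitivity of $\pi_{2*}(s^*(H^2(K_2(A),\Z)))$ in $H^2(\widetilde K,\Z)$---with the commutation relations of Remark~\ref{commut2}. Applying $\pi_{2*}(s_2^*(x))=\widetilde r^*(\pi_{1*}(x))$ to $x=s_1^*(y)$ and using $s^*=s_2^*\circ s_1^*$ gives $\pi_{2*}(s^*(y))=\widetilde r^*\big(\pi_{1*}(s_1^*(y))\big)$, hence $\pi_{2*}(s^*(H^2(K_2(A),\Z)))=\widetilde r^*\big(\pi_{1*}(s_1^*(H^2(K_2(A),\Z)))\big)$. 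Since $\widetilde r$ is a blow-up, $\widetilde r^*$ is an injection with $H^2(\widetilde K,\Z)=\widetilde r^*H^2(K',\Z)\oplus\bigoplus_l\Z D_l$; an injection with primitive image reflects primitivity, so primitivity of the left-hand side forces $\pi_{1*}(s_1^*(H^2(K_2(A),\Z)))$ to be primitive in $H^2(K',\Z)$.

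For (iv)--(v) the structural input is that $\iota_1$ and $\iota_2$ act by $-1$ on the normal directions to their fixed loci, hence fix $Z_0'$ and the $E_l$ pointwise; thus $\pi_1,\pi_2$ are double covers ramified there, yielding $\pi_1^*(\overline{Z_0}')=2Z_0'$, $\pi_2^*(\widetilde{\overline{Z_0}})=2\widetilde{Z_0}$ and $\pi_2^*(D_l)=2E_l$, while the ramification classes $Z_0'$, $\widetilde{Z_0}=s_2^*(Z_0')$ and $E_l$ are primitive upstairs as (total transforms of) exceptional divisors of blow-ups. For (v) I would note, as in (iii), that $\overline{Z_0}'$ is primitive in $H^2(K',\Z)$ iff $\widetilde{\overline{Z_0}}=\widetilde r^*(\overline{Z_0}')$ is primitive in $H^2(\widetilde K,\Z)$; then from $\pi_2^*(\widetilde{\overline{Z_0}})=2\widetilde{Z_0}$ and primitivity of $\widetilde{Z_0}$ any divisor of $\widetilde{\overline{Z_0}}$ must divide $2$, and the residual case reduces to whether $\widetilde{Z_0}\in\im\pi_2^*$. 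For (iv), the half-class is integral because $\tfrac12(\widetilde{\overline{Z_0}}+\sum_l D_l)=c_1(L)$, where $L$ is the line bundle defining the double cover $\pi_2$ (its branch divisor is $\widetilde{\overline{Z_0}}+\sum_l D_l$ and satisfies $\mathcal{O}(\text{branch})=L^{\otimes 2}$); primitivity of $\widetilde{\mathcal D}$ then amounts to the absence of any divisibility beyond this one.

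The hard part will be exactly this residual non-divisibility in (iv) and (v): ruling out that a ramification class such as $\widetilde{Z_0}$ lies in $\im\pi_2^*$, equivalently that no further division by $2$ is possible once $c_1(L)$ has been adjoined. I expect to settle it by passing to a transverse slice of the generically transverse $A_1$-singularity along $\overline{Z_0}$, where $\overline{Z_0}'$ restricts to the $(-2)$-curve of the minimal resolution, which is primitive; together with the analogous local model $\mathbb{C}^4/\{\pm1\}$ (resolved with exceptional $\mathbb{P}^3$) at the $36$ points, this pins down the saturation. Alternatively, one may compute the discriminant of $\widetilde{\mathcal D}$ from the self-intersections of its generators and match it against its saturation, which is the bookkeeping I would fall back on if the slice argument proves awkward.
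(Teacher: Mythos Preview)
Your proposal is correct and follows essentially the same path as the paper. For (i) the paper simply cites the analogous result in \cite{Lol} ``proven using adjunction formula'', which is exactly your normal-bundle computation $E_l|_{E_l}\cong\mathcal O_{\mathbb P^3}(-1)$; for (ii) both you and the paper invoke the Fujiki formula; and for (iii) your argument via Lemma~\ref{primitive2} and the commutation in Remark~\ref{commut2} is literally the paper's proof. For (iv)--(v) the paper omits the argument entirely, referring to Lemma~4.6.14 of \cite{Lol}, so your sketch---integrality of the half-class as $c_1$ of the double-cover line bundle, and residual primitivity via a transverse slice to the $A_1$-locus---supplies precisely the content the paper outsources.

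One small caveat: in (iii) and (v) you invoke the decomposition $H^2(\widetilde K,\Z)=\widetilde r^*H^2(K',\Z)\oplus\bigoplus_l\Z D_l$ as if $\widetilde r$ were a blow-up of a smooth variety, but $K'$ is singular at the $36$ points being blown up. The conclusion you need---that $\widetilde r^*$ is injective and reflects primitivity---is still true here (the singularities are isolated cones $\mathbb C^4/\{\pm1\}$, so a local Mayer--Vietoris or relative cohomology argument gives the splitting), but you should justify it rather than quote the smooth blow-up formula. This is routine and does not affect the correctness of your plan.
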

\begin{proof}
\begin{itemize}
\item[(i)]
It is the same statement as Proposition 4.6.16 1) of \cite{Lol}, proven using adjunction formula.
\item[(ii)]
It follows directly from the Fujiki formula (\ref{fujiki}).
\item[(iii)]
By Lemma \ref{primitive2}, $\pi_{2*}(s^*(H^2(K_2(A),\Z)))$ is primitive in $H^{2}(\widetilde{K},\Z)$. Then by Remark \ref{commut2}, $r'^*(\pi_{*}(H^2(K_2(A),\Z)))$ is primitive in $H^{2}(K',\Z)$. Using again Remark \ref{commut2}, we get the result.
\item[]
The proof of (iv) and (v) is the same as Lemma 4.6.14 of \cite{Lol} and will be omitted. 
\end{itemize}
\end{proof}
With Lemma \ref{Fulton} (iii) and (v), it only remains to prove that $\pi_{1*}(s_{1}^{*}(e))+\overline{Z_{0}}'$ is divisible by 2 which will be done in Lemma \ref{dernierlemme}. To prove this lemma, we first prove that $\pi_{2*}(s^{*}(e))+\widetilde{\overline{Z_{0}}}$ is divisible by 2. Knowing that $\widetilde{\overline{Z_0}}+D_{1}+...+D_{36}$ is divisible by 2, we only have to show that $\pi_{2*}(s^{*}(e))+D_{1}+...+D_{36}$ is divisible by 2 which is done by Lemma \ref{exist} and \ref{Ddelta}.

First we need to know the group $H^{3}(\widetilde{K},\Z)$.
\begin{lemme}\label{H3}
We have $H^{3}(\widetilde{K},\Z)=0.$
\end{lemme}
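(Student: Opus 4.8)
The plan is to avoid computing $H^3(\widetilde K,\Z)$ directly and instead compare $\widetilde K$ with the \emph{free} quotient $U=V/\iota$ already appearing in the diagram (\ref{commutativediagram}), exactly in the spirit of Lemma~\ref{3} and Lemma~\ref{primitive3}. First I would pin down the geometry of $\iota_2$. Since $\iota=-\id_A$ fixes the K3 surface $Z_0$ and the $36$ points and, being symplectic, acts by $-\id$ on the rank-$2$ normal directions of $Z_0$ and on the tangent space $\C^4$ at each isolated fixed point, after the two blow-ups $s=s_2\circ s_1$ the lifted involution $\iota_2$ acts trivially on every exceptional projective fibre. Hence $\Fix\iota_2$ is a smooth divisor, the disjoint union of a $\mathbb{P}^1$-bundle over $Z_0$ (the proper transform of $Z_0'$) and of $E_1,\dots,E_{36}\cong\mathbb{P}^3$; in particular $\iota_2$ is a reflection and $\pi_2\colon N_2\to\widetilde K$ is a double cover branched along a smooth divisor $\mathcal R\subset\widetilde K$, each component of which is isomorphic to the corresponding component of $\Fix\iota_2$. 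Consequently $H^1(\mathcal R,\Z)=0$, because $H^1(\mathbb{P}^3)=0$ and $H^1$ of a $\mathbb{P}^1$-bundle over a K3 surface vanishes. Since $r=\widetilde r\circ r'$ is an isomorphism over the smooth locus of $K_2(A)/\iota$, one identifies $U\cong\widetilde K\smallsetminus\mathcal R$, and $\pi_2$ restricts there to the free double cover $V\to U$ with $V=K_2(A)\smallsetminus\Fix\iota$.

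Next I would prove the key integral vanishing $H^3(U,\Z)=0$. Because $\Fix\iota$ has complex codimension $\ge 2$ and the class $[Z_0]$ is primitive in $H^4(K_2(A),\Z)$ (Proposition~\ref{H4norm}), restriction gives an $\iota$-equivariant isomorphism $H^k(K_2(A),\Z)\cong H^k(V,\Z)$ for $k\le 3$. I then feed this into the Cartan--Leray spectral sequence of the free quotient $V\to U$, namely $E_2^{p,q}=H^p\big(\langle\iota\rangle,H^q(V,\Z)\big)\Rightarrow H^{p+q}(U,\Z)$ (this is Proposition~3.2.5 of~\cite{Lol}, the tool already invoked in Lemma~\ref{3}). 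In total degree $3$ all modules are known: $H^1(V)=0$, while $H^0(V)=\Z$ and $H^2(V)\cong H^2(K_2(A),\Z)$ carry the \emph{trivial} $\iota$-action by (\ref{l22}), and $H^3(V)\cong H^3(K_2(A),\Z)$ carries the action $-\id$ by Corollary~\ref{actionH3}. Therefore $E_2^{0,3}=H^3(V)^{\iota}=0$, $E_2^{1,2}=H^1(\langle\iota\rangle,\Z^{\oplus7})=0$, $E_2^{2,1}=0$ and $E_2^{3,0}=H^3(\langle\iota\rangle,\Z)=0$; as every graded piece in total degree $3$ vanishes, $H^3(U,\Z)=0$.

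Finally I would glue the two inputs. The Thom isomorphism $H^k(\widetilde K,U,\Z)\cong H^{k-2}(\mathcal R,\Z)$ and the long exact sequence of the pair $(\widetilde K,U)$ yield
\[
H^1(\mathcal R,\Z)\longrightarrow H^3(\widetilde K,\Z)\longrightarrow H^3(U,\Z),
\]
whose outer terms are both zero by the first two paragraphs, forcing $H^3(\widetilde K,\Z)=0$, as claimed. (An equivalent route, staying on $N_2$, is to note that the blow-up formulas give $H^3(N_2,\Z)\cong H^3(K_2(A),\Z)$ with $\iota_2$ acting by $-\id$, so $H^3(\widetilde K,\Q)=0$; the content is then the same torsion statement treated above.)

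The routine parts are the blow-up/codimension bookkeeping and the rational vanishing, which is immediate from $H^3(K_2(A),\Q)^{\iota}=0$. The genuine obstacle is the \emph{integral} vanishing of $H^3(U,\Z)$, i.e. excluding the $2$-torsion that a double cover can create (the lens-space phenomenon). This is precisely where the symplectic nature of $\iota$ is decisive: it makes $H^2(K_2(A),\Z)$ $\iota$-invariant with \emph{no} sign summand ($l_{1,-}^2(K_2(A))=0$) and $H^1(K_2(A),\Z)=0$, which annihilates the only $E_2$-terms that could contribute $2$-torsion in total degree $3$ — in sharp contrast to Lemma~\ref{3}, where the surviving sign summands produced the torsion $(\Z/2\Z)^{\oplus b_1(A)}$.
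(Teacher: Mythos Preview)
Your proof is correct and follows essentially the same strategy as the paper: first establish $H^{3}(U,\Z)=0$ via the spectral sequence of the free quotient $V\to U$ (the paper cites this as Proposition~3.2.8 of~\cite{Lol}, using the invariants $l_{1,\pm}^{k}$ from (\ref{l22}) and (\ref{l3})), then conclude via the Thom isomorphism and the long exact sequence of the pair $(\widetilde K,U)$. The only difference is cosmetic: you spell out the geometry of the branch divisor $\mathcal{R}$ and the vanishing $H^1(\mathcal{R},\Z)=0$ explicitly, whereas the paper compresses this into the single line ``$H^{3}(\widetilde K,U,\Z)=0$ by Thom isomorphism''.
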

\begin{proof}
We have the following exact sequence: 
$$\xymatrix@C=10pt@R=0pt{H^{3}(K_2(A),V,\Z)\ar[r] &H^{3}(K_2(A),\Z)\ar[r]^{f} &H^{3}(V,\Z)\ar[r]& 
H^{4}(K_2(A),V,\Z)\ar[r]^{\rho} &H^{4}(K_2(A),\Z).}$$
By Thom isomorphism, $H^{3}(K_2(A),V,\Z)=0$ and $H^{4}(K_2(A),V,\Z)=H^{0}(Z_0,\Z)$.
Moreover $\rho$ is injective, so $H^{3}(V,\Z)=H^{3}(K_2(A),\Z)$. 

Hence by (\ref{l22}), (\ref{l3}), Proposition 3.2.8 of \cite{Lol} and since $H^{3}(K_2(A),\Z)^\iota=0$, we find that $H^{3}(U,\Z)=0$.
Then the result follows from the exact sequence
$$\xymatrix@C=10pt@R=0pt{H^{3}(\widetilde{K},U,\Z)\ar[r] &H^{3}(\widetilde{K},\Z)\ar[r] &H^{3}(U,\Z)}$$
and from the fact that $H^{3}(\widetilde{K},U,\Z)=0$ by Thom isomorphism.
\end{proof}
To prove the next lemma, 
we will need a proposition from Section 7 of \cite{BNS} about Smith theory. Let $X$ be a topological space and let $G=\left\langle \iota\right\rangle$ be an involution acting on $X$. 
Let $\sigma:=1+\iota\in \mathbb{F}_{2}[G]$. We consider the chain complex $C_{*}(X)$ of $X$ with coefficients in $\mathbb{F}_{2}$ and its subcomplex $\sigma C_{*}(X)$. We denote by $X^{G}$ the fixed locus of the action of $G$ on $X$. 
\begin{prop}\label{SmithProp}
\begin{itemize}
\item[(1)] (\cite{Bredon}, Theorem 3.1). There is an exact sequence of complexes:
$$\xymatrix@C=20pt{0\ar[r] &\sigma C_{*}(X)\oplus C_{*}(X^{G})\ar[r]^{\ \ \ \ \ \ f}&C_{*}(X) \ar[r]^{\sigma}&\sigma C_{*}(X) \ar[r]&0
},$$ where $f$ denotes the sum of the inclusions.
\item[(2)] (\cite{Bredon}, (3.4) p.124). There is an isomorphism of complexes:
$$\sigma C_{*}(X)\simeq C_{*}(X/G,X^{G}),$$
where $X^{G}$ is identified with its image in $X/G$.
\end{itemize}
\end{prop}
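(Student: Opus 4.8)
Since Proposition~\ref{SmithProp} is a statement from classical Smith theory, my plan is to invoke Bredon's book, where both assertions are proved (Theorem~3.1 and formula~(3.4)). For the reader's orientation I would recall the mechanism, which is purely algebraic and rests on the structure of the group ring $\Lambda \defIs \F[G]$. The only ingredient needed is the identity $\sigma^{2}=(1+\iota)^{2}=0$ in $\Lambda$, which holds because $2\iota=0$ and $\iota^{2}=\id$ force $1+2\iota+\iota^{2}=0$. Thus $\Lambda\cong\F[\sigma]/(\sigma^{2})$ is local, and every chain group becomes a $\Lambda$-module via the action of $\iota$.

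For part~(1), the key step is to decompose $C_{*}(X)$ as a $\Lambda$-module. A singular simplex is either fixed by $\iota$ (equivalently, its image lies in $X^{G}$) or has a free $G$-orbit. The simplices of the first kind span the trivial submodule $C_{*}(X^{G})$, on which $\iota$ acts as the identity and hence $\sigma$ acts as $0$; the simplices of the second kind generate a free $\Lambda$-module. On a free rank-one summand $\Lambda$ one computes directly $\sigma\Lambda=\Ker(\sigma|_{\Lambda})=\langle\sigma\rangle$, so that $\im\sigma$ and $\Ker\sigma$ coincide there, whereas on the trivial summand $\sigma$ is zero. Summing over all summands gives $\im(\sigma\colon C_{*}(X)\to C_{*}(X))=\sigma C_{*}(X)$ and $\Ker\sigma=\sigma C_{*}(X)\oplus C_{*}(X^{G})$, which is exactly the short exact sequence asserted in~(1), with $f$ the sum of the two inclusions.

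For part~(2), the plan is to use the projection $p\colon X\to X/G$ and the induced chain map $p_{*}$. One checks that $p_{*}$ is surjective and, reading it off on the same decomposition, that its kernel is precisely $\sigma C_{*}(X)$: a free orbit $\{\xi,\iota\xi\}$ maps to a single simplex downstairs, so its contribution to the kernel is $\langle\xi+\iota\xi\rangle=\langle\sigma\xi\rangle$, while fixed simplices map injectively. Passing to the relative complex kills the image of $C_{*}(X^{G})$ and yields the isomorphism $\sigma C_{*}(X)\simeq C_{*}(X/G,X^{G})$; equivalently, this is the first isomorphism theorem applied to $\sigma$ from part~(1), combined with $C_{*}(X/G)\cong C_{*}(X)/\sigma C_{*}(X)$. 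I expect the only genuine subtlety---and the reason Bredon devotes care to it---to be the naturality and well-definedness of these identifications in singular (as opposed to $G$-CW) homology, in particular the compatibility of $p_{*}$ with the differentials along $X^{G}$; since this is settled in the cited reference, I would not reproduce it here.
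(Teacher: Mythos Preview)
The paper does not give its own proof of this proposition: it is stated purely as a citation of Bredon (Theorem~3.1 and (3.4), p.~124), with no argument supplied. Your proposal, which also invokes Bredon but adds an explanatory sketch of the underlying $\F[G]$-module algebra, is therefore consistent with---and more informative than---what the paper does.

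One technical caveat worth tightening in your write-up: for part~(2) you assert that $p_{*}\colon C_{*}(X)\to C_{*}(X/G)$ is surjective with kernel $\sigma C_{*}(X)$. For \emph{singular} chains this is not literally true, since a singular simplex in $X/G$ whose image meets the branch locus $X^{G}$ need not lift to $X$. Bredon handles this by working with a suitable $G$-complex (or an equivalent device) where the orbit-type decomposition of simplices is built in; this is exactly the point you flag at the end, but it would be cleaner to say up front that the argument is for the chain model Bredon uses rather than for arbitrary singular chains. With that adjustment your sketch is correct.
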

\begin{lemme}\label{exist}
There exists $D_e$ which is a linear combination of the $D_i$ with coefficient 0 or 1 such that $\pi_{2*}(s^{*}(e))+D_e$ is divisible by 2.
\end{lemme}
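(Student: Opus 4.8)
The plan is to detect divisibility by $2$ purely mod $2$ and then to localise away from the K3-part of the fixed locus. Since $H^{3}(\widetilde K,\Z)=0$ by Lemma~\ref{H3}, the universal coefficient theorem gives $H^{2}(\widetilde K,\F)\cong H^{2}(\widetilde K,\Z)/2$, so a class is divisible by $2$ if and only if its reduction mod $2$ vanishes; moreover every $\F$-coefficient is represented by $0$ or $1$, so the constraint ``coefficients $0$ or $1$'' is automatic. Hence it suffices to produce $D_e$ with $\pi_{2*}(s^{*}(e))\equiv D_e \pmod 2$, i.e. to show that the reduction of $\pi_{2*}(s^{*}(e))$ lies in the $\F$-span of $[D_1],\dots,[D_{36}]$ in $H^{2}(\widetilde K,\F)$.

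First I would set $W:=\widetilde K\smallsetminus\bigcup_{i}D_i$ and use the long exact sequence of the pair $(\widetilde K,W)$. As the $D_i$ are disjoint smooth divisors, excision together with the Thom isomorphism identifies $H^{2}(\widetilde K,W;\F)\cong\bigoplus_i H^{0}(D_i;\F)$, whose image in $H^{2}(\widetilde K,\F)$ is exactly $\langle [D_1],\dots,[D_{36}]\rangle_{\F}$. Therefore the reduction of $\pi_{2*}(s^{*}(e))$ lies in this span precisely when $\pi_{2*}(s^{*}(e))$ restricts to $0$ mod $2$ on $W$. Note that $W$ is the image of $N_1\smallsetminus\{p_1,\dots,p_{36}\}$, so that over $W$ the map $\pi_1$ is a double cover branched exactly along the remaining fixed divisor $Z_0'$. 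By Remark~\ref{commut2} one has $\pi_{2*}(s^{*}(e))=r^{*}(\pi_{*}(e))$, and since $\widetilde r$ is an isomorphism over $W$, its restriction there is $\pi_{1*}(s_1^{*}(e))$.

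The hard part will be showing $\pi_{1*}(s_1^{*}(e))|_{W}\equiv 0\pmod 2$; this is the step that forces $D_e$ to involve only the point divisors $D_i$ and not $\widetilde{\overline{Z_0}}$. The crucial feature is that $s_1^{*}(e)$ is pulled back from $K_2(A)$ along the blow-down of $Z_0'$, hence carries no $[Z_0']$-component. I would prove that, mod $2$, $s_1^{*}(e)$ lies in $\im(\pi_1^{*})$, since then $\pi_{1*}(s_1^{*}(e))=\pi_{1*}\pi_1^{*}(\eta)=2\eta\equiv 0$. To establish $s_1^{*}(e)\in\im(\pi_1^{*})$ mod $2$, I would invoke Smith theory (Proposition~\ref{SmithProp}): for the involution $\iota_1$ whose fixed locus over $W$ is the smooth divisor $Z_0'$, the obstruction for an $\iota_1$-invariant class to lift through $\pi_1^{*}$ is concentrated on $Z_0'$ and is read off from its restriction there. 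Since $s_1^{*}(e)|_{Z_0'}$ is pulled back from the K3 base $Z_0$ of the $\mathbb P^1$-bundle $Z_0'\to Z_0$, and hence is constant along the fibres (in contrast to $[Z_0']|_{Z_0'}$, which is the fibre hyperplane class and does not lift), it lies in the image of the relevant Smith map, so the lift exists. Feeding this back through the localisation sequence produces $D_e=\sum_i\epsilon_i D_i$ with the required property; the precise values of the $\epsilon_i$ are then to be pinned down by the monodromy argument of Lemma~\ref{Ddelta}.
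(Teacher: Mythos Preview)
Your reduction via the localisation sequence of the pair $(\widetilde K,W)$ is sound: since the $D_i\cong\mathbb P^3$ have $H^1=0$, one indeed has $\ker\big(H^2(\widetilde K,\F)\to H^2(W,\F)\big)=\langle[D_1],\dots,[D_{36}]\rangle$, so it suffices to show $\pi_{2*}(s^*(e))|_W\equiv 0$ in $H^2(W,\F)$.

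The gap is in your argument for this vanishing. You assert that ``the obstruction for an $\iota_1$-invariant class to lift through $\pi_1^*$ is concentrated on $Z_0'$ and is read off from its restriction there'', and conclude that $s_1^*(e)\in\im(\pi_1^*)$ because $s_1^*(e)|_{Z_0'}$ has no fibre component. This criterion is not correct. Consider a K3 surface $X$ realised as a double cover $\pi:X\to\mathbb P^2$ branched along a smooth sextic $C$: over $\F$ the involution acts trivially on all of $H^2(X,\F)\cong\F^{22}$, yet $\im(\pi^*)$ is one-dimensional, and the restriction map $H^2(X,\F)\to H^2(R,\F)=\F$ has a $21$-dimensional kernel. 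So there are many $\iota$-invariant classes restricting to zero on the ramification locus that nevertheless do \emph{not} lie in $\im(\pi^*)$. Your parenthetical that $[Z_0']$ ``does not lift'' is also wrong: Lemma~\ref{Fulton}(iv) gives $\widetilde{\overline{Z_0}}\equiv\sum_iD_i$ in $H^2(\widetilde K,\F)$, hence $[\overline{Z_0}']|_W\equiv 0$, so the branch class $\tfrac12[\overline{Z_0}']$ is integral on $W$ and $[Z_0']=\pi_1^*\big(\tfrac12[\overline{Z_0}']\big)$ lies squarely in $\im(\pi_1^*)$.

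The paper's proof takes a different route: rather than analysing the branched cover locally, it performs a global Smith-theoretic dimension count. Using Proposition~\ref{SmithProp} one computes $h^2_\sigma(N_2)=36$, and the exact sequence of the pair $(\widetilde K,\;\widetilde{\overline{Z_0}}\cup\bigcup_kD_k)$ then shows that the restriction map $\varsigma^*:H^2(\widetilde K,\F)\to H^2(\widetilde{\overline{Z_0}}\cup\bigcup_kD_k,\F)$ has image of dimension $8$. Since $\varsigma^*$ kills $\pi_{2*}(s^*(H^2(K_2(A))))$ and the known half-class $\tfrac12(\widetilde{\overline{Z_0}}+\sum_kD_k)$ accounts for one of these eight dimensions, there must exist seven further classes $\tfrac12(u_i+d_i)\in H^2(\widetilde K,\Z)$ with $u_i\in\pi_{2*}(s^*(H^2(K_2(A),\Z)))$ and $d_i\in\langle D_1,\dots,D_{36}\rangle$. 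A final rank comparison forces the $u_i$ to span $\pi_{2*}(s^*(H^2(K_2(A),\F)))$, so $\pi_{2*}(s^*(e))$ lies in their $\F$-span and the desired $D_e$ exists.
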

\begin{proof}
First, we have to use Smith theory as in Section 4.6.4 of \cite{Lol}.

Look at the following exact sequence:
$$\xymatrix@C=10pt@R0pt{0\ar[r] &H^{2}(\widetilde{K},\widetilde{\overline{Z_{0}}}\cup(\cup_{k=1}^{36}D_{k}),\mathbb{F}_{2}))\ar[r]&H^{2}(\widetilde{K},\mathbb{F}_{2}) \ar[r]& H^{2}(\widetilde{\overline{Z_{0}}}\cup(\cup_{k=1}^{36}D_{k}),\mathbb{F}_{2}))\\
\ar[r]&H^{3}(\widetilde{K},\widetilde{\overline{Z_{0}}}\cup(\cup_{k=1}^{36}D_{k}),\mathbb{F}_{2})\ar[r] &0.\ \ \ \ \ \ \ \ \ &
}$$
First, we will calculate the dimension of the vector spaces $H^{2}(\widetilde{K},\widetilde{\overline{Z_{0}}}\cup(\cup_{k=1}^{36}D_{k}),\mathbb{F}_{2})$ and $H^{3}(\widetilde{K},$ $\widetilde{\overline{Z_{0}}}\cup(\cup_{k=1}^{36}D_{k}),\mathbb{F}_{2})$.
By (2) of Proposition \ref{SmithProp}, we have 
$$H^{*}(\widetilde{K},\widetilde{Z_{0}}\cup(\cup_{k=1}^{36}D_{k}),\mathbb{F}_{2})\simeq H^{*}_{\sigma}(N_{2}).$$

The previous exact sequence gives us the following equation:

$$h^{2}_{\sigma}(N_{2})-h^{2}(\widetilde{K},\mathbb{F}_{2})+h^{2}(\widetilde{Z_{0}}\cup(\cup_{k=1}^{36}D_{k}),\mathbb{F}_{2})-h^{3}_{\sigma}(N_{2})=0.$$
As $h^{2}(\widetilde{K},\mathbb{F}_{2})=8+36=44$ and $h^{2}(\widetilde{Z_{0}}\cup(\cup_{k=1}^{36}D_{k}),\mathbb{F}_{2})=23+36=59$, we obtain:
$$h^{2}_{\sigma}(N_{2})-h^{3}_{\sigma}(N_{2})=-15.$$

Moreover by 2) of Proposition \ref{SmithProp}, we have the exact sequence
$$\xymatrix@C=10pt@R0pt{0\ar[r] &H^{1}_{\sigma}(N_{2})\ar[r]&H^{2}_{\sigma}(N_{2}) \ar[r]&H^{2}(N_{2},\mathbb{F}_{2}) \ar[r]&H^{2}_{\sigma}(N_{2})\oplus H^{2}(\widetilde{Z_{0}}\cup(\cup_{k=1}^{36}E_{k}),\mathbb{F}_{2})\\
\ar[r]&H^{3}_{\sigma}(N_{2})\ar[r]&\coker\ar[r] &0.\ \ \ \ \ \ \ \ \ \ \  &
}$$
By Lemma 7.4 of \cite{BNS}, $h^{1}_{\sigma}(N_{2})=h^{0}(\widetilde{Z_{0}}\cup(\cup_{k=1}^{36}E_{k}),\mathbb{F}_{2})-1$.
Then we get the equation
\begin{align*}
&h^{0}(\widetilde{Z_{0}}\cup(\cup_{k=1}^{36}E_{k}),\mathbb{F}_{2})-1-h^{2}_{\sigma}(N_{2})+h^{2}(N_{2},\mathbb{F}_{2})\\
&-h^{2}_{\sigma}(N_{2})-h^{2}(\widetilde{Z_{0}}\cup(\cup_{k=1}^{36}E_{k}),\mathbb{F}_{2})+h^{3}_{\sigma}(N_{2})-\alpha=0,\\
\end{align*}
where $\alpha=\dim \coker$.
So
$$21-\alpha-2h^{2}_{\sigma}(N_2)+h^{3}_{\sigma}(N_2)=0.$$
From the two equations, we deduce that
$$h^{2}_{\sigma}(N_{2})=36-\alpha,\ \ \ \ \ h^{3}_{\sigma}(N_{2})=51-\alpha.$$

Come back to the exact sequence
$$\xymatrix@C=15pt{0\ar[r] &H^{2}(\widetilde{K},\widetilde{\overline{Z_{0}}}\cup(\cup_{k=1}^{36}D_{k}),\mathbb{F}_{2})\ar[r]&\ar[r]^{\varsigma^{*}\ \ \ \ \ \ \  }H^{2}(\widetilde{K},\mathbb{F}_{2}) & H^{2}(\widetilde{\overline{Z_{0}}}\cup(\cup_{k=1}^{36}D_{k}),\mathbb{F}_{2}),
}$$
where $\varsigma:\widetilde{\overline{Z_{0}}}\cup(\cup_{k=1}^{36}D_{k})\hookrightarrow \widetilde{K}$ is the inclusion.
Since $h^{2}(\widetilde{K},\widetilde{\overline{Z_{0}}}\cup(\cup_{k=1}^{36}D_{k}),\mathbb{F}_{2})=h^{2}_{\sigma}(N_{2})=36-\alpha$, we have $\dim_{\mathbb{F}_{2}} \varsigma^{*}(H^{2}(\widetilde{K},\mathbb{F}_{2}))=(8+36)-36+\alpha=8+\alpha$.
We can interpret this as follows.
Consider the homomorphism
\begin{align*}
\varsigma^{*}_{\Z}:H^{2}(\widetilde{K},\Z)&\rightarrow H^{2}(\widetilde{\overline{Z_{0}}},\Z)\oplus (\oplus_{k=1}^{36} H^{2}(D_{k},\Z))\\
 u&\rightarrow (u\cdot\widetilde{\overline{Z_{0}}},u\cdot D_{1},...,u\cdot D_{36}).
\end{align*}
Since this is a map of torsion free $\Z$-modules (by Lemma \ref{H3} and universal coefficient formula), we can tensor by $\mathbb{F}_{2}$,
$$\varsigma^{*}=\varsigma^{*}_{\Z}\otimes \id_{\mathbb{F}_{2}}: H^{2}(\widetilde{K},\Z)\otimes\mathbb{F}_{2}\rightarrow H^{2}(\widetilde{\overline{Z_{0}}},\Z)\oplus (\oplus_{k=1}^{36} H^{2}(D_{k},\Z))\otimes\mathbb{F}_{2},$$
and we have $8+\alpha$ independent elements such that the intersection with the $D_{k}$, $k\in\left\{1,...,36\right\}$ and $\widetilde{\overline{Z_{0}}}$ are not all zero.
But, $\varsigma^{*}(\pi_{2*}(H^{2}(N_2,\Z)))=0$ and $\varsigma^{*}(\widetilde{\overline{Z_0}},\left\langle D_1,...,D_{36}\right\rangle)$, (it follows from Proposition \ref{commut}). By Lemma \ref{Fulton} (iv), the element $\widetilde{\overline{Z_0}}+D_{1}+...+D_{36}$ is divisible by 2. 
Hence necessary, it remain $7+\alpha$ elements $\frac{u_1+d_1}{2}$,...,$\frac{u_{7+\alpha}+d_{7+\alpha}}{2}$ in 
$H^2(\widetilde{K},\Z)$, linearly independent as elements of $H^2(\widetilde{K},\mathbb{F}_2)$ and with $u_i\in \pi_{2*}(s^{*}(H^{2}(K_{2}(A),\Z)))$, $d_i\in\left\langle D_1,...,D_{36}\right\rangle$.

By Lemma \ref{Fulton} (iv), $\left\langle D_1,...,D_{36}\right\rangle$ is primitive in $H^2(\widetilde{K},\Z)$. Hence necessary, the element $u_1,...,u_{7+\alpha}$ viewed as element in $\pi_{2*}(s^{*}(H^{2}(K_{2}(A),\mathbb{F}_{2})))$ are also linearly independent. 
Since $\dim_{\mathbb{F}_{2}}$ $\pi_{2*}(s^{*}(H^{2}(K_{2}(A),\mathbb{F}_{2})))=7$, it follows that $\alpha=0$ and $\Vect_{\mathbb{F}_{2}}(u_1,...,u_{7})=\pi_{2*}(s^{*}(H^{2}(K_{2}(A),\mathbb{F}_{2})))$.
Hence $e\in\Vect_{\mathbb{F}_{2}}(u_1,...,u_{7})$ and there exists $D_e$ which is a linear combination of the $D_i$ with coefficient 0 or 1 such that $\pi_{2*}(s^{*}(e))+D_e$ is divisible by 2.
\end{proof}
\begin{lemme}\label{Ddelta}
We have:
$$D_e=D_1+...+D_{36}.$$
\end{lemme}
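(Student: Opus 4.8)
The plan is to pin down the $36$ coefficients of $D_e=\sum_{i}c_iD_i$ (with $c_i\in\{0,1\}$) by first using the monodromy action of $G_\xi$ to collapse them into three unknowns, and then computing each one by a transverse intersection. The first observation I would record is that $D_e$ is \emph{uniquely} determined: if $\pi_{2*}(s^{*}e)+D_e$ and $\pi_{2*}(s^{*}e)+D_e'$ are both divisible by $2$ with coefficients in $\{0,1\}$, then $\sum_i(c_i-c_i')D_i\in 2H^{2}(\widetilde{K},\Z)$, and since the $D_i$ belong to the primitive system of Lemma \ref{Fulton} (iv) this forces $c_i=c_i'$. Now $G_\xi$ acts on $(K_2(A),\iota)$ (it fixes $0\in A$, hence preserves $K_2(A)$, and being linear it commutes with $\iota=-\id_A$), it fixes the diagonal class $e$, it commutes with $s$ and $\pi_2$, and it permutes the $D_i$ according to its action on the $36$ isolated fixed points. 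By uniqueness $D_e$ is $G_\xi$-invariant, so each $c_i$ is constant on $G_\xi$-orbits.

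By Remark \ref{PlaneTriple} the $35$ points of $\mathcal P$ are identified with the $35$ planes of $A[2]$, while $p_{36}$ is the $G_\xi$-fixed vertex of $W_0$; by Lemma \ref{orbitesG} the planes split into orbits of sizes $5$ and $30$. Hence the problem reduces to computing three parities $\epsilon_5,\epsilon_{30},\epsilon_1\in\{0,1\}$, one per orbit, and showing each equals $1$. To extract $c_i\bmod 2$ I would intersect the integral relation $\pi_{2*}(s^{*}e)+D_e=2\eta$ with a curve $\bar C_i\subset\widetilde{K}$ meeting $D_i$ transversally in a single point and avoiding $\widetilde{\overline{Z_0}}$ and the other $D_j$. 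Since $\iota$ acts as $-\id$ on $T_{p_i}\cong\C^{4}$, the exceptional divisor $E_i\cong\mathbb P^{3}$ is fixed pointwise by $\iota_2$, so $\pi_2$ is a double cover ramified along $D_i$ with $\pi_2^{*}D_i=2E_i$; therefore $D_i\cdot\bar C_i=1$ while $D_j\cdot\bar C_i=\widetilde{\overline{Z_0}}\cdot\bar C_i=0$. Reducing mod $2$ and applying the projection formula with $C_i:=\pi_2^{-1}(\bar C_i)$ and $\pi_2^{*}[\bar C_i]=[C_i]$ gives $c_i\equiv \pi_{2*}(s^{*}e)\cdot\bar C_i=s^{*}e\cdot C_i\pmod 2$, i.e. the parity of the intersection of the half-diagonal with an explicit curve through $p_i$.

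It then remains to evaluate these three parities. For $p_{36}$, which lies on the diagonal, the contribution is visibly local and I expect $\epsilon_1=1$. For the two orbits of reduced points the diagonal does \emph{not} pass through $p_i$, so $s^{*}e\cdot C_i$ must be computed globally using the explicit orbit representatives of Lemma \ref{orbitesG} on the model $A=E_\xi\times E_\xi$; this global parity computation is the step I expect to be the main obstacle, precisely because, away from $p_{36}$, the coefficient is invisible in any local model (every curve contained in $D_i$ sees only the even class $D_i|_{D_i}=\mathcal O_{\mathbb P^{3}}(-2)$), so a genuinely global input—supplied by the $G_\xi$-orbits—is indispensable.

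As a cross-check, and a possible alternative route to the same conclusion, I would compute the canonical class of $\widetilde{K}$ by adjunction: combining the discrepancies of $s$ (namely $K_{N_2}=\widetilde{Z_0}+3\sum_iE_i$, from blowing up the surface $Z_0$ and then the $36$ points) with the Hurwitz formula $K_{N_2}=\pi_2^{*}K_{\widetilde{K}}+\widetilde{Z_0}+\sum_iE_i$ for the ramified double cover $\pi_2$ yields $\pi_2^{*}K_{\widetilde{K}}=2\sum_iE_i=\pi_2^{*}\big(\sum_iD_i\big)$, hence $K_{\widetilde{K}}=\sum_{i=1}^{36}D_i$ since $\pi_2^{*}$ is injective on the torsion-free group $H^{2}(\widetilde{K},\Z)$. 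With this identity the assertion $D_e=\sum_iD_i$ becomes the congruence $\pi_{2*}(s^{*}e)\equiv K_{\widetilde{K}}\equiv w_2(\widetilde{K})\pmod 2$, which one may hope to verify characteristic-class-theoretically via the Wu formula; I would use this reformulation mainly to confirm the outcome $\epsilon_5=\epsilon_{30}=\epsilon_1=1$ obtained from the orbit-by-orbit transverse computation above.
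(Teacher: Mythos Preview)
Your setup is sound --- the uniqueness of $D_e$, the $G_\xi$-invariance, and the orbit decomposition are all correct --- but the proposal stops short of a proof, and it misses the two ideas that make the paper's argument go through.

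First, you use only $G_\xi$, which leaves three orbits $(5,30,1)$ and hence three unknown parities. The paper brings in a second symmetry: the monodromy representation on $A[2]$ contains $\Sp(A[2])$ (Remark~\ref{SPA2}), and this acts on $\{D_1,\ldots,D_{35}\}$ via the action on planes in $A[2]$, with orbits of sizes $15$ and $20$. Combining the $\Sp(A[2])$-orbits $(15,20)$ with the $G_\xi$-orbits $(5,30)$ forces transitivity on $\{D_1,\ldots,D_{35}\}$, so $D_e$ is already pinned down to one of $\{D_{36},\;D_1+\cdots+D_{35},\;D_1+\cdots+D_{36}\}$ (the case $D_e=0$ being excluded by primitivity, Lemma~\ref{primitive2}).

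Second, instead of your transverse-curve computation --- which you yourself flag as the main obstacle and do not carry out --- the paper finishes with a one-line integrality check: by Lemma~\ref{Fulton} (i), (ii) and Proposition~\ref{commut},
\[
\left(\frac{\pi_{2*}(s^{*}e)+D_e}{2}\right)^{4}=\frac{324-d}{2},
\]
so $d$ must be even, forcing $d=36$ among $\{1,35,36\}$. Your curve argument would in principle also work, but it requires a genuine global computation of $s^{*}e\cdot C_i$ for curves through $2$-torsion configurations, which you have not supplied; and your alternative via $K_{\widetilde{K}}=\sum_iD_i$ (which is correct) still needs the unproved congruence $\pi_{2*}(s^{*}e)\equiv K_{\widetilde{K}}\pmod 2$, for which invoking the Wu formula is a hope rather than an argument. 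The missing ingredients are therefore the $\Sp(A[2])$ monodromy and the fourth-power self-intersection; with those in hand the lemma falls out immediately.
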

\begin{proof}
We know from Remark \ref{SPA2} that the image of the monodromy representation on $A[2]$ contains the symplectic group $\Sp A[2]$. 
We recall from Remark \ref{RemarkSymplecticInv} (2), that the $D_1,...,D_{35}$ are given by $\pi_2(s^{-1}(\mathcal{P}))$.
It follows that the image of the monodromy representation on $H^2(\widetilde{K},\Z)$ contains the isometries which act on $D_1,...,D_{35}$
as the elements $f$ of $\Sp A[2]$:
$$f\cdot \pi_2(s^{-1}(\{a_1,a_2,a_3\})=\pi_2(s^{-1}(\{f(a_1),f(a_2),f(a_3)\}),$$
and act trivially on $D_{36}$ and $\pi_{2*}(s^{*}(e))$. 
As explained by Remark \ref{PlaneTriple}, Remark \ref{RemarkSymplecticInv} (2) and Proposition \ref{transitively}, the 2 orbits of the action of $\Sp A[2]$ on the set $\mathfrak{D}:=\left\{D_1,...,D_{35}\right\}$ correspond to the two sets of isotropic and non-isotropic planes in $A[2]$. Hence by Proposition \ref{OrbitesSp} (3), (4)  the action of $\Sp A[2]$ on the set $\mathfrak{D}$ has 2 orbits: one of 15 elements and another of 20 elements. 

On the other hand, as we have mentioned, we can assume that $A=E_\xi\times E_\xi$ where $E_\xi$ is the elliptic curve introduced in Definition \ref{elliptic6}. Hence there is the following automorphism group acting on $A$:
$$G:=\left\langle \left(
\begin{array}{cc}
\xi & 0\\
0 & 1 
\end{array} \right), 
\left(
\begin{array}{cc}
0 & 1\\
1 & 0 
\end{array} \right),
\left(
\begin{array}{cc}
1 & 1\\
0 & 1 
\end{array} \right)
 \right\rangle.$$
The group $G$ extends naturally to an automorphism group of $N_2$ (we recall that $N_2$ is defined in Section (\ref{nota})) 
which we also denote $G$.  Moreover, the action of $G$ restricts to the set $\mathfrak{D}$. Then by Lemma \ref{orbitesG} the action of $G$ on $\mathfrak{D}$ has 2 orbits: one of 5 elements and one of 30 elements. Also the group $G$ acts trivially on $D_{36}$ and on $\pi_{2*}(s^{*}(e))$. 

Hence the combined action of $G$ and $\Sp A[2]$ acts transitively on $\mathfrak{D}$.
Since $\pi_{2*}(s^{*}(e))$ is fixed by the action of $G$ and $\Sp A[2]$, $D_e$ has also to be fixed by the action of $G$ and $\Sp A[2]$ else it will contradict Lemma \ref{Fulton} (iv).
It follows that there are only 3 possibilities for $D_e$: 
\begin{itemize}
\item[(1)]
$D_e=D_{36}$,
\item[(2)]
$D_e=D_1+...+D_{35}$,
\item[(3)]
or $D_e=D_1+...+D_{36}$.
\end{itemize}
Let $d$ be the number of $D_i$ with coefficient equal to 1 in the linear decomposition of $D_e$. The number $d$ can be 1, 35 or 36.

Then from Lemma \ref{Fulton} (i), (ii) and Proposition \ref{commut}
$$\left(\frac{\pi_{2*}(s^{*}(e))+D_e}{2}\right)^4=\frac{324-d}{2}.$$ 
Hence $d$ has to be divisible by 2.
It follows that $D_e=D_1+...+D_{36}$.
\end{proof} 
\begin{lemme}\label{dernierlemme}
The class $\pi_{1*}(s_1^{*}(e))+\overline{Z_{0}}'$ is divisible by 2.
\end{lemme}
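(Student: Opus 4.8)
The plan is to prove the divisibility upstairs, on the full resolution $\widetilde{K}$, and then to push it down to $K'$ along $\widetilde{r}$. First I would assemble the two divisibility facts already available on $\widetilde{K}$. By Lemma~\ref{exist} together with Lemma~\ref{Ddelta}, the class $\pi_{2*}(s^{*}(e))+D_{1}+\cdots+D_{36}$ is divisible by $2$, and by Lemma~\ref{Fulton}~(iv) the class $\widetilde{\overline{Z_{0}}}+D_{1}+\cdots+D_{36}$ is divisible by $2$. Adding these two classes yields $\pi_{2*}(s^{*}(e))+\widetilde{\overline{Z_{0}}}+2(D_{1}+\cdots+D_{36})$, which is divisible by $2$; hence so is
$$
\pi_{2*}(s^{*}(e))+\widetilde{\overline{Z_{0}}} \in H^{2}(\widetilde{K},\Z).
$$

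Next I would recognize this class as a pullback from $K'$. Since $s$ factors through $s_{1}$ and $s_{2}$, we have $s^{*}(e)=s_{2}^{*}(s_{1}^{*}(e))$, and applying the first commutation relation of Remark~\ref{commut2} to $x=s_{1}^{*}(e)\in H^{2}(N_{1},\Z)$ gives $\pi_{2*}(s^{*}(e))=\widetilde{r}^{*}\bigl(\pi_{1*}(s_{1}^{*}(e))\bigr)$. As $\widetilde{\overline{Z_{0}}}=\widetilde{r}^{*}(\overline{Z_{0}}')$ by definition, the divisible class above equals $\widetilde{r}^{*}\bigl(\pi_{1*}(s_{1}^{*}(e))+\overline{Z_{0}}'\bigr)$. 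Thus $\widetilde{r}^{*}$ applied to the target class is divisible by $2$.

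Finally I would descend the divisibility through $\widetilde{r}^{*}$. The morphism $\widetilde{r}$ is the blow-up of $K'$ along its $36$ isolated (quotient) singular points, whose exceptional divisors $D_{i}\cong\mathbb{P}^{3}$ contribute the classes $[D_{i}]$, so that $H^{2}(\widetilde{K},\Z)=\widetilde{r}^{*}\bigl(H^{2}(K',\Z)\bigr)\oplus\bigoplus_{i=1}^{36}\Z\,D_{i}$. In particular $\widetilde{r}^{*}$ is injective with primitive (saturated) image, so if $\widetilde{r}^{*}(\alpha)=2\beta$ then writing $\beta=\widetilde{r}^{*}(\gamma)+\sum_{i}c_{i}D_{i}$ and comparing components forces $c_{i}=0$ and $\alpha=2\gamma$. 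Applying this to $\alpha=\pi_{1*}(s_{1}^{*}(e))+\overline{Z_{0}}'$ shows that $\pi_{1*}(s_{1}^{*}(e))+\overline{Z_{0}}'$ is divisible by $2$ in $H^{2}(K',\Z)$, as claimed.

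The main obstacle is the last step, justifying that divisibility by $2$ genuinely descends along $\widetilde{r}^{*}$. This rests on the splitting of $H^{2}(\widetilde{K},\Z)$ into the pulled-back part and the span of the exceptional classes, which in turn uses that $H^{2}(\widetilde{K},\Z)$ is torsion free (this follows from Lemma~\ref{H3} and the universal coefficient theorem). Everything else is a formal manipulation of the two known divisibility statements and of the commutation relations of Remark~\ref{commut2}.
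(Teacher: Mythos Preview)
Your first two steps are correct and coincide with the paper's own argument: combine Lemma~\ref{exist}, Lemma~\ref{Ddelta} and Lemma~\ref{Fulton}~(iv) to get that $\pi_{2*}(s^{*}(e))+\widetilde{\overline{Z_{0}}}$ is divisible by $2$ in $H^{2}(\widetilde{K},\Z)$, and then rewrite this class as $\widetilde{r}^{*}\bigl(\pi_{1*}(s_{1}^{*}(e))+\overline{Z_{0}}'\bigr)$ via Remark~\ref{commut2}.

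The gap is in your descent step. The direct sum decomposition
\[
H^{2}(\widetilde{K},\Z)=\widetilde{r}^{*}\bigl(H^{2}(K',\Z)\bigr)\oplus\bigoplus_{i=1}^{36}\Z\,D_{i}
\]
does not hold. The map $\widetilde{r}$ is not a blow-up of a smooth variety at smooth points: the $36$ centres are quotient singularities of $K'$, so Voisin's Theorem~7.31 does not apply. Concretely, the very class you invoke in the first step furnishes a counterexample. By Lemma~\ref{Fulton}~(iv) the class $\tfrac{1}{2}\bigl(\widetilde{\overline{Z_{0}}}+D_{1}+\cdots+D_{36}\bigr)$ lies in $H^{2}(\widetilde{K},\Z)$. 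If your splitting held, this would force $\widetilde{\overline{Z_{0}}}=\widetilde{r}^{*}(\overline{Z_{0}}')$ to be divisible by $2$ in $\widetilde{r}^{*}(H^{2}(K',\Z))$, hence $\overline{Z_{0}}'$ divisible by $2$ in $H^{2}(K',\Z)$, contradicting Lemma~\ref{Fulton}~(v). Thus neither the splitting nor, as a consequence, the saturation of $\im\widetilde{r}^{*}$ that you deduce from it, is available.

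The paper circumvents this by a geometric argument: once $\tfrac{1}{2}\bigl(\pi_{2*}(s^{*}(e))+\widetilde{\overline{Z_{0}}}\bigr)$ is known to be integral, one represents it by a Cartier divisor on $\widetilde{K}$ disjoint from $\bigcup_{k}D_{k}$; since $\widetilde{r}$ is an isomorphism away from the $D_{k}$, this divisor descends to a Cartier divisor on $K'$ whose class is $\tfrac{1}{2}\bigl(\pi_{1*}(s_{1}^{*}(e))+\overline{Z_{0}}'\bigr)$. In other words, the descent is performed at the level of divisors rather than through a putative splitting of $H^{2}(\widetilde{K},\Z)$.
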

\begin{proof}
We know that $\pi_{2,*}(s^{*}(e))+\widetilde{\overline{Z_{0}}}$ is divisible by 2.
Indeed by Lemma \ref{Fulton} (iv), $\widetilde{\overline{Z_{0}}}+D_1+...+D_{36}$ is divisible by 2 and by Lemma \ref{exist} and \ref{Ddelta},
$\pi_{2,*}(s^{*}(e))+D_1+...+D_{36}$ is divisible by 2. 

We can find a Cartier divisor on $\widetilde{K}$ which corresponds to $\frac{\pi_{2*}(s^{*}(e))+\widetilde{\overline{Z_0}}}{2}$ and which does not meet 
$\cup_{k=1}^{36} D_k$.
Then this Cartier divisor induces a Cartier divisor on $K'$ which necessarily corresponds to half the cocycle $\pi_{1*}(s_{1}^{*}(e))$ $+\overline{Z_0}'$.
\end{proof}
\subsection{Computing $B_{K'}$}\label{beauK'}
We finish the proof of Theorem \ref{theorem}, computing the Beauville--Bogomolov form $B_{K'}$ of $K'$. We continue using the notation provided in Section \ref{nota}. 
One of the main ingredient will be the Fujiki formula (see Section 1.2.2 of \cite{Lol}).
If $X=K'$ or $K_2(A)$, we have for all $\alpha\in H^2(X,\Z)$:
\begin{equation}
\alpha^{4}=c_{X}B_{X}(\alpha,\alpha)^2,
\label{Fujiki}
\end{equation}
$c_X\in \mathbb{Q}$ is the Fujiki constant. 
Moreover if 
$0\neq \omega$ is the holomorphic 2-from on $X$, we have 
\begin{equation}
B_{X}(\omega+\overline{\omega},\omega+\overline{\omega})>0.
\label{Fujikiposi}
\end{equation}
There also exists a polarized version of the Fujiki formula.
\begin{equation}
\alpha_{1}\cdot \alpha_{2}\cdot\alpha_{3}\cdot\alpha_{4}=\frac{c_{X}}{24}\sum_{\sigma\in \mathfrak{S}_{4}}B_{X}(\alpha_{\sigma(1)},\alpha_{\sigma(2)})\cdot B_{X}(\alpha_{\sigma(3)},\alpha_{\sigma(4)}).
\label{beauville}
\end{equation}
for all $\alpha_{i}\in H^{2}(X,\Z)$.

\begin{lemme}\label{Zinter}
We have $$\overline{Z_{0}}'^{2}=-2r^{*}(\overline{Z_{0}}).$$
\end{lemme}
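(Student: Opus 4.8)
The plan is to carry out the computation on the smooth double cover $N_{1}$ and then push it down to $K'$, since $K_{2}(A)/\iota$ itself is singular along $\overline{Z_{0}}$. The starting point is the local structure of $\iota$ along $Z_{0}$. Because $\iota$ is symplectic, its fixed locus is a symplectic submanifold, so the holomorphic $2$-form restricts nondegenerately to $Z_{0}$ and the normal bundle $N\defIs N_{Z_{0}/K_{2}(A)}$ is a rank-two \emph{symplectic} bundle; in particular $\det N$ is trivialised by the form and $c_{1}(N)=0$. Moreover $\iota$ has no $+1$-eigenvalue on $N$ (as $Z_{0}$ is the whole local fixed set), hence acts fibrewise as $-\id$. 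Consequently, on $N_{1}=\Bl_{Z_{0}}K_{2}(A)$ the induced involution $\iota_{1}$ acts trivially on the exceptional divisor $Z_{0}'=\mathbb{P}(N)$ (a scalar acts trivially on a projective bundle). Thus $Z_{0}'\subset\Fix\iota_{1}$, the quotient map $\pi_{1}\colon N_{1}\to K'$ is near $Z_{0}'$ a double cover ramified exactly along $Z_{0}'$ with branch divisor $\overline{Z_{0}}'=\pi_{1}(Z_{0}')$, and $K'$ is smooth along $\overline{Z_{0}}'$.

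I would first reduce the self-intersection on $K'$ to one on $N_{1}$. From the ramified-double-cover relations $\pi_{1}^{*}\overline{Z_{0}}'=2Z_{0}'$ and $\pi_{1*}Z_{0}'=\overline{Z_{0}}'$, the projection formula gives
\[
\overline{Z_{0}}'^{2}=\pi_{1*}(Z_{0}')\cdot\overline{Z_{0}}'=\pi_{1*}\!\big(Z_{0}'\cdot\pi_{1}^{*}\overline{Z_{0}}'\big)=2\,\pi_{1*}\!\big(Z_{0}'^{2}\big).
\]
So it suffices to identify $Z_{0}'^{2}$ in $H^{4}(N_{1},\Z)$. For the blow-up of the smooth codimension-two centre $Z_{0}$, with exceptional divisor $g\colon Z_{0}'\hookrightarrow N_{1}$, projection $p\colon Z_{0}'\to Z_{0}$, and tautological class $\zeta=c_{1}\mathcal{O}_{\mathbb{P}(N)}(1)$, one has $g^{*}Z_{0}'=-\zeta$ together with the standard blow-up identity $s_{1}^{*}[Z_{0}]=g_{*}\big(\zeta+p^{*}c_{1}(N)\big)$. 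Since $c_{1}(N)=0$, this collapses to $s_{1}^{*}[Z_{0}]=g_{*}\zeta=-g_{*}(g^{*}Z_{0}')=-Z_{0}'^{2}$, hence $Z_{0}'^{2}=-s_{1}^{*}[Z_{0}]$.

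Combining the two steps yields $\overline{Z_{0}}'^{2}=-2\,\pi_{1*}s_{1}^{*}[Z_{0}]$. Finally I would commute push-forward and pull-back across the square relating $s_{1},\pi_{1},\pi,r'$: by the commutation of Remark \ref{commut2} (Lemma 3.3.21 of \cite{Lol}, which is insensitive to the cohomological degree) one has $\pi_{1*}s_{1}^{*}=r'^{*}\pi_{*}$, while $\pi_{*}[Z_{0}]=[\overline{Z_{0}}]$ because $\pi$ restricts to an isomorphism $Z_{0}\xrightarrow{\sim}\overline{Z_{0}}$. This gives $\overline{Z_{0}}'^{2}=-2\,r'^{*}(\overline{Z_{0}})$; since the $36$ points blown up by $\widetilde r$ are disjoint from $\overline{Z_{0}}'$ we have $\widetilde r^{*}\overline{Z_{0}}'=\widetilde{\overline{Z_{0}}}$, so applying $\widetilde r^{*}$ recovers the statement in the form involving $r=\widetilde r\circ r'$.

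I expect the two genuinely delicate points to be the following. First, establishing rigorously that $\iota$ acts as $-\id$ on $N$ and that $c_{1}(N)=0$: this is exactly what forces the correction term $g_{*}(p^{*}c_{1}(N))$ in the blow-up formula to vanish and produces the clean coefficient $-2$. Second, justifying the push-pull commutation $\pi_{1*}s_{1}^{*}=r'^{*}\pi_{*}$ in degree $4$ over the \emph{singular} space $K_{2}(A)/\iota$, where the pullback $r'^{*}(\overline{Z_{0}})$ must be interpreted with care; routing the whole argument through the smooth models $N_{1}$ and $K'$, as above, is precisely what circumvents the singularity.
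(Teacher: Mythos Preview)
Your proof is correct and follows essentially the same route as the paper. Both arguments compute $Z_{0}'^{2}=-s_{1}^{*}[Z_{0}]$ on $N_{1}$ via the blow-up formula together with $c_{1}(N_{Z_{0}/K_{2}(A)})=0$ (the paper phrases this as ``$K_{2}(A)$ hyperk\"ahler and $Z_{0}$ a K3 surface'', which is the same adjunction argument you give with the symplectic normal bundle), and then push down to $K'$; your projection-formula step $\overline{Z_{0}}'^{2}=2\,\pi_{1*}(Z_{0}'^{2})$ is exactly the $m=2$ case of the paper's Proposition~\ref{commut}. Your added care about $r'^{*}$ versus $r^{*}$ and the extension of Remark~\ref{commut2} to degree~4 is a helpful clarification of a point the paper leaves implicit.
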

\begin{proof}
We use the same technique as in Lemma 4.6.12 of \cite{Lol}.
Consider the following diagram:
$$\xymatrix{
Z_{0}' \ar[d]^{g}\ar@{^{(}->}[r]^{l_{1}}& N_{1} \ar[d]^{s_{1}}\\
   Z_{0} \ar@{^{(}->}[r]^{l_{0}}  & K_{2}(A),
   }$$
where $l_{0}$ and $l_{1}$ are the inclusions and $g:=s_{1|Z_0'}$.
By Proposition 6.7 of \cite{Fulton}, we have:
$$s_{1}^{*}l_{0*}(Z_{0})=l_{1*}(c_{1}(E)),$$
where $E:=g^{*}(\mathscr{N}_{Z_{0}/K_{2}(A)})/\mathscr{N}_{Z_{0}'/N_{1}}$.
Hence $$s_{1}^{*}l_{0*}(Z_{0})=c_{1}(g^{*}(\mathscr{N}_{Z_{0}/K_{2}(A)}))-Z_{0}'^{2}.$$
Since $K_{2}(A)$ is hyperkähler and $Z_{0}$ is a K3 surface, we have $c_{1}(\mathscr{N}_{Z_{0}/K_{2}(A)})=0$.
So
$$Z_{0}'^{2}=-s_{1}^{*}l_{0*}(Z_{0}).$$
Then the result follows from Proposition \ref{commut}.
\end{proof}
\begin{prop}\label{passage}
We have the formula $$B_{K'}(\pi_{1*}(s_{1}^{*}(\alpha),\pi_{1*}(s_{1}^{*}(\beta)))=6\sqrt{\frac{2}{c_{K'}}}B_{K_2(A)}(\alpha,\beta),$$
where $c_{K'}$ is the Fujiki constant of $K'$ and $\alpha$, $\beta$ are in $H^{2}(K_2(A),\Z)$ and $B_{K_2(A)}$ is the Beauville--Bogomolov form of $K_2(A)$.
\end{prop}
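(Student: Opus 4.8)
The plan is to transport the Fujiki formula through the two maps $K_2(A)\xleftarrow{s_1} N_1 \xrightarrow{\pi_1} K'$ of diagram~(\ref{commutativediagram}), writing $L:=\pi_{1*}\circ s_1^{*}$ for the operator occurring in the statement. The starting observation is that $\iota$ acts trivially on $H^2(K_2(A),\Z)$, so for every $\alpha\in H^2(K_2(A),\Z)$ the class $s_1^{*}(\alpha)$ is $\iota_1$-invariant and Proposition~\ref{commut} is available for $\pi_1$.

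First I would evaluate $\int_{K'}(L\alpha)^4$. Proposition~\ref{commut} with $m=4$ gives $(L\alpha)^4 = 2^{3}\,\pi_{1*}\big(s_1^{*}(\alpha^4)\big)$. Since the integral of a top-degree class is preserved both by the Gysin pushforward $\pi_{1*}$ and by pullback along the birational morphism $s_1$, one gets $\int_{K'}(L\alpha)^4 = 8\int_{K_2(A)}\alpha^4$. The Fujiki formula~(\ref{fujiki}) for $n=3$ reads $\int_{K_2(A)}\alpha^4 = 9\,B_{K_2(A)}(\alpha,\alpha)^2$, whence $\int_{K'}(L\alpha)^4 = 72\,B_{K_2(A)}(\alpha,\alpha)^2$. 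Comparing with the Fujiki formula~(\ref{Fujiki}) on $K'$, namely $\int_{K'}(L\alpha)^4 = c_{K'}\,B_{K'}(L\alpha,L\alpha)^2$, yields $B_{K'}(L\alpha,L\alpha)^2 = \tfrac{72}{c_{K'}}\,B_{K_2(A)}(\alpha,\alpha)^2$, i.e. (as $\sqrt{72}=6\sqrt2$) $B_{K'}(L\alpha,L\alpha) = \pm\,6\sqrt{\frac{2}{c_{K'}}}\,B_{K_2(A)}(\alpha,\alpha)$.

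To see that a single sign works for all $\alpha$, I would note that $\alpha\mapsto B_{K'}(L\alpha,L\alpha)$ and $\alpha\mapsto B_{K_2(A)}(\alpha,\alpha)$ are quadratic forms, so the identity above says that the product of the two polynomials $B_{K'}(L\alpha,L\alpha)\mp 6\sqrt{2/c_{K'}}\,B_{K_2(A)}(\alpha,\alpha)$ vanishes identically; since the polynomial ring is an integral domain, one of the two factors is identically zero, giving a global sign. I fix it to $+$ using positivity: after normalising the symplectic form of $K'$ one has $\pi_1^{*}\omega_{K'} = s_1^{*}\omega$, so for $\alpha_0=\omega+\overline{\omega}$ relation~(\ref{pietpi}) gives $L\alpha_0 = \pi_{1*}\pi_1^{*}(\omega_{K'}+\overline{\omega_{K'}}) = 2(\omega_{K'}+\overline{\omega_{K'}})$; then (\ref{Fujikiposi}) forces $B_{K'}(L\alpha_0,L\alpha_0)>0$, while $B_{K_2(A)}(\alpha_0,\alpha_0)>0$ as well, so the sign is positive. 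Finally, since $L$ is linear and the two quadratic forms agree up to the constant $6\sqrt{2/c_{K'}}$, polarisation of the identity $B_{K'}(L\alpha,L\alpha)=6\sqrt{2/c_{K'}}\,B_{K_2(A)}(\alpha,\alpha)$ yields the asserted bilinear equality for all $\alpha,\beta$.

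The main obstacle is the determination of the global sign: the bookkeeping of how $\int$ behaves under the quotient $\pi_1$ and the blow-up $s_1$ is routine, but ensuring that the square-root extraction is consistent across all of $H^2(K_2(A),\R)$ and pinning it down requires the integral-domain argument together with the positivity~(\ref{Fujikiposi}) of the $2$-form class (and the compatibility $\pi_1^{*}\omega_{K'}=s_1^{*}\omega$, which relies on $\iota$ being symplectic). Alternatively, the whole statement can be obtained in one step from the polarised Fujiki formula~(\ref{beauville}) applied on both $K'$ and $K_2(A)$ together with Proposition~\ref{commut} and Remark~\ref{commut2}; this avoids extracting a square root, at the cost of separately matching the $B(\cdot,\cdot)^2$ cross-terms.
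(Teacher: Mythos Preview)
Your proof is correct and follows essentially the same route as the paper: apply the Fujiki formula on both $K_2(A)$ and $K'$, use Proposition~\ref{commut} with $m=4$ to compare the fourth powers through $\pi_{1*}\circ s_1^{*}$, and resolve the sign ambiguity via the positivity~(\ref{Fujikiposi}). The paper's own argument is terser (it writes the class identity $(\pi_{1*}(s_1^{*}(\alpha)))^{4}=8\alpha^{4}$ and simply invokes~(\ref{Fujikiposi})); your integral-domain argument making the global sign choice rigorous, together with the polarisation step to pass from the quadratic to the bilinear identity, spells out details the paper leaves implicit.
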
 
\begin{proof}
The ingredient for the proof is the Fujiki formula.

By (\ref{Fujiki}), we have $$(\pi_{1*}(s_{1}^{*}(\alpha)))^{4}=c_{K'}B_{K'}(\pi_{1*}(s_{1}^{*}(\alpha),\pi_{1*}(s_{1}^{*}(\alpha)))^{2}\ \ \text{and}$$
$$\alpha^{4}=9B_{K_2(A)}(\alpha,\alpha)^{2}.$$
Moreover, by Proposition \ref{commut}, $$(\pi_{1*}(s^{*}(\alpha)))^{4}=8s^{*}(\alpha)^{4}=8\alpha^{4}.$$
By statement (\ref{Fujikiposi}), we get the result.
\end{proof}
In particular, it follows:
\begin{equation}
B_{K'}(\pi_{1*}(s_{1}^{*}(e),\pi_{1*}(s_{1}^{*}(e)))=-36\sqrt{\frac{2}{c_{K'}}}
\label{deltahaha}
\end{equation}
\begin{lemme}\label{ortho}
$$B_{K'}(\pi_{1*}(s_{1}^{*}(\alpha)),\overline{Z_0}')=0,$$
for all $\alpha\in H^{2}(\X,\Z)$.
\end{lemme}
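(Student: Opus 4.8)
The plan is to deduce the vanishing of the Beauville--Bogomolov pairing from the vanishing of a suitable intersection number, fed through the polarized Fujiki formula (\ref{beauville}). Concretely, I would first establish that
$$\pi_{1*}(s_1^*(\alpha))\cdot \pi_{1*}(s_1^*(\beta))\cdot \pi_{1*}(s_1^*(\gamma))\cdot \overline{Z_0}'=0$$
for all $\alpha,\beta,\gamma\in H^2(\X,\Z)$, and then read off the orthogonality from (\ref{beauville}) together with Proposition \ref{passage}.

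For the intersection number, note first that $s_1$ is $\iota_1$-equivariant and that $\iota$ acts trivially on $H^2(\X,\Z)$ (Theorem \ref{SymplecticInvo}), so $s_1^*(\alpha)\in H^2(N_1,\Z)^{\iota_1}$ and Proposition \ref{commut} gives $\pi_{1*}(s_1^*\alpha)\cdot\pi_{1*}(s_1^*\beta)\cdot\pi_{1*}(s_1^*\gamma)=4\,\pi_{1*}\!\left(s_1^*(\alpha\beta\gamma)\right)$. Applying the projection formula for $\pi_1$ together with the identity $\int_{K'}\pi_{1*}=\int_{N_1}$, the intersection number becomes $4\int_{N_1}s_1^*(\alpha\beta\gamma)\cdot\pi_1^*(\overline{Z_0}')$. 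Since $\overline{Z_0}'$ is the irreducible branch divisor of the double cover $\pi_1$, its preimage is supported on the exceptional divisor $Z_0'$ of $s_1$, so $\pi_1^*(\overline{Z_0}')$ is a multiple of $Z_0'$. Using the projection formula for the inclusion $l_1:Z_0'\hookrightarrow N_1$ and the factorization $s_1\circ l_1=l_0\circ g$ with $g:Z_0'\to Z_0$, the integral reduces to a multiple of $\int_{Z_0'}g^*l_0^*(\alpha\beta\gamma)$. But $\alpha\beta\gamma\in H^6(\X,\Z)$ restricts to $H^6(Z_0,\Z)=0$, because $Z_0$ is a real four-dimensional K3 surface; hence the intersection number vanishes.

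Feeding this into (\ref{beauville}) with $\alpha_1,\alpha_2,\alpha_3=\pi_{1*}(s_1^*\alpha),\pi_{1*}(s_1^*\beta),\pi_{1*}(s_1^*\gamma)$ and $\alpha_4=\overline{Z_0}'$, and rewriting every pairing $B_{K'}$ between two pullback classes as $6\sqrt{2/c_{K'}}\,B_{\X}$ via Proposition \ref{passage}, the three perfect matchings of the four inputs produce, after cancelling the common nonzero factor,
$$B_{\X}(\alpha,\beta)f(\gamma)+B_{\X}(\alpha,\gamma)f(\beta)+B_{\X}(\beta,\gamma)f(\alpha)=0,$$
where $f(\,\cdot\,):=B_{K'}\!\left(\pi_{1*}(s_1^*(\,\cdot\,)),\overline{Z_0}'\right)$. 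Choosing $\beta=\gamma=\beta_0$ with $B_{\X}(\beta_0,\beta_0)\neq0$ (possible since $B_{\X}$ is non-degenerate) and specializing $\alpha=\beta_0$ forces $3B_{\X}(\beta_0,\beta_0)f(\beta_0)=0$, hence $f(\beta_0)=0$; the same relation then gives $B_{\X}(\beta_0,\beta_0)f(\alpha)=0$, so $f(\alpha)=0$ for every $\alpha$. This is exactly the assertion of the lemma.

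I expect the main obstacle to be the geometric bookkeeping of the second paragraph: verifying that $\pi_1^*(\overline{Z_0}')$ is supported on $Z_0'$ and correctly carrying out the two projection-formula reductions so that the computation collapses to a restriction to $Z_0$, where the dimension reason $H^6(Z_0,\Z)=0$ kills it. Note that the precise multiplicity in $\pi_1^*(\overline{Z_0}')=2Z_0'$ plays no role here, since the integral vanishes for any multiple of $Z_0'$.
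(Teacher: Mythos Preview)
Your proposal is correct and follows essentially the same route as the paper: compute the triple-product intersection with $\overline{Z_0}'$ via Proposition~\ref{commut} and the projection formula, observe it vanishes for dimension reasons on $Z_0$, and then extract the orthogonality from the polarized Fujiki formula~(\ref{beauville}). The paper is terser---it takes $\alpha=\beta=\gamma$ and kills the intersection via $s_{1*}(Z_0')=0$ rather than $H^6(Z_0)=0$---but these are dual versions of the same computation, and your more explicit endgame (deriving $f\equiv 0$ from the symmetric trilinear identity) is a welcome clarification of what ``conclude with~(\ref{beauville})'' actually entails.
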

\begin{proof}
We have $\pi_{1*}(s_{1}^{*}(\alpha))^{3}\cdot\overline{Z_0}'=8s_{1}^{*}(\alpha)^{3}\cdot\Sigma_{1}$ by Proposition \ref{commut},
and $s_{1*}(s_{1}^{*}(\alpha^{3})\cdot Z_0')=\alpha^{3}\cdot s_{1*}(Z_0')=0$ by the projection formula.
We conclude with (\ref{beauville}).
\end{proof}
\begin{lemme}\label{Z2}
We have:
$$B_{K'}(\overline{Z_0}',\overline{Z_0}')=-4\sqrt{\frac{2}{c_{K'}}}.$$
\end{lemme}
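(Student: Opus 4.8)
The plan is to determine $B_{K'}(\overline{Z_{0}}',\overline{Z_{0}}')$ by feeding a single top-degree intersection product on $K'$ into the polarized Fujiki formula (\ref{beauville}). Concretely, I would apply (\ref{beauville}) to the four classes $\pi_{1*}(s_{1}^{*}(e)),\ \pi_{1*}(s_{1}^{*}(e)),\ \overline{Z_{0}}',\ \overline{Z_{0}}'$. The key simplification is Lemma \ref{ortho}, which gives $B_{K'}(\pi_{1*}(s_{1}^{*}(e)),\overline{Z_{0}}')=0$; consequently every term in the symmetrized sum that contains a mixed pairing vanishes, and only the $8$ permutations of $\mathfrak{S}_4$ that pair the two copies of $\pi_{1*}(s_{1}^{*}(e))$ with each other (and the two copies of $\overline{Z_{0}}'$ with each other) survive. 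This collapses (\ref{beauville}) to the single identity
$$\pi_{1*}(s_{1}^{*}(e))^{2}\cdot\overline{Z_{0}}'^{2}=\tfrac{c_{K'}}{3}\,B_{K'}(\pi_{1*}(s_{1}^{*}(e)),\pi_{1*}(s_{1}^{*}(e)))\cdot B_{K'}(\overline{Z_{0}}',\overline{Z_{0}}').$$

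Next I would evaluate the left-hand intersection number geometrically. Proposition \ref{commut} gives $\pi_{1*}(s_{1}^{*}(e))^{2}=2\,\pi_{1*}(s_{1}^{*}(e^{2}))$ (note $s_{1}^{*}(e)$ is $\iota_{1}$-invariant since $\iota$ acts trivially on $H^{2}(K_{2}(A),\Z)$), while Lemma \ref{Zinter} gives $\overline{Z_{0}}'^{2}=-2\,r'^{*}(\overline{Z_{0}})$, so the product equals $-4\,\pi_{1*}(s_{1}^{*}(e^{2}))\cdot r'^{*}(\overline{Z_{0}})$. Applying the projection formula for $r'$ together with the commutativity of diagram (\ref{commutativediagram})—so that $r'_{*}\pi_{1*}=\pi_{*}s_{1*}$ and $s_{1*}s_{1}^{*}=\id$—this reduces to $\int_{K_{2}(A)/\iota}\overline{Z_{0}}\cdot\pi_{*}(e^{2})$. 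Writing $\overline{Z_{0}}=\pi_{*}[Z_{0}]$ and using the transfer relation (\ref{pietpi}), namely $\pi^{*}\pi_{*}[Z_{0}]=(\id+\iota^{*})[Z_{0}]=2[Z_{0}]$, the projection formula for $\pi$ turns this into $\int_{K_{2}(A)}2\,e^{2}\cdot[Z_{0}]=2\,(e^{2}\cdot Z_{0})$. Finally, identifying the class of the fixed K3 surface $Z_{0}$ with the Hassett--Tschinkel class $Z_{0}=Y_{p}-W_{0}$, formula (\ref{ZT}) yields $e^{2}\cdot Z_{0}=2\,B_{K_{2}(A)}(e,e)=2(-6)=-12$. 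Hence the intersection number equals $(-4)\cdot(-24)=96$.

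It then remains only to substitute the value $B_{K'}(\pi_{1*}(s_{1}^{*}(e)),\pi_{1*}(s_{1}^{*}(e)))=-36\sqrt{2/c_{K'}}$ from (\ref{deltahaha}) into the collapsed identity. This gives $96=\tfrac{c_{K'}}{3}\cdot(-36\sqrt{2/c_{K'}})\cdot B_{K'}(\overline{Z_{0}}',\overline{Z_{0}}')=-12\sqrt{2c_{K'}}\,B_{K'}(\overline{Z_{0}}',\overline{Z_{0}}')$, so that $B_{K'}(\overline{Z_{0}}',\overline{Z_{0}}')=-8/\sqrt{2c_{K'}}$. Simplifying the radical via $8/\sqrt{2}=4\sqrt{2}$ produces exactly $B_{K'}(\overline{Z_{0}}',\overline{Z_{0}}')=-4\sqrt{2/c_{K'}}$, as claimed.

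I expect the main obstacle to be the bookkeeping in the second step: correctly propagating the numerical factors through the chain of pushforwards and pullbacks $\pi_{1},s_{1},r'$ of diagram (\ref{commutativediagram}) and the transfer relations (\ref{pietpi}), and justifying the identification of the fundamental class of the fixed K3 surface $Z_{0}$ with the cohomology class to which (\ref{ZT}) applies (so that these integrals make sense through the singular quotient $K_{2}(A)/\iota$, using the framework of the earlier sections). Once these identifications are in place, both the Fujiki collapse (counting the $8$ surviving permutations) and the closing radical manipulation are routine.
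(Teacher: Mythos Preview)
Your proposal is correct and follows essentially the same approach as the paper: both apply the polarized Fujiki formula (\ref{beauville}) to $\pi_{1*}(s_{1}^{*}(e))^{2}\cdot\overline{Z_{0}}'^{2}$, use Lemma~\ref{ortho} and (\ref{deltahaha}) to collapse it, and then evaluate the intersection number via $Z_{0}\cdot e^{2}=-12$ from (\ref{ZT}). The only difference is bookkeeping in that evaluation: the paper applies Proposition~\ref{commut} once with four invariant classes to get $\overline{Z_{0}}'^{2}\cdot\pi_{1*}(s_{1}^{*}(e))^{2}=8\,Z_{0}'^{2}\cdot s_{1}^{*}(e)^{2}$ and then uses $s_{1*}(Z_{0}'^{2})=-Z_{0}$ directly, whereas you split the computation into two applications of Proposition~\ref{commut} together with Lemma~\ref{Zinter} and a chain of projection formulas through $r'$ and $\pi$; both routes yield $96$.
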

\begin{proof}
By (\ref{beauville}) and (\ref{deltahaha}), we have:
\begin{align*}
\overline{Z_0}'^{2}\cdot\pi_{1*}(s_{1}^{*}(e))^{2}
&=\frac{c_{K'}}{3}B_{M'}(\overline{Z_0}',\overline{Z_0}')\times B_{K'}(\pi_{1*}(s_{1}^{*}(e)),\pi_{1*}(s_{1}^{*}(e)))\\
&=\frac{c_{K'}}{3}B_{K'}(\overline{Z_0}',\overline{Z_0}')\times\left(-36\sqrt{\frac{2}{c_{K'}}}\right)\\
\end{align*}
\vspace{-1cm}
\begin{equation}
=-12\sqrt{2c_{K'}}B_{K'}(\overline{Z_0}',\overline{Z_0}')\ \ \ \ \ \ 
\label{jenesaispas1}
\end{equation}
By Proposition \ref{commut}, we have 
\begin{equation}
\overline{Z_0}'^{2}\cdot\pi_{1*}(s_{1}^{*}(e))^{2}=8Z_0'^{2}\cdot (s_{1}^{*}(e))^{2}.
\label{jenesaispas2}
\end{equation}
By the projection formula, 
$Z_0'^{2}\cdot (s_{1}^{*}(e))^{2}=s_{1*}(Z_0'^{2})\cdot e^{2}$.
Moreover by lemma \ref{Zinter}, $s_{1*}(Z_0'^{2})=-Z_0$. 
Hence
\begin{equation}
Z_0'^{2}\cdot (s_{1}^{*}(e))^{2}=-Z_0\cdot e^{2}.
\label{jenesaispas3}
\end{equation}
It follows from (\ref{jenesaispas1}), (\ref{jenesaispas2}) and (\ref{jenesaispas3}) that
\begin{equation}
-8Z_0\cdot e^{2}=-12\sqrt{2c_{K'}}B_{K'}(\overline{Z_0}',\overline{Z_0}').
\label{jenesaispas4}
\end{equation}
Moreover from Section 4 of \cite{Hassett}, we have:
\begin{equation}
Z_0\cdot e^{2}=-12.
\label{jenesaispas5}
\end{equation}
So by (\ref{jenesaispas4}) and (\ref{jenesaispas5}):
$$B_{K'}(\overline{Z_0}',\overline{Z_0}')=-8\sqrt{\frac{1}{2c_{K'}}}.$$
\end{proof}
Now we are able to finish the calculation of the Beauville--Bogomolov form on $H^{2}(K',\Z)$.
By (\ref{deltahaha}), Propositions \ref{passage}, Lemma \ref{ortho}, \ref{Z2} and Theorem \ref{fin},
the Beauville--Bogomolov form on $H^{2}(K',\Z)$ gives the lattice:
$$ U^{\oplus3}\left(6\sqrt{\frac{2}{c_{K'}}}\right) \oplus -\frac{1}{4}\sqrt{\frac{2}{c_{K'}}}\left(
\begin{array}{cc}
40 & 32\\
32 & 40 
\end{array} \right)$$
$$=U^{\oplus3}\left(6\sqrt{\frac{2}{c_{K'}}}\right) \oplus -\sqrt{\frac{2}{c_{K'}}}\left(
\begin{array}{cc}
10 & 8\\
8 & 10 
\end{array} \right)$$
Then it follows from the integrality and the indivisibility of the Beauville--Bogomolov form that $c_{K'}=8$, and we get Theorem \ref{theorem}.
\subsection{Betti numbers and Euler characteristic of $K'$}
\begin{prop}\label{b}
The Betti numbers and the Euler characteristic are
\begin{align*}
 b_2(K')&=8,&
b_{3}(K')&=0,&
b_{4}(K')&=90,&
\chi(K')&=108.
\end{align*}
\end{prop}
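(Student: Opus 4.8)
The plan is to obtain all of these numbers at once from the rational cohomology of the intermediate space $N_1$, using the fact that $K' \simeq N_1/\iota_1$. Since $\iota_1$ has finite order, $H^k(K',\Q) \cong H^k(N_1,\Q)^{\iota_1}$, so I would first compute $H^*(N_1,\Q)$ and then extract its $\iota_1$-invariant part. As $s_1 : N_1 \to \X$ is the blow-up of the smooth codimension-$2$ centre $Z_0$, the classical blow-up formula gives $H^k(N_1,\Q) \cong s_1^* H^k(\X,\Q) \oplus l_{1*}\big(g^* H^{k-2}(Z_0,\Q)\big)$, where $Z_0' = \mathbb{P}(\mathscr{N}_{Z_0/\X})$ is the exceptional divisor and $g = s_{1|Z_0'} : Z_0' \to Z_0$.

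The decisive point will be to track the $\iota_1$-action on the two summands. On $s_1^*H^k(\X,\Q)$ it is induced by $\iota^*$, so its invariants have dimension $\dim H^k(\X,\Q)^\iota$. On the exceptional summand I expect the action to be trivial: $\iota$ acts on the fibres of $\mathscr{N}_{Z_0/\X}$ by $-\id$, and since scalar multiplication preserves lines, the induced action on the projectivization $Z_0'$ fixes every point, whence $\iota_1$ restricts to the identity on $Z_0'$ and acts trivially on $g^*H^{k-2}(Z_0,\Q)$. Granting this, one gets the clean formula $b_k(K') = \dim H^k(\X,\Q)^\iota + b_{k-2}(Z_0)$ for every $k$.

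It then remains to substitute the two ingredients. The invariant dimensions come from the $l$-invariants computed earlier: by Proposition \ref{sarti}(i), $\dim H^k(\X,\Q)^\iota = l_2^k(\X)+l_{1,+}^k(\X)$, so $H^1(\X)=0$, (\ref{l22}), (\ref{l3}), Proposition \ref{invariants} and Poincar\'e duality for $\X$ give the sequence $1,0,7,0,68,0,7,0,1$ in degrees $0,\dots,8$. Feeding in the K3 Betti numbers $1,0,22,0,1$ of $Z_0$ yields $b_0(K')=1$, $b_2(K')=8$, $b_3(K')=0$, $b_4(K')=90$, and by symmetry $b_6(K')=8$, $b_8(K')=1$, with all odd Betti numbers zero; note that $b_2(K')=8$ is in any case already contained in Theorem \ref{fin}. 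Summing the alternating sum finally gives $\chi(K') = 1+8+90+8+1 = 108$. I expect the one genuinely delicate step to be the verification that $\iota_1$ acts trivially on the exceptional divisor $Z_0'$, which is what makes the exceptional cohomology contribute entirely to the invariants; everything after that is bookkeeping.
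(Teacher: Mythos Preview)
Your proposal is correct and follows essentially the same route as the paper: the paper's proof simply cites Voisin's blow-up formula (Theorem 7.31) together with the invariants from (\ref{l22}), (\ref{l3}) and Proposition \ref{invariants}, which is exactly your computation $b_k(K')=\dim H^k(\X,\Q)^\iota + b_{k-2}(Z_0)$. Your ``delicate step'' is not really delicate: since $Z_0$ is a component of $\Fix\iota$, the linearized action of $\iota$ on $\mathscr{N}_{Z_0/\X}$ is $-\id$, which acts trivially on the $\mathbb{P}^1$-fibres of $Z_0'=\mathbb{P}(\mathscr{N}_{Z_0/\X})$, so $\iota_1$ fixes $Z_0'$ pointwise and the exceptional summand is entirely invariant.
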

\begin{proof}
It is the same proof as Proposition 4.7.2 of \cite{Lol}.
From Theorem 7.31 of \cite{Voisin}, (\ref{l22}), (\ref{l3}) and Proposition \ref{invariants}, we get the Betti numbers.
Then $\chi(K')=1-0+8-0+90-0+8-0+1=108$.
\end{proof}

\appendix
\section{Divisible classes in $H^4(\kum{A}{2},\Z)$}
Here we give the divisible classes from Section~\ref{Middle} that were determined by using a computer.
\begin{prop}\label{XXXI}
The 31 following classes of $\Pi'$ are divisible by 3 in $H^{4}(K_2(A),\Z)$ and their thirds span a $\mathbb F_3$-vector space of dimension 31 in $\frac{\Pi'^{sat}}{\Pi'}$.
$$\sum_{\tau\in\Lambda} \Big(Z_{\tau} - Z_{\tau+\tau'}\Big), \text{ with }$$
\begin{itemize}
\item[(i)]
$\Lambda=\plan{1\\0\\0\\0}{0\\1\\0\\0} \text{ and } 0\neq \tau'\in P^\perp = \plan{0\\0\\1\\0}{0\\0\\0\\1} $,

\item[(ii)] 
$\Lambda=\plan{0\\0\\1\\0}{0\\0\\0\\1}  \text{ and } 0\neq \tau' \in P^\perp = \plan{1\\0\\0\\0}{0\\1\\0\\0} \setminus \vect{1\\0\\0\\0}$,

\item[(iii)] 
$\Lambda=\plan{1\\0\\0\\1}{0\\1\\2\\1} \text{ and } \tau' \in \left\{ \vect{0\\1\\1\\2},\vect{1\\0\\0\\2},\vect{1\\1\\1\\1},\vect{2\\2\\2\\2} \right\}$,

\item[(iv)] 
$\Lambda=\plan{1\\0\\0\\0}{0\\1\\0\\1} \text{ and } \tau' \in \left\{ \vect{0\\0\\0\\1},\vect{2\\0\\1\\2},\vect{1\\0\\2\\0},\vect{1\\0\\2\\1} \right\}$,
\item[(v)]
$\Lambda=\plan{1\\0\\0\\0}{0\\1\\1\\1} \text{ and } \tau' \in \left\{ \vect{0\\0\\1\\1},\vect{1\\0\\0\\1} \right\}$,

\item[(vi)] 
$\Lambda=\plan{1\\0\\1\\1}{0\\1\\0\\1} \text{ and } \tau' \in \left\{ \vect{0\\1\\0\\2},\vect{1\\0\\2\\2} \right\}$,

\item[(vii)]
$\Lambda=\plan{1\\0\\1\\0}{0\\1\\0\\1} \text{ and } \tau' \in \left\{ \vect{0\\1\\0\\2},\vect{1\\0\\2\\0} \right\}$,

\item[(viii)]
$\Lambda=\plan{1\\0\\0\\0}{0\\1\\0\\2} \text{ and } \tau' = \vect{1\\0\\1\\0}$,

\item[(ix)]
$\Lambda=\plan{1\\0\\1\\1}{0\\1\\2\\2} \text{ and } \tau' = \vect{1\\1\\0\\2}$.
\end{itemize}
\end{prop}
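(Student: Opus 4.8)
The plan is to reduce the statement to a finite linear-algebra verification inside the $\mathbb F_3$-algebra $k[V]$, with $k=\mathbb F_3$ and $V=A[3]$, exploiting the dictionary set up in Section~\ref{integralbasisH4}. First I would recall the correspondence $Z_\tau\leftrightarrow X_\tau$, under which $\Pi$ reduces modulo $3$ to $k[V]$ and $\Pi'$ maps isomorphically onto the augmentation ideal $\mathfrak m$ (this is clean because $\Pi/\Pi'\cong\mathbb Z$ is torsion free). Under this dictionary a class $\sum_{\tau\in\Lambda}(Z_\tau-Z_{\tau+\tau'})$ reduces to $-(X_{\tau'}-1)\,b_\Lambda$, where $b_\Lambda:=\sum_{i\in\Lambda}X_i$; when $\Lambda$ is non-isotropic this is $b_\Lambda\in N$ multiplied by an element of $\mathfrak m$, hence an element $\tau'(b_\Lambda)-b_\Lambda$ of $D$ from Definition~\ref{SymplecticIdeal}. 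The divisibility established around~(\ref{first31}), combined with the monodromy action of $\Sp(A[3])\ltimes A[3]$ (Propositions~\ref{Hassettmonodromy} and~\ref{transitively}), gives a natural isomorphism $\Pi'^{sat}/\Pi'\cong D$: concretely, the reduction $\bar c$ of a divisible class $c\in\Pi'$ corresponds to $[\tfrac13 c]\in\Pi'^{sat}/\Pi'$, and by Proposition~\ref{SymplecticIdealsDimension} this space has dimension $31$.

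Granting this framework, the proof splits into two tasks. For the divisibility claim I would check that in each of the nine families (i)--(ix) the plane $\Lambda$ is \emph{non-isotropic} for the block-diagonal symplectic form adapted to $A=E_1\times E_2$, i.e. $\omega(x,y)=x_1y_2-x_2y_1+x_3y_4-x_4y_3$; a one-line evaluation of $\omega$ on the two spanning vectors of each listed $\Lambda$ confirms this. Since the base class~(\ref{first31}) is divisible by $3$ and monodromy preserves both integrality and the lattice structure, transitivity of $\Sp(4,\mathbb F_3)$ on non-isotropic planes (Proposition~\ref{transitively}) places every listed class in the divisible-by-$3$ orbit, which proves the first assertion.

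For the spanning claim I would pass the $31$ listed classes through the dictionary to obtain the explicit vectors $(X_{\tau'}-1)b_\Lambda\in D\subset k[V]$, assemble them as the rows of a matrix over $\mathbb F_3$ whose $81$ columns are indexed by the monomials $X_i$, and verify that this matrix has rank $31$. Because $\dim_{\mathbb F_3}D=31$, rank $31$ forces these vectors to be a basis of $D$, hence their thirds a basis of $\Pi'^{sat}/\Pi'\cong(\mathbb Z/3\mathbb Z)^{31}$.

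The main obstacle is genuinely this last step: the list was produced by a computer search over non-isotropic planes and translation vectors, and certifying $\mathbb F_3$-linear independence of $31$ vectors in an $81$-dimensional space is a finite but unilluminating calculation best delegated to a machine. The conceptual point one must not skip is the faithfulness of the identification $\Pi'^{sat}/\Pi'\cong D$: one needs that the only $\mathbb F_3$-combinations of the reductions $\bar c$ lying in $3\Pi'$ are those predicted by relations in $D$, and it is precisely the rank computation that certifies this.
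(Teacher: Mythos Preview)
Your proposal is correct and follows essentially the same approach as the paper: the paper states that these classes ``were determined by using a computer'' within the framework of Section~\ref{integralbasisH4}, and you have faithfully articulated that framework (the dictionary $Z_\tau\leftrightarrow X_\tau$ identifying the reductions with the ideal $D$, non-isotropy of each $\Lambda$ to inherit divisibility from~(\ref{first31}) via monodromy, and the final $\mathbb F_3$-rank check). One small caveat: your claimed isomorphism $\Pi'^{sat}/\Pi'\cong D$ uses $\Pi'^{over}=\Pi'^{sat}$, which in the paper is only established afterwards via the discriminant count; however this is harmless for the proposition itself, since linear independence of the thirds in $\Pi'^{sat}/\Pi'$ is equivalent to linear independence of the reductions in $\Pi'\otimes\mathbb F_3$, and that is exactly what your rank-$31$ computation certifies.
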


\begin{prop}\label{XIX}
We use Notation \ref{BasisH2KA}.
The 19 following classes are divisible by 3 in $H^{4}(K_2(A),\Z)$ and their thirds provide a sub-vector space of dimension 19 of $\frac{H^4(K_2(A),\Z)}{\Sym^{sat}\oplus\Pi'^{sat}}$.
\begin{itemize}
\item[(i)]
$u_2^2+\sum_{\tau\in \Lambda} Z_\tau-Z_0$, for $\Lambda = \plan{0\\0\\0\\1}{0\\0\\1\\0}$,
\item[(ii)]
$v_2^2+v_2u_2+u_2^2+\sum_{\tau\in \Lambda} Z_\tau-Z_0$, for $\Lambda= \plan{0\\0\\0\\1}{0\\1\\1\\0}$,
\item[(iii)]
$w_2^2+w_2u_2+u_2^2+\sum_{\tau\in \Lambda} Z_\tau-Z_0$, for $\Lambda= \plan{0\\0\\1\\0}{0\\1\\0\\1}$,
\item[(iv)]
$w_2^2-w_2u_2+u_2^2+\sum_{\tau\in \Lambda} Z_\tau-Z_0$, for  $\Lambda= \plan{0\\0\\1\\0}{0\\1\\0\\2}$,
\item[(v)]
$w_2^2-w_2v_2+w_2u_2+v_2^2+v_2u_2+u_2^2+\sum_{\tau\in \Lambda} Z_\tau-Z_0$, for $\Lambda= \plan{0\\0\\1\\2}{0\\1\\0\\1}$,
\item[(vi)]
$w_1^2+w_1u_2+u_2^2+\sum_{\tau\in \Lambda} Z_\tau-Z_0$, for  $\Lambda= \plan{0\\0\\0\\1}{1\\0\\2\\0}$,
\item[(vii)]
$w_1^2-w_1u_2+u_2^2+\sum_{\tau\in \Lambda} Z_\tau-Z_0$, for $\Lambda= \plan{0\\0\\0\\1}{1\\0\\1\\0}$,
\item[(viii)]
$v_1^2+v_1u_2+u_2^2+\sum_{\tau\in \Lambda} Z_\tau-Z_0$, for $\Lambda= \plan{0\\0\\1\\0}{1\\0\\0\\1}$,
\item[(ix)]
$v_1^2-v_1u_2+u_2^2+\sum_{\tau\in \Lambda} Z_\tau-Z_0$, for $\Lambda= \plan{0\\0\\1\\0}{1\\0\\0\\2}$,
\item[(x)]
$v_1^2+v_1w_1-v_1u_2+w_1^2+w_1u_2+u_2^2+\sum_{\tau\in \Lambda} Z_\tau-Z_0$, for $\Lambda = \plan{0\\0\\1\\2}{1\\0\\0\\2}$,
\item[(xi)]
$v_1^2+v_1w_1-v_1w_2-v_1v_2+v_1u_2+w_1^2+w_1w_2+w_1v_2-w_1u_2+w_2^2-w_2v_2+w_2u_2+v_2^2+v_2u_2+u_2^2+\sum_{\tau\in \Lambda} Z_\tau-Z_0$, for $\Lambda = \plan{0\\0\\1\\2}{1\\1\\0\\1}$,
\item[(xii)]
$v_1^2-v_1w_1+v_1w_2-v_1v_2+v_1u_2+w_1^2+w_1w_2-w_1v_2+w_1u_2+w_2^2+w_2v_2-w_2u_2+v_2^2+v_2u_2+u_2^2+\sum_{\tau\in \Lambda} Z_\tau-Z_0$, for $\Lambda = \plan{0\\0\\1\\1}{1\\2\\0\\1}$,
\item[(xiii)]
$u_1^2+\sum_{\tau\in \Lambda} Z_\tau-Z_0$, for  $\Lambda = \plan{0\\1\\0\\0}{1\\0\\0\\0}$,
\item[(xiv)]
$u_1^2-u_1v_2+v_2^2+ \sum_{\tau\in \Lambda} Z_\tau-Z_0$, for  $\Lambda = \plan{0\\1\\0\\0}{1\\0\\0\\1}$,
\item[(xv)]
$u_1^2+u_1v_2+v_2^2+\sum_{\tau\in \Lambda} Z_\tau-Z_0$, for $\Lambda= \plan{0\\1\\0\\0}{1\\0\\0\\2}$,
\item[(xvi)]
$u_1^2+u_1w_1+w_1^2+\sum_{\tau\in \Lambda} Z_\tau-Z_0$, for $\Lambda = \plan{0\\1\\0\\2}{1\\0\\0\\0}$,
\item[(xvii)]
$u_1^2+u_1w_1-u_1v_2+w_1^2+w_1v_2+v_2^2+\sum_{\tau\in \Lambda} Z_\tau-Z_0$, for $\Lambda = \plan{0\\1\\0\\2}{1\\0\\0\\1}$,
\item[(xviii)]
$u_1^2-u_1w_1+u_1w_2-u_1u_2+w_1^2+w_1w_2-w_1u_2+w_2^2+w_2u_2+u_2^2+\sum_{\tau\in \Lambda} Z_\tau-Z_0$, for  $\Lambda = \plan{0\\1\\0\\1}{1\\0\\1\\0}$,
\item[(xix)]
$u_1^2+u_1v_1-u_1w_1+v_1^2+v_1w_1+w_1^2+\sum_{\tau\in \Lambda} Z_\tau-Z_0$, for $\Lambda = \plan{0\\1\\2\\1}{1\\0\\0\\0}$.
\end{itemize}
\end{prop}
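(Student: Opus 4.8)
The plan is to derive both assertions from the single integral class $\mathfrak{N}$ constructed in Section~\ref{integralbasisH4}, the monodromy action of $\Sp(H_1(A,\Z))\ltimes A[3]$ on $H^4(K_2(A),\Z)$, and the numerical data collected in Section~\ref{Section_Symplectic}. Recall that, rewriting the Lagrangian plane class (\ref{LagrangianPlaneClass}) by means of (\ref{WW}) and the divisibilities of $e^2$, $u_1\cdot e$ and $W$ from Proposition~\ref{classedivisibleSym}, we obtained that
$$
\mathfrak{N}=\frac{u_1^2+\sum_{\tau\in\Lambda'}(Z_\tau-Z_0)}{3}
$$
is integral, with $\Lambda'=E_1[3]\times 0$. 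Equivalently $u_1^2+\sum_{\tau\in\Lambda'}(Z_\tau-Z_0)$ is divisible by $3$; this is the base case, and note that each $\Pi$-part $\sum_{\tau\in\Lambda}(Z_\tau-Z_0)$ lies in $\Pi'$, its coefficients adding up to $0$.

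First I would prove that every class in the list is divisible by $3$. By Section~\ref{monodromyexplication}, any $f\in\Sp(H_1(A,\Z))$ extends to a monodromy operator of $H^4(K_2(A),\Z)$ acting on $\Pi$ by $Z_\tau\mapsto Z_{f(\tau)}$ and on $\Sym$ through the map $j$; being a monodromy operator it is an isometry, hence carries integral classes to integral classes and preserves divisibility. Applying such an $f$ (reducing mod $3$ to the prescribed element of $\Sp(A[3])$, which exists by surjectivity of reduction and Proposition~\ref{transitively}) to $\mathfrak{N}$ yields an integral class whose triple is $f(u_1)^2+\sum_{\tau\in\Lambda}(Z_\tau-Z_0)$. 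The listed quadratic $Q_\Lambda$ is chosen so that $Q_\Lambda\equiv f(u_1)^2\pmod 3$ in $\Sym$ — this is possible because the cross coefficients $2c_ic_j$ occurring in a square $f(u_1)^2=(\sum c_iy_i)^2$ reduce mod $3$ to $-c_ic_j$ — so that $f(u_1)^2-Q_\Lambda$ is $3$ times an integral class of $\Sym$. Hence each listed class equals $3\,(f\cdot\mathfrak{N})$ modified by classes of $\Sym$ and $\Pi'$ that are themselves divisible by $3$ (Propositions~\ref{classedivisibleSym} and~\ref{XXXI}); the isotropic planes occurring in the list are reduced to the non-isotropic case via the identity $(M)=(N)$ together with Lemma~\ref{SympLemma}. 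In all cases the resulting class is divisible by $3$.

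Then I would establish the independence in the quotient by the dimension count from Section~\ref{Section_Symplectic}. The span of the orbit of $\mathfrak{N}$ under $\Sp(A[3])\ltimes A[3]$, taken modulo $\Sym\oplus\Pi'$, is modelled by the $\Sp V$-module $O\subset\Sym^2(\Lambda^2V)\oplus \mathbb{F}_3[V]$, the first summand corresponding to $\Sym$ (via $u_1^2\leftrightarrow(v\wedge w)^2$) and the second to $\Pi$ (via $\sum_{\tau\in\Lambda'}Z_\tau\leftrightarrow\sum_{i\in P}X_i$), where $\dim O=51$ by Proposition~\ref{CombinedSymplectic}. By Lemma~\ref{cleffinclassesdiv}, $\ker\pr_{2|O}$ has dimension $1$ (the part lying in $\Sym^{sat}$, namely the multiple of $\cc$ from Corollary~\ref{Pi'}) and $\ker\pr_{1|O}$ has dimension $31$ (the part lying in $\Pi'^{sat}$, i.e. the classes of Proposition~\ref{XXXI}). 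These two kernels are independent, so the image of $O$ in $\frac{H^4(K_2(A),\Z)}{\Sym^{sat}\oplus\Pi'^{sat}}$ has dimension $51-1-31=19$. This certifies that the orbit produces exactly $19$ independent classes in the quotient.

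I expect the main obstacle to be explicit bookkeeping rather than any conceptual difficulty: one must exhibit $19$ honest representatives, check that each listed quadratic part $Q_\Lambda$ really is congruent mod $3$ to a square $f(u_1)^2$ arising from the orbit — so that the exact listed class, and not merely its class modulo $\Pi'$, is divisible by $3$ — and verify that the $19$ chosen representatives are linearly independent modulo $\Sym^{sat}\oplus\Pi'^{sat}$. This last verification is carried out by computer, while the abstract count $51-1-31=19$ guarantees that the computer output has the correct rank and is complete.
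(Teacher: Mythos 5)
Your proposal is correct and follows essentially the same route as the paper: both derive the $19$ classes from the span of the monodromy orbit of $\mathfrak{N}$ under $\Sp(A[3])\ltimes A[3]$, invoke Proposition~\ref{CombinedSymplectic} and Lemma~\ref{cleffinclassesdiv} for the dimension count $51-1-31=19$ modulo $\Sym^{sat}\oplus\Pi'^{sat}$, and delegate the explicit enumeration and independence of the listed representatives to a computer. The only differences are cosmetic: your explicit mod-$3$ matching of the quadratic parts $Q_\Lambda$ with squares $f(u_1)^2$ fleshes out what the paper leaves implicit in the computer search, and your fallback via $(M)=(N)$ for isotropic planes is vacuous, since every $\Lambda$ in the list is an $\Sp$-image of the non-isotropic plane $\Lambda'$ and hence non-isotropic.
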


\noindent
\textbf{Acknowledgements. }
We want to thank Samuel Boissi\`ere, David Chataur, Brendan Hassett, Giovanni Mongardi, Marc Nieper-Wi\ss kirchen and Ulrike Rie\ss\ for useful discussions.
We also thank Samuel Boissi\`ere, Daniel Huybrechts and Marc Nieper-Wi\ss kirchen for hospitality.
GM is supported by Fapesp grant 2014/05733-9. SK was partially supported by a DAAD grant.

\bibliographystyle{amsplain}

\end{document}